\crefname{equation}{equation}{equations}
\newcommand{\N}{\mathbb{N}}
\newcommand{\R}{\mathbb{R}}
\newcommand{\C}{\mathbb{C}}
\renewcommand{\P}{\mathbb{P}}
\newcommand{\RP}{\mathbb{RP}}
\newcommand{\M}{\mathcal{M}}
\newcommand{\calK}{\mathcal{K}}
\newcommand{\dd}{\mathop{}\!\mathrm{d}}
\newcommand{\LL}[1]{L^{#1}}
\newcommand{\contained}{\subset}
\newcommand{\intersection}{\cap}
\newcommand{\union}{\cup}
\newcommand{\bigunion}{\bigcup}
\newcommand{\suchthat}{\, : \,}
\newcommand{\st}{\suchthat}
\newcommand{\card}{\#}
\newcommand{\defined}{\coloneqq}
\newcommand{\identically}{\equiv}
\newcommand{\bigdirectsum}{\bigoplus}
\newcommand{\transpose}{^{\text{\tiny$\mathsf{T}$}}}
\newcommand{\orthogonal}{\perp}
\newcommand{\conj}[1]{\overline{#1}}
\renewcommand{\O}{\operatorname{O}}
\newcommand{\rank}{\operatorname{rank}}
\renewcommand{\Re}{\operatorname{Re}} 
\newcommand{\gradient}{\nabla}
\newcommand{\Laplacian}{\Delta}
\newcommand{\tendsto}{\rightarrow}
\newcommand{\from}{\colon}
\newcommand{\diam}{\operatorname{diam}}
\newcommand{\vol}{\operatorname{vol}}
\newcommand{\distributed}{\sim}
\DeclarePairedDelimiterX\norm[1]\lVert\rVert{\ifblank{#1}{\:\cdot\:}{#1}}
\DeclarePairedDelimiterX\abs[1]\lvert\rvert{\ifblank{#1}{\:\cdot\:}{#1}}
\DeclarePairedDelimiterX\set[1]{\{}{\}}{\ifblank{#1}{\: \:}{#1}}
\DeclarePairedDelimiterX\innerprod[2]\langle\rangle
 \DeclarePairedDelimiterX\mutualEnergy[2]\langle\rangle{\ifblank{#1#2}{\,\cdot,\cdot\,}{#1,#2}}
\DeclarePairedDelimiterX\floor[1]\lfloor\rfloor{\ifblank{#1}{\:\cdot\:}{#1}}
\DeclarePairedDelimiterXPP\expectation[2]{\ifblank{#1}{\mathbb{E}}{\mathbb{E}_{#1}}}\lbrack\rbrack{}{\ifblank{#2}{\:\cdot\:}{#2}}
\DeclarePairedDelimiterXPP\Pochhammer[2]{}{(}{)}{_{#2}}{\ifblank{#1}{\cdot}{#1}}
\newcommand{\Grass}[3][]{\ifblank{#1}{\operatorname{Gr}_{#2,#3}}
	{\operatorname{Gr}_{#2,#3}(#1)}}
\newcommand{\embeddedGrass}[3][]{\ifblank{#1}{\widetilde{\operatorname{Gr}}_{#2,#3}}
	{\widetilde{\operatorname{Gr}}_{#2,#3}(#1)}}
\newcommand{\Stief}[3][]{\ifblank{#1}{\operatorname{St}_{#2,#3}}
	{\operatorname{St}_{#2,#3}(#1)}}
 \newcommand{\HalfStief}[3][]{\ifblank{#1}{\overline{\operatorname{St}}_{#2,#3}}
	{\overline{\operatorname{St}}_{#2,#3}(#1)}}
\newcommand{\Gegenbauer}[2]{C_{#1}^{(#2)}}
\newcommand{\Jacobi}[3]{P_{#1}^{(#2,#3)}}
\newcommand{\JacobiGrass}[1]{P_{#1}}
\newcommand{\Legendre}[1]{P_{#1}}
\newcommand{\hypergeom}[2]{\prescript{}{#1}{F}^{}_{#2}}
\newcommand{\BesselJ}[1]{J_{#1}}
\newcommand{\Pol}{\operatorname{Pol}}
\newcommand{\chordalGrass}{d_c}
\newcommand{\Energy}{E}
\newcommand{\minEnergy}{\mathcal{E}}
\newcommand{\contEnergy}{I}
\newcommand{\Wiener}{W}
\newcommand{\Measures}{\mathcal{M}}
\newcommand{\SignedMeasures}{\mathcal{S}}
\newcommand{\ZeroMeasures}{\mathcal{Z}}
\newcommand{\Partitions}{\mathcal{P}}
\newcommand{\DegreePart}[1]{\abs{#1}}
\newcommand{\HarmonicDPP}[1]{\mathfrak{X}^{(#1)}}
\newcommand{\pushforward}[2]{#2_{*}#1}
\newacronym{dpp}{\textup{\textsc{{dpp}}}}{\emph{determinantal point process}}
\theoremstyle{plain}
\newtheorem{theorem}{Theorem}[section]
\newtheorem{corollary}[theorem]{Corollary}
\newtheorem{lemma}[theorem]{Lemma}
\newtheorem{proposition}[theorem]{Proposition}
\theoremstyle{definition}
\newtheorem{definition}[theorem]{Definition}
\theoremstyle{remark}
\newtheorem{remark}[theorem]{Remark}
\begin{document}

	\title[Minimal Riesz and logarithmic energies on the Grassmannian]{Minimal Riesz and logarithmic energies\\ on the Grassmannian $\operatorname{Gr}_{2,4}$}
	
	\author{Ujué Etayo}
	\address{Ujué Etayo: Departamento de Métodos Cuantitativos, CUNEF, Leonardo Prieto Castro, 2. Ciudad Universitaria, 28040 Madrid, Spain}
	\email{ujue.etayo@cunef.edu}
	
	\author{Pedro R. López-Gómez}
	\address{Pedro R. López-Gómez: Departamento de Matemáticas, Estadística y Computación, Universidad de Cantabria,  Avda. Los Castros, s/n, 39005 Santander, Spain}
	\email{lopezpr@unican.es}
	
	\date{\today{}}
	
	\thanks{The authors have been supported by grant PID2020-113887GB-I00 funded by MCIN/AEI/10.13039/501100011033. The first author has also been supported by the starting grant from FBBVA associated with the prize José Luis Rubio de Francia. The second author has also been supported by grant PRE2021-097772 funded by MCIN/AEI/10.13039/501100011033 and by ``ESF Investing in your future''.}
	
	\subjclass[2020]{Primary 31C12, 60G55; Secondary 33C50, 41A60, 42C10.}
	
	\keywords{Grassmannian, Riesz energy, logarithmic energy, determinantal point processes}

	\begin{abstract}
		We study the Riesz and logarithmic energies on the Grassmannian $\operatorname{Gr}_{2,4}$ of $2$-dimensional subspaces of $\mathbb{R}^4$. We prove that the continuous Riesz and logarithmic energies are uniquely minimized by the uniform measure, and we obtain asymptotic upper and lower bounds for the minimal discrete energies, with matching orders for the next-order terms. Additionally, we define a determinantal point process on $\operatorname{Gr}_{2,4}$ and compute the expected energy of the points coming from this random process, thereby obtaining explicit constants in the upper bounds for the Riesz and logarithmic energies.
	\end{abstract}
	
	\maketitle


\section{Introduction}

The study of energy minimization problems has long been a central theme in various branches of mathematics and physics. Of particular interest are the Riesz and logarithmic energies given their relevance in areas like potential theory, approximation theory, or mathematical physics, among others. In this paper, we explore the problem of minimizing the Riesz and logarithmic energies on the Grassmann manifold, a space of significant interest not only for the disciplines above, but also for areas like coding theory and signal processing.

\subsection{The Grassmann manifold}\label{subsec:grassmannian}

In this section, we introduce the basic notions about the Grassmann manifold that we will use throughout this document. For our purposes, a brief description will suffice. The reader interested in more information about this space can consult, for example, \cites{AbsilMahonySepulchre2004, BendokatZimmermannAbsil2024}.

The (real, unoriented) Grassmann manifold or Grassmannian $\Grass{m}{d}$ is the set of linear subspaces of dimension $m$ of $\R^d$. The orthogonal group $\O(d)$ acts as a transitive group of transformations on $\Grass{m}{d}$, that is, given any two $m$-dimensional subspaces of $\R^{d}$, there exists an isometry in $\O(d)$ that maps one into the other. The stabilizer of an element $P\in \Grass{m}{d}$ also stabilizes its orthogonal complement $P^\orthogonal$. Hence, we obtain the classical identification of the Grassmannian as a homogeneous space:
\begin{equation*}
    \Grass{m}{d}=\frac{\O(d)}{\O(m)\times \O(d-m)}.
\end{equation*}
From this construction, the Grassmannian inherits the structure of a compact smooth manifold of dimension $m(d-m)$. 

Unlike spheres and projective spaces, the Grassmann manifold is not a two-point homogeneous space. This means that the orbits of pairs of points under the action of the isometry group $\O(d)$ cannot be characterized by a single invariant, such as the distance between points. Instead, to understand the orbits of pairs $(P,Q)\in\Grass{m}{d}^2$ under the action of $\O(d)$ we have to introduce the concept of \emph{principal angles} between subspaces of $\R^d$. These principal angles are defined recursively as follows. Let $P,Q\in\Grass{m}{d}$ be two $m$-dimensional subspaces of $\R^d$. The first principal angle $\theta_1(P,Q)$ between $P$ and $Q$ is defined as the smallest angle between any two unit vectors $u\in P$ and $v\in Q$, that is,
\begin{equation*}
    \theta_1(P,Q)\defined \min_{u\in P,\, v\in Q}\arccos\abs{\innerprod{u}{v}}.
\end{equation*}
Let $u_1$ and $v_1$ be the vectors for which this minimum is attained. The second principal angle $\theta_2(P,Q)$ is then defined as the smallest angle between any two unit vectors $u\in P\intersection u_1^\orthogonal$ and $v\in Q\intersection v_1^\orthogonal$. The remaining principal angles are defined analogously. When the elements $P$ and $Q$ are understood, we simply write $\theta_i$ instead of $\theta_i(P,Q)$. This way, we obtain an $m$-tuple $(\theta_1,\dotsc,\theta_m)$, with $0\leq \theta_1\leq\dotsb\leq \theta_m\leq \pi/2$, that fully characterizes the orbits of $(P,Q)$ under the action of $\O(d)$, in the following sense.

\begin{proposition}[{\cite[Proposition 1.2]{Bachoc2006}}]
    Two pairs $(P,Q),(P',Q')\in\Grass{m}{d}^2$ are in the same orbit under the the action of $\O(d)$ if and only if
    \begin{equation*}
        \theta_i(P,Q)=\theta_i(P',Q')\quad \text{for all $1\leq i\leq m$}.
    \end{equation*}
\end{proposition}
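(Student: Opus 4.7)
The plan is to prove the two implications separately. The forward direction is immediate: if $g\in\O(d)$ satisfies $g(P)=P'$ and $g(Q)=Q'$, then the restrictions of $g$ to $P$ and $Q$ preserve the ambient inner product, and the recursive construction defining the principal angles—minimizing $\arccos\abs{\innerprod{u}{v}}$ over unit vectors $u,v$ in subspaces obtained by intersecting with orthogonal complements of previous minimizers—commutes with $g$. Hence $\theta_i(P,Q)=\theta_i(P',Q')$ for all $i$.

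For the converse, I would attach to any pair $(P,Q)$ a canonical orthonormal frame of $\R^d$ whose relative structure depends only on the tuple of principal angles, and then define the desired $g\in\O(d)$ by mapping the frame of $(P,Q)$ to the frame of $(P',Q')$. The key ingredient is the \emph{principal vector decomposition}: one builds, inductively from the recursive definition in the excerpt, orthonormal bases $\{u_1,\dotsc,u_m\}$ of $P$ and $\{v_1,\dotsc,v_m\}$ of $Q$ satisfying $\innerprod{u_i}{v_j}=\cos(\theta_i(P,Q))\,\delta_{ij}$. At the $k$-th step one takes $u_k\in P$ and $v_k\in Q$ attaining the $k$-th minimum, then restricts to the subspaces of $P$ and $Q$ orthogonal to $u_1,\dotsc,u_{k-1}$ and $v_1,\dotsc,v_{k-1}$, respectively, and iterates.

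Given such principal bases, for each $i$ with $\theta_i>0$ decompose $v_i=\cos(\theta_i)\,u_i+\sin(\theta_i)\,w_i$ with $w_i\in P^\orthogonal$ a unit vector, and check that the $w_i$ form an orthonormal set using $\innerprod{v_i}{v_j}=\delta_{ij}$ together with the orthonormality of the $u_i$. Extend $\{u_1,\dotsc,u_m\}\union\{w_i:\theta_i>0\}$ to an orthonormal basis of $\R^d$, repeat the construction for $(P',Q')$, and let $g\in\O(d)$ be the isometry sending one basis to the other in the natural way. Then $g(P)=P'$ because the $u_i$ span $P$ and the $u_i'$ span $P'$, and $g(Q)=Q'$ because the equality of principal angles gives $g(v_i)=\cos(\theta_i)\,u_i'+\sin(\theta_i)\,w_i'=v_i'$.

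The main obstacle I anticipate is the case of repeated principal angles, where the minimizers in the recursive definition are not unique and in particular multiplicities at $0$ or $\pi/2$ must be handled with care (for $\theta_i=0$ there is no auxiliary vector $w_i$, while for $\theta_i=\pi/2$ one has $v_i\in P^\orthogonal$ outright). This requires arguing that any consistent choice of principal vectors still yields orthonormal families with the stated orthogonality properties; once that is in place, the construction of $g$ and the verification $g(P)=P'$, $g(Q)=Q'$ are routine linear algebra.
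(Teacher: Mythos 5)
Your proof is essentially the classical argument, and its overall structure is sound; note also that the paper itself offers no proof of this statement---it is imported directly from Bachoc's Proposition~1.2---so there is no internal argument to compare yours against. The forward implication is indeed immediate, and once you have principal bases with the stated biorthogonality, your construction of $w_i=(v_i-\cos(\theta_i)\,u_i)/\sin(\theta_i)\in P^\orthogonal$, the orthonormality check, the extension to an orthonormal basis of $\R^d$ (the counts on both sides match because the angle tuples agree), and the verification $g(P)=P'$, $g(Q)=Q'$ all go through, including the degenerate cases $\theta_i=0$ and $\theta_i=\pi/2$.

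The one step you gloss over is precisely the content of the ``principal vector decomposition'': the greedy recursion only guarantees that $u_k$ is orthogonal to $u_1,\dotsc,u_{k-1}$ and $v_k$ to $v_1,\dotsc,v_{k-1}$, while the cross relations $\innerprod{u_i}{v_j}=0$ for $i\neq j$ do not follow from the construction alone. They require the first-order optimality of each minimizing pair: optimality forces the orthogonal projection of $u_i$ onto $Q$ to be a scalar multiple of $v_i$ (and symmetrically), whence for $j>i$ one gets $\innerprod{u_i}{v_j}=\pm\cos(\theta_i)\innerprod{v_i}{v_j}=0$. If you do not want to run that variational argument, a cleaner route is to fix arbitrary orthonormal bases of $P$ and $Q$, form the $m\times m$ matrix of mutual inner products, and take its singular value decomposition: the singular values are the cosines of the principal angles and the rotated bases are principal vectors satisfying $\innerprod{u_i}{v_j}=\cos(\theta_i)\,\delta_{ij}$ with $\cos(\theta_i)\geq 0$ automatically. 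This also disposes of the two issues you flag at the end---repeated angles (no uniqueness of minimizers is needed, any biorthogonal principal bases suffice) and the sign ambiguity hidden in $\arccos\abs{\innerprod{u}{v}}$---without any case analysis. With that lemma supplied, your proof is complete.
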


From the definition of the principal angles it is clear that $P=Q$ if and only if all the principal angles between them are zero. 

It is a classical result in the geometry of Grassmann manifolds that any notion of distance that depends only on the relative position between subspaces must be a function of the principal angles (see \cite[Theorem 3]{Wong1967} and \cite[Theorem 2]{YeLim2016}). The geodesic or Riemannian distance between two elements $P,Q\in\Grass{m}{d}$ is given by
\begin{equation*}
    d_g(P,Q)=\sqrt{\theta_1^2+\dotsb+\theta_m^2},
\end{equation*}
where $\theta_1,\dotsc,\theta_m$ are the principal angles between $P$ and $Q$. However, this distance is not smooth everywhere. Therefore, we will work instead with the \emph{chordal distance} introduced in \cite{ConwayHardinSloane1996}, which is defined as
\begin{equation*}
    \chordalGrass(P,Q)=\sqrt{\sin^2\theta_1+\dotsb+\sin^2\theta_m}.
\end{equation*}
From the definition of the Grassmann manifold, it is clear that $\Grass{1}{d}$ is just the real projective space $\RP^d$. Moreover, by taking orthogonal complements we can identify $\Grass{m}{d}$ with $\Grass{d-m}{d}$. Therefore, $\Grass{2}{4}$ is the simplest Grassmannian that is algebraically different from a projective space. In this work, we focus on the Grassmannian $\Grass{2}{4}$. In that case, the orbits of pairs $(P,Q)\in\Grass{2}{4}^2$ are characterized by the two principal angles $0\leq \theta_1\leq\theta_2\leq \pi/2$ between $P$ and $Q$, and the chordal distance is simply
\begin{equation}\label{eq:chordal-distance-angles}
    \chordalGrass(P,Q)=\sqrt{\sin^2\theta_1+\sin^2\theta_2}.
\end{equation}
We will use the following substitution throughout this work:
\begin{equation}\label{eq:xi_+-}
\begin{split}
	\xi_{+}(P,Q)&=\cos(\theta_1(P,Q)+\theta_2(P,Q)),\\
	\xi_{-}(P,Q)&=\cos(\theta_1(P,Q)-\theta_2(P,Q)).
\end{split}
\end{equation}
Unless necessary, we will omit the dependence on $P$ and $Q$. It can be checked that
\begin{equation}\label{eq:relation-xi-costheta}
	\xi_{+}\xi_{-}=\cos^2\theta_1+\cos^2\theta_2-1.
\end{equation}
With this notation, the chordal distance can be written as
\begin{equation}\label{eq:chordal-distance-xi}
    \chordalGrass(P,Q)=\sqrt{1-\xi_{+}\xi_{-}}.
\end{equation}
We denote the $\O(4)$-invariant probability measure on the Grassmannian $\Grass{2}{4}$ induced by the Haar measure on $\O(4)$ as $\sigma$. We will refer to it as the \emph{uniform measure} on the Grassmann manifold. We present now a classical result. Recall that a set of $m$ orthonormal vectors in $\mathbb{R}^{d}$ is called an $m$-frame and that $m$-frames are the points of the Stiefel manifold $\Stief{m}{d}$. We denote the set of $m$-frames with positive first components as $\HalfStief{m}{d}$

\begin{proposition}[{\cite[p. 67]{James1954}; \cite[(3.10)]{Davis1999b}}]\label{lemma_decomp_measure_24}
The density of the uniform measure $\sigma$ on $\Grass{2}{4}$ is given by
\begin{equation*}
    \dd\sigma=(\dd\tilde{\theta})(\dd V)(\dd Z),
\end{equation*}
where
\begin{equation*}
    \dd \tilde{\theta}=2(\cos^2\theta_1-\cos^2\theta_2)\dd\theta_1\dd\theta_2,
\end{equation*}
and $\dd V$ and $\dd Z$ are the densities of the uniform measures on $\HalfStief{2}{4}$ and $\Stief{2}{4}$, respectively.
\end{proposition}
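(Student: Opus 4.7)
The plan is to realize $\Grass{2}{4}$ as the homogeneous space $\O(4)/H$ with $H=\O(2)\times\O(2)$, and push the normalized Haar measure on $\O(4)$ forward via $g\mapsto gP_0$, where $P_0$ is a fixed reference plane with stabilizer $H$. The structure of this pushforward is encoded by a generalized Cartan (KAK) decomposition adapted to the symmetric-space structure of $\O(4)/H$: for generic $g\in\O(4)$ one writes $g=h_1\,a(\theta_1,\theta_2)\,h_2$, where $h_1,h_2\in H$ and $a(\theta_1,\theta_2)$ is the block-diagonal rotation by the principal angles $0\leq\theta_1\leq\theta_2\leq\pi/2$. These coordinates are well-defined on an open, full-measure subset of $\O(4)$, up to an order-two discrete stabilizer swapping the two angles.

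The key computation is the Jacobian of this change of variables, obtained by expressing the Maurer--Cartan form of $\O(4)$ in the coordinates $(h_1,\theta_1,\theta_2,h_2)$ and taking wedge products. The symmetric space $\O(4)/H$ has rank $2$ with restricted positive roots proportional to $\theta_1\pm\theta_2$, each of multiplicity $1$, so the Weyl integration formula yields a density proportional to
\begin{equation*}
    \sin(\theta_1+\theta_2)\sin(\theta_2-\theta_1)=\cos^2\theta_1-\cos^2\theta_2,
\end{equation*}
and accounting for the order of the discrete stabilizer pins down the prefactor $2$.

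Finally, I would descend to the left quotient $\Grass{2}{4}=\O(4)/H$. The right factor $h_2$ is absorbed by the quotient, while the left factor $h_1$ persists; identifying the two $\O(2)$-blocks of $h_1$ with their induced actions on $2$-frames in $\R^4$, and fixing the residual sign ambiguity on one of the blocks by restricting to $\HalfStief{2}{4}$, recovers the uniform measures $\dd V$ and $\dd Z$ in the statement. The main obstacle is the Jacobian computation together with the bookkeeping of discrete stabilizers and normalization constants on each factor; as this is a classical calculation in the multivariate-statistics literature on Grassmannians, I would in practice invoke the references already cited in the proposition rather than redo it from scratch.
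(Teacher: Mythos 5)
The paper offers no proof of this proposition: it is quoted as a classical fact with references to James and to Davis, so there is nothing internal to compare your argument with. Your sketch is a correct and standard route to the statement. For the symmetric pair $(\O(4),\O(2)\times\O(2))$ the restricted root system is of type $D_2$, the positive roots $\theta_2\pm\theta_1$ each have multiplicity one (consistent with $\dim\Grass{2}{4}=2+1+1$), and the radial part of the KAK/Weyl integration formula is proportional to $\sin(\theta_2+\theta_1)\sin(\theta_2-\theta_1)=\cos^2\theta_1-\cos^2\theta_2$ on the chamber $0\le\theta_1\le\theta_2\le\pi/2$, which is exactly the angular factor in the statement. Two caveats on the bookkeeping you defer. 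First, the prefactor $2$ is not produced by an order-two stabilizer; since every factor in the statement is a probability density, it is simply the normalization of the chamber integral, because
\begin{equation*}
  \int_{0}^{\pi/2}\int_{0}^{\theta_2}\bigl(\cos^2\theta_1-\cos^2\theta_2\bigr)\dd\theta_1\dd\theta_2=\frac{1}{2},
\end{equation*}
and the relevant discrete group (sign changes of principal vectors together with the swap of the angles) has order larger than two. Second, identifying ``the two $\O(2)$-blocks of $h_1$'' with the uniform measures on $\HalfStief{2}{4}$ and $\Stief{2}{4}$ cannot be taken literally: that residual factor of $H$ is two-dimensional, while those Stiefel manifolds are five-dimensional, so recovering the exact form quoted from Davis requires the CS-decomposition bookkeeping with principal frames attached to the reference plane and its orthogonal complement, not merely the leftover $h_1$. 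Neither caveat matters for how the proposition is used: the only consequence invoked later, \cref{lemma_integral}, depends solely on the angular marginal $2(\cos^2\theta_1-\cos^2\theta_2)\dd\theta_1\dd\theta_2$ with the frame factors integrating to one, and your computation (or simply the citation, as in the paper) delivers exactly that.
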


\begin{corollary}\label{lemma_integral}
    Let $F\from \Grass{2}{4} \times \Grass{2}{4} \to \R$ be such that $F(P,Q) = f(\theta_1, \theta_2)$ where $\theta_1,\theta_2$ are the principal angles between $P$ and $Q$. Then,
    \begin{equation*}
        \iint\limits_{\mathclap{\Grass{2}{4}\times \Grass{2}{4}}} F(P,Q) \dd\sigma(P)\dd\sigma(Q)
        =\int_{0}^{\pi/2}\int_{0}^{\theta_2}
        f(\theta_1, \theta_2)
        2(\cos^2\theta_1-\cos^2\theta_2)\dd \theta_1 \dd \theta_2.
    \end{equation*}
\end{corollary}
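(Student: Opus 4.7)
My plan is to apply Proposition \ref{lemma_decomp_measure_24} directly and then integrate out the auxiliary variables. The proposition provides a disintegration of the relevant measure on $\Grass{2}{4}\times\Grass{2}{4}$ (equivalently, the joint distribution of a uniformly random pair) in terms of the principal angles $(\theta_1,\theta_2)$ together with auxiliary frame variables $V\in\HalfStief{2}{4}$ and $Z\in\Stief{2}{4}$. The factor $2(\cos^2\theta_1-\cos^2\theta_2)\dd\theta_1\dd\theta_2$ is precisely the Jacobian of this decomposition on the principal-angle factor, while $\dd V$ and $\dd Z$ parametrize the remaining rotational freedom once the angles are fixed.

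The key observation is that the integrand $F(P,Q)=f(\theta_1,\theta_2)$ depends only on the principal angles, not on $V$ or $Z$. By Fubini's theorem I can therefore factor the double integral and peel off the $V$- and $Z$-integrals, each of which equals $1$ because $\dd V$ and $\dd Z$ are (normalized) probability measures on the corresponding Stiefel-type manifolds. What remains is the integral of $f(\theta_1,\theta_2)\cdot 2(\cos^2\theta_1-\cos^2\theta_2)$ over the angle region, and the convention that the principal angles are ordered $0\le\theta_1\le\theta_2\le\pi/2$ restricts that region to the triangle appearing on the right-hand side of the claim.

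I do not foresee a real obstacle, since the whole content of the corollary is already packaged inside Proposition \ref{lemma_decomp_measure_24}: the proof is a direct application of Fubini once the auxiliary variables are integrated out. The only minor points worth verifying are the normalization of the measures on $\HalfStief{2}{4}$ and $\Stief{2}{4}$ (so that $\int\dd V=\int\dd Z=1$) and the compatibility between the angle range in the proposition and the ordered triangle $\{0\le\theta_1\le\theta_2\le\pi/2\}$ that appears in the corollary. As a sanity check one can verify that $\int_0^{\pi/2}\!\!\int_0^{\theta_2}2(\cos^2\theta_1-\cos^2\theta_2)\dd\theta_1\dd\theta_2=1$, confirming that the resulting density on the principal angles is a probability density, as it must be.
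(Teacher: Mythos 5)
Your proposal is correct and is exactly the argument the paper intends: the corollary is an immediate consequence of Proposition \ref{lemma_decomp_measure_24}, obtained by using the $\O(4)$-invariance of $\sigma$ to decompose $\dd\sigma(P)$ relative to $Q$, applying Fubini, and integrating out the frame variables $V$ and $Z$ (whose normalized measures contribute factors of $1$), leaving the angle density $2(\cos^2\theta_1-\cos^2\theta_2)$ on the ordered triangle $0\leq\theta_1\leq\theta_2\leq\pi/2$. Your sanity check that this density integrates to $1$ over the triangle is a useful confirmation of the normalization.
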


In order to write \cref{lemma_integral} in terms of the variables $\xi_{+}$ and $\xi_{-}$ introduced in \eqref{eq:xi_+-}, note that the Jacobian of that change of variables is precisely $2(\cos^2\theta_1-\cos^2\theta_2)$. This, together with some simple symmetry arguments, leads to the following consequence of \cref{lemma_integral}.

\begin{lemma}\label{lemma:integral-Grass24-2}
	Let $F\from \Grass{2}{4} \times \Grass{2}{4} \to \R$ be a function such that $F(P,Q)=g(\xi_{+}(P,Q),\xi_{-}(P,Q))$, where $g\from [-1,1]^2\to \R$ is an integrable function such that $g(\xi_{+},\xi_{-})=g(\xi_{-},\xi_{+})$ and $g(-\xi_{+},-\xi_{-})=g(\xi_{+},\xi_{-})$. Then,
	\begin{align*}
		\iint\limits_{\mathclap{\Grass{2}{4}\times \Grass{2}{4}}} F(P,Q)\dd\sigma(P)\dd\sigma(Q)
		&=\int_{0}^{1}\int_{-\xi_{-}}^{\xi_{-}}g(\xi_{+},\xi_{-})\dd \xi_{+}\dd\xi_{-}\\
		&=\frac{1}{2}\int_{0}^{1}\int_{-1}^{1}g(\xi_{+},\xi_{-})\dd \xi_{+}\dd\xi_{-}\\
		&=\frac{1}{4}\int_{-1}^{1}\int_{-1}^{1}g(\xi_{+},\xi_{-})\dd \xi_{+}\dd\xi_{-}.
	\end{align*}
\end{lemma}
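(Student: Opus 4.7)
The plan is to carry out the change of variables $(\theta_1, \theta_2) \mapsto (\xi_+, \xi_-)$ in the integral provided by \cref{lemma_integral}, and then exploit the two postulated symmetries of $g$ to unfold the resulting domain successively into a rectangle and a square. By \cref{lemma_integral} together with the hypothesis $F(P,Q) = g(\xi_+, \xi_-)$, the double integral on $\Grass{2}{4}^2$ equals $\int_0^{\pi/2}\int_0^{\theta_2} g(\xi_+, \xi_-)\, 2(\cos^2\theta_1-\cos^2\theta_2)\,\dd\theta_1\,\dd\theta_2$. Since the Jacobian of $(\theta_1, \theta_2) \mapsto (\xi_+, \xi_-)$ equals $2(\cos^2\theta_1 - \cos^2\theta_2)$ (as noted immediately before the statement), this density cancels exactly. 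From the product-to-sum identities $\xi_+ + \xi_- = 2\cos\theta_1\cos\theta_2 \geq 0$ and $\xi_- - \xi_+ = 2\sin\theta_1\sin\theta_2 \geq 0$, the image of $\{0 \leq \theta_1 \leq \theta_2 \leq \pi/2\}$ is the triangle $A = \{(\xi_+, \xi_-) : 0 \leq \xi_- \leq 1,\ -\xi_- \leq \xi_+ \leq \xi_-\}$, which yields the first equality.

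For the second equality, I would partition the rectangle $[-1,1] \times [0,1]$ as the disjoint union (up to a null set) of $A$ and the two side triangles $A_2 = \{0 \leq \xi_- < \xi_+ \leq 1\}$ and $A_3 = \{-1 \leq \xi_+ < -\xi_-,\ 0 \leq \xi_- \leq 1\}$. Under the substitution $(\xi_+, \xi_-) \mapsto (\xi_-, \xi_+)$, combined with the hypothesis $g(\xi_+, \xi_-) = g(\xi_-, \xi_+)$, the region $A_2$ is sent bijectively onto $A \cap \{\xi_+ \geq 0\}$, so $\int_{A_2} g = \int_{A \cap \{\xi_+ \geq 0\}} g$. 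Chaining the two symmetries yields $g(-\xi_-, -\xi_+) = g(\xi_+, \xi_-)$, and the substitution $(\xi_+, \xi_-) \mapsto (-\xi_-, -\xi_+)$ then identifies $\int_{A_3} g$ with $\int_{A \cap \{\xi_+ \leq 0\}} g$. Summing the three contributions gives $\int_{[-1,1]\times[0,1]} g = 2\int_A g$, as required.

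Finally, the substitution $(\xi_+, \xi_-) \mapsto (-\xi_+, -\xi_-)$ together with $g(-\xi_+, -\xi_-) = g(\xi_+, \xi_-)$ immediately yields $\int_{[-1,1]\times[-1,0]} g = \int_{[-1,1]\times[0,1]} g$, whence $\int_{[-1,1]^2} g = 4\int_A g$. The main obstacle, modest though it is, lies in the bookkeeping of the second step: one must verify carefully that the two substitutions send $A_2$ and $A_3$ exactly onto the two halves of $A$, so that the inequalities describing the image regions line up correctly. Once these identifications are in place, everything reduces to routine changes of variables.
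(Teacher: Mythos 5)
Your proof is correct and follows exactly the route the paper sketches: the change of variables $(\theta_1,\theta_2)\mapsto(\xi_+,\xi_-)$ whose Jacobian $2(\cos^2\theta_1-\cos^2\theta_2)$ cancels the density from \cref{lemma_integral}, giving the integral over the triangle $\{-\xi_-\le\xi_+\le\xi_-,\ 0\le\xi_-\le 1\}$, followed by the symmetry arguments that unfold this triangle to the rectangle and then to the square. Your bookkeeping of the two reflections (swap and double sign change) sending the side triangles onto the two halves of $A$ is accurate, so the proposal matches the paper's intended argument with the details filled in.
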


We conclude this section with a classical fact. It is well known (see, for example, \cite[Section 5]{ConwayHardinSloane1996}) that the Grassmannian $\Grass{m}{d}$ can be isometrically embedded in $\R^D$, with $D=\binom{d+1}{2}-1$. In the case of the Grassmannian $\Grass{2}{4}$, this means that there exists an isometric embedding $\varphi\from \Grass{2}{4}\to \embeddedGrass{2}{4}$, where $\embeddedGrass{2}{4}\contained \R^9$, of the Grassmannian into an Euclidean space so that the chordal distance is precisely the Euclidean distance in the embedding.

In recent years, the Grassmann manifold has been explored from different perspectives and for multiple purposes; see, for example, \cites{BachocCoulangeonNebe2002,
BachocBannaiCoulangeon2004,
Bachoc2006, 
BachocBenHaimLitsyn2008, 
BregerEhler2017,
BregerEhlerGraf2017,
BregerEhlerGraef2018,
BregerEhlerGraefPeter2017,
EhlerGraefNeumayerSteidl2021,
ConwayHardinSloane1996,
CalderbankHardinRainsShorSloane1999, DhillonHeathStrohmerTropp2008,
AlvarezVizosoBeltranCuevasSantamariaTucekPeters2023,
CuevasAlvarezVizosoBeltranSantamariaTucekPeters2022, CuevasAlvarezVizosoBeltranSantamariaTucekPeters2023}.
However, to the best of our knowledge, no systematic study of the Riesz and logarithmic energies on the Grassmannian has been conducted prior to this work.


\subsection{Energies on the Grassmann manifold}

Among the different criteria used to study the distribution of a set of points on a particular space, one that has garnered significant attention over the past decades involves computing certain repulsive discrete energies associated with the given set (see the monograph \cite{BorodachovHardinSaff2019} for a comprehensive survey on the subject). In this work, we consider pairwise interaction energies given by symmetric and lower semicontinuous kernels $K\from \Grass{2}{4}\times \Grass{2}{4}\to (-\infty,\infty]$ on the Grassmann manifold. For a collection $\omega_N=\set{P_1,\dotsc,P_N}\contained\Grass{2}{4}$, we define the \emph{discrete $K$-energy} of $\omega_N$ as
\begin{equation*}
	\Energy_K(\omega_N)=\sum_{i\neq j} K(P_i,P_j).
\end{equation*}
We denote
\begin{equation*}
    \minEnergy_K(\Grass{2}{4},N)\defined\min \Energy_K(\omega_N),
\end{equation*}
where the minimum is taken over all the collections of $N$ elements on $\Grass{2}{4}$.

Analogous definitions can be given in the continuous setting. For a Borel probability measure $\mu$ supported on the Grassmannian $\Grass{2}{4}$, the \emph{continuous $K$-energy} of $\mu$ is defined as
\begin{equation*}
    \contEnergy_{K}[\mu]=\iint\limits_{\mathclap{\Grass{2}{4}\times\Grass{2}{4}}} K(P,Q)\dd\mu(P)\dd\mu(Q).
\end{equation*}
We define the \emph{Wiener constant} $\Wiener_K(\Grass{2}{4})$ as the smallest such energy, that is,
\begin{equation*}
    \Wiener_K(\Grass{2}{4})\defined\inf\contEnergy_K[\mu],
\end{equation*}
where the infimum is taken over all the probability measures supported on $\Grass{2}{4}$. A measure $\mu^*$ is called an \emph{equilibrium measure} of $\contEnergy_K$ if $\contEnergy_K[\mu^*]=\Wiener_K(\Grass{2}{4})$.

Prominent examples of symmetric and lower semicontinuous kernels are the Riesz and logarithmic kernels, which are the primary focus of this work. For the Grassmannian $\Grass{2}{4}$, we define the Riesz $s$-kernel, for $s>0$, as
\begin{equation}\label{eq:Riesz-kernel}
    K_s(P,Q)=\frac{1}{\chordalGrass(P,Q)^s},
\end{equation}
and the logarithmic kernel as
\begin{equation}\label{eq:log-kernel}
    K_{\log}(P,Q)=-\log\chordalGrass(P,Q).
\end{equation}
We denote $\Energy_{s}\defined \Energy_{K_s}$, $\minEnergy_{s}\defined \minEnergy_{K_s}$, $\contEnergy_{s}\defined \contEnergy_{K_s}$, and $\Wiener_{s}\defined \Wiener_{K_s}$. From now on, following well-established conventions in the literature (see \cite[Definition 2.2.2]{BorodachovHardinSaff2019}), we will refer to the logarithmic energy as the $s=\log$ case of the Riesz energy.

The minimum discrete and continuous energies are related through the following fundamental result. 

\begin{theorem}[{see \cite[Theorem 4.2.2]{BorodachovHardinSaff2019}}]\label{thm:fundamentalThm}
    Let $K$ be a symmetric and lower semicontinuous kernel $K$ on $\Grass{2}{4}$. Then,
    \begin{equation*}
        \lim_{N\tendsto\infty}\frac{\minEnergy_K(\Grass{2}{4},N)}{N^2}=\Wiener_K(\Grass{2}{4}).
    \end{equation*}
    Moreover, if $(\omega_N)_{N=2}^{\infty}$ is any sequence of $N$-point configurations on $\Grass{2}{4}$ satisfying 
    \begin{equation*}
    	\frac{\Energy_K(\omega_N)}{N^2}\tendsto \Wiener_K(\Grass{2}{4})\quad \text{as $N\tendsto\infty$},
    \end{equation*}    
    and $\nu$ is a weak$^*$ limit point of the sequence of normalized counting measures 
    \begin{equation*}
    	\nu(\omega_N)\defined \frac{1}{N}\sum_{x\in\omega_{N}}\delta_x,
    \end{equation*}
    where $\delta_x$ is the Dirac measure centered at $x$, then $\nu$ is an equilibrium measure of $\contEnergy_K$. In particular, this holds for any sequence $(\omega_N^*)_{N=2}^{\infty}$ of optimal $K$-energy $N$-point configurations on $\Grass{2}{4}$.
\end{theorem}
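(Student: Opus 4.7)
The plan is to establish the asymptotic identity via matching upper and lower bounds on $\minEnergy_K(\Grass{2}{4},N)/N^2$, and to deduce the statement about weak$^*$ limit points from the lower-bound argument. The main obstacle is the singular nature of the Riesz and logarithmic kernels, which take the value $+\infty$ on the diagonal and therefore prevent us from plugging the normalized counting measure $\nu(\omega_N)$ directly into $\contEnergy_K$; the standard workaround is to truncate the kernel and pass to the limit.

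For the upper bound I would use a first-moment argument. Fix a Borel probability measure $\mu$ on $\Grass{2}{4}$ with $\contEnergy_K[\mu]<\infty$ and sample $N$ independent $\mu$-distributed points $P_1,\dotsc,P_N$. By linearity and independence, $\mathbb{E}[\Energy_K(\set{P_1,\dotsc,P_N})]=N(N-1)\contEnergy_K[\mu]$, so some deterministic configuration realizes an energy no larger than this expectation. Dividing by $N^2$ and infimizing over admissible $\mu$ gives
\begin{equation*}
\limsup_{N\tendsto\infty}\frac{\minEnergy_K(\Grass{2}{4},N)}{N^2}\le \Wiener_K(\Grass{2}{4}).
\end{equation*}

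For the lower bound, for each $M>0$ set $K_M\defined \min(K,M)$; this is bounded and lower semicontinuous on the compact product $\Grass{2}{4}\times\Grass{2}{4}$, so the functional $\mu\mapsto \contEnergy_{K_M}[\mu]$ is weak$^*$ lower semicontinuous on the weak$^*$-compact space of Borel probability measures on $\Grass{2}{4}$. Given any sequence $(\omega_N)$ of $N$-point configurations with $\Energy_K(\omega_N)/N^2\tendsto L$, I would pass to a subsequence along which the counting measures $\nu_{N_k}=\nu(\omega_{N_k})$ converge weak$^*$ to a probability measure $\nu$. Splitting $\contEnergy_{K_M}[\nu_{N_k}]$ into its off-diagonal part (bounded above by $\Energy_K(\omega_{N_k})/N_k^2$ because $K_M\le K$) and its diagonal part (bounded by $M/N_k$), weak$^*$ lower semicontinuity yields $\contEnergy_{K_M}[\nu]\le L$. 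Monotone convergence as $M\tendsto\infty$, using $K_M\uparrow K$ pointwise, then gives $\contEnergy_K[\nu]\le L$. Since $\contEnergy_K[\nu]\ge \Wiener_K(\Grass{2}{4})$ by definition of the Wiener constant, we conclude $\Wiener_K(\Grass{2}{4})\le L$.

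Applying the lower-bound argument to optimal configurations (where $L=\liminf_N \minEnergy_K(\Grass{2}{4},N)/N^2$) yields the matching lower bound $\Wiener_K(\Grass{2}{4})\le \liminf_N \minEnergy_K(\Grass{2}{4},N)/N^2$, proving the first assertion. For the second assertion, when $(\omega_N)$ is asymptotically optimal we have $L=\Wiener_K(\Grass{2}{4})$, so the chain of inequalities collapses to $\contEnergy_K[\nu]=\Wiener_K(\Grass{2}{4})$ for every weak$^*$ limit point $\nu$, certifying $\nu$ as an equilibrium measure of $\contEnergy_K$. The delicate points are the weak$^*$ lower semicontinuity of $\contEnergy_{K_M}$ (which follows from approximating $K_M$ pointwise and monotonically from below by continuous functions on the compact product, an exercise in Urysohn-style constructions) and the control of the diagonal contribution, which is handled cleanly by fixing $M$ first and letting $N\tendsto\infty$ before sending $M\tendsto\infty$.
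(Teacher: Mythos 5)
Since the paper offers no proof of this statement (it is quoted from \cite[Theorem 4.2.2]{BorodachovHardinSaff2019}), the comparison is with the standard argument given there, and your proposal reproduces it correctly: the upper bound by averaging the discrete energy over $N$ independent $\mu$-distributed points (equivalently, integrating against the product measure), and the lower bound together with the equilibrium-measure statement by truncating $K$ at level $M$, using weak$^*$ lower semicontinuity of $\contEnergy_{K_M}$ on the weak$^*$-compact set of probability measures on the compact space $\Grass{2}{4}$, controlling the diagonal contribution by $M/N$, and letting $M\tendsto\infty$ by monotone convergence. Your treatment of the singular diagonal and of the case $\Wiener_K(\Grass{2}{4})=\infty$ is sound; the only minor tidy-up is to extract first a subsequence realizing $\liminf_{N}\minEnergy_K(\Grass{2}{4},N)/N^2$ and only then a weak$^*$-convergent subsequence of the counting measures, exactly as in the cited proof.
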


In the case of the Riesz energy, it follows from \cite[Theorem 4.3.1]{BorodachovHardinSaff2019}) that $\Wiener_s(\Grass{2}{4})=\infty$ for $s\geq 4=\dim(\Grass{2}{4})$. In other words, we have $\contEnergy_s[\mu]=\infty$ for every Borel probability measure $\mu$ supported on $\Grass{2}{4}$, and hence every such measure is an equilibrium measure. In this case, which is called \emph{hypersingular}, minimizers of the Riesz $s$-energy are known to be asymptotically uniformly distributed, that is, the corresponding sequence of normalized counting measures converges (in the weak$^*$ sense) to the uniform measure $\sigma$ (see \cite[Theorem 8.5.2]{BorodachovHardinSaff2019} for the case $s>4$ and \cite[Theorem 9.5.4]{BorodachovHardinSaff2019} for the case $s=4$). Moreover, for $s=4$ the leading term of the minimal discrete $4$-energy is known (see \cite[Theorem 9.5.4]{BorodachovHardinSaff2019}):
\begin{equation}\label{eq:leading-term-hypersingular}
    \lim_{N\tendsto\infty}\frac{\minEnergy_{4}(\Grass{2}{4},N)}{N^2\log{N}}=\frac{\beta_4}{\vol(\Grass{2}{4})}=\frac{\pi^2/2}{2\pi^2}=\frac{1}{4},
\end{equation}
where $\beta_4$ is the volume of the unit ball in $\R^4$ and $\vol(\Grass{2}{4})$ is the volume of the Grassmannian $\Grass{2}{4}$ (see \cite[Eq. (1.4.11)]{Chikuse2003}).

In contrast, from \cite[Theorem 4.3.3]{BorodachovHardinSaff2019} we have that $\Wiener_s(\Grass{2}{4})<\infty$ for $0<s<4$ and $s=\log$. This case is called \emph{singular} and it is the subject of classical potential theory. In this range of values of $s$, however, the uniformity of the minimizers is not guaranteed. From \cref{thm:fundamentalThm}, we know that the minimal discrete $s$-energy in this case is of the form
\begin{equation*}
    \minEnergy_s(\Grass{2}{4},N)=\Wiener_s(\Grass{2}{4})N^2+o(N^2).
\end{equation*}
Improving our knowledge of the $o(N^2)$ term is one of the fundamental goals of this work.

If we denote by $\M$ a compact, connected two-point homogeneous space (that is, a sphere or a projective space), it has been proved in \cite{AndersonDostertGrabnerMatzkeStepaniuk2023} that the continuous Riesz energy is uniquely minimized by the uniform measure in $\M$ for $0<s<\dim(\M)$ and $s=\log$. Additionally, the authors prove that the next-order term is of the order of $N^{1+s/\dim(\M)}$ for $s>0$ and of the order of $N\log N$ for $s=\log$. In this work, we establish similar results for the Grassmannian $\Grass{2}{4}$ (see \cref{sec:main-results}).

In general, finding collections of points that minimize the Riesz and logarithmic energies is extremely challenging even in simple spaces; see, for instance, the survey \cite{BrauchartGrabner2015} for the case of the sphere. In fact, in most cases our knowledge of the minimal energy is incomplete. A related open problem consists then in giving the asymptotic expansion of these minimal energies in terms of the number of points as precisely as possible. In this sense, obtaining precise asymptotic upper and lower bounds for the minimal energies is essential. Regarding the upper bounds, a common strategy is to study the energy of specific configurations in the space of interest, as this energy will always be greater than or equal to the minimal one. For deterministic sets of points, computing the asymptotic expansion of the energy has proved to be a very difficult task. An alternative approach involves studying random configurations of points. In this context, a newly developed technique that has yielded excellent results is the use of \emph{determinantal point processes}; see, for example, \cites{AlishahiZamani2015, BeltranMarzoOrtegaCerda2016, MarzoOrtegaCerda2017, BeltranEtayo2019, BeltranFerizovic2020, Hirao2021, AndersonDostertGrabnerMatzkeStepaniuk2023}.

\subsection{Determinantal point processes}

Determinantal point processes are random point processes closely related to random matrix theory.
Here, we provide a brief introduction to the topic, covering the basic concepts that we will use. For a more comprehensive reference,
we recommend \cite[Chapter 4]{HoughKrishnapurPeresVirag2009}.

Let $\Lambda$ be a locally compact, Polish topological space with a Radon measure $\mu$. A simple point process $\mathfrak{X}$ of $n$ points in $\Lambda$ is a random variable taking values in the space of $n$-point subsets of $\Lambda$.

The joint intensities (if any exists, as only some point processes have them) are functions $\rho_{k}\from \Lambda^k\to[0,\infty)$, with $k\geq1$, such that for any family of mutually disjoint subsets $D_1,\dotsc,D_k$ of $\Lambda$ we have
\begin{equation}
  \expectation[\Big]{x\sim\mathfrak{X}}{\prod_{i=1}^k \card(x\cap D_i)}
  =
  \int_{\prod D_i}\rho_k(x_1,\dotsc,x_k)\dd\mu(x_1)\dotsb\dd\mu(x_k).
\end{equation} 
By $x\sim\mathfrak{X}$ we mean that $x=\set{x_1,\dotsc,x_n}$ is a subset of $n$ elements of $\Lambda$ sampled from the point process $\mathfrak{X}$.

From \cite[Formula (1.2.2)]{HoughKrishnapurPeresVirag2009}, for any measurable function $\phi\from \Lambda^{k} \to [0, \infty)$ the following equality holds:
\begin{equation}\label{eq:joint-intensities}
	\expectation[\Big]{x\distributed\mathfrak{X}}{\sum_{\substack{i_1,\dotsc,i_k\\ \text{distinct}}}\!\!\phi(x_{i_1},\dotsc,x_{i_k})}=\int\limits_{\Lambda^k}\phi(y_1,\dotsc,y_k)\rho_k(y_1,\dotsc,y_k)\dd\mu(y_1)\dotsb\dd\mu(y_k).
\end{equation}
If there exists a measurable function $\calK\from \Lambda\times\Lambda\to\C$ such that these joint intensity functions can be written as 
\begin{equation}
    \rho_{k}(x_{1},\dotsc,x_{k}) = \det(\calK(x_{i},x_{j}))_{1\leq i,j\leq k},
\end{equation}
then we say that $\mathfrak{X}$ is a \gls{dpp} with kernel $\calK$. A particularly suitable collection of such processes is obtained by choosing $\calK$ as the reproducing kernel of an $n$-dimensional subspace $H$ of the Hilbert space $\LL{2}(\Lambda,\C)$. Recall that the reproducing kernel of $H$ is the only continuous, hermitian, positive definite function $\calK_H\from \Lambda\times\Lambda\to\C$ such that $\calK_H(\cdot,x)\in H$ and
\begin{equation}
    f(x)=\innerprod{f}{\calK_H(\cdot,x)}=\int_{\Lambda}f(y)\calK_H(x,y)\dd y,\qquad \forall x\in\Lambda,\ \forall f\in H.
\end{equation}
Given any orthonormal basis $\varphi_{1},\dotsc,\varphi_{n}$ of $H$, we have
\begin{equation*}
     \calK_H(x,y)=\sum_{i=1}^n\varphi_i(x)\overline{\varphi_i(y)},
\end{equation*}
and we say that $\calK_H$ is a projection kernel of trace $n$.	

An existence theorem for determinantal point processes was independently proved by Odile Macchi and Alexander Soshnikov (see \cites{Macchi1975, Soshnikov2000}). The full version of the theorem can be found in \cite[Theorem 4.5.5]{HoughKrishnapurPeresVirag2009}. Here we present a weaker result (this is all we need in this paper) that is a direct consequence of the Macchi--Soshnikov Theorem.

\begin{proposition}\label{prop:expectation-dpp}		
	Let $\Lambda$ be a locally compact, Polish topological space with a Radon measure $\mu$ and let $H \subset \LL{2}(\Lambda,\C)$ have dimension $n$. Let $\calK_H$ be the reproducing kernel of $H$. Then, there exists a point process $\mathfrak{X}_H$ in $\Lambda$ of $n$ points with associated joint intensity functions
	\begin{equation*}
		\rho_{k}(x_{1},\dotsc,x_{k}) = \det(\calK_H(x_{i}, x_{j}))_{1\leq i,j\leq k}.
	\end{equation*}
	In particular, for any measurable function $f\from \Lambda \times \Lambda \to [0,\infty)$ we have
	\begin{equation*}
		\expectation[\Big]{x \sim \mathfrak{X}_H}{
		\displaystyle\sum_{i \neq j} f(x_{i}, x_{j})}  
		= \iint\limits_{\Lambda\times\Lambda}
		\left( \calK_H(x,x)\calK_H(y,y) - \abs{\calK_H(x,y)}^{2} \right) f(x,y)\dd\mu(x)\dd\mu(y).
	\end{equation*}
	We will call $\mathfrak{X}_H$ a projection determinantal point process with kernel $\calK_H$.
\end{proposition}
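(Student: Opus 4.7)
The plan is to deduce the proposition from the Macchi--Soshnikov theorem cited just before the statement, so the work is essentially one of verifying hypotheses and then evaluating a $2\times 2$ determinant. First I would check that the reproducing kernel $\calK_H$ of a finite-dimensional subspace $H\containedeq \LL{2}(\Lambda,\C)$ fits the framework of that theorem. If $\varphi_1,\dotsc,\varphi_n$ is any orthonormal basis of $H$, then the expansion
\begin{equation*}
\calK_H(x,y)=\sum_{i=1}^n\varphi_i(x)\overline{\varphi_i(y)}
\end{equation*}
is manifestly Hermitian, and the associated integral operator $T_{\calK_H}$ on $\LL{2}(\Lambda,\C)$ sends $f$ to $\sum_{i=1}^n\innerprod{f}{\varphi_i}\varphi_i$; that is, it is the orthogonal projection onto $H$. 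Hence its spectrum is contained in $\set{0,1}$ and it is trace-class with trace $n$, which are precisely the conditions required by \cite[Theorem 4.5.5]{HoughKrishnapurPeresVirag2009}.

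Applying that theorem then furnishes a determinantal point process $\mathfrak{X}_H$ on $\Lambda$ whose joint intensities satisfy $\rho_k(x_1,\dotsc,x_k)=\det(\calK_H(x_i,x_j))_{1\leq i,j\leq k}$, and because the underlying operator is a projection of rank $n$, the process is almost surely supported on configurations of exactly $n$ points. For the expectation formula I would specialise the general identity \eqref{eq:joint-intensities} to the case $k=2$, $\phi=f$, which gives
\begin{equation*}
\expectation[\Big]{x\distributed\mathfrak{X}_H}{\sum_{i\neq j}f(x_i,x_j)}=\iint\limits_{\Lambda\times\Lambda} f(x,y)\det\begin{pmatrix}\calK_H(x,x)&\calK_H(x,y)\\ \calK_H(y,x)&\calK_H(y,y)\end{pmatrix}\dd\mu(x)\dd\mu(y).
\end{equation*}
Using the Hermitian symmetry $\calK_H(y,x)=\overline{\calK_H(x,y)}$, the $2\times 2$ determinant reduces to $\calK_H(x,x)\calK_H(y,y)-\abs{\calK_H(x,y)}^2$, yielding precisely the formula in the statement.

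There is no real obstacle here: the proposition is essentially a repackaging of Macchi--Soshnikov in the finite-rank reproducing-kernel setting, together with a one-line evaluation of a $2\times 2$ determinant. The only sanity check is that the reproducing kernel really is the integral kernel of the orthogonal projection onto $H$, and this is immediate from the reproducing property $f(x)=\innerprod{f}{\calK_H(\cdot,x)}$ displayed just before the proposition; local trace-class conditions are automatic since $H$ is finite-dimensional.
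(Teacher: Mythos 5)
Your proposal is correct and follows essentially the same route as the paper, which presents this proposition as a direct consequence of the Macchi--Soshnikov theorem (noting that the reproducing kernel of a finite-dimensional subspace is the kernel of a rank-$n$ orthogonal projection) and obtains the expectation formula from \eqref{eq:joint-intensities} with $k=2$ together with the evaluation of the $2\times 2$ determinant via Hermitian symmetry. No gaps.
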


\begin{remark}\label{remark:KH-xx}
	Under the hypotheses of Proposition \ref{prop:expectation-dpp}, from \eqref{eq:joint-intensities} with $\phi\equiv 1$ and $k=1$ we have
	\begin{equation*}
	    n=\expectation{x\sim\mathfrak{X}_H}{n} 
	    = \int_{\Lambda} \calK_H(x,x)\dd\mu(x).
	\end{equation*}
	In particular, if $\calK_H(x,x)$ is constant and $\mu$ is a probability measure, then we must have $\calK_H(x,x)=n$.
\end{remark}

\subsection{Decomposition of $\LL{2}(\Grass{2}{4})$ and generalized Jacobi polynomials}\label{sec:L2-generalized-Jacobi}

In this section, we present some classical results about the Hilbert space $\LL{2}(\Grass{2}{4})$ of square-integrable functions on the Grassmannian $\Grass{2}{4}$:
\begin{equation*}
	\LL{2}(\Grass{2}{4})=\set[\Big]{f\from \Grass{2}{4}\to \C\st \int_{\Grass{2}{4}}\abs{f(P)}^2\dd\sigma(P)<\infty}.
\end{equation*}
We introduce the following notation:
\begin{align*}
	\Partitions&=\set{\tau =(\tau_1,\tau_2)\in \N^2\st \tau_1\geq \tau_2\geq  0},\\
	\Partitions^*&=\set{\tau\in \Partitions \st \tau\neq (0,0)}.
\end{align*}
We will refer to the elements of $\Partitions$ as \emph{integer partitions} or simply \emph{partitions}. The \emph{degree} of a partition $\tau = (\tau_1,\tau_2)$ is $\DegreePart{\tau}\defined\tau_1+\tau_2$. Let $\Laplacian$ be the Laplace--Beltrami operator on the Grassmannian $\Grass{2}{4}$. We follow the convention of choosing the sign of the operator so that the eigenvalues of $\Laplacian$ are nonnegative. It is known that the eigenvalues and eigenfunctions of $\Laplacian$ are indexed by partitions $\tau\in\Partitions$. This can be derived using tools from representation theory; see, for example, \cite{JamesConstantine1974} or \cite[Section 3]{BachocCoulangeonNebe2002} for a more in-depth discussion of this topic. As in the case of any compact manifold, the space $\LL{2}(\Grass{2}{4})$ can be decomposed as the direct sum of the eigenspaces of the Laplace--Beltrami operator, that is,
\begin{equation*}
	\LL{2}(\Grass{2}{4})=\overline{\bigdirectsum_{\tau\in\Partitions}H_{\tau}},\quad H_{\tau}\orthogonal H_{\tau'},\quad \tau\neq \tau',
\end{equation*}
where $H_{\tau}$ is the eigenspace of the Laplace--Beltrami operator associated with the eigenvalue $\lambda_{\tau}$. The dimension of the space $H_{\tau}$ is
\begin{equation}\label{eq:d-tau}
	d_{\tau}\defined \dim(H_{\tau})=(2-\delta_{0,\tau_2})(2(\tau_1+\tau_2)+1)(2(\tau_1-\tau_2)+1),
\end{equation}
where $\delta_{0,\tau_2}$ is the Kronecker delta (see, for example, \cite[Eq. (2.3)]{Davis1999b}). The reproducing kernel of the space $H_{\tau}$ is given by
\begin{equation}\label{eq:K-tau-1}
	\calK_{\tau}(P,Q)=d_{\tau}\JacobiGrass{\tau}(y_1(P,Q),y_2(P,Q)), \qquad P,Q\in\Grass{2}{4},
\end{equation}
where $y_j(P,Q)=\cos^2(\theta_j(P,Q))$, for $j=1,2$, and the $\JacobiGrass{\tau}$ are \emph{generalized Jacobi polynomials}. These polynomials were introduced by James and Constantine \cite{JamesConstantine1974} and, in the context of representation theory, they are the \emph{zonal spherical functions} of the Grassmannian; see \cites{JamesConstantine1974,Davis1999a,Davis1999b}. In the case of $\Grass{2}{4}$, the polynomials $\JacobiGrass{\tau}\from[0,1]^2\to\R$ are symmetric polynomials of degree $\DegreePart{\tau}$ in the variables $y_j$. They form a complete orthogonal system in $[0,1]^2$ with respect to the measure
\begin{equation}\label{eq:measure-y}
	\dd \mu(y_1,y_2)=\frac{1}{2} y_1^{-1/2}(1-y_1)^{-1/2}y_2^{-1/2}(1-y_2)^{-1/2}\abs{y_1-y_2}\dd y_1 \dd y_2.
\end{equation}
These polynomials satisfy $\JacobiGrass{(0,0)}\identically 1$ and they are normalized so that $\JacobiGrass{\tau}(1,1)=1$. Using the substitution given by the variables $\xi_+$ and $\xi_{-}$ introduced in \eqref{eq:xi_+-}, the measure in \eqref{eq:measure-y} is transformed to the following measure in the region $-\xi_{-}\leq \xi_{+}\leq \xi_{-}$, $0\leq \xi_{-}\leq 1$:
\begin{equation}\label{eq:measure-xi}
	\dd\eta(\xi_{+},\xi_{-})=\dd\xi_{+}\dd\xi_{-}.
\end{equation}
Using simple symmetry arguments, it can be checked that the polynomials $\JacobiGrass{\tau}$ are orthogonal with respect to the measure \eqref{eq:measure-xi} in $[-1,1]^2$. We know from \cite[Eq. (4.13)]{Davis1999b} that the polynomials $\JacobiGrass{\tau}$, when written in terms of the variables $\xi_{+}$ and $\xi_{-}$, admit the following simple expression in terms of classical Legendre polynomials (see \cref{appendix:orthogonal-polynomials} for more information about these polynomials):
\begin{equation}\label{eq:JacobiGrass-xi}
	\JacobiGrass{\tau}(\xi_{+},\xi_{-})=\frac{1}{2}\Bigl(\Legendre{\tau_1+\tau_2}(\xi_{+})\Legendre{\tau_1-\tau_2}(\xi_{-})+\Legendre{\tau_1+\tau_2}(\xi_{-})\Legendre{\tau_1-\tau_2}(\xi_{+})\Bigr),
\end{equation}
where $\Legendre{\tau_1+\tau_2}$ and $\Legendre{\tau_1-\tau_2}$ are Legendre polynomials of degree $\tau_1+\tau_2$ and $\tau_1-\tau_2$, respectively. Then, the reproducing kernel $\calK_{\tau}$ in \eqref{eq:K-tau-1} can be expressed in the variables $\xi_{+}$ and $\xi_{-}$ as
\begin{align}
	\calK_{\tau}(P,Q)&=\calK_{\tau}(\xi_{+}(P,Q),\xi_{-}(P,Q))=d_{\tau}\JacobiGrass{\tau}(\xi_{+},\xi_{-})\notag\\
	&=\frac{d_{\tau}}{2}\Bigl(\Legendre{\tau_1+\tau_2}(\xi_{+})\Legendre{\tau_1-\tau_2}(\xi_{-})+\Legendre{\tau_1+\tau_2}(\xi_{-})\Legendre{\tau_1-\tau_2}(\xi_{+})\Bigr), \qquad P,Q\in\Grass{2}{4}.\label{eq:K-tau-2}
\end{align}
See also \cite[Eq. (F.1)]{DickEhlerGraefKrattenthaler2023}. 

Given a function $h\in\LL{2}([-1,1]^2,\dd\eta)$, we can consider its expansion in terms of generalized Jacobi polynomials:
\begin{equation*}
	h(x,y)=\sum_{\tau\in\Partitions}\hat{h}(\tau)\JacobiGrass{\tau}(x,y),
\end{equation*}
where the convergence is \emph{a priori} in the $\LL{2}$ sense. The \emph{generalized Fourier--Jacobi coefficients} $\hat{h}(\tau)$ are then given by
\begin{equation}\label{eq:generalizedJacobiCoefficients}
	\hat{h}(\tau)=\frac{\innerprod{h}{\JacobiGrass{\tau}}_{\eta}}{\norm{\JacobiGrass{\tau}}_{\eta}^2}=\frac{1}{\norm{\JacobiGrass{\tau}}_{\eta}^2}\int_{-1}^{1}\int_{-1}^{1}h(x,y)\JacobiGrass{\tau}(x,y)\dd x\dd y,
\end{equation}
where
\begin{equation*}
	\norm{\JacobiGrass{\tau}}_{\eta}^2=\innerprod{\JacobiGrass{\tau}}{\JacobiGrass{\tau}}_{\eta}=\int_{-1}^{1}\int_{-1}^{1}\JacobiGrass{\tau}(x,y)^2\dd x\dd y.
\end{equation*}
In particular, since $\JacobiGrass{(0,0)}\identically 1$, we have
\begin{equation}\label{eq:generalizedJacobiCoefficients-0-0}
	\hat{h}(0,0)=\frac{1}{4}\int_{-1}^{1}\int_{-1}^{1}h(x,y)\dd x\dd y.
\end{equation}
We conclude this section with the classical \emph{addition formula} for zonal polynomials. Given any orthonormal basis $\set{e_1,\dotsc,e_{d_{\tau}}}$ of the subspace $H_{\tau}$ of $\LL{2}(\Grass{2}{4})$, we have
\begin{equation}\label{eq:addition-formula-G24}
	\JacobiGrass{\tau}(\xi_{+}(P,Q),\xi_{-}(P,Q))=\frac{1}{d_{\tau}}\sum_{i=1}^{d_{\tau}} e_i(P)\conj{e_i(Q)},
\end{equation}
see \cite[Lemma 4]{Roy2010}. From this, the positive definiteness of the generalized Jacobi polynomials follows (see \cite[Proposition 2.1]{Bachoc2006}).

\subsection{Notation}

The following notations will be used throughout this work. Let $f$ and $g$ be two real-valued functions.

\begin{itemize}

    \item $f(x)\lesssim g(x)$ means that there exists a constant $C>0$ such that $f(x)\leq Cg(x)$ for all $x$.
    
     \item We say that $f(x)=O(g(x))$ as $x\tendsto\infty$ if there exists a positive constant $C$ and $x_0\in \R$ such that $\abs{f(x)}\leq C\abs{g(x)}$ for all $x>x_0$. If $g(x)\neq 0$, we can express this condition as
     \begin{equation*}
         \limsup_{x\tendsto\infty} \abs[\bigg]{\frac{f(x)}{g(x)}}\leq C.
     \end{equation*}
     
    \item The expression $f(x)\asymp g(x)$ as $x\tendsto\infty$ means that $f(x)=O(g(x))$ and $g(x)=O(f(x))$, that is, $f(x)$ and $g(x)$ are asymptotically of the same order.

    \item We say that $f(x)=o(g(x))$ as $x\tendsto\infty$ if
    \begin{equation*}
        \lim_{x\tendsto\infty}\abs[\bigg]{\frac{f(x)}{g(x)}}=0.
    \end{equation*}
    
    \item Finally, $f(x)\sim g(x)$ as $x\tendsto\infty$  means that 
    \begin{equation*}
        \lim_{x\tendsto\infty}\frac{f(x)}{g(x)}=1.
    \end{equation*}
    
\end{itemize}

We have defined this asymptotic notation for $x\tendsto\infty$, but one can substitute $\infty$ by any real number $a$ and the definitions hold in all cases considering neighbourhoods of $a$ when necessary.

\section{Main results}\label{sec:main-results}

Our first main result is analogous to similar results proved for the case of two-point homogeneous spaces, namely that the continuous Riesz and logarithmic energies are uniquely minimized by the uniform measure. In this paper, we establish this result for the Grassmannian $\Grass{2}{4}$. 

\begin{theorem}\label{thm:minimizers-Riesz-G24}
	The logarithmic energy $\contEnergy_{\log}$ and the Riesz $s$-energy $\contEnergy_{s}$ on $\Grass{2}{4}$, for $0<s<4$, are uniquely minimized by the uniform measure $\sigma$, with
	\begin{align*}
		\Wiener_s(\Grass{2}{4})&=\contEnergy_s[\sigma]=\hypergeom{3}{2}\biggl(\frac{1}{2},\frac{s}{4}+\frac{1}{2},\frac{s}{4}; \frac{3}{2},\frac{3}{2};1\biggr),\\
		\Wiener_{\log}(\Grass{2}{4})&=\contEnergy_{\log}[\sigma]=1-\frac{\pi^2}{16}-\frac{\log{2}}{2},
	\end{align*}
	where $\hypergeom{3}{2}$ is a generalized hypergeometric function \embparen{see \cref{appendix:orthogonal-polynomials}}. Moreover, if $(\omega_N)_{N=2}^{\infty}$ is a sequence of configurations such that
	\begin{equation*}
		\lim_{N\tendsto\infty}\frac{\Energy_{s}(\omega_{N})}{N^2}=\Wiener_s(\Grass{2}{4}),
	\end{equation*}
    where $0<s<4$ or $s=\log$, then the sequence of normalized counting measures
	\begin{equation*}
		\nu(\omega_{N})=\frac{1}{N}\sum_{x\in\omega_N}\delta_{x}
	\end{equation*}
	converges weakly$^*$ to $\sigma$. In particular, this holds for a sequence $(\omega_N^*)_{N=2}^{\infty}$ of minimizers for the discrete problem. 
\end{theorem}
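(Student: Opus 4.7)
My plan is to leverage the harmonic decomposition of $\LL{2}(\Grass{2}{4})$ developed in \cref{sec:L2-generalized-Jacobi}. By \eqref{eq:chordal-distance-xi}, both the Riesz and logarithmic kernels are functions of $\xi_{+}\xi_{-}$ alone, so they admit an expansion $k(\xi_{+},\xi_{-})=\sum_{\tau\in\Partitions}\hat{k}(\tau)\JacobiGrass{\tau}(\xi_{+},\xi_{-})$ in the generalized Jacobi basis. Inserting this expansion into $\contEnergy_K[\mu]$, applying the addition formula \eqref{eq:addition-formula-G24}, and interchanging sum and integrals produces the key identity
\begin{equation*}
    \contEnergy_K[\mu]=\sum_{\tau\in\Partitions}\frac{\hat{k}(\tau)}{d_{\tau}}\sum_{i=1}^{d_{\tau}}\abs[\bigg]{\int_{\Grass{2}{4}}e_i\dd\mu}^2,
\end{equation*}
valid for every Borel probability measure $\mu$. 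When $\mu=\sigma$ every $\tau\neq(0,0)$ summand vanishes by orthogonality of $H_\tau$ to constants, leaving $\contEnergy_K[\sigma]=\hat{k}(0,0)$. Hence, proving $\hat{k}(\tau)>0$ for every $\tau\in\Partitions^*$ will simultaneously give minimality and uniqueness of $\sigma$.

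For the strict positivity I would use the factorization \eqref{eq:JacobiGrass-xi} together with the Taylor expansions $(1-\xi_{+}\xi_{-})^{-s/2}=\sum_{k\geq 0}\frac{(s/2)_k}{k!}(\xi_{+}\xi_{-})^k$ and $-\frac{1}{2}\log(1-\xi_{+}\xi_{-})=\frac{1}{2}\sum_{k\geq 1}k^{-1}(\xi_{+}\xi_{-})^k$. Exploiting the $(\xi_{+}\leftrightarrow\xi_{-})$-symmetry of $k$ reduces each Fourier--Jacobi coefficient to the form
\begin{equation*}
    \hat{k}(\tau)=\frac{1}{\norm{\JacobiGrass{\tau}}_{\eta}^2}\sum_{k}c_k\,A_{\tau_1+\tau_2,k}\,A_{\tau_1-\tau_2,k},
\end{equation*}
with $c_k>0$ and $A_{n,k}\defined\int_{-1}^{1}t^k\Legendre{n}(t)\dd t$. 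A classical consequence of Rodrigues' formula is that $A_{n,k}$ vanishes unless $k\geq n$ and $k-n$ is even, in which case $A_{n,k}>0$. Since $(\tau_1+\tau_2)$ and $(\tau_1-\tau_2)$ share parity, infinitely many summands are strictly positive whenever $\tau\in\Partitions^*$, giving the desired bound.

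The explicit Wiener constants come from $\hat{k}(0,0)=\tfrac{1}{4}\int_{-1}^{1}\int_{-1}^{1}k(\xi_{+},\xi_{-})\dd\xi_{+}\dd\xi_{-}$, as in \eqref{eq:generalizedJacobiCoefficients-0-0}. Integrating the power series term by term, only even powers of $\xi_{\pm}$ survive and each pair contributes $4/(2m+1)^2$. For the Riesz case, the duplication identities $(s/2)_{2m}=4^m(s/4)_m(s/4+1/2)_m$ and $(2m)!=4^m m!(1/2)_m$, combined with $(3/2)_m/(1/2)_m=2m+1$, collapse the resulting series exactly into $\hypergeom{3}{2}\!\left(1/2,s/4+1/2,s/4;3/2,3/2;1\right)$. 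For the logarithmic case, the series $\tfrac{1}{4}\sum_{m\geq 1}\tfrac{1}{m(2m+1)^2}$ is resolved via the partial fraction $\tfrac{1}{m(2m+1)^2}=\tfrac{1}{m}-\tfrac{2}{2m+1}-\tfrac{2}{(2m+1)^2}$ together with $\sum_{m\geq 1}(\tfrac{1}{2m}-\tfrac{1}{2m+1})=1-\log 2$ and $\sum_{m\geq 1}(2m+1)^{-2}=\pi^2/8-1$, yielding exactly $1-\pi^2/16-(\log 2)/2$. The weak$^*$ convergence of any asymptotically optimal sequence to $\sigma$ then follows immediately from \cref{thm:fundamentalThm} combined with the uniqueness of the equilibrium measure, using compactness of the space of probability measures on the compact Grassmannian $\Grass{2}{4}$.

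The main obstacle I anticipate is justifying the Fourier--Jacobi expansion of the kernel when it fails to belong to $\LL{2}([-1,1]^2,\dd\eta)$, which happens for the Riesz kernel once $s\geq 2$. I would bypass this by working with the truncated kernels $K^{(\varepsilon)}\defined\min(K,1/\varepsilon)$, which are bounded and continuous (hence their Fourier--Jacobi series converges in $\LL{2}$), applying the positivity argument above to obtain $\contEnergy_{K^{(\varepsilon)}}[\mu]\geq \contEnergy_{K^{(\varepsilon)}}[\sigma]$, and passing to the limit $\varepsilon\downarrow 0$ via monotone convergence since $K^{(\varepsilon)}\uparrow K$ pointwise. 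Uniqueness is preserved under this limit because equality would force, for each $\varepsilon>0$ small enough that $\widehat{K^{(\varepsilon)}}(\tau)>0$, the vanishing of every Fourier coefficient of $\mu$ lying in $H_\tau$, so $\mu$ and $\sigma$ must have identical Fourier--Jacobi coefficients and hence coincide.
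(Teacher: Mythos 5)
Your strategy---expanding the kernel in generalized Jacobi polynomials, using the addition formula \eqref{eq:addition-formula-G24} to write $\contEnergy_K[\mu]$ as a sum of nonnegative terms weighted by the coefficients $\hat{k}(\tau)$, proving $\hat{k}(\tau)>0$ for $\tau\in\Partitions^*$ via the Legendre moments $\int_{-1}^{1}t^k\Legendre{n}(t)\dd t$, and reading off $\Wiener_K=\hat k(0,0)$---is a legitimate alternative to the paper's argument, and your evaluations of the constants are correct (the Pochhammer duplication manipulation does collapse the series to $\hypergeom{3}{2}(1/2,s/4+1/2,s/4;3/2,3/2;1)$, and the partial-fraction computation gives $1-\pi^2/16-(\log 2)/2$). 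The paper takes a shorter route for minimality and uniqueness: it embeds $\Grass{2}{4}$ isometrically into $\R^9$ so that the chordal distance is the Euclidean distance, quotes the Euclidean conditional strict positive definiteness results recalled in \cref{thm:Riesz-log-cspd-R^p}, and then uses invariance (\cref{prop:cspd-implies-uniform-measure}) to identify the unique minimizer with $\sigma$; the constants are computed from the same integral $\tfrac14\iint(1-xy)^{-s/2}\dd x\dd y$ that you obtain as $\hat k(0,0)$.

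There is, however, a genuine gap in the step where you handle the singularity of the kernel. You regularize by $K^{(\varepsilon)}=\min(K,1/\varepsilon)$ and assert that ``the positivity argument above'' yields $\contEnergy_{K^{(\varepsilon)}}[\mu]\geq\contEnergy_{K^{(\varepsilon)}}[\sigma]$, and later that $\widehat{K^{(\varepsilon)}}(\tau)>0$. But that positivity argument rests entirely on the explicit expansion of $(1-\xi_{+}\xi_{-})^{-s/2}$ (resp. of the logarithm) as a power series in $\xi_{+}\xi_{-}$ with positive coefficients; the truncated kernel $\min(K,1/\varepsilon)$ admits no such expansion, and taking a pointwise minimum with a constant does not preserve positive definiteness or nonnegativity of Fourier--Jacobi coefficients, so both the inequality and the strict positivity you invoke for $K^{(\varepsilon)}$ are unsupported. (The same issue already affects the interchange of sum and integral against an arbitrary, possibly singular, $\mu\otimes\mu$ even in the range $0<s<2$ where $K\in\LL{2}$.) The standard repair---which the paper itself uses in \cref{sec:lower-bounds}---is to shift the argument instead: $K_\delta=(1-\xi_{+}\xi_{-}+\delta)^{-s/2}$ (resp. $-\tfrac12\log(1-\xi_{+}\xi_{-}+\delta)$) is continuous, has nonnegative Fourier--Jacobi coefficients by exactly your power-series argument (cf. \cref{lemma:positive-coefficients-lower-bound}), and increases pointwise to $K$ as $\delta\downarrow 0$, so monotone convergence gives $\contEnergy_{K}[\mu]\geq\contEnergy_{K}[\sigma]$ for every probability measure $\mu$; for uniqueness, fix $\tau_0\in\Partitions^*$ with $\iint\JacobiGrass{\tau_0}\dd\mu\dd\mu>0$ (such a $\tau_0$ exists when $\mu\neq\sigma$) and note that the $\tau_0$ coefficient of $K_\delta$ is bounded below away from zero as $\delta\to 0$. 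Finally, the last step of your uniqueness argument should state explicitly that vanishing of $\int e_i\dd\mu$ for all eigenfunctions with $\tau\in\Partitions^*$ forces $\mu=\sigma$, which uses the density of $\bigdirectsum_{\tau}H_\tau$ in $C(\Grass{2}{4})$.
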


The remaining main results provide asymptotic upper and lower bounds for the minimal discrete Riesz and logarithmic energies when the number of points tends to infinity.

\begin{theorem}\label{thm:bounds-Riesz}
    For $0<s<4$, there exist constants $C_s,C_s'<0$ such that, for $N$ sufficiently large,
    \begin{equation*}
        C_sN^{1+s/4}\leq \minEnergy_s(\Grass{2}{4},N)-\Wiener_s(\Grass{2}{4})N^2\leq C_s'N^{1+s/4}.
    \end{equation*}
\end{theorem}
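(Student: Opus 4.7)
The plan is to prove the two bounds by complementary techniques. The upper bound will come from sampling a projection determinantal point process whose expected energy admits an explicit expansion; the lower bound will come from a Delsarte--Yudin type linear programming argument. Both rest on the same ingredient: asymptotics for the generalized Fourier--Jacobi coefficients of $K_s$.

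For the upper bound, set $H_L \defined \bigdirectsum_{\DegreePart{\tau}\leq L} H_\tau$ with reproducing kernel $\calK_L = \sum_{\DegreePart{\tau}\leq L} \calK_\tau$ and trace $N(L) = \sum_{\DegreePart{\tau}\leq L} d_\tau$. The dimension formula \eqref{eq:d-tau} gives $N(L) \asymp L^4$, so for each $N$ one can fix $L \asymp N^{1/4}$ with $N(L) \leq N < N(L+1)$ and sample $N(L)$ points from the projection DPP $\mathfrak{X}_L$ associated with $H_L$, placing the remaining $N-N(L) = O(N^{3/4})$ points arbitrarily at asymptotically negligible cost. Since $\JacobiGrass{\tau}(1,1)=1$ forces $\calK_L(P,P) = N(L)$, \cref{prop:expectation-dpp} specializes to
\begin{equation*}
    \expectation{}{\Energy_s(\mathfrak{X}_L)} = N(L)^2\,\Wiener_s(\Grass{2}{4}) - \iint\limits_{\mathclap{\Grass{2}{4}\times\Grass{2}{4}}} \abs{\calK_L(P,Q)}^2 K_s(P,Q)\dd\sigma(P)\dd\sigma(Q).
\end{equation*}
Expanding $\calK_L^2$ via \eqref{eq:K-tau-2} and the Legendre factorization \eqref{eq:JacobiGrass-xi} and integrating term by term against $K_s$ reduces the correction integral to a finite sum of one-dimensional integrals of Legendre polynomials against $(1-\xi\,\xi')^{-s/2}$, whose asymptotics yield a contribution of order $N^{1+s/4}$. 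Since $\minEnergy_s(\Grass{2}{4},N)$ is bounded above by the expected energy of any random configuration, this produces $C_s'$.

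For the lower bound, the plan is to construct for each $L$ an auxiliary symmetric function $q_L \from \Grass{2}{4}\times\Grass{2}{4}\to \R$ satisfying three properties: $q_L \leq K_s$ pointwise; its generalized Fourier--Jacobi coefficients satisfy $\hat{q}_L(\tau)\geq 0$ for every $\tau\neq (0,0)$; and $q_L(P,P)$ is constant. Positive definiteness via the addition formula \eqref{eq:addition-formula-G24} then yields, for any configuration $\omega_N = \set{P_1,\dotsc,P_N}$,
\begin{equation*}
    \Energy_s(\omega_N) \geq \sum_{i\neq j} q_L(P_i,P_j) \geq N^2\,\hat{q}_L(0,0) - N\, q_L(P,P).
\end{equation*}
Taking $q_L$ to be an appropriate regularized truncation of the Fourier--Jacobi expansion of $K_s$, one expects the deficit $\Wiener_s(\Grass{2}{4})-\hat{q}_L(0,0)$ to decay like $L^{-(4-s)}$ while $q_L(P,P)$ grows like $L^s$; the calibration $L\asymp N^{1/4}$ then balances the two error terms at the common order $N^{1+s/4}$, producing $C_s$.

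The principal obstacle will be extracting sharp enough asymptotics for the Fourier--Jacobi coefficients $\hat{K}_s(\tau)$ as $\DegreePart{\tau}\to\infty$. Thanks to \eqref{eq:JacobiGrass-xi} these coefficients collapse to double integrals of products of Legendre polynomials against $(1-\xi_{+}\xi_{-})^{-s/2}$, to be treated using Mehler--Dirichlet integral representations and standard hypergeometric machinery. Because both the DPP variance integral and the construction of $q_L$ feed on exactly the same coefficient estimates, the matching orders of $C_sN^{1+s/4}$ and $C_s'N^{1+s/4}$ should fall out of a single asymptotic analysis.
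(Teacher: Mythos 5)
Your overall strategy (linear programming for the lower bound, a projection DPP for the upper bound) is in the same spirit as the paper, but both halves have genuine gaps as written. For the upper bound, the claim that the leftover $N-N(L)=O(N^{3/4})$ points can be placed ``arbitrarily at asymptotically negligible cost'' is false: each added point interacts with roughly $N$ others at kernel values bounded below by $\diam(\Grass{2}{4})^{-s}>0$, so the added energy is of order $N^{7/4}$, which dominates the target correction $N^{1+s/4}$ for all $s\leq 3$. This is precisely why \cref{cor:upper-bound-dpp-all-N}, which passes from $N=d_k$ to general $N$, is restricted to $3<s<4$. The step can be repaired (add the extra points one at a time at a location where the potential of the current configuration is at most its $\sigma$-average, which by invariance equals the current cardinality times $\Wiener_s(\Grass{2}{4})$, so the surplus is absorbed into $\Wiener_s(\Grass{2}{4})(N^2-N(L)^2)$), but this must be argued, not asserted; the paper instead proves the order of the upper bound for all $N$ by jittered sampling over an equal-measure partition (\cref{prop:jitt}, via \cref{prop:partition}), reserving the DPP for the explicit constants. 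Relatedly, your plan to extract the DPP correction by integrating $\calK_L^2$ ``term by term'' is not a finite computation in the relevant sense: the number of partitions with $\DegreePart{\tau}\leq L$ grows like $L^2$ and uniform-in-$\tau$ asymptotics would be required; the paper avoids this by first deriving the closed Christoffel--Darboux-type formula for $\calK_L$ (\cref{thm:reproducing-kernel-grassmannian-dpp}) and then performing a genuinely two-variable Mehler--Heine/Hilb analysis.

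For the lower bound you have reproduced the linear-programming skeleton (the paper's \cref{thm:LP-lower-bound}), but the substance is missing: you never construct $q_L$, and the three required properties are exactly the hard part. A ``regularized truncation of the Fourier--Jacobi expansion of $K_s$'' is not automatically a pointwise minorant of $K_s$ (truncations of expansions of singular kernels oscillate above and below the kernel), nor does it obviously have nonnegative coefficients, and the decay rates $L^{-(4-s)}$ for the deficit and $L^{s}$ for the diagonal are asserted without proof. The paper's construction is concrete: $g(x,y)=F_{s,n,\delta}(1-xy)$, the degree-$n$ Taylor polynomial of $f_s(u)=u^{-s/2}$ expanded about $u+\delta$, so that the signed integral remainder gives $g\leq K_s$ pointwise; nonnegativity of $\hat g(\tau)$ for $\tau\neq(0,0)$ follows from the binomial expansion of $(1-xy+\delta)^{-\alpha}$ in powers of $xy$ together with positive definiteness of $\JacobiGrass{\tau}$ (\cref{lemma:positive-coefficients-lower-bound}); and the deficit $\Wiener_s(\Grass{2}{4})-\hat g(0,0)\asymp\delta^{2-s/2}$ comes from hypergeometric asymptotics near the unit argument (\cref{prop:hypergeom32-limit-divergent}), after which the choice $\delta=N^{-1/2}$ (your $L\asymp N^{1/4}$ in bandwidth language) balances the error terms at order $N^{1+s/4}$. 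Until you exhibit such a $q_L$ and verify these properties, the lower bound is not established.
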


In the logarithmic case, we determine exactly the next-order term.

\begin{theorem}\label{thm:bounds-log}
    The minimal logarithmic energy on the Grassmannian $\Grass{2}{4}$ satisfies
    \begin{equation*}
        \minEnergy_{\log}(\Grass{2}{4},N)=\Wiener_{\log}(\Grass{2}{4})N^2-\frac{1}{4}N\log{N}+O(N),\qquad N\tendsto\infty.
    \end{equation*}
\end{theorem}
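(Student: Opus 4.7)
My plan is to establish matching upper and lower bounds, both of the form $\Wiener_{\log}(\Grass{2}{4})N^2-\tfrac{1}{4}N\log N+O(N)$. The upper bound will come from the expected logarithmic energy of a projection determinantal point process on $\Grass{2}{4}$, while the lower bound is of Wagner type, based on a regularized logarithmic kernel combined with the positive definiteness of the generalized Jacobi polynomials encoded in \eqref{eq:addition-formula-G24}. Heuristically, the constant $\tfrac14$ matches $1/\dim(\Grass{2}{4})$, consistent with the pattern for two-point homogeneous spaces.

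\textbf{Upper bound.} For each $L\geq 1$, I would consider the projection determinantal point process $\mathfrak{X}_L$ whose kernel is the reproducing kernel $\calK_{H_L}$ of $H_L\defined\bigdirectsum_{\DegreePart{\tau}\leq L}H_{\tau}$. Since $H_L$ is $\O(4)$-invariant, $\calK_{H_L}(P,P)$ is constant and, by \cref{remark:KH-xx}, equals $N_L\defined\dim H_L=\sum_{\DegreePart{\tau}\leq L}d_{\tau}$; from \eqref{eq:d-tau} one checks that $N_L\asymp L^4$. \cref{prop:expectation-dpp} then gives
\begin{equation*}
    \expectation{x\distributed\mathfrak{X}_L}{\Energy_{\log}(x)}=\Wiener_{\log}(\Grass{2}{4})N_L^2-J_L,\qquad J_L\defined\iint\bigl(-\log\chordalGrass(P,Q)\bigr)\calK_{H_L}(P,Q)^2\dd\sigma(P)\dd\sigma(Q).
\end{equation*}
Using \eqref{eq:K-tau-2}, \eqref{eq:JacobiGrass-xi}, and \cref{lemma:integral-Grass24-2}, $J_L$ reduces to one-variable integrals of products of Legendre polynomials against the power series $-\tfrac{1}{2}\log(1-\xi_+\xi_-)=\tfrac{1}{2}\sum_{n\geq 1}(\xi_+\xi_-)^n/n$. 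Since $\calK_{H_L}^2$ behaves as an approximate identity localized at scale $\chordalGrass\asymp L^{-1}\asymp N_L^{-1/4}$ on a four-dimensional space, a careful asymptotic analysis should yield $J_L=\tfrac{1}{4}N_L\log N_L+O(N_L)$. Because some realization of $\mathfrak{X}_L$ achieves at most its expected energy, this delivers the desired bound along the subsequence $N=N_L$; general $N$ are handled by a standard interpolation argument, either by taking a projection DPP on an appropriately chosen $N$-dimensional subspace of $H_{L+1}$ or by adjoining auxiliary points and absorbing the cost in $O(N)$.

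\textbf{Lower bound.} For $\epsilon>0$, let $K_{\log,\epsilon}(P,Q)\defined-\log\max(\chordalGrass(P,Q),\epsilon)$, so that $K_{\log,\epsilon}\leq K_{\log}$ pointwise and $K_{\log,\epsilon}(P,P)=-\log\epsilon$. The key structural input is that the generalized Fourier--Jacobi coefficients $\widehat{K_{\log,\epsilon}}(\tau)$ are nonnegative for every $\tau\neq(0,0)$, which I would attempt to establish via the integral representation $K_{\log,\epsilon}(P,Q)=\int_{\epsilon}^{1}u^{-1}\mathbf{1}_{\chordalGrass(P,Q)\leq u}\dd u$ together with the positive definiteness of the chordal-cap indicators on $\Grass{2}{4}$ (invoking the isometric embedding $\varphi\from\Grass{2}{4}\hookrightarrow\embeddedGrass{2}{4}\subset\R^9$ and a Schoenberg-type argument); if this direct approach fails, one replaces $K_{\log,\epsilon}$ by a smoother truncation whose Fourier--Jacobi expansion can be checked explicitly. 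Granting the positivity, the addition formula \eqref{eq:addition-formula-G24} gives $\sum_{i,j=1}^{N}\calK_{\tau}(P_i,P_j)\geq 0$ for every partition $\tau$ and every configuration $\omega_N=\set{P_1,\dotsc,P_N}$, so
\begin{equation*}
    \Energy_{\log}(\omega_N)\geq\sum_{i\neq j}K_{\log,\epsilon}(P_i,P_j)\geq\contEnergy_{K_{\log,\epsilon}}[\sigma]\,N^2+N\log\epsilon.
\end{equation*}
A short volume computation based on $(\sigma\otimes\sigma)(\set{\chordalGrass\leq\epsilon})\asymp\epsilon^4$ yields $\contEnergy_{K_{\log,\epsilon}}[\sigma]=\Wiener_{\log}(\Grass{2}{4})+O(\epsilon^4)$. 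Choosing $\epsilon=N^{-1/4}$ produces $\Energy_{\log}(\omega_N)\geq\Wiener_{\log}(\Grass{2}{4})N^2-\tfrac{1}{4}N\log N+O(N)$.

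\textbf{Main obstacle.} The principal technical difficulty will be isolating the sharp constant $\tfrac14$ in the asymptotic $J_L=\tfrac{1}{4}N_L\log N_L+O(N_L)$, because expanding $\calK_{H_L}^2$ produces cross-terms $d_{\tau}d_{\tau'}\JacobiGrass{\tau}\JacobiGrass{\tau'}$ with $\tau\neq\tau'$, and controlling their integrals against $K_{\log}$ leads to Gaunt-type triple integrals of Legendre polynomials; fortunately, the separated-variable structure of \eqref{eq:JacobiGrass-xi} reduces everything to tractable one-variable computations, and the known $1/n$-decay of the Legendre coefficients of $-\log$ drives the $\log N$ factor. A secondary delicate point is the positivity claim used in the lower bound, which is the reason I allowed myself the flexibility of smoothing $K_{\log,\epsilon}$ further if needed.
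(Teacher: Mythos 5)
Your overall architecture (a Wagner/linear-programming lower bound built on nonnegativity of generalized Fourier--Jacobi coefficients, plus an upper bound from the harmonic-ensemble \gls{dpp}) is the same skeleton as the paper's, but both halves have genuine gaps. The decisive one is in the lower bound: the nonnegativity of $\widehat{K_{\log,\epsilon}}(\tau)$ for the hard truncation $K_{\log,\epsilon}=-\log\max(\chordalGrass,\epsilon)$ is not established, and the mechanism you propose would fail. Indicators of metric balls are not positive definite kernels: on the circle the coefficients of an arc indicator are proportional to $\sin(nu)/n$ and change sign, and in $\R^{9}$ the Fourier transform of $\mathbf{1}_{\norm{x}\le u}$ is an oscillating Bessel expression taking negative values, so a Schoenberg-type argument through the embedding $\embeddedGrass{2}{4}\contained\R^9$ cannot deliver positive definiteness of $\mathbf{1}_{\chordalGrass(P,Q)\le u}$, hence not of its superposition $K_{\log,\epsilon}$. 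Your fallback (``replace by a smoother truncation whose expansion can be checked'') is precisely where the content lies: the paper replaces the hard cutoff by the Taylor-remainder minorant $F_{0,n,\delta}(1-\xi_{+}\xi_{-})$ of $-\tfrac12\log u$, shifted by $\delta$ in the squared-distance variable $u=1-\xi_{+}\xi_{-}=\chordalGrass^2$ (so $\delta=N^{-1/2}$ plays the role of your $\epsilon^2$), and proves $\hat g(\tau)\ge 0$ for $\tau\in\Partitions^*$ by expanding $-\log(1-xy+\delta)$ as a series in $(xy)^k$ with positive coefficients and using positive definiteness of $\JacobiGrass{\tau}$ (\cref{lemma:positive-coefficients-lower-bound}); then \cref{thm:LP-lower-bound} yields $-\tfrac14 N\log N+O(N)$. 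Until you exhibit some admissible minorant with nonnegative coefficients, the lower bound is not proved.

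On the upper bound, the asymptotic $J_L=\tfrac14 N_L\log N_L+O(N_L)$ is asserted rather than proved; in the paper it rests on the closed form of the kernel (\cref{thm:reproducing-kernel-grassmannian-dpp}, obtained via a Christoffel--Darboux argument that collapses $\calK_k$ to two Gegenbauer products) followed by Hilb/Mehler--Heine asymptotics, and there is no easy route through ``one-variable Legendre computations'' for $\calK_k^2$ without such a closed form. Moreover, passing from $N=d_k$ to all $N$ is not routine in the logarithmic case: the monotonicity interpolation that works for $3<s\le 4$ fails because $d_{k+1}-d_k\asymp N^{3/4}$ perturbs the energy at order $N^{7/4}\gg N\log N$; the paper instead gets the all-$N$ upper bound of \cref{thm:bounds-log} from jittered sampling (\cref{prop:jitt}), reserving the \gls{dpp} for the explicit constant at $N=d_k$. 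Your auxiliary-point suggestion can be repaired---adjoin $N-d_k$ i.i.d.\ uniform points independent of the harmonic ensemble, so the expected cross terms equal $\Wiener_{\log}(\Grass{2}{4})$ times the number of new pairs and the expectation becomes $\Wiener_{\log}(\Grass{2}{4})N^2-\tfrac14 d_k\log d_k+O(N)$ with $d_k\log d_k=N\log N+O(N^{3/4}\log N)$---but as written, ``absorbing the cost in $O(N)$'' is unjustified, since the raw cross terms are of order $N^{7/4}$, and the alternative of a projection \gls{dpp} on a non-invariant $N$-dimensional subspace loses the constant-diagonal property on which your computation relies.
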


\begin{theorem}\label{thm:bounds-hypersingular}
    The minimal Riesz $4$-energy on the Grassmannian $\Grass{2}{4}$ satisfies
    \begin{equation*}
        \minEnergy_{4}(\Grass{2}{4},N)=\frac{N^2\log{N}}{4}+O(N^2),\qquad N\tendsto\infty.
    \end{equation*}
\end{theorem}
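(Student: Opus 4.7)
The leading-order asymptotics $\minEnergy_{4}(\Grass{2}{4},N)\sim \tfrac{1}{4}N^2\log N$ is provided by \eqref{eq:leading-term-hypersingular}; the task is to upgrade the error from $o(N^2\log N)$ to $O(N^2)$, which requires matching upper and lower bounds of this sharper order. The plan follows the template used for hypersingular Riesz energies on two-point homogeneous spaces in \cite{AndersonDostertGrabnerMatzkeStepaniuk2023}, adapted to the zonal structure of $\Grass{2}{4}$ set up in \cref{sec:L2-generalized-Jacobi}.

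\textbf{Upper bound.} To produce a configuration $\omega_N$ with $E_4(\omega_N)\leq \tfrac{1}{4}N^2\log N + O(N^2)$, I would use the representatives of an area-regular partition of $\Grass{2}{4}$ into $N$ cells of measure $1/N$ and diameter $\lesssim N^{-1/4}$, chosen so that $\chordalGrass(x_i,x_j)\gtrsim N^{-1/4}$ (a \gls{dpp} sample with projection kernel of trace $N$, in the spirit of the later sections of the paper, would serve as an alternative). Pairs at chordal distance $\geq 1/2$ contribute $O(N^2)$. For the remaining pairs, integration by parts against $r\mapsto n_i(r)\defined\card\{j\neq i\suchthat \chordalGrass(x_i,x_j)\leq r\}$, combined with the local volume expansion
\begin{equation*}
\sigma(B(p,r))=\tfrac{1}{4}r^{4}+o(r^{4}),\qquad r\to 0^{+}
\end{equation*}
(which follows from \cref{lemma_integral} together with $\vol(\Grass{2}{4})=2\pi^2$ and $\beta_4=\pi^2/2$), yields the per-point estimate $\sum_{j\neq i}\chordalGrass(x_i,x_j)^{-4}\leq \tfrac{1}{4}N\log N + O(N)$. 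Summing across the $N$ base points gives the claimed bound.

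\textbf{Lower bound.} For any $\omega_N$ and any truncation height $t>0$, set $K_t(P,Q)\defined\min(K_4(P,Q),t)$; then $E_4(\omega_N)\geq \sum_{i\neq j}K_t(x_i,x_j)$. Expanding $K_t$ in generalized Jacobi polynomials and using the addition formula \eqref{eq:addition-formula-G24},
\begin{equation*}
\sum_{i\neq j}K_t(x_i,x_j)=\sum_{\tau\in\Partitions}\widehat{K_t}(\tau)\sum_{i,j}\JacobiGrass{\tau}(\xi_{+}(x_i,x_j),\xi_{-}(x_i,x_j))-Nt\geq N^{2}\widehat{K_t}(0,0)-Nt,
\end{equation*}
provided $\widehat{K_t}(\tau)\geq 0$ for every $\tau\in\Partitions^{*}$. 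By \eqref{eq:generalizedJacobiCoefficients-0-0}, the constant coefficient is $\widehat{K_t}(0,0)=\iint K_t\dd\sigma\dd\sigma$, which by the same local expansion equals $\tfrac{1}{4}\log t+O(1)$ as $t\to\infty$. Choosing $t\asymp N$ then yields $E_4(\omega_N)\geq \tfrac{1}{4}N^2\log N - O(N^2)$, matching the upper bound.

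\textbf{Main obstacle.} The delicate step is establishing the nonnegativity of $\widehat{K_t}(\tau)$ for $\tau\in\Partitions^{*}$. The untruncated kernel $K_4$ is positive definite on $\Grass{2}{4}$ (being, via the isometric embedding $\varphi\from\Grass{2}{4}\to\embeddedGrass{2}{4}\contained\R^9$, the restriction of the Euclidean Riesz kernel), but a pointwise truncation need not preserve nonnegativity of its Jacobi coefficients. I would resolve this either by truncating in frequency rather than in amplitude---replacing $K_t$ by the partial Jacobi sum $\sum_{\DegreePart{\tau}\leq n}\widehat{K_4}(\tau)\JacobiGrass{\tau}$ with $n=n(N)$ suitably chosen---or by using a smooth cut-off of $K_4$ engineered, in the style of Yudin's construction in the linear-programming method, to retain nonnegative Jacobi coefficients while differing from $K_4$ by a controllable amount. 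The remainder of the argument is largely mechanical; balancing the auxiliary truncation parameter against the $O(N^2)$ error budget is the only real technical point.
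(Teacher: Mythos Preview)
Your overall architecture---linear programming for the lower bound, a well-distributed configuration for the upper bound---matches the paper's, but the execution differs in both halves, and the lower bound contains a real gap at exactly the point you flag.

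\textbf{Lower bound.} Your truncation $K_t=\min(K_4,t)$ is the standard move on spheres, but on $\Grass{2}{4}$ neither of your proposed remedies for the positivity obstacle works as stated. Frequency truncation is a non-starter: since $s=4=\dim(\Grass{2}{4})$, the kernel $K_4$ is not integrable against $\sigma\otimes\sigma$, so the coefficients $\widehat{K_4}(\tau)$ in \eqref{eq:generalizedJacobiCoefficients} are undefined and there is no partial sum to take. A Yudin-type smooth cutoff might be adaptable, but you would have to build it from scratch in the two-variable zonal calculus, and you give no indication of how. The paper sidesteps the issue entirely by using the \emph{shifted} kernel
\[
\psi_\delta(\xi_+,\xi_-)=(1-\xi_+\xi_-+\delta)^{-2}\leq (1-\xi_+\xi_-)^{-2},
\]
which is continuous on $[-1,1]^2$ and for which the positivity $\widehat{\psi_\delta}(\tau)\geq 0$ is proved directly (\cref{lemma:positive-coefficients-lower-bound}): expand $(1+\delta)^{-2}\bigl(1-\tfrac{xy}{1+\delta}\bigr)^{-2}$ as a power series in $xy$ with nonnegative coefficients and invoke Bochner's characterization of positive-definite functions. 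Then \cref{thm:LP-lower-bound} gives $E_4(\omega_N)\geq \widehat{\psi_\delta}(0,0)\,N^2-\delta^{-2}N$, and a two-line computation shows $\widehat{\psi_\delta}(0,0)=-\tfrac12\log\delta+O(1)$; the choice $\delta=N^{-1/2}$ yields $\tfrac14 N^2\log N-O(N^2)$. This is exactly your linear-programming inequality, but with a regularization for which positivity is provable rather than hoped for.

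\textbf{Upper bound.} The paper does not use a separated deterministic net; it computes the expected $4$-energy of the harmonic ensemble (\cref{thm:Energy-DPP-hypersingular}) and obtains $\tfrac14 N^2\log N+C_4 N^2+o(N^2)$ for $N=d_k$, then passes to arbitrary $N$ by monotonicity (\cref{cor:upper-bound-dpp-all-N-hypersingular}). Your partition-plus-separation argument is plausible at the level of orders, but extracting the sharp constant $\tfrac14$ from it requires $n_i(r)\leq \bigl(\tfrac14+o(1)\bigr)Nr^4$ uniformly, i.e.\ genuine equidistribution at all scales, not merely separation $\gtrsim N^{-1/4}$; packing alone only gives $n_i(r)\lesssim Nr^4$ with an uncontrolled constant, which would produce $C\,N^2\log N$ for some $C$ rather than $\tfrac14 N^2\log N$. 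Your parenthetical DPP alternative is in fact what the paper does.
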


The lower bounds in \cref{thm:bounds-Riesz,thm:bounds-log,thm:bounds-hypersingular} are obtained through linear programming techniques in the spirit of \cites{Wagner1990, Brauchart2006, AndersonDostertGrabnerMatzkeStepaniuk2023}. See also \cite[Chapter 5]{BorodachovHardinSaff2019} for more information about these techniques. Note that in the cited references the corresponding kernels depend only on one parameter, namely the distance between points, and so everything is essentially one-dimensional; in the case of the Grassmannian $\Grass{2}{4}$, however, the kernels are functions of the two principal angles. To overcome this limitation, we develop a bivariate version of this kind of reasoning. We provide the proof of these lower bounds in \cref{sec:lower-bounds}.
 
To prove the upper bounds in \cref{thm:bounds-Riesz,thm:bounds-log,thm:bounds-hypersingular} we use determinantal point processes. The order of those bounds in  \cref{thm:bounds-Riesz,thm:bounds-log} is established using jittered sampling. However, to obtain bounds with explicit constants for the next order terms we need to use more sophisticated determinantal point processes. The Grassmannian $\Grass{2}{4}$ endowed with the uniform measure $\sigma$ is a locally compact, Polish topological space. Therefore, from \cref{prop:expectation-dpp}, to define a determinantal point process of $N$ points on $\Grass{2}{4}$ it suffices to choose a suitable subspace of $\LL{2}(\Grass{2}{4})$ of dimension $N$. Recall from \cref{sec:L2-generalized-Jacobi} the decomposition of $\LL{2}(\Grass{2}{4})$ as a direct sum of eigenspaces of the Laplace--Beltrami operator:
\begin{equation*}
    \LL{2}(\Grass{2}{4})=\overline{\bigdirectsum_{\tau\in\Partitions} H_{\tau}}.
\end{equation*}
For every natural number $k>0$, we consider the \gls{dpp} associated with the subspace $H_k=\bigdirectsum_{\DegreePart{\tau}\leq k}H_{\tau}$ of $\LL{2}(\Grass{2}{4})$. The reproducing kernel of $H_k$ is then given by
\begin{equation}\label{eq:kernel-sum}
    \calK_k(P,Q)=\sum_{\DegreePart{\tau}\leq k} \calK_{\tau}(P,Q),
\end{equation}
where $\calK_{\tau}$ is the reproducing kernel of the eigenspace $H_{\tau}$. From \eqref{eq:K-tau-2}, the kernels $\calK_{\tau}(P,Q)$ are given by bivariate orthogonal polynomials in the variables $\xi_{+}(P,Q)$ and $\xi_{-}(P,Q)$. In this work, we derive a simple, explicit expression for the kernel \eqref{eq:kernel-sum} using a multivariate version of the Christoffel--Darboux formula. 

\begin{theorem}\label{thm:reproducing-kernel-grassmannian-dpp}
	For every natural number $k>0$, the reproducing kernel of $H_k$ is
	\begin{equation*}
		\calK_k(P,Q)=\calK_k(\xi_{+},\xi_{-})=\Gegenbauer{k}{3/2}(\xi_{+})\Gegenbauer{k}{3/2}(\xi_{-})+\Gegenbauer{k-1}{3/2}(\xi_{+})\Gegenbauer{k-1}{3/2}(\xi_{-}),
	\end{equation*}
    where $\Gegenbauer{k}{3/2}$ is the Gegenbauer polynomial of degree $k$ and parameter $3/2$.
\end{theorem}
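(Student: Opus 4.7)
The plan is to compute the sum $\calK_k = \sum_{\DegreePart{\tau}\leq k}\calK_\tau$ explicitly by exploiting the product structure already visible in \eqref{eq:K-tau-2}. First, I substitute the formula \eqref{eq:K-tau-2} for $\calK_\tau$ together with the dimension \eqref{eq:d-tau}, and reparametrize each partition $\tau=(\tau_1,\tau_2)$ by $a = \tau_1+\tau_2$ and $b = \tau_1-\tau_2$. The constraint $\tau\in\Partitions$ with $\DegreePart{\tau}\leq k$ translates into $a\geq b\geq 0$ with $a\equiv b\pmod{2}$ and $a\leq k$ (note $b\leq a$, so $b\leq k$ automatically), while $d_\tau$ becomes $(2-\delta_{a,b})(2a+1)(2b+1)$. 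The factor $2-\delta_{a,b}$ exactly compensates the symmetrization appearing in \eqref{eq:K-tau-2}, allowing me to drop the ordering $a\geq b$ and rewrite
\begin{equation*}
\calK_k(\xi_{+},\xi_{-}) = \sum_{\substack{0\leq a,b\leq k\\ a\equiv b\,(\!\bmod 2)}} (2a+1)(2b+1)\,\Legendre{a}(\xi_{+})\Legendre{b}(\xi_{-}).
\end{equation*}

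Next, I observe that this sum decouples. Splitting the parity constraint into the cases ``both $a$ and $b$ even'' and ``both $a$ and $b$ odd'' separates the indices $a$ and $b$, giving
\begin{equation*}
\calK_k(\xi_{+},\xi_{-}) = E(\xi_{+})E(\xi_{-}) + O(\xi_{+})O(\xi_{-}),
\end{equation*}
where $E(x) \defined \sum_{a\leq k,\, a\text{ even}}(2a+1)\Legendre{a}(x)$ and $O(x) \defined \sum_{a\leq k,\, a\text{ odd}}(2a+1)\Legendre{a}(x)$.

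Finally, I identify $E$ and $O$ as Gegenbauer polynomials. The classical Legendre identity $(2n+1)\Legendre{n}(x) = \Legendre{n+1}'(x) - \Legendre{n-1}'(x)$, with the convention $\Legendre{-1}\identically 0$, causes both single-variable sums to telescope; checking the two parity cases for $k$ shows that one of $E, O$ collapses to $\Legendre{k+1}'(x)$ and the other to $\Legendre{k}'(x)$. Combining this with the Gegenbauer derivative relation $\Legendre{n}'(x) = \Gegenbauer{n-1}{3/2}(x)$ yields $\{E, O\} = \{\Gegenbauer{k}{3/2}, \Gegenbauer{k-1}{3/2}\}$ as an unordered pair, and the claimed formula follows because the target expression is symmetric in these two polynomials. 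The only subtle point is keeping track of which of $E$ or $O$ matches which Gegenbauer polynomial across the parities of $k$; once that bookkeeping is done, the proof is essentially algebra around the product structure of \eqref{eq:K-tau-2}, and no step presents a real obstacle.
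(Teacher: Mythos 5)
Your proof is correct, but it follows a genuinely different route from the paper. You resum the kernel directly: after reparametrizing $\tau\mapsto(a,b)=(\tau_1+\tau_2,\tau_1-\tau_2)$ you observe that the factor $2-\delta_{0,\tau_2}$ in \eqref{eq:d-tau} exactly undoes the symmetrization in \eqref{eq:K-tau-2}, so the sum unfolds to $\sum_{0\leq a,b\leq k,\,a\equiv b\,(2)}(2a+1)(2b+1)\Legendre{a}(\xi_{+})\Legendre{b}(\xi_{-})$, which decouples by parity into $E(\xi_{+})E(\xi_{-})+O(\xi_{+})O(\xi_{-})$; the univariate sums telescope via $(2n+1)\Legendre{n}=\Legendre{n+1}'-\Legendre{n-1}'$ and $\Legendre{n}'=\Gegenbauer{n-1}{3/2}$ (this is precisely the identity \eqref{eq:sum-legendre-even-odd} in the appendix), and the symmetry of the target expression absorbs the parity-dependent labelling of $E$ and $O$. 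All steps check out, and your answer is consistent with $\calK_k(1,1)=d_k=\tfrac12(k+1)^2(k^2+2k+2)$. The paper instead invokes the bivariate Christoffel--Darboux type formula of Bachoc (\cref{thm:Christoffel-Darboux-Grassmannian}) and then computes the numerator through \cref{lemma:Q_k(1),lemma:Q_k(y),lemma:S1,lemma:S2}, finally dividing out $1-\xi_{+}\xi_{-}$ using \eqref{eq:legendre-gegenbauer-n} and \eqref{eq:legendre-gegenbauer-n+1}. Your argument is shorter and more elementary, exploiting the fully decoupled product form \eqref{eq:JacobiGrass-xi} that is special to $\Grass{2}{4}$; the paper's route is heavier but showcases the Christoffel--Darboux machinery for bivariate zonal kernels, which is the tool one would reach for on Grassmannians where no such simple product formula in separated variables is available.
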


\begin{remark}
    The Grassmannian $\Grass{2}{4}$ can be identified with the set of orthogonal projectors on $\R^4$ of rank $2$:
    \begin{equation*}
        \Grass{2}{4}=\set{P\in \R^{4\times 4}\st P\transpose =P,\ P^2=P,\ \rank(P)=2}.
    \end{equation*}
    With this definition of the Grassmannian, the space $\Pol_k(\Grass{2}{4})$ of polynomials of degree at most $k$ on $\C^{4\times 4}$ restricted to $\Grass{2}{4}$ satisfies
    \begin{equation*}
        \Pol_k(\Grass{2}{4})=\bigdirectsum_{\DegreePart{\tau}\leq k}H_{\tau}=H_k,
    \end{equation*}
    see \cite[Section 4.2]{BregerEhlerGraf2017}. This space has been extensively studied; see, for example, \cites{BachocEhler2013, BregerEhlerGraf2017, BodmannEhlerGraef2018, EhlerGraf2019}. Thus, \cref{thm:reproducing-kernel-grassmannian-dpp} provides a simple expression for the reproducing kernel of this polynomial space.
\end{remark}

The dimension of $H_k$ is given by
\begin{equation*}
    d_k=\sum_{\DegreePart{\tau}\leq k} d_{\tau}=\frac{1}{2}(k+1)^2(k^2+2k+2).
\end{equation*}
Therefore, for every $k> 0$, the \gls{dpp} associated with the subspace $H_k$ samples $N=d_k$ points. By analogy with the harmonic ensembles presented in \cites{BeltranMarzoOrtegaCerda2016, AndersonDostertGrabnerMatzkeStepaniuk2023}, we will call this family of \glspl{dpp} \emph{harmonic ensemble} and, to emphasize the dependence on $N$, we will denote it by $\HarmonicDPP{N}$. It is worth mentioning that in the recent paper \cite{GarciaArias2024}, the author proves that the harmonic ensemble on homogeneous manifolds has the optimal rate of convergence to the uniform measure with respect to the Wasserstein distance $W_2$.

To the best of our knowledge, this is the first time that a determinantal point process has been considered in the case of the Grassmann manifold. Note that in the paper \cite{KasselLevy2022} the authors introduce and study a class of determinantal probability measures that generalize the class of discrete determinantal point processes. These measures live on the Grassmannian of a real, complex, or quaternionic inner product space. 
Although this might seem similar, they do not introduce any determinantal point process on the Grassmanian.

From \cref{prop:expectation-dpp}, the expected Riesz and logarithmic energies of points coming from the harmonic ensemble are given by
\begin{align*}
	\expectation{x\distributed\HarmonicDPP{N}}{\Energy_s(x)}&=\iint\limits_{\mathclap{\Grass{2}{4}\times\Grass{2}{4}}} \frac{\calK_k(P,P)\calK_k(Q,Q)-\calK_k(P,Q)^2}{\chordalGrass(P,Q)^s}\dd\sigma(P)\dd\sigma(Q),\\
	\expectation{x\distributed\HarmonicDPP{N}}{\Energy_{\log}(x)}&=-\iint\limits_{\mathclap{\Grass{2}{4}\times\Grass{2}{4}}} (\calK_k(P,P)\calK_k(Q,Q)-\calK_k(P,Q)^2)\log\chordalGrass(P,Q)\dd\sigma(P)\dd\sigma(Q).
\end{align*}
Here we study the asymptotic behavior of these expected energies in terms of the number of points sampled by the \gls{dpp}. The results are summarized in the following theorems.

\begin{theorem}\label{thm:Energy-DPP-Riesz}
	The expected Riesz $s$-energy, with $0<s<4$, of the points coming from the harmonic ensemble is
	\begin{equation*}
		\expectation{x\distributed\HarmonicDPP{N}}{\Energy_s(x)}
        =\Wiener_s(\Grass{2}{4})N^2-C_sN^{1+s/4}+o(N^{1+s/4}),
	\end{equation*}
	where
	\begin{equation}\label{eq:constant-dpp}
		C_s=2^{2+3s/4}\int_0^{\infty}\int_0^{\infty} \frac{\BesselJ{1}(x)^2\BesselJ{1}(y)^2}{xy(x^2+y^2)^{s/2}}\dd x\dd y,
	\end{equation}
    and $\BesselJ{1}$ is the Bessel function of the first kind of order $1$. For $s=2$, we have
	\begin{equation*}
		C_{2}=-\frac{2^{7/2}(4-24G+3\pi)}{48\pi},
	\end{equation*}
	where $G$ is Catalan's constant.
\end{theorem}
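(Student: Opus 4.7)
The plan is to start from \cref{prop:expectation-dpp} applied with $f(P,Q) = 1/\chordalGrass(P,Q)^s$. Since $\calK_k$ is $\O(4)$-invariant and $\sigma$ is a probability measure, \cref{remark:KH-xx} gives $\calK_k(P,P) \equiv N$, so the expected energy equals $\Wiener_s(\Grass{2}{4})N^2$ (using $\contEnergy_s[\sigma] = \Wiener_s(\Grass{2}{4})$ from \cref{thm:minimizers-Riesz-G24}) minus the correction integral
\begin{equation*}
I_k \defined \iint\limits_{\Grass{2}{4}\times\Grass{2}{4}} \frac{\calK_k(P,Q)^2}{\chordalGrass(P,Q)^s}\dd\sigma(P)\dd\sigma(Q).
\end{equation*}
Applying \cref{lemma:integral-Grass24-2} together with \cref{thm:reproducing-kernel-grassmannian-dpp} and \eqref{eq:chordal-distance-xi}, this correction becomes the concrete planar integral
\begin{equation*}
I_k = \frac{1}{4}\int_{-1}^1\!\!\int_{-1}^1 \frac{\bigl[\Gegenbauer{k}{3/2}(\xi_{+})\Gegenbauer{k}{3/2}(\xi_{-}) + \Gegenbauer{k-1}{3/2}(\xi_{+})\Gegenbauer{k-1}{3/2}(\xi_{-})\bigr]^2}{(1-\xi_{+}\xi_{-})^{s/2}}\dd\xi_{+}\dd\xi_{-}.
\end{equation*}
The singular set $\xi_{+}\xi_{-}=1$ meets $[-1,1]^2$ only at the corners $(1,1)$ and $(-1,-1)$, and by the parity $\Gegenbauer{n}{3/2}(-x)=(-1)^n\Gegenbauer{n}{3/2}(x)$ these two corners produce identical contributions that dominate $I_k$.

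The central step is the local rescaling $\xi_{\pm}=\cos(x_{\pm}/k)$ in a shrinking neighborhood of $(1,1)$, under which $\dd\xi_{+}\dd\xi_{-} \sim (x_{+}x_{-}/k^4)\dd x_{+}\dd x_{-}$ and $1-\xi_{+}\xi_{-} \sim (x_{+}^2+x_{-}^2)/(2k^2)$. The Mehler--Heine asymptotic for ultraspherical polynomials yields $\Gegenbauer{k}{3/2}(\cos(z/k)) \sim k^2\BesselJ{1}(z)/z$ as $k\tendsto\infty$, with the same limit for $\Gegenbauer{k-1}{3/2}$, so that the bracketed kernel near the corner behaves like $2k^4\BesselJ{1}(x_{+})\BesselJ{1}(x_{-})/(x_{+}x_{-})$. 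Collecting powers of $k$ and using $N = d_k \sim k^4/2$, hence $k^{4+s} \sim 2^{1+s/4}N^{1+s/4}$, the contribution of the two dominant corners produces exactly $C_s N^{1+s/4}$ with $C_s$ as stated. The main technical obstacle is error control: I would need to show that (i) the Mehler--Heine approximation holds uniformly on an expanding window $[0,\varepsilon_k k]^2$ with $\varepsilon_k\tendsto 0$, and (ii) the contribution from the complement of these corner neighborhoods is $o(N^{1+s/4})$. Part (ii) follows from standard uniform bounds on $\Gegenbauer{k}{3/2}$ on compact subsets of $(-1,1)$ combined with integrability of $(1-\xi_{+}\xi_{-})^{-s/2}$ for $s<4$; the same condition $s<4$ is exactly what makes the limiting Bessel integral converge at the origin.

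For the explicit value of $C_2$, it remains to evaluate
\begin{equation*}
J \defined \int_0^\infty\!\!\int_0^\infty \frac{\BesselJ{1}(x)^2\BesselJ{1}(y)^2}{xy(x^2+y^2)}\dd x\dd y.
\end{equation*}
The plan is to use the identity $1/(x^2+y^2) = \int_0^\infty e^{-t(x^2+y^2)}\dd t$ to separate the variables, then invoke the classical Weber--Schafheitlin type evaluation of $\int_0^\infty e^{-tz^2}\BesselJ{1}(z)^2/z\,\dd z$ in closed form, reducing $J$ to a one-dimensional integral. The remaining integral evaluates via the classical identity $\int_0^1 \arctan(t)/t\,\dd t = G$ and elementary manipulations, yielding $C_2 = -2^{7/2}(4-24G+3\pi)/(48\pi)$ after multiplying by the prefactor $2^{2+3s/4}\big|_{s=2} = 2^{7/2}$.
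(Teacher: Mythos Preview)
Your overall strategy matches the paper's proof: reduce via \cref{prop:expectation-dpp}, \cref{remark:KH-xx}, and \cref{lemma:integral-Grass24-2} to the planar integral $I_k$, then extract the leading term by a Mehler--Heine rescaling near the singular corner(s). The constant bookkeeping you sketch is correct and reproduces $C_s=2^{2+3s/4}\int\!\!\int \BesselJ{1}(x)^2\BesselJ{1}(y)^2/(xy(x^2+y^2)^{s/2})\dd x\dd y$.

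The genuine gap is your justification of (ii). The complement of the corner neighborhoods in $[-1,1]^2$ is \emph{not} a compact subset of $(-1,1)^2$: it still contains, for instance, the edge segments where $\xi_{+}=1$ but $\xi_{-}$ is bounded away from $\pm 1$, and on such strips $\abs{\Gegenbauer{k}{3/2}(\xi_{+})}$ is as large as $O(k^2)$. So ``standard uniform bounds on compact subsets of $(-1,1)$'' do not apply, and bare integrability of $(1-\xi_{+}\xi_{-})^{-s/2}$ is not enough to kill the growth in $k$. What is actually needed is the Szeg\H{o}-type pointwise bound \eqref{eq:bound-gegenbauer}, namely $\Gegenbauer{k}{3/2}(\cos\theta)=\theta^{-3/2}O(k^{1/2})$ for $\theta\gtrsim k^{-1}$, which interpolates between the endpoint value $O(k^2)$ and the interior bound $O(k^{1/2})$. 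The paper exploits this by passing to angular variables $(\theta,\sigma)$ and decomposing $[0,\pi]\times[0,\pi/2]$ into eight regions (\cref{lemma:integral-R1,lemma:integral-R2-R3,lemma:integral-R4,lemma:integral-R5,lemma:integral-R6-R7,lemma:integral-Riesz-R8}), tracking the competing powers of $\theta^{-3/2}$, $k^{1/2}$, and $(1-\cos\theta\cos\sigma)^{-s/2}$ on each. Your part (i) also needs more than uniform convergence on an expanding window: the paper uses dominated convergence with an explicit integrable majorant built from the same Szeg\H{o} bound.

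A minor remark: the paper does not actually prove the closed form for $C_2$; it is stated without derivation. Your Weber--Schafheitlin reduction is a reasonable plan, but be aware there is nothing in the paper to check it against.
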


\begin{theorem}\label{thm:Energy-DPP-log}
	The expected logarithmic energy of the points coming from the harmonic ensemble is
	\begin{equation*}
		\expectation{x\distributed\HarmonicDPP{N}}{\Energy_{\log}(x)}=\Wiener_{\log}(\Grass{2}{4})N^2-\frac{1}{4}N\log{N}+C_{\log}N+o(N),
	\end{equation*}
	where
	\begin{equation*}
		C_{\log}=\frac{1+2G}{\pi}+\frac{1}{4}-\gamma+\frac{\log{2}}{4},
	\end{equation*}
    and $\gamma$ is the Euler--Mascheroni constant.
\end{theorem}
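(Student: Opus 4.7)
The plan is to reduce the expected logarithmic energy to the continuous Wiener energy plus a single \emph{bias} integral involving $\calK_k^2$, to rewrite that integral via \cref{thm:reproducing-kernel-grassmannian-dpp} and \cref{lemma:integral-Grass24-2}, and finally to extract the asymptotics in $N$ by computing moments of Gegenbauer polynomials.

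First, I would apply \cref{prop:expectation-dpp} to $f=-\log\chordalGrass$ together with the fact that $\calK_k(P,P)=d_k=N$, which follows from the homogeneity of $\Grass{2}{4}$ and \cref{remark:KH-xx}. This yields
\[
\expectation{x\distributed\HarmonicDPP{N}}{\Energy_{\log}(x)} = \Wiener_{\log}(\Grass{2}{4})\, N^2 + I_k,
\]
where $I_k \defined \iint_{\Grass{2}{4}\times\Grass{2}{4}} \calK_k(P,Q)^2 \log\chordalGrass(P,Q)\dd\sigma(P)\dd\sigma(Q)$, so the task reduces to showing $I_k = -\frac{1}{4}N\log N + C_{\log}N + o(N)$. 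Using $\log\chordalGrass = \frac{1}{2}\log(1-\xi_{+}\xi_{-})$ and \cref{lemma:integral-Grass24-2}, this integral becomes
\[
I_k = \frac{1}{8}\int_{-1}^1\int_{-1}^1 \calK_k(\xi_{+},\xi_{-})^2 \log(1-\xi_{+}\xi_{-})\dd\xi_{+}\dd\xi_{-}.
\]

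Next, I would expand $\calK_k^2$ using \cref{thm:reproducing-kernel-grassmannian-dpp}: every resulting term is a product of a function of $\xi_{+}$ and a function of $\xi_{-}$. Combining this with the Taylor series $\log(1-\xi_{+}\xi_{-}) = -\sum_{m\geq 1}(\xi_{+}\xi_{-})^m/m$ separates the variables and gives
\[
I_k = -\frac{1}{8}\sum_{m=1}^{\infty}\frac{(L_m^{k,k})^2 + 2(L_m^{k,k-1})^2 + (L_m^{k-1,k-1})^2}{m},
\]
where $L_m^{a,b}\defined\int_{-1}^1 x^m \Gegenbauer{a}{3/2}(x)\Gegenbauer{b}{3/2}(x)\dd x$. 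I would then evaluate $L_m^{a,b}$ in closed form via classical Gegenbauer identities, for instance through the linearization formula for products $\Gegenbauer{a}{3/2}\Gegenbauer{b}{3/2}$, or through the relation $\Gegenbauer{n}{3/2}(x)\propto \Legendre{n+1}'(x)$ combined with integration by parts. These moments vanish unless $\abs{a-b}\leq m\leq a+b$ with the correct parity, and otherwise admit explicit rational expressions in $a,b,m$.

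Finally, the asymptotic analysis proceeds as $k\to\infty$, with $N=d_k\sim k^4/2$ and hence $\log N \sim 4\log k$. The leading $-\frac{1}{4}N\log N\sim -N\log k$ contribution is expected to come from a harmonic-type partial sum $\sum_{m=1}^{O(k)}1/m=\log k + \gamma + o(1)$, which simultaneously produces the $-\gamma N$ summand of $C_{\log}$. The remaining constants $(1+2G)/\pi$, $\frac{1}{4}$, and $(\log 2)/4$ should emerge from passing from sums to Riemann integrals via a rescaling like $m=tk$ and from definite integrals well known to yield the Catalan constant, such as $\int_0^1\int_0^1(1+xy)^{-1}\dd x\dd y = G$. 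The main obstacle will be the delicate bookkeeping of subleading corrections: the three summands in $I_k$ exhibit cancellations at the $O(N)$ level, so pinning down $C_{\log}$ precisely requires both careful control of the moments $L_m^{a,b}$ and uniform tail bounds for $m\gg k$ to secure the $o(N)$ remainder.
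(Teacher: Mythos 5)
Your opening reduction is correct and coincides with the paper's: by \cref{prop:expectation-dpp}, \cref{remark:KH-xx} and \cref{lemma:integral-Grass24-2} the expected energy is $\Wiener_{\log}(\Grass{2}{4})N^2$ plus the bias term, and your $\frac18\int_{-1}^{1}\int_{-1}^{1}\calK_k^2\log(1-\xi_{+}\xi_{-})\dd\xi_{+}\dd\xi_{-}$ equals the paper's $\frac14\int_{0}^{1}\int_{-1}^{1}$ by symmetry. The separation of variables through \cref{thm:reproducing-kernel-grassmannian-dpp} and the resulting series $I_k=-\frac18\sum_m m^{-1}\bigl((L_m^{k,k})^2+2(L_m^{k,k-1})^2+(L_m^{k-1,k-1})^2\bigr)$ is also legitimate (all terms are squares, so monotone convergence justifies the interchange), and it is a genuinely different route from the paper, which instead substitutes $\xi_{\pm}=\cos$ of angles, splits $[0,\pi]\times[0,\pi/2]$ into the regions of \cref{fig:regions-dpp}, shows all but two regions contribute $o(k^4)$, and extracts the asymptotics via Hilb's formula and Mehler--Heine (\cref{lemma:integral-with-Hilb,lemma:integral-log-R1,lemma:integral-log-R8}), ending in explicit Bessel integrals that produce $G$ and $\gamma$. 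The gap is that everything past your series identity—which is where the entire content of the theorem lives—is a forecast rather than an argument: the $-\frac14N\log N$ term ``is expected to come from'' a harmonic sum, the constants ``should emerge'' from a Riemann-sum rescaling, and the ``main obstacle will be'' the bookkeeping. No asymptotics for $L_m^{k,k}$ or $L_m^{k,k-1}$ uniform in the relevant ranges of $m$ are established, no tail bounds are proved, and $C_{\log}$ is never actually computed; the counterpart of \cref{lemma:integral-dpp-log} is simply missing.

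Two concrete claims in the sketch are moreover false and would derail the plan as written. First, the moments $L_m^{a,b}=\int_{-1}^{1}x^m\Gegenbauer{a}{3/2}(x)\Gegenbauer{b}{3/2}(x)\dd x$ do \emph{not} vanish for $m>a+b$; they vanish only for mismatched parity or $m<\abs{a-b}$ (already $\int_{-1}^{1}x^4\dd x\neq0$ for $a=b=0$). The sum over $m$ therefore has an infinite tail, and since $L_m^{k,k}\approx 2\,\Gegenbauer{k}{3/2}(1)^2/m\sim k^4/(2m)$ for $m\gg k^2$, the range $m\gtrsim k^2$ contributes $\sum_m k^8/m^3\asymp k^4\asymp N$, i.e.\ exactly at the order of the constant $C_{\log}$ you must determine—so this tail can be neither truncated at $m\sim k$ nor handled crudely. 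Second, $\int_0^1\int_0^1(1+xy)^{-1}\dd x\dd y=\pi^2/12$, not Catalan's constant; $G$ would come from $\int_0^1\int_0^1(1+x^2y^2)^{-1}\dd x\dd y$ or, as in the paper, from integrals such as $\int_0^{\infty}\int_0^{\infty}\BesselJ{1}(x)^2\BesselJ{1}(y)^2\log(x^2+y^2)/(xy)\dd x\dd y$. The moment route may well be viable, but as it stands the decisive quantitative step is absent and the heuristics guiding it are partly incorrect.
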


\begin{theorem}\label{thm:Energy-DPP-hypersingular}
	The expected Riesz $4$-energy of the points coming from the harmonic ensemble is
	\begin{equation*}
		\expectation{x\distributed\HarmonicDPP{N}}{\Energy_4(x)}=\frac{N^2\log{N}}{4}+C_4N^2+o(N^2),
	\end{equation*}
	where
	\begin{multline}
		C_4=\frac{7\log 2}{4}+2\int_{0}^{1}\int_{0}^{1}
		\frac{x^2y^2-16\BesselJ{1}(x)^2\BesselJ{1}(y)^2}{xy(x^2+y^2)^2}\dd x\dd y\label{eq:C4-dpp}\\
		-32\int_{1}^{\infty}\int_{1}^{\infty}\frac{\BesselJ{1}(x)^2\BesselJ{1}(y)^2}{xy(x^2+y^2)^2}\dd x\dd  y-64\int_{1}^{\infty}\int_{0}^{1}\frac{\BesselJ{1}(x)^2\BesselJ{1}(y)^2}{xy(x^2+y^2)^2}\dd x\dd y\approx 0.991.
	\end{multline}
\end{theorem}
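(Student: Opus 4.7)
The plan is to reduce the expectation to a concrete double integral and extract its asymptotics via Mehler--Heine-type expansions. Since $H_k$ is $\O(4)$-invariant, the reproducing kernel $\calK_k(P,P)$ is constant, and \cref{remark:KH-xx} gives $\calK_k(P,P)=d_k=N$. Applying \cref{prop:expectation-dpp} with $s=4$, the chordal identity \eqref{eq:chordal-distance-xi}, \cref{lemma:integral-Grass24-2}, and \cref{thm:reproducing-kernel-grassmannian-dpp} (whose symmetry hypotheses are readily verified from $\Gegenbauer{n}{3/2}(-x)=(-1)^n\Gegenbauer{n}{3/2}(x)$), I rewrite
\begin{equation*}
\expectation{x\distributed\HarmonicDPP{N}}{\Energy_4(x)}=\frac{1}{4}\int_{-1}^{1}\int_{-1}^{1}\frac{N^2-\calK_k(\xi_+,\xi_-)^2}{(1-\xi_+\xi_-)^2}\dd\xi_+\dd\xi_-.
\end{equation*}
The numerator vanishes precisely at the singular corners $(1,1)$ and $(-1,-1)$, which by the $(\xi_+,\xi_-)\mapsto(-\xi_+,-\xi_-)$ symmetry contribute equally, so it suffices to analyze a neighborhood of $(1,1)$.

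To treat that neighborhood I would perform the rescaling $\xi_+=\cos(u/k)$, $\xi_-=\cos(v/k)$ and apply the Mehler--Heine formula for Gegenbauer polynomials of parameter $3/2$:
\begin{equation*}
\Gegenbauer{k}{3/2}(\cos(u/k))=\frac{k^2\BesselJ{1}(u)}{u}(1+o(1)),
\end{equation*}
uniformly for $u$ in compact subsets of $[0,\infty)$, with an analogous statement for $\Gegenbauer{k-1}{3/2}$. Together with $(1-\xi_+\xi_-)^2\sim(u^2+v^2)^2/(4k^4)$, Jacobian $uv/k^4$, $\calK_k^2\sim 4k^8\BesselJ{1}(u)^2\BesselJ{1}(v)^2/(u^2v^2)$, and $N^2\sim k^8/4$, the rescaled integrand becomes
\begin{equation*}
\frac{k^8\bigl(u^2v^2-16\BesselJ{1}(u)^2\BesselJ{1}(v)^2\bigr)}{uv(u^2+v^2)^2},
\end{equation*}
with $k^8/4\sim N^2$.

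The main obstacle is isolating the logarithmic divergence responsible for $(N^2\log N)/4$ and pinning down the constant $C_4$. I would split the rescaled $(u,v)$-domain into $[0,1]^2$, $[1,\infty)^2$, and $[0,1]\times[1,\infty)$ (with its mirror absorbed into the coefficient $-64$). On $[0,1]^2$ the Taylor expansion $\BesselJ{1}(u)=u/2-u^3/16+O(u^5)$ yields $u^2v^2-16\BesselJ{1}(u)^2\BesselJ{1}(v)^2=O(u^4v^2+u^2v^4)$ near the origin, producing the first convergent integral of \eqref{eq:C4-dpp}. On the unbounded pieces the large-argument asymptotic $\BesselJ{1}(u)^2\sim\frac{2}{\pi u}\cos^2(u-3\pi/4)$ provides sufficient decay to give the remaining two integrals. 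The logarithmic factor arises because the $(u,v)$-integration effectively extends up to the scale $k\pi/2$: the unbounded pieces contain a subleading contribution of order $\log k$, which via $\log k=(\log N+\log 2)/4+o(1)$ simultaneously generates the $N^2\log N/4$ term (consistent with \eqref{eq:leading-term-hypersingular}) and, combined with boundary matching constants at $u$ or $v$ equal to $1$, produces the $7\log 2/4$ part of $C_4$. The contribution away from the singular corners is bounded using standard pointwise Bernstein-type estimates for Gegenbauer polynomials away from $\pm 1$, showing it is $O(N^2)$ without any logarithm. The hardest step will be the uniform control of Mehler--Heine remainders across the three subregions and the precise bookkeeping of boundary contributions to ensure no constant is miscounted.
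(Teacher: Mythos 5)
Your overall strategy is the same as the paper's: reduce via \cref{prop:expectation-dpp}, \cref{remark:KH-xx}, \cref{lemma:integral-Grass24-2} and \cref{thm:reproducing-kernel-grassmannian-dpp} to a double integral of $\bigl(N^2-\calK_k(\xi_+,\xi_-)^2\bigr)(1-\xi_+\xi_-)^{-2}$, zoom in on the singular corner at scale $k^{-1}$, apply Mehler--Heine plus dominated convergence (with Gegenbauer bounds) to get the three Bessel integrals, and treat the pure Riesz part exactly to produce the logarithm. That part of the plan is sound and mirrors the paper's Lemmas \ref{lemma:integral-dpp-hyper-2} and \ref{lemma:integral-dpp-hyper-3}.

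The gap is in how you propose to extract the order-$N^2$ constant, i.e.\ the $\tfrac{7\log 2}{4}$ and the $\tfrac14 N^2\log N$ term. You assert that the region away from the singular corners "is $O(N^2)$ without any logarithm" and that the $\tfrac{7\log2}{4}$ arises from $\log k=\tfrac14(\log N+\log 2)+o(1)$ together with "boundary matching constants at $u$ or $v$ equal to $1$". Neither claim suffices. The $N^2$-part of the integrand contributes, over the complement of the corner square of side $k^{-1}$, exactly $\tfrac{N^2}{2}\iint_{R\setminus\hat R}(1-\xi_+\xi_-)^{-2}\dd\xi_+\dd\xi_-$, which is of order $N^2\log N$ \emph{and} carries a definite $O(N^2)$ constant that enters $C_4$; an $O(N^2)$ bound loses $C_4$ entirely. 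Moreover, this constant cannot be recovered from the rescaled integrand $4uv/(u^2+v^2)^2$ integrated "up to scale $k\pi/2$", because the approximations $\sin\theta\sim\theta$ and $1-\cos\theta\cos\sigma\sim(\theta^2+\sigma^2)/2$ fail in the bulk (e.g.\ for $\xi_+<0$); a naive extension of the corner model gives the right $2\log k$ but a wrong additive constant, and "matching at $u,v=1$" is not where the discrepancy lives. The paper closes exactly this point by evaluating the truncated bulk integral in closed form (\cref{lemma:integral-dpp-hyper-1}),
\begin{equation*}
\iint_{R\setminus\hat R}\frac{\dd\xi_+\dd\xi_-}{(1-\xi_+\xi_-)^2}=2\log k+3\log 2+O(k^{-2}),
\end{equation*}
and it is the $3\log2$ here, added to the $\log 2$ from converting $\log k$ into $\log N$, that yields $\tfrac{7\log2}{4}$. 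Separately, you do need (as the paper does via the intermediate scale $k^{-1/2}$ and the bounds \eqref{eq:bound-gegenbauer}) that the $\calK_k^2$-part beyond distance $k^{-1/2}$ from the corner is $o(N^2)$, while between $k^{-1}$ and $k^{-1/2}$ it contributes the two Bessel-tail integrals at order $N^2$ --- so it is not negligible either. With the exact bulk evaluation inserted in place of your "boundary matching" step, the rest of your outline goes through and reproduces \eqref{eq:C4-dpp}.
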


Comparing \cref{thm:Energy-DPP-hypersingular} with \cref{eq:leading-term-hypersingular}, we see that our \gls{dpp} gives the correct leading term in the asymptotic expansion of the minimal $4$-energy.

\Cref{thm:Energy-DPP-Riesz,thm:Energy-DPP-hypersingular,thm:Energy-DPP-log} provide asymptotic upper bounds for the corresponding minimal discrete energies, with explicit constants for the next-order terms when the number of points is of the form $N=d_k$. Using the same idea as in \cite[Corollary 2]{BeltranMarzoOrtegaCerda2016}, we are able to overcome this last restriction for $3<s<4$.

\begin{corollary}\label{cor:upper-bound-dpp-all-N}
    For any $N\geq 1$ \embparen{not necessarily of the form $d_k$}, and for $3<s<4$, we have
    \begin{equation*}
        \minEnergy_s(\Grass{2}{4},N)\leq \Wiener_s(\Grass{2}{4})N^2-C_sN^{1+s/4}+o(N^{1+s/4}),
    \end{equation*}
    where $C_s$ is the constant in \eqref{eq:constant-dpp}.
\end{corollary}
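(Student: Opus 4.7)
The plan is to combine the bound from \cref{thm:Energy-DPP-Riesz}, which gives the desired upper bound along the subsequence $N=d_k$, with a simple monotonicity argument to cover all $N$. The key observation is that the Riesz $s$-kernel $K_s$ is strictly positive, so removing any point from a configuration strictly decreases its discrete $s$-energy; in particular $\minEnergy_s(\Grass{2}{4},N) \leq \minEnergy_s(\Grass{2}{4},N+1)$ for every $N \geq 2$. This monotonicity is the only new ingredient beyond \cref{thm:Energy-DPP-Riesz}.

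More precisely, given arbitrary $N \geq 1$, I would choose $k$ so that $d_k \leq N \leq d_{k+1}$. Since at least one realization of the harmonic ensemble $\HarmonicDPP{d_{k+1}}$ attains energy no larger than its expectation, \cref{thm:Energy-DPP-Riesz} will yield $\minEnergy_s(\Grass{2}{4},d_{k+1}) \leq \Wiener_s(\Grass{2}{4})\, d_{k+1}^2 - C_s\, d_{k+1}^{1+s/4} + o(d_{k+1}^{1+s/4})$. Combined with monotonicity this gives $\minEnergy_s(\Grass{2}{4},N) \leq \minEnergy_s(\Grass{2}{4},d_{k+1})$, and hence the same upper bound with $d_{k+1}$ in place of $N$ on the right-hand side; the remaining task is to replace $d_{k+1}$ by $N$ while preserving the error $o(N^{1+s/4})$.

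For this I will use the explicit formula $d_k = \frac{1}{2}(k+1)^2(k^2+2k+2) \sim k^4/2$, so that $d_{k+1}-d_k$ is of order $k^3$ and therefore $d_{k+1}-N = O(N^{3/4})$. A Taylor expansion then gives $d_{k+1}^2 = N^2 + O(N^{7/4})$ and $d_{k+1}^{1+s/4} = N^{1+s/4} + O(N^{s/4+3/4})$. Substituting these into the DPP bound introduces an error $O(N^{7/4}) + O(N^{s/4+3/4})$, which I need to absorb into $o(N^{1+s/4})$.

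The hypothesis $s>3$ is exactly what makes this last step go through: it guarantees $s/4>3/4$, so $1+s/4 > 7/4$ and the term $O(N^{7/4})$ becomes $o(N^{1+s/4})$; the term $O(N^{s/4+3/4})$ is automatically of strictly smaller order than $N^{1+s/4}$. The main (modest) obstacle is precisely this sharpness: the restriction $s>3$ is forced by the size of the gap between consecutive values $d_k$ relative to the next-order term $N^{1+s/4}$, and extending the corollary below $s=3$ would require a genuinely different argument, for instance constructing an $N$-point analogue of the harmonic ensemble for arbitrary $N$ rather than merely discarding points from $\HarmonicDPP{d_{k+1}}$.
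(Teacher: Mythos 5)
Your proposal is correct and follows essentially the same argument as the paper: monotonicity of $\minEnergy_s$ in $N$ (from positivity of the kernel), the bound $\minEnergy_s(\Grass{2}{4},d_{k+1})\leq \expectation{x\distributed\HarmonicDPP{d_{k+1}}}{\Energy_s(x)}$, and the observation that $d_{k+1}-N=O(N^{3/4})$ so the resulting errors $O(N^{7/4})+O(N^{3/4+s/4})$ are $o(N^{1+s/4})$ precisely when $s>3$. The only cosmetic difference is that you carry out the comparison in the variable $N$ while the paper expands everything in powers of $k$; the content is identical.
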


\Cref{cor:upper-bound-dpp-all-N} follows from the fact that, for $N\in(d_{k},d_{k+1})$,
\begin{equation*}
    \minEnergy_s(\Grass{2}{4},N)\leq \minEnergy_s(\Grass{2}{4},d_{k+1})\leq \expectation{x\distributed\HarmonicDPP{d_{k+1}}}{\Energy_s(x)},
\end{equation*}
and both $\expectation{x\distributed\HarmonicDPP{d_{k}}}{\Energy_s(x)}$ and $\expectation{x\distributed\HarmonicDPP{d_{k+1}}}{\Energy_s(x)}$ have the same first two asymptotic terms. More precisely, we have $d_k=k^4/2+O(k^3)$ and also $d_{k+1}=k^4/2+O(k^3)$. Then, for $3<s<4$ and $N\in(d_k,d_{k+1})$,
\begin{align*}
    \expectation{x\distributed\HarmonicDPP{d_{k+1}}}{\Energy_s(x)}&=\Wiener_s(\Grass{2}{4})\frac{1}{4}k^8+O(k^7)-C_s\frac{1}{2^{1+s/4}}k^{4+s}+o(k^{4+s})\\
    &=\Wiener_s(\Grass{2}{4})\frac{1}{4}k^8-C_s\frac{1}{2^{1+s/4}}k^{4+s}+o(k^{4+s})\\
    &=\Wiener_s(\Grass{2}{4})N^2-C_sN^{1+s/4}+o(N^{1+s/4}).
\end{align*}
It can be checked that this same argument is also valid in the case $s=4$. Therefore, we obtain the following corollary, which also proves the order of the upper bound in \cref{thm:bounds-hypersingular}.

\begin{corollary}\label{cor:upper-bound-dpp-all-N-hypersingular}
    For any $N\geq 1$ \embparen{not necessarily of the form $d_k$}, we have
    \begin{equation*}
        \minEnergy_4(\Grass{2}{4},N)\leq \frac{N^2\log{N}}{4}+C_4N^{2}+o(N^2),
    \end{equation*}
    where $C_4$ is the constant in \eqref{eq:C4-dpp}.
\end{corollary}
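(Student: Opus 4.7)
The plan is to follow exactly the pattern sketched in the excerpt for \Cref{cor:upper-bound-dpp-all-N}, adapted to the hypersingular case $s=4$. For any $N$ with $d_k \leq N \leq d_{k+1}$, I would use the trivial monotonicity
\begin{equation*}
    \minEnergy_4(\Grass{2}{4},N) \leq \minEnergy_4(\Grass{2}{4},d_{k+1}) \leq \expectation{x\distributed\HarmonicDPP{d_{k+1}}}{\Energy_4(x)},
\end{equation*}
where the second inequality is valid because the minimum is bounded above by the expected value over any admissible random configuration. By \cref{thm:Energy-DPP-hypersingular}, the right-hand side equals $\tfrac{1}{4}d_{k+1}^2\log d_{k+1}+C_4 d_{k+1}^2+o(d_{k+1}^2)$, so the task reduces to showing that this expression agrees with $\tfrac{1}{4}N^2\log N+C_4 N^2+o(N^2)$.

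The key step is therefore an asymptotic comparison between $d_{k+1}$ and $N$. From $d_k=\tfrac{1}{2}k^4+O(k^3)$ I would deduce $d_{k+1}-d_k=O(k^3)$, hence $d_{k+1}=N+O(k^3)$ and $d_{k+1}^2=N^2+O(k^7)=N^2+o(N^2)$ since $N\asymp k^4$. For the logarithmic factor, I would write
\begin{equation*}
    \log d_{k+1}-\log N=\log\bigl(1+(d_{k+1}-N)/N\bigr)=O(1/k),
\end{equation*}
so $d_{k+1}^2\log d_{k+1}-N^2\log N = (d_{k+1}^2-N^2)\log d_{k+1}+N^2(\log d_{k+1}-\log N)=O(k^7\log k)+O(k^7)=o(N^2)$. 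Plugging these into the expansion of $\expectation{x\distributed\HarmonicDPP{d_{k+1}}}{\Energy_4(x)}$ yields
\begin{equation*}
    \expectation{x\distributed\HarmonicDPP{d_{k+1}}}{\Energy_4(x)}=\frac{N^2\log N}{4}+C_4 N^2+o(N^2),
\end{equation*}
which is exactly the required bound.

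The argument is essentially a bookkeeping exercise; the main (very minor) delicacy is checking that the extra $\log N$ factor present in the hypersingular regime does not spoil the replacement $d_{k+1}\rightsquigarrow N$. As shown above this reduces to the elementary estimate $\log(d_{k+1}/N)=O(1/k)=O(N^{-1/4})$, which kills an extra factor of $N$ and keeps the discrepancy in the $o(N^2)$ range. No new analytic input beyond \cref{thm:Energy-DPP-hypersingular} is required, and no hypothesis $N=d_k$ is needed anywhere, so the result holds for all $N\geq 1$.
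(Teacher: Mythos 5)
Your argument is correct and is exactly the route the paper takes: the paper proves the case $3<s<4$ via the monotonicity $\minEnergy_s(N)\leq\minEnergy_s(d_{k+1})\leq\expectation{x\distributed\HarmonicDPP{d_{k+1}}}{\Energy_s(x)}$ plus the replacement $d_{k+1}\rightsquigarrow N$ using $d_k, d_{k+1}=\tfrac12 k^4+O(k^3)$, and then simply remarks that ``the same argument is also valid for $s=4$''. Your bookkeeping with the extra $\log$ factor (showing $d_{k+1}^2\log d_{k+1}-N^2\log N=O(k^7\log k)=o(N^2)$) is precisely the check the paper leaves implicit.
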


\subsection{Structure of the paper}

In \cref{sec:minimizing-measure}, we prove \cref{thm:minimizers-Riesz-G24}. \Cref{sec:lower-bounds} is dedicated to the proof of the lower bounds in \cref{thm:bounds-Riesz,thm:bounds-log,thm:bounds-hypersingular}. In \cref{sec:jittered}, we establish the order of the upper bounds in \cref{thm:bounds-Riesz,thm:bounds-log} using jittered sampling. In \cref{sec:reproducing-kernel-harmonic}, we prove \cref{thm:reproducing-kernel-grassmannian-dpp}. In \Cref{sec:energy-harmonic-ensemble}, we prove \cref{thm:Energy-DPP-Riesz,thm:Energy-DPP-hypersingular,thm:Energy-DPP-log}. In \cref{sec:auxiliary-results}, we gather some auxiliary results and technical lemmas used in the proofs of \cref{sec:energy-harmonic-ensemble}. Finally, \cref{appendix:orthogonal-polynomials} contains some definitions and results about orthogonal polynomials and special functions that are used throughout this work.

\section{Minimizers of the Riesz and logarithmic energies}\label{sec:minimizing-measure}

In this section we prove \cref{thm:minimizers-Riesz-G24}. The key point consists in proving that the Riesz and logarithmic kernels on $\Grass{2}{4}$ are conditionally strictly positive definite.

We denote the set of Borel probability measures on $\Grass{2}{4}$ by $\Measures(\Grass{2}{4})$, the set of finite signed Borel measures by $\SignedMeasures(\Grass{2}{4})$, and the set of finite signed Borel measures with total mass zero, that is, $\nu\in\SignedMeasures(\Grass{2}{4})$ satisfying $\nu(\Grass{2}{4})=0$, by $\ZeroMeasures(\Grass{2}{4})$. Below we recall the concept of well-defined energy of a measure $\nu\in\SignedMeasures(\Grass{2}{4})$.

\begin{definition}[{\cite[Definition 4.2.4]{BorodachovHardinSaff2019}}]\label{def:well-defined-energy}
	Let $A$ be an infinite compact metric space. Given a symmetric and lower semicontinuous kernel $K$ on $A\times A$ and two finite positive Borel measures $\mu$ and $\nu$ supported on $A$, we define their joint (or mutual) $K$-energy $\mutualEnergy{\mu}{\nu}_K$ by
	\begin{equation*}
		\mutualEnergy{\mu}{\nu}_K\defined\iint K(x,y)\dd\nu(x)\dd\mu(y).
	\end{equation*}
	When $\mu$ and $\nu$ are finite signed Borel measures supported on $A$, their joint energy is defined as
	\begin{equation*}
		\mutualEnergy{\mu}{\nu}_K\defined \mutualEnergy{\mu^+}{\nu^+}_K+\mutualEnergy{\mu^-}{\nu^-}_K-\mutualEnergy{\mu^+}{\nu^-}_K-\mutualEnergy{\mu^-}{\nu^+}_K,
	\end{equation*}
	provided that at least one of the sums  $\mutualEnergy{\mu^+}{\nu^+}_K+\mutualEnergy{\mu^-}{\nu^-}_K$ or  $\mutualEnergy{\mu^+}{\nu^-}_K+\mutualEnergy{\mu^-}{\nu^+}_K$ is finite (we say that $\mutualEnergy{\mu}{\nu}_K$ is \emph{well defined} in this case). Here $\mu=\mu^+-\mu^-$ and $\nu^+-\nu^-$ are the Jordan decompositions of the signed measures $\mu$ and $\nu$. We extend the definition of $\contEnergy_K$ to signed measures by setting
	\begin{equation*}
		\contEnergy_K[\nu]\defined \mutualEnergy{\nu}{\nu}_K.
	\end{equation*}
\end{definition}

Recall that a symmetric and lower semicontinuous kernel $K\from\Grass{2}{4}\times \Grass{2}{4}\to(-\infty,\infty]$ is called \emph{conditionally strictly positive definite} if for all $\nu\in\ZeroMeasures(\Grass{2}{4})$ whose energy is well defined, we have $\contEnergy_{K}[\nu]\geq 0$, and $\contEnergy_{K}[\nu]=0$ only if $\nu\identically 0$.

\begin{theorem}[{\cite[Theorems 4.4.5 and 4.4.8]{BorodachovHardinSaff2019}}]\label{thm:Riesz-log-cspd-R^p}
	Let $A\contained \R^p$ be an arbitrary compact set and let $x,y\in A$. Let $d=\dim(A)$ be the Hausdorff dimension of $A$. Then, the logarithmic kernel $K_{\log}(x,y)=-\log\norm{x-y}$ and the Riesz kernel $K_s(x,y)=\norm{x-y}^{-s}$, for $0<s<d$, are conditionally strictly positive definite.
\end{theorem}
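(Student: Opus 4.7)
The plan is to reduce the question to a positivity statement for the Fourier transform of $\nu$, viewed as a compactly supported signed Borel measure on the ambient space $\R^p$. Given $\nu\in\ZeroMeasures(A)$ with well-defined $K$-energy, I would first extend $\nu$ by zero to $\R^p$ and consider $\widehat{\nu}(\xi)=\int_{\R^p}e^{-i\xi\cdot x}\dd\nu(x)$, which is a bounded uniformly continuous function on $\R^p$ vanishing at the origin precisely because $\nu(A)=0$.

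For the Riesz case I would invoke the classical identity, valid as tempered distributions on $\R^p$ for $0<s<p$,
\begin{equation*}
    \norm{x}^{-s}=c_{p,s}\int_{\R^p} e^{i\xi\cdot x}\norm{\xi}^{-(p-s)}\dd\xi,\qquad c_{p,s}>0,
\end{equation*}
where $c_{p,s}$ is an explicit ratio of Gamma functions. Since $d\leq p$ always, the hypothesis $0<s<d$ puts us safely in this range. After a standard truncation/mollification to make sense of the distributional identity, Parseval's theorem yields
\begin{equation*}
    \contEnergy_s[\nu]=c_{p,s}\int_{\R^p}\abs{\widehat{\nu}(\xi)}^2\norm{\xi}^{-(p-s)}\dd\xi\geq 0,
\end{equation*}
and equality forces $\widehat{\nu}\equiv 0$ almost everywhere, hence everywhere by continuity, so $\nu\equiv 0$.

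For the logarithmic case I would use the pointwise limit $-\log\norm{x-y}=\lim_{s\to 0^+}s^{-1}(\norm{x-y}^{-s}-1)$. Integrating against $\dd\nu\otimes\dd\nu$ and using $\iint 1\dd\nu\dd\nu=\nu(A)^2=0$ to eliminate the constant term, a dominated convergence argument on the compact set $A\times A$ delivers
\begin{equation*}
    \contEnergy_{\log}[\nu]=\lim_{s\to 0^+}\frac{\contEnergy_s[\nu]}{s}\geq 0.
\end{equation*}
Strictness is more delicate: I would combine the Riesz Fourier identity with the expansion $c_{p,s}/s\to C>0$ as $s\to 0^+$, exploiting the vanishing $\widehat{\nu}(0)=0$ to compensate the $\norm{\xi}^{-p}$ singularity at the origin, to obtain a genuine Fourier representation of $\contEnergy_{\log}[\nu]$ with a strictly positive weight. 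This again forces $\widehat{\nu}\equiv 0$.

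The main obstacle is the bookkeeping around the well-definedness of the energy in the sense of \cref{def:well-defined-energy}: the Jordan components $\nu^{\pm}$ need not have finite self-energies, so the Fubini manipulations required to pass from the original double integral to the Fourier-side integral must be justified through mutual energies $\mutualEnergy{\nu^+}{\nu^-}_K$. The hypothesis $s<d$ enters here, ensuring that generic positive measures on $A$ have finite $s$-potentials and thus making the Fourier reformulation rigorous. A secondary technical point is the careful identification of the $s\to 0^+$ limit in the logarithmic case, where the vanishing of $\widehat{\nu}$ at the origin must be quantified (for instance via the Lipschitz bound $|\widehat{\nu}(\xi)|\lesssim\norm{\xi}\,\diam(A)\,\abs{\nu}(A)$) to tame the $\norm{\xi}^{-p}$ weight.
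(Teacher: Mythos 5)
You should note first that the paper does not prove this statement at all: it is quoted verbatim from Borodachov--Hardin--Saff, whose proof runs through the strict positive definiteness of the Gaussian kernels $e^{-t\norm{x-y}^2}$ and integral representations of the Riesz and logarithmic kernels as superpositions of Gaussians with positive weights. Your Riesz argument takes the other classical route (Fourier transform of $\norm{x}^{-s}$ plus Parseval), and in outline it is sound for all $0<s<p$, hence in particular for $0<s<d$: nonnegativity and strictness both follow because the weight $\norm{\xi}^{s-p}$ is positive almost everywhere, and the bookkeeping you flag (reducing to the finite-energy case, ruling out the value $-\infty$ via mollification and lower semicontinuity, handling mutual energies as in \cref{def:well-defined-energy}) is standard.

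The genuine gap is in the logarithmic case. Your route rests on the identity $\contEnergy_{\log}[\nu]=\lim_{s\to0^+}s^{-1}\contEnergy_s[\nu]$, justified by dominated convergence on $A\times A$. Since the difference quotient $s^{-1}(t^{-s}-1)$ is monotone in $s$, the natural dominator is $s_0^{-1}(t^{-s_0}-1)$, so the DCT requires $\iint\norm{x-y}^{-s_0}\dd\abs{\nu}(x)\dd\abs{\nu}(y)<\infty$ for some $s_0>0$, i.e.\ finite Riesz $s_0$-energy of $\abs{\nu}$. This is \emph{not} implied by well-defined (or even finite) logarithmic energy: there are compact sets of Hausdorff dimension zero with positive logarithmic capacity, and a neutral measure built from two such pieces has finite logarithmic energy while $\contEnergy_s[\nu^{\pm}]=\infty$ for every $s>0$; for such $\nu$ the quantity $\contEnergy_s[\nu]$ need not even be well defined, so the limit formula cannot be stated, let alone proved by dominated convergence. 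The standard repair is to avoid the $s\to0^+$ limit altogether: use Frullani's integral $-\log\norm{x-y}=\tfrac12\int_0^\infty t^{-1}\bigl(e^{-t\norm{x-y}^2}-e^{-t}\bigr)\dd t$, kill the $e^{-t}$ term by the neutrality $\nu(A)=0$, and invoke strict positive definiteness of the Gaussian kernels (this is essentially the cited proof in the reference), or equivalently establish directly the Fourier representation of $\contEnergy_{\log}[\nu]$ for neutral compactly supported measures, using your Lipschitz bound $\abs{\widehat{\nu}(\xi)}\lesssim\norm{\xi}$ to control the $\norm{\xi}^{-p}$ weight near the origin. Your strictness sketch points in this direction, but as written it hangs on the unjustified limit interchange.
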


The following theorem is given in \cite{BorodachovHardinSaff2019} for compact subsets of $\R^d$. The interested reader can check in \cite[Theorem 4.2.7]{BorodachovHardinSaff2019} that the proof can be easily extended to general compact metric spaces.

\begin{theorem}\label{thm:cspd-implies-uniqueness}
	Let $K$ be a symmetric and lower semicontinuous kernel on $\Grass{2}{4}$. If $K$ is conditionally strictly positive definite, then $\contEnergy_K$ has a unique minimizer in $\Measures(\Grass{2}{4})$.
\end{theorem}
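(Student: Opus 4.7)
The plan is to split the statement into existence and uniqueness. For existence, since $\Grass{2}{4}$ is compact, the space $\Measures(\Grass{2}{4})$ is weakly-$*$ compact by Banach--Alaoglu, and the functional $\mu\mapsto \contEnergy_K[\mu]$ is weakly-$*$ lower semicontinuous: writing $K=(K+M)-M$ with $M$ a lower bound for the lower semicontinuous kernel $K$ on the compact product $\Grass{2}{4}\times\Grass{2}{4}$ and approximating the nonnegative $K+M$ from below by bounded continuous functions via monotone convergence yields the lower semicontinuity of the energy functional. A minimizer attaining $\Wiener_K(\Grass{2}{4})$ therefore exists, and the interesting case is $\Wiener_K(\Grass{2}{4})<\infty$, where uniqueness is nontrivial.

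For uniqueness, suppose $\mu_1,\mu_2\in\Measures(\Grass{2}{4})$ both achieve the infimum. The midpoint $\mu=\tfrac12(\mu_1+\mu_2)\in\Measures(\Grass{2}{4})$ satisfies $\contEnergy_K[\mu]\geq \Wiener_K(\Grass{2}{4})$ by minimality, while bilinearity and symmetry of the mutual energy give
\[
\contEnergy_K[\mu]=\tfrac14\contEnergy_K[\mu_1]+\tfrac12\mutualEnergy{\mu_1}{\mu_2}_K+\tfrac14\contEnergy_K[\mu_2],
\]
forcing $\mutualEnergy{\mu_1}{\mu_2}_K\geq \Wiener_K(\Grass{2}{4})$. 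Now set $\nu=\mu_1-\mu_2\in\ZeroMeasures(\Grass{2}{4})$; the same bilinear expansion yields
\[
\contEnergy_K[\nu]=\contEnergy_K[\mu_1]-2\mutualEnergy{\mu_1}{\mu_2}_K+\contEnergy_K[\mu_2]=2\Wiener_K(\Grass{2}{4})-2\mutualEnergy{\mu_1}{\mu_2}_K\leq 0.
\]
Conditional strict positive definiteness of $K$ then gives $\contEnergy_K[\nu]\geq 0$, hence $\contEnergy_K[\nu]=0$, and the strict part of the hypothesis implies $\nu\equiv 0$, i.e.\ $\mu_1=\mu_2$.

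The main technical obstacle is making the bilinear expansion of $\contEnergy_K[\nu]$ rigorous in the sense of \cref{def:well-defined-energy}, since a priori the individual terms in the Jordan decomposition of the signed measure could be infinite. This is handled by combining the lower bound $K\geq -M$ on the compact product (which makes the mutual energies bounded below and rules out indeterminate forms) with the finiteness of the self-energies $\contEnergy_K[\mu_i]=\Wiener_K(\Grass{2}{4})$, so that every joint energy entering the identity is well defined in the sense of \cref{def:well-defined-energy} and the manipulation is legitimate.
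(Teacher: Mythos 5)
Your argument is correct and is essentially the proof the paper relies on: the paper gives no argument of its own for this statement, but cites \cite[Theorem 4.2.7]{BorodachovHardinSaff2019} and notes it extends to compact metric spaces, and that proof is precisely your combination of existence via weak$^*$ compactness and lower semicontinuity with the parallelogram/convexity argument for uniqueness through conditional strict positive definiteness. The one detail worth spelling out is the well-definedness of $\contEnergy_K[\mu_1-\mu_2]$: since the Jordan parts satisfy $(\mu_1-\mu_2)^+\leq\mu_1$ and $(\mu_1-\mu_2)^-\leq\mu_2$, their self-energies with respect to $K+M\geq 0$ are dominated by the finite quantities $\contEnergy_{K+M}[\mu_i]$, which is exactly the mechanism behind the ingredients you indicate.
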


\begin{corollary}\label{prop:cspd-implies-uniform-measure}
	 Let $K$ be a symmetric and lower semicontinuous kernel on $\Grass{2}{4}$. If $K$ is conditionally strictly positive definite and isometry invariant, then the uniform measure $\sigma$ is the unique minimizer of $\contEnergy_K$.    
\end{corollary}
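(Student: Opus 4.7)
The plan is a standard symmetrization argument exploiting the uniqueness given by Theorem \ref{thm:cspd-implies-uniqueness}. First, that theorem yields a unique minimizer $\mu^* \in \Measures(\Grass{2}{4})$ of $\contEnergy_K$. The strategy is then to show that $\mu^*$ must be $\O(4)$-invariant, and finally invoke the fact that $\sigma$ is the only $\O(4)$-invariant Borel probability measure on $\Grass{2}{4}$ (which holds because $\O(4)$ acts transitively on the Grassmannian, as recalled in \cref{subsec:grassmannian}).

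For the invariance, I would fix $g \in \O(4)$ and consider the pushforward $g_*\mu^*$, which is again a Borel probability measure on $\Grass{2}{4}$ since $g$ acts as a homeomorphism. Using the isometry invariance $K(gP, gQ) = K(P,Q)$ together with the change-of-variables formula for pushforward measures, one obtains
\begin{equation*}
    \contEnergy_K[g_*\mu^*] = \iint\limits_{\mathclap{\Grass{2}{4}\times\Grass{2}{4}}} K(P,Q)\dd(g_*\mu^*)(P)\dd(g_*\mu^*)(Q) = \iint\limits_{\mathclap{\Grass{2}{4}\times\Grass{2}{4}}} K(gP,gQ)\dd\mu^*(P)\dd\mu^*(Q) = \contEnergy_K[\mu^*].
\end{equation*}
Hence $g_*\mu^*$ is also a minimizer of $\contEnergy_K$, and by the uniqueness statement of \cref{thm:cspd-implies-uniqueness} we conclude $g_*\mu^* = \mu^*$ for every $g \in \O(4)$.

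Thus $\mu^*$ is $\O(4)$-invariant. Since $\O(4)$ acts transitively on $\Grass{2}{4}$, the uniform measure $\sigma$ is the unique $\O(4)$-invariant probability measure, so $\mu^* = \sigma$. No step presents any real obstacle; the only thing to be slightly careful about is that, although the energies may be infinite in general, the minimizer $\mu^*$ has finite energy (this is implicit in the uniqueness statement, since otherwise every measure would trivially be a minimizer), and hence both sides of the identity above are finite and the manipulations are legitimate.
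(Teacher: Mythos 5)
Your argument is correct and is essentially the proof the paper has in mind: the paper simply cites the analogous result \cite[Corollary 2.15]{AndersonDostertGrabnerMatzkeStepaniuk2023}, whose proof is exactly this symmetrization step (push forward the unique minimizer from \cref{thm:cspd-implies-uniqueness} by an isometry, use invariance of the kernel and uniqueness to conclude $\O(4)$-invariance, then identify the invariant measure with $\sigma$ by transitivity). Your closing caveat is harmless but unnecessary, since $K$ is bounded below on the compact space $\Grass{2}{4}$, so the change-of-variables identity holds in $(-\infty,\infty]$ whether or not the energy is finite.
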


\begin{proof}
	Analogous to \cite[Corollary 2.15]{AndersonDostertGrabnerMatzkeStepaniuk2023}.
\end{proof}

\begin{theorem}\label{thm:Riesz-log-cspd-G24}
	The logarithmic kernel $K_{\log}$ and the Riesz kernel $K_s$, for $0<s<4$, are conditionally strictly positive definite on the Grassmannian $\Grass{2}{4}$.
\end{theorem}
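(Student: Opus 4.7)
The plan is to pull the problem back to Euclidean space through the isometric embedding $\varphi\from\Grass{2}{4}\to\embeddedGrass{2}{4}\contained\R^9$ recalled at the end of \cref{subsec:grassmannian}, under which $\chordalGrass(P,Q)=\norm{\varphi(P)-\varphi(Q)}$. Once the Riesz and logarithmic kernels on $\Grass{2}{4}$ are identified with the standard Euclidean Riesz and logarithmic kernels restricted to the compact set $\embeddedGrass{2}{4}\contained\R^9$, the result will follow directly from \cref{thm:Riesz-log-cspd-R^p}.

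First, I would observe that the pushforward $\pushforward{\nu}{\varphi}\in\SignedMeasures(\embeddedGrass{2}{4})$ of any $\nu\in\SignedMeasures(\Grass{2}{4})$ has Jordan decomposition $\pushforward{\nu^+}{\varphi}-\pushforward{\nu^-}{\varphi}$ and the same total mass as $\nu$; in particular, $\nu\in\ZeroMeasures(\Grass{2}{4})$ implies $\pushforward{\nu}{\varphi}\in\ZeroMeasures(\embeddedGrass{2}{4})$. Applying the change of variables formula to each of the four mutual energy terms in \cref{def:well-defined-energy} yields, for $K\in\set{K_s,K_{\log}}$ on $\Grass{2}{4}$ and its Euclidean counterpart $\widetilde{K}$ on $\R^9$,
\begin{equation*}
    \contEnergy_K[\nu]=\contEnergy_{\widetilde{K}}[\pushforward{\nu}{\varphi}],
\end{equation*}
with one side well defined if and only if the other is.

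Since $\varphi$ is an isometric embedding, $\embeddedGrass{2}{4}$ is a compact smooth submanifold of $\R^9$ whose Hausdorff dimension equals $\dim(\Grass{2}{4})=4$. Hence \cref{thm:Riesz-log-cspd-R^p}, applied to $\embeddedGrass{2}{4}\contained\R^9$, shows that both $\widetilde{K}=K_s$ for $0<s<4$ and $\widetilde{K}=K_{\log}$ are conditionally strictly positive definite on $\embeddedGrass{2}{4}$. Consequently, for any $\nu\in\ZeroMeasures(\Grass{2}{4})$ with well-defined $K$-energy we obtain $\contEnergy_K[\nu]=\contEnergy_{\widetilde{K}}[\pushforward{\nu}{\varphi}]\geq 0$, and equality forces $\pushforward{\nu}{\varphi}\equiv 0$, hence $\nu\equiv 0$ by the injectivity of $\varphi$.

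The only mildly delicate point is verifying that the pushforward energy identity is compatible with the signed measure conventions of \cref{def:well-defined-energy}, but this reduces to a term-by-term change of variables on each of the four joint energies $\mutualEnergy{\nu^{\pm}}{\nu^{\pm}}_K$, so I do not anticipate any serious obstacle.
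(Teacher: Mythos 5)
Your proposal is correct and is essentially the paper's own argument: the paper likewise transports a signed measure of total mass zero through the isometric embedding $\varphi\from\Grass{2}{4}\to\embeddedGrass{2}{4}\contained\R^9$, rewrites the energy as a Euclidean Riesz or logarithmic energy of the pushforward measure, and invokes \cref{thm:Riesz-log-cspd-R^p}. Your extra remarks on the Jordan decomposition of the pushforward and the Hausdorff dimension of $\embeddedGrass{2}{4}$ only make explicit details the paper leaves implicit.
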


\begin{proof}
	Let $\mu\in \ZeroMeasures(\Grass{2}{4})$ be a measure whose energy is well defined. Assume that $\mu$ is not identically zero. We have to prove that
	\begin{equation*}
		\contEnergy_{s}[\mu]=\iint\limits_{\mathclap{\Grass{2}{4}\times \Grass{2}{4}}}\frac{1}{\chordalGrass(P,Q)^s}\dd\mu(P)\dd\mu(Q)>0.
	\end{equation*}
	As we mentioned in \cref{subsec:grassmannian}, there exists an isometric embedding $\varphi\from \Grass{2}{4}\to \embeddedGrass{2}{4}$, where $\embeddedGrass{2}{4}\contained \R^9$, of the Grassmannian into an Euclidean space so that the chordal distance is the Euclidean distance in the embedding. Since $\varphi$ is an isometry, we have
	\begin{align*}
		\iint\limits_{\mathclap{\Grass{2}{4}\times \Grass{2}{4}}}\chordalGrass(P,Q)^{-s}\dd\mu(P)\dd\mu(Q)
		&=\iint\limits_{\mathclap{\Grass{2}{4}\times \Grass{2}{4}}}\norm{\varphi(P)-\varphi(Q)}^{-s}\dd\mu(P)\dd\mu(Q)\\
		&=\iint\limits_{\mathclap{\embeddedGrass{2}{4}\times \embeddedGrass{2}{4}}}\norm{x-y}^{-s}\dd\pushforward{\mu}{\varphi}(x)\dd\pushforward{\mu}{\varphi}(y)>0,
	\end{align*}
	where $\pushforward{\mu}{\varphi}$ is the pushforward measure of $\mu$ under $\varphi$. The last inequality follows from \cref{thm:Riesz-log-cspd-R^p}. The logarithmic case is analogous.
\end{proof}

\subsection{Proof of \cref{thm:minimizers-Riesz-G24}}
    From \cref{thm:Riesz-log-cspd-G24} and \cref{prop:cspd-implies-uniform-measure}, we have that the uniform measure $\sigma$ uniquely minimizes the continuous Riesz and logarithmic energies on $\Grass{2}{4}$ for $0<s<4$. The last claim of the theorem follows from \cref{thm:fundamentalThm} and the weak$^*$ compactness of $\Measures(\Grass{2}{4})$. We now compute the continuous energies $\contEnergy_s[\sigma]$ and $\contEnergy_{\log}[\sigma]$. The continuous Riesz $s$-energy is given, for $0<s<4$, by
	\begin{equation}\label{eq:cont-Riesz-energy-1}
        \contEnergy_s[\sigma]=\iint\limits_{\mathclap{\Grass{2}{4}\times \Grass{2}{4}}} \frac{1}{\chordalGrass(P,Q)^s}\dd\sigma(P)\dd\sigma(Q).
	\end{equation}
	From \cref{lemma:integral-Grass24-2}, we have, calling $x=\xi_{+}$ and $y=\xi_{-}$, 
	\begin{align}
		\contEnergy_s[\sigma]&=\frac{1}{4}\int_{-1}^{1}\int_{-1}^{1}\frac{1}{(1-xy)^{s/2}}\dd x\dd y \label{eq:cont-Riesz-energy-2}\\
        &=\frac{1}{2(s-2)}\int_{-1}^{1}\frac{(1-y)^{1-s/2}-(1+y)^{1-s/2}}{y}\dd y.\notag
	\end{align}
	Note that
	\begin{equation*}
		\frac{(1-y)^{1-s/2}-(1+y)^{1-s/2}}{y}=(-2)\sum_{n=0}^{\infty}\binom{1-s/2}{1+2n}y^{2n}.
	\end{equation*}
	This power series converges in $(-1,1)$. Since a power series converges uniformly within its radius of convergence, we can integrate term by term the previous expression, obtaining
	\begin{equation*}
		\int_{-1}^{1}\frac{(1-y)^{1-s/2}-(1+y)^{1-s/2}}{y}\dd y
		=2(s-2)\hypergeom{3}{2}\biggl(\frac{1}{2},\frac{s}{4}+\frac{1}{2},\frac{s}{4};\frac{3}{2};\frac{3}{2};1\biggr).
	\end{equation*}
	Hence, we conclude that
	\begin{equation*}
		\contEnergy_s[\sigma]
		=\hypergeom{3}{2}\biggl(\frac{1}{2},\frac{s}{4}+\frac{1}{2},\frac{s}{4};\frac{3}{2};\frac{3}{2};1\biggr).
	\end{equation*}
	Finally, we compute the continuous logarithmic energy. Recall that
    \begin{equation}\label{eq:cont-log-energy}
        \contEnergy_{\log}[\sigma]=-\iint\limits_{\mathclap{\Grass{2}{4}\times \Grass{2}{4}}} \log{\chordalGrass(P,Q)}\dd\sigma(P)\dd\sigma(Q).
    \end{equation}
    From \cref{lemma:integral-Grass24-2}, we have, calling $x=\xi_{+}$ and $y=\xi_{-}$, 
	\begin{equation*}
		\contEnergy_{\log}[\sigma]=-\frac{1}{8}\int_{-1}^{1}\int_{-1}^{1}\log(1-xy)\dd x\dd y
        =1-\frac{\pi^2}{16}-\frac{\log{2}}{2}.\pushQED{\qed}\qedhere
	\end{equation*}

\section{Lower bounds for the Riesz and logarithmic energies}\label{sec:lower-bounds}

This section is dedicated to the proof of the lower bounds in \cref{thm:bounds-Riesz,thm:bounds-log}. Given a kernel of the form $K_f(P,Q)\defined f(\xi_{+}(P,Q),\xi_{-}(P,Q))$ for some function $f\from[-1,1]^2\to(-\infty,\infty]$, we denote
\begin{equation*}
	\Energy^f(\omega_N)\defined \Energy_{K_f}(\omega_N)=\sum_{\substack{P,Q\in\omega_N\\ P\neq Q}}f(\xi_{+}(P,Q),\xi_{-}(P,Q)).
\end{equation*}
Particular examples of this type of kernels are the Riesz and logarithmic kernels $K_s$ and $K_{\log}$ introduced in \eqref{eq:Riesz-kernel} and \eqref{eq:log-kernel}.

Recall that we use the following notation:
\begin{align*}
	\Partitions&=\set{\tau =(\tau_1,\tau_2)\in \N^2\st \tau_1\geq \tau_2\geq  0},\\
	\Partitions^*&=\set{\tau\in \Partitions \st \tau\neq (0,0)}.
\end{align*}

\begin{definition}
	Let $\tau\in\Partitions$ and let $\omega_N=\set{P_1,\dotsc,P_N}\contained \Grass{2}{4}$. We define the \emph{$\tau$-th moment} of $\omega_N$ as
	\begin{equation*}
		M_{\tau}(\omega_N)\defined \sum_{i,j=1}^{N}\JacobiGrass{\tau}(\xi_{+}(P_i,P_j),\xi_{-}(P_i,P_j)),
	\end{equation*}
    where $\JacobiGrass{\tau}$ is the generalized Jacobi polynomial associated with the partition $\tau$ (see \cref{eq:JacobiGrass-xi}).
\end{definition}

\begin{lemma}\label{lemma:nth-moment-nonnegative-G24}
	Let $\omega_N=\set{P_1,\dotsc,P_N}\contained \Grass{2}{4}$ be an $N$-point configuration and let $\tau\in\Partitions$. Then, the moment $M_{\tau}(\omega_N)$ is nonnegative.
\end{lemma}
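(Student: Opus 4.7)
The plan is to use the addition formula for the generalized Jacobi polynomials, which is precisely \cref{eq:addition-formula-G24} in the excerpt. That formula expresses $\JacobiGrass{\tau}(\xi_+(P,Q),\xi_-(P,Q))$ as a sum of rank-one contributions coming from an orthonormal basis of the eigenspace $H_\tau \subset \LL{2}(\Grass{2}{4})$, and such a representation always forces the associated Gram-like quadratic form to be nonnegative.

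Concretely, I would fix any orthonormal basis $\set{e_1,\dotsc,e_{d_\tau}}$ of $H_\tau$ and write, for every pair $P_i, P_j \in \omega_N$,
\begin{equation*}
\JacobiGrass{\tau}(\xi_{+}(P_i,P_j),\xi_{-}(P_i,P_j))=\frac{1}{d_{\tau}}\sum_{k=1}^{d_{\tau}} e_k(P_i)\conj{e_k(P_j)}.
\end{equation*}
Summing over $i,j = 1, \dotsc, N$ and exchanging the order of summation gives
\begin{equation*}
M_{\tau}(\omega_N)=\frac{1}{d_{\tau}}\sum_{k=1}^{d_{\tau}}\sum_{i,j=1}^{N}e_k(P_i)\conj{e_k(P_j)}=\frac{1}{d_{\tau}}\sum_{k=1}^{d_{\tau}}\Bigl|\sum_{i=1}^{N}e_k(P_i)\Bigr|^{2}\geq 0.
\end{equation*}
This is a finite sum of squared moduli divided by the positive integer $d_\tau$, so the conclusion is immediate.

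There is essentially no obstacle here: the only thing one must verify is that $\JacobiGrass{\tau}$ is real-valued (so the sum $M_\tau(\omega_N)$ is real to begin with), and this is clear from \cref{eq:JacobiGrass-xi}, where $\JacobiGrass{\tau}$ is expressed as a symmetric combination of products of real Legendre polynomials. Thus the whole argument reduces to invoking the positive definiteness of zonal polynomials via the addition formula.
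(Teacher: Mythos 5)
Your proof is correct and is essentially the paper's argument made explicit: the paper's one-line proof cites the positive definiteness of the generalized Jacobi polynomials, which (as remarked right after \cref{eq:addition-formula-G24}) is exactly the addition-formula computation you carry out, namely $M_{\tau}(\omega_N)=\frac{1}{d_{\tau}}\sum_{k=1}^{d_{\tau}}\bigl|\sum_{i=1}^{N}e_k(P_i)\bigr|^{2}\geq 0$. No gap here; unwinding the citation as you do is a perfectly valid (and self-contained) way to present the same proof.
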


\begin{proof}
	It follows from the positive definiteness of the generalized Jacobi polynomials (see \cite[Proposition 2.1]{Bachoc2006}).
\end{proof}

\begin{proposition}\label{prop:Energy-Fourier-Grassmann}
	Suppose $g\from[-1,1]^2\to \R$ has an expansion in generalized Jacobi polynomials that converges pointwise on $[-1,1]^2$, that is, there is a sequence $(\hat{g}(\tau))_{\tau}$ such that $g(x,y)=\sum_{\tau\in \Partitions}\hat{g}(\tau)\JacobiGrass{\tau}(x,y)$ for each $(x,y)\in [-1,1]^2$. If $\omega_N\contained \Grass{2}{4}$ is any $N$-point configuration for $N\geq 2$, then
	\begin{equation*}
		\Energy^g(\omega_N)=\hat{g}(0,0)N^2-g(1,1)N+\sum_{\tau\in\Partitions^*} \hat{g}(\tau)M_{\tau}(\omega_N).
	\end{equation*}
\end{proposition}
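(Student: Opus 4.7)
The plan is to substitute the pointwise expansion of $g$ into the definition of $\Energy^g(\omega_N)$, interchange the order of summation, and separate off the diagonal and the $\tau=(0,0)$ terms. Writing the partial sums
\begin{equation*}
S_M(x,y)=\sum_{\DegreePart{\tau}\leq M}\hat{g}(\tau)\JacobiGrass{\tau}(x,y),
\end{equation*}
the pointwise convergence hypothesis gives $S_M(x,y)\tendsto g(x,y)$ for every $(x,y)\in[-1,1]^2$, so
\begin{equation*}
\Energy^g(\omega_N)=\sum_{i\neq j}g(\xi_{+}(P_i,P_j),\xi_{-}(P_i,P_j))=\lim_{M\tendsto\infty}\sum_{i\neq j}S_M(\xi_{+}(P_i,P_j),\xi_{-}(P_i,P_j)),
\end{equation*}
where the interchange of the limit with the outer finite sum is automatic.

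Next, I would swap the finite sum over pairs $i\neq j$ with the finite sum defining $S_M$, and then account for the missing diagonal. Since both principal angles between a subspace and itself vanish, $\xi_{+}(P,P)=\xi_{-}(P,P)=1$, and by the normalization $\JacobiGrass{\tau}(1,1)=1$ each diagonal term contributes exactly $1$ to $M_{\tau}(\omega_N)$, so
\begin{equation*}
\sum_{i\neq j}\JacobiGrass{\tau}(\xi_{+}(P_i,P_j),\xi_{-}(P_i,P_j))=M_{\tau}(\omega_N)-N.
\end{equation*}
Substituting yields
\begin{equation*}
\Energy^g(\omega_N)=\lim_{M\tendsto\infty}\biggl[\sum_{\DegreePart{\tau}\leq M}\hat{g}(\tau)M_{\tau}(\omega_N)-N\sum_{\DegreePart{\tau}\leq M}\hat{g}(\tau)\biggr].
\end{equation*}

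To finish, I would evaluate the second bracketed sum using pointwise convergence at $(x,y)=(1,1)$: since $\JacobiGrass{\tau}(1,1)=1$ for every $\tau$, the partial sums $\sum_{\DegreePart{\tau}\leq M}\hat{g}(\tau)$ converge to $g(1,1)$. Isolating the $\tau=(0,0)$ contribution to the first bracketed sum, and using $\JacobiGrass{(0,0)}\identically 1$ so that $M_{(0,0)}(\omega_N)=N^2$, produces the stated identity (the remaining series over $\tau\in\Partitions^{*}$ must converge because the other two terms in the bracket do). The only technical point is the interchange of summation, but this reduces to the trivial observation that a finite sum commutes with a pointwise limit, so no absolute or uniform convergence needs to be assumed beyond the hypothesis.
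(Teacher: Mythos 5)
Your proof is correct and follows essentially the same route as the paper: substitute the pointwise Jacobi expansion, interchange the finite sum over pairs with the series, use $\xi_{\pm}(P,P)=1$, $\JacobiGrass{\tau}(1,1)=1$ and $\JacobiGrass{(0,0)}\identically 1$ to account for the diagonal and the $\tau=(0,0)$ term. The only (harmless) difference is bookkeeping: the paper subtracts the diagonal as $Ng(1,1)$ before expanding the full double sum, whereas you expand the off-diagonal sum and recover $g(1,1)$ from the convergence of $\sum_{\tau}\hat{g}(\tau)$ at $(1,1)$, which also correctly yields convergence of $\sum_{\tau\in\Partitions^*}\hat{g}(\tau)M_{\tau}(\omega_N)$.
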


\begin{proof}
	We have
	\begin{align*}
		\Energy^g(\omega_N)&=\sum_{\substack{P,Q\in\omega_N\\P\neq Q}}g(\xi_{+}(P,Q),\xi_{-}(P,Q))\\
		&=\sum_{P,Q\in\omega_N}g(\xi_{+}(P,Q),\xi_{-}(P,Q))-\sum_{P\in\omega_N}g(\xi_{+}(P,P),\xi_{-}(P,P))\\
		&=\sum_{P,Q\in\omega_N}g(\xi_{+}(P,Q),\xi_{-}(P,Q))-Ng(1,1).
	\end{align*}
	Note that
	\begin{align*}
		\sum_{P,Q\in\omega_{N}}g(\xi_{+}(P,Q),\xi_{-}(P,Q))&=\sum_{P,Q\in\omega_N}\sum_{\tau\in\Partitions}\hat{g}(\tau)\JacobiGrass{\tau}(\xi_{+}(P,Q),\xi_{-}(P,Q))\\
		&=N^2\hat{g}(0,0)+\sum_{\tau\in\Partitions^*}\hat{g}(\tau)\sum_{P,Q\in\omega_N}\JacobiGrass{\tau}(\xi_{+}(P,Q),\xi_{-}(P,Q))\\
		&=N^2\hat{g}(0,0)+\sum_{\tau\in\Partitions^*}\hat{g}(\tau)M_{\tau}(\omega_{N}).\qedhere
	\end{align*}
\end{proof}

We now adapt \cite[Theorem 5.5.1]{BorodachovHardinSaff2019} to the case of $\Grass{2}{4}$. Given a function $f\from [-1,1]^2\to(-\infty,\infty]$, let 
\begin{multline*}
	\mathcal{A}(f)\defined\left\{g\in C([-1,1]^2)\st \text{$\hat{g}(\tau)\geq 0$ for $\tau\in\Partitions^*$ and $g(x,y)\leq f(x,y)$}\right.\\
    \left. \text{for $(x,y)\in[-1,1]^2$}\right\}.
\end{multline*}

\begin{theorem}\label{thm:LP-lower-bound}
	Suppose $f\from [-1,1]^2\to(-\infty,\infty]$ and $g\in\mathcal{A}(f)$. If $\omega_N$ is an $N$-point configuration on $\Grass{2}{4}$, then
	\begin{equation*}
		\Energy^{f}(\omega_{N})\geq \Energy^{g}(\omega_{N})\geq \hat{g}(0,0)N^2-g(1,1)N. 
	\end{equation*}
\end{theorem}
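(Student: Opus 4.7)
The strategy is to handle the two inequalities separately, with the second one being the substantive content. For the first inequality $\Energy^f(\omega_N) \geq \Energy^g(\omega_N)$, I would simply invoke the pointwise bound $g(x,y) \leq f(x,y)$ on $[-1,1]^2$ guaranteed by $g \in \mathcal{A}(f)$. Since $(\xi_+(P,Q), \xi_-(P,Q)) \in [-1,1]^2$ for every ordered pair of distinct points $(P,Q) \in \omega_N^2$, summing the termwise inequality over the $N(N-1)$ such pairs yields the claim immediately.

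For the second inequality, I would apply Proposition \ref{prop:Energy-Fourier-Grassmann} to $g$. Its hypothesis requires that the generalized Jacobi expansion $\sum_{\tau\in\Partitions} \hat g(\tau) \JacobiGrass{\tau}$ of $g$ converge pointwise on $[-1,1]^2$, and I would establish this using the positivity assumption on the coefficients. From the addition formula \eqref{eq:addition-formula-G24} together with Cauchy--Schwarz one obtains the uniform bound $|\JacobiGrass{\tau}(x,y)| \leq \JacobiGrass{\tau}(1,1) = 1$ on the admissible region. Since $g$ is continuous and $\hat g(\tau) \geq 0$ for every $\tau \in \Partitions^*$, a standard Fejér/Tauberian argument (Cesàro means of the Jacobi expansion converge uniformly to $g$, and the Hardy--Littlewood Tauberian theorem promotes Cesàro summability of a series with nonnegative terms to ordinary convergence) yields $\sum_{\tau \in \Partitions^*} \hat g(\tau) \leq g(1,1) - \hat g(0,0) < \infty$. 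Together with the uniform bound on $\JacobiGrass{\tau}$, this gives absolute and uniform convergence of the series; the uniform limit must coincide with the $L^2$ limit $g$, hence with $g$ pointwise.

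Applying Proposition \ref{prop:Energy-Fourier-Grassmann} then gives
\[
  \Energy^g(\omega_N) = \hat g(0,0) N^2 - g(1,1) N + \sum_{\tau \in \Partitions^*} \hat g(\tau)\, M_\tau(\omega_N).
\]
The residual sum is nonnegative, since each coefficient $\hat g(\tau)$ is nonnegative by the definition of $\mathcal{A}(f)$ and each moment $M_\tau(\omega_N)$ is nonnegative by Lemma \ref{lemma:nth-moment-nonnegative-G24} (which reflects the positive definiteness of the generalized Jacobi polynomials). Discarding this sum yields the bound $\Energy^g(\omega_N) \geq \hat g(0,0) N^2 - g(1,1) N$ and closes the argument.

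The main obstacle is the verification of pointwise convergence of the Jacobi expansion of $g$; this is a technical point that is often glossed over in analogous arguments on the sphere, and it requires a little care in the bivariate Grassmannian setting. Once secured, the remainder is bookkeeping: the linear programming bound expresses the fact that the contributions of the higher harmonic blocks, being nonnegative, can be discarded, leaving just the constant term $\hat g(0,0)N^2$ minus the diagonal correction $g(1,1)N$.
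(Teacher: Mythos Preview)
Your argument matches the paper's: the first inequality is the pointwise bound $g\le f$, and the second comes from Proposition~\ref{prop:Energy-Fourier-Grassmann} together with Lemma~\ref{lemma:nth-moment-nonnegative-G24} by discarding the nonnegative tail $\sum_{\tau\in\Partitions^*}\hat g(\tau)M_\tau(\omega_N)$. The paper applies Proposition~\ref{prop:Energy-Fourier-Grassmann} directly without verifying its pointwise-convergence hypothesis; you add a Tauberian justification, which is a welcome bit of extra care, though the Fej\'er-type input you invoke (uniform Ces\`aro summability of the bivariate Legendre expansion of a continuous function up to the boundary) is itself not entirely off-the-shelf and would need a reference or a tensor-product argument---in practice the paper only applies the theorem to the functions of Lemma~\ref{lemma:positive-coefficients-lower-bound}, for which the expansion can be checked by hand from the uniformly convergent power series \eqref{eq:expansion-1}--\eqref{eq:expansion-2}.
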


\begin{proof}
	Since $g\leq f$, we have $\Energy^{g}(\omega_{N})\leq\Energy^{f}(\omega_{N})$. Therefore, from \cref{prop:Energy-Fourier-Grassmann} and \cref{lemma:nth-moment-nonnegative-G24},
	\begin{align*}
		\Energy^{f}(\omega_{N})&\geq \Energy^{g}(\omega_{N})=\hat{g}(0,0)N^2-g(1,1)N+\sum_{\tau\in\Partitions^*}\hat{g}(\tau)M_{\tau}(\omega_N)\\
        &\geq \hat{g}(0,0)N^2-g(1,1)N. \qedhere
	\end{align*}
\end{proof}

To prove the lower bounds in \cref{thm:bounds-Riesz,thm:bounds-log} we will need the following lemma.

\begin{lemma}\label{lemma:positive-coefficients-lower-bound}
	Let $\delta>0$. Let $f_{s,\delta}\from[-1,1]^2\to\R$ be the function given by
	\begin{equation*}
		f_{s,\delta}(x,y)=
		\begin{cases}
			(1-xy+\delta)^{-s} & s>0,\\
			-\log(1-xy+\delta) & s=0.
		\end{cases}
	\end{equation*}
	Then, $\hat{f}_{s,\delta}(\tau)\geq 0$ for all $\tau\in\Partitions^*$.
\end{lemma}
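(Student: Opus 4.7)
The plan is to expand $f_{s,\delta}$ as a power series in the product $xy$ and then reduce the nonnegativity of each generalized Fourier--Jacobi coefficient to a classical nonnegativity property of Legendre moments.

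First, observe that for $\delta>0$ we have $|xy|/(1+\delta)<1$ on $[-1,1]^2$, so the binomial/logarithmic expansions
\begin{equation*}
(1-xy+\delta)^{-s}=(1+\delta)^{-s}\sum_{n=0}^{\infty}\frac{(s)_n}{n!}\,\frac{(xy)^n}{(1+\delta)^n},\qquad -\log(1-xy+\delta)=-\log(1+\delta)+\sum_{n=1}^{\infty}\frac{1}{n}\,\frac{(xy)^n}{(1+\delta)^n},
\end{equation*}
converge uniformly on $[-1,1]^2$. In both cases we obtain $f_{s,\delta}(x,y)=\sum_{n=0}^{\infty}a_n(xy)^n$ with $a_n\geq 0$ for every $n\geq 1$. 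Uniform convergence justifies interchanging the sum with the integral in \eqref{eq:generalizedJacobiCoefficients}, so
\begin{equation*}
\hat{f}_{s,\delta}(\tau)=\frac{1}{\norm{\JacobiGrass{\tau}}_{\eta}^2}\sum_{n=0}^{\infty}a_n\int_{-1}^{1}\!\!\int_{-1}^{1}x^n y^n\,\JacobiGrass{\tau}(x,y)\dd x\dd y.
\end{equation*}

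Second, I would insert the explicit formula \eqref{eq:JacobiGrass-xi} for $\JacobiGrass{\tau}$ in terms of Legendre polynomials. Since the integrand $x^n y^n$ is symmetric in $x$ and $y$, the two terms in \eqref{eq:JacobiGrass-xi} contribute equally, and the double integral factors as
\begin{equation*}
\int_{-1}^{1}\!\!\int_{-1}^{1}x^n y^n\,\JacobiGrass{\tau}(x,y)\dd x\dd y=\Bigl(\int_{-1}^{1}x^n \Legendre{\tau_1+\tau_2}(x)\dd x\Bigr)\Bigl(\int_{-1}^{1}y^n \Legendre{\tau_1-\tau_2}(y)\dd y\Bigr).
\end{equation*}

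Third, I would invoke the classical fact that $\int_{-1}^{1}x^n\Legendre{k}(x)\dd x\geq 0$ for every $n,k\geq 0$. This is immediate from the expansion of $x^n$ in the Legendre basis: the coefficients in $x^n=\sum_{k\leq n}c_{n,k}\Legendre{k}(x)$ are nonnegative (they are either zero, by parity or degree considerations, or a positive ratio of factorials/double factorials), so orthogonality of the Legendre polynomials gives $\int_{-1}^{1}x^n\Legendre{k}(x)\dd x=\tfrac{2}{2k+1}c_{n,k}\geq 0$.

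Combining these three steps, every term in the series for $\hat{f}_{s,\delta}(\tau)$ is a product of $a_n\geq 0$ and two nonnegative Legendre moments, so $\hat{f}_{s,\delta}(\tau)\geq 0$ for every $\tau\in\Partitions^*$. (The case $\tau=(0,0)$ is excluded precisely because that is where the $a_0$-term, which need not be positive in the logarithmic case, would land; for $\tau\in\Partitions^*$ the $n=0$ summand vanishes by orthogonality of $\JacobiGrass{\tau}$ to constants.) The only potentially delicate point is the interchange of sum and integral, but uniform convergence on the compact square $[-1,1]^2$ makes this routine; no step should present a serious obstacle.
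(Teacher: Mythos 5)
Your proof is correct, and it reaches the key nonnegativity step by a genuinely different route than the paper. Both arguments start the same way: expand $f_{s,\delta}$ as a power series in $xy$ whose coefficients are nonnegative for $n\geq 1$, interchange sum and integral by uniform convergence on $[-1,1]^2$, and dispose of the possibly negative constant term (relevant only in the logarithmic case) via orthogonality of $\JacobiGrass{\tau}$ to $\JacobiGrass{(0,0)}\identically 1$ for $\tau\in\Partitions^*$. Where you differ is in proving $\int_{-1}^{1}\int_{-1}^{1}x^n y^n \JacobiGrass{\tau}(x,y)\dd x\dd y\geq 0$: the paper invokes the positive definiteness of the generalized Jacobi polynomials as kernels on $\Grass{2}{4}$ together with Bochner's characterization (which tacitly also needs the kernel $(\xi_{+}\xi_{-})^n$ to be positive definite, e.g.\ because $\xi_{+}\xi_{-}$ is the Frobenius inner product of the centered projectors), whereas you use the explicit formula \eqref{eq:JacobiGrass-xi} to factor the double integral into the product $\bigl(\int_{-1}^{1}t^n\Legendre{\tau_1+\tau_2}(t)\dd t\bigr)\bigl(\int_{-1}^{1}t^n\Legendre{\tau_1-\tau_2}(t)\dd t\bigr)$ — your symmetry observation making the two cross terms equal is correct — and then the classical fact that Legendre moments of monomials are nonnegative (immediate from Rodrigues' formula or from the nonnegative Legendre coefficients of $x^n$, as you say). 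Your route is more elementary and self-contained, avoiding positive definiteness and the reference to Bochner entirely and sidestepping the slight terseness of that step in the paper; the trade-off is that it hinges on the special Legendre-product structure of the zonal polynomials of $\Grass{2}{4}$, while the paper's argument would carry over verbatim to any family of positive definite zonal kernels (hence to general Grassmannians).
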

 
 \begin{proof}
 	We have
 	\begin{align}
 		(1-xy+\delta)^{-s/2}&=(\delta+1)^{-s/2}\biggl(1-\frac{xy}{\delta+1}\biggr)^{-s/2}\label{eq:expansion-1}\\
 		&=(\delta+1)^{-s/2}\sum_{k=0}^{\infty}\binom{k+s/2-1}{k}\frac{x^ky^k}{(\delta+1)^k},\notag\\
 		-\log(1-xy+\delta)&=-\log(\delta+1)+\log\biggl(1-\frac{xy}{\delta+1}\biggr)\label{eq:expansion-2}\\
 		&=-\log(\delta+1)+\sum_{k=1}^{\infty}\frac{1}{k}\biggl(\frac{x^ky^k}{(\delta+1)^k}\biggr)\notag,
 	\end{align}
 	where the series in \cref{eq:expansion-1,eq:expansion-2} converge uniformly on $[-1,1]^2$. Therefore, from \eqref{eq:generalizedJacobiCoefficients} the generalized Fourier--Jacobi coefficients $\hat{f}_{s,\delta}(\tau)$ are given, for $\tau\in\Partitions^*$, by
 	\begin{equation*}
 		\frac{(\delta+1)^{-s/2}}{\norm{\JacobiGrass{\tau}}_{\eta}^2}\sum_{k=0}^{\infty}\binom{k+s/2-1}{k}\frac{1}{(\delta+1)^k}\int_{-1}^{1}\int_{-1}^{1}x^ky^k\JacobiGrass{\tau}(x,y)\dd x\dd y,
 	\end{equation*}
 	for $s>0$, and by
 	\begin{align*}
 		\MoveEqLeft-\frac{\log(\delta+1)}{\norm{\JacobiGrass{\tau}}_{\eta}^2}\int_{-1}^{1}\int_{-1}^{1}\JacobiGrass{\tau}(x,y)\dd x\dd y\\
 		&\qquad+\frac{1}{\norm{\JacobiGrass{\tau}}_{\eta}^2}\sum_{k=1}^{\infty}\frac{1}{k(\delta+1)^k}\int_{-1}^{1}\int_{-1}^{1}x^ky^k\JacobiGrass{\tau}(x,y)\dd x\dd y\\
 		&=\frac{1}{\norm{\JacobiGrass{\tau}}_{\eta}^2}\sum_{k=1}^{\infty}\frac{1}{k(\delta+1)^k}\int_{-1}^{1}\int_{-1}^{1}x^ky^k\JacobiGrass{\tau}(x,y)\dd x\dd y,
 	\end{align*}
 	for $s=\log$, where we have used that $\JacobiGrass{(0,0)}\identically 1$ together with the orthogonality of the generalized Jacobi polynomials. Since $\JacobiGrass{\tau}(x,y)$ is positive definite, we have, using the characterization of positive definiteness given by \cite[Eq. (3)]{Bochner1941}, that
 	\begin{align*}
 		\int_{-1}^{1}\int_{-1}^{1}x^ky^k\JacobiGrass{\tau}(x,y)\dd x\dd y\geq 0
 	\end{align*}
 	for all $\tau\in\Partitions^*$. Hence, the lemma follows.\qedhere
 \end{proof}

\subsection{Proof of the lower bounds in \cref{thm:bounds-Riesz,thm:bounds-log,thm:bounds-hypersingular}}

\subsubsection{Proof of the lower bound for $0\leq s<4$}

Consider the function
\begin{equation*}
	f_s(u)=\begin{cases}
		u^{-s/2} & s>0,\\
		-\frac{1}{2}\log u & s=0,
	\end{cases}
\end{equation*}
where in this proof $s=0$ corresponds to the logarithmic case. The derivatives of this function are given by
\begin{equation*}
	f_s^{(k)}(u)=(-1)^kc_{s,k}f_{s+2k}(u),
\end{equation*}
where
\begin{equation}\label{eq:def-csk}
	c_{s,k}=\begin{cases}
		1 & k=0,\\
		\Pochhammer{\frac{s}{2}}{k} & \text{$s>0$, $k>0$},\\
		\frac{1}{2}(k-1)! & \text{$s=0$, $k>0$}.
	\end{cases}
\end{equation}
By Taylor's theorem with integral form of the remainder, we have, for $u>0$,
\begin{align*}
	f_s(u)&=\sum_{k=0}^{n}\frac{\delta^k}{k!}(-1)^kf_s^{(k)}(u+\delta)+\frac{(-1)^{n+1}}{n!}\int_{u}^{u+\delta}(w-u)^nf_s^{(n+1)}(w)\dd w\\
	&=\sum_{k=0}^{n}\frac{\delta^k}{k!}(-1)^kf_{s}^{(k)}(u+\delta)+\frac{(-1)^{n+1}}{n!}\int_{0}^{\delta}t^nf_s^{(n+1)}(u+t)\dd t.
\end{align*}
Consider the function
\begin{align}
	F_{s,n,\delta}(u)=\sum_{k=0}^{n}\frac{\delta^k}{k!}(-1)^kf_s^{(k)}(u+\delta)&\leq f_s(u)-\frac{(-1)^{n+1}}{n!}\int_{0}^{\delta}t^nf_s^{(n+1)}(u+t)\dd t\label{eq:F-n-delta}\\
    &=f_s(u)-\frac{c_{s,n+1}}{n!}\int_{0}^{\delta}t^nf_{s+2n+2}(u+t)\dd t,\notag
\end{align}
with equality for $u>0$. Since $(-1)^{n+1}f_s^{(n+1)}\geq 0$, we have $F_{s,n,\delta}(u)\leq f_s(u)$.
Given $P,Q\in\Grass{2}{4}$, let 
\begin{equation}\label{eq:tilde-f-s}
    \tilde{f}_s(\xi_{+}(P,Q),\xi_{-}(P,Q))\defined f_s(1-\xi_{+}(P,Q)\xi_{-}(P,Q)).
\end{equation}
Recall that the chordal distance in $\Grass{2}{4}$ in terms of the variables $\xi_{+}$ and $\xi_{-}$ is given in \cref{eq:chordal-distance-xi}. Then, the discrete Riesz energy of an $N$-point configuration $\omega_N=\set{P_1,\dotsc,P_N}\contained\Grass{2}{4}$ is precisely 
\begin{equation*}
    E_s(\omega_N)=E^{\tilde{f}_s}(\omega_N)=\sum_{i\neq j}f_s(1-\xi_{+}(P_i,P_j)\xi_{-}(P_i,P_j)).
\end{equation*}
To determine a lower bound for $E_s(\omega_N)$ we apply \cref{thm:LP-lower-bound} to the function $g\from[-1,1]^2\to\R$ given by
\begin{equation*}
	g(x,y)=F_{s,n,\delta}(1-xy).
\end{equation*}
From \cref{lemma:positive-coefficients-lower-bound}, it follows that the generalized Fourier--Jacobi coefficients $\hat{g}(\tau)$ of $g(x,y)$ are nonnegative for all $\tau\in\Partitions^*$.
Therefore, from \cref{thm:LP-lower-bound} we have, for any $N$-point configuration $\omega_N$,
\begin{equation}\label{eq:lower-bound-Es}
    \Energy_s(\omega_N)\geq \hat{g}(0,0)N^2-g(1,1)N,
\end{equation}
where the coefficient $\hat{g}(0,0)$ is given by \eqref{eq:generalizedJacobiCoefficients-0-0} and
\begin{equation}\label{eq:g-1-1}
	g(1,1)=F_{s,n,\delta}(0)=\sum_{k=0}^{n}\frac{\delta^k}{k!}(-1)^kf_s^{(k)}(\delta)=f_{s}(\delta)+\delta^{-s/2}\sum_{k=1}^{n}\frac{c_{s,k}}{k!}.
\end{equation} 
We now compute $\hat{g}(0,0)$. From \eqref{eq:F-n-delta} and \eqref{eq:cont-Riesz-energy-2} (and recall that $\Wiener_s(\Grass{2}{4})=\contEnergy_s[\sigma]$), we have
\begin{align}
	\hat{g}(0,0)&=\frac{1}{4}\int_{-1}^{1}\int_{-1}^{1}g(x,y)\dd x\dd y \label{eq:g-0-0-aux}\\
	&=\Wiener_s(\Grass{2}{4})-\frac{c_{s,n+1}}{4n!}\int_{0}^{\delta}t^n\int_{-1}^{1}\int_{-1}^{1}(1-xy+t)^{-s/2-n-1}\dd x\dd y \dd t.\notag
\end{align}
The inner integral is
\begin{align*}
	\MoveEqLeft\int_{-1}^{1}\int_{-1}^{1}(1-xy+t)^{-s/2-n-1}\dd x\dd y\\
	&=\frac{1}{s/2+n}\int_{-1}^{1}\frac{(1-y+t)^{-s/2-n}-(1+y+t)^{-s/2-n}}{y}\dd y.
\end{align*}
To compute this integral, note that
\begin{align*}
	\MoveEqLeft\frac{(1-y+t)^{-s/2-n}-(1+y+t)^{-s/2-n}}{y}\\
    &=-2\sum_{k=0}^{\infty}(1+t)^{-n-s/2-2k-1}\binom{-n-s/2}{2k+1}y^{2k}.
\end{align*}
Hence, integrating term-by-term we obtain
\begin{align*}
	\MoveEqLeft\frac{1}{s/2+n}\int_{-1}^{1}\frac{(1-y+t)^{-s/2-n}-(1+y+t)^{-s/2-n}}{y}\dd y\\
	&=\frac{-4}{s/2+n}\sum_{k=0}^{\infty}(1+t)^{-n-s/2-2k-1}\binom{-n-s/2}{2k+1}\frac{1}{2k+1}\\
	&=4(1+t)^{-n-s/2-1}\hypergeom{3}{2}\biggl(\frac{1}{2},\frac{n}{2}+\frac{s}{4}+\frac{1}{2},\frac{n}{2}+\frac{s}{4}+1;\frac{3}{2},\frac{3}{2};\frac{1}{(1+t)^2}\biggr).
\end{align*}
From \cref{prop:hypergeom32-limit-divergent} we have
\begin{align*}
   	\MoveEqLeft 4(1+t)^{-n-s/2-1}\hypergeom{3}{2}\biggl(\frac{1}{2},\frac{n}{2}+\frac{s}{4}+\frac{1}{2},\frac{n}{2}+\frac{s}{4}+1;\frac{3}{2},\frac{3}{2};\frac{1}{(1+t)^2}\biggr)\\
   	&\sim 2^{3-s/2-n}\frac{\Gamma(n+s/2-1)\Gamma(3/2)^2}{\Gamma(1/2)\Gamma(n/2+s/4+1/2)\Gamma(n/2+s/4+1)}t^{1-s/2-n}\\
   	&=\frac{2}{(n+s/2)(n+s/2-1)}t^{1-s/2-n}
\end{align*}
as $t\tendsto 0$, where in the last equality we have used the duplication formula for the gamma function. This estimate is valid for $n>1-s/2$. Calling
\begin{equation}\label{eq:def-An}
	A_{s,n}=\frac{2}{(n+s/2)(n+s/2-1)},
\end{equation}
we have, putting all the previous computations together,
\begin{equation}\label{eq:aux-low-bound}
	\int_{-1}^{1}\int_{-1}^{1}(1-xy+t)^{-s/2-n-1}\dd x\dd y\sim A_{s,n}t^{1-s/2-n},
\end{equation}
as $t\tendsto 0$. Substituting \eqref{eq:aux-low-bound} into \eqref{eq:g-0-0-aux}, we obtain
\begin{equation}\label{eq:g-0-0-final}
	\hat{g}(0,0)=\Wiener_s(\Grass{2}{4})-\frac{A_{s,n}c_{s,n+1}}{4n!(2-s/2)}\delta^{2-s/2}+o(\delta^{2-s/2}),
\end{equation}
as $\delta\tendsto 0^+$. These expressions are valid for $0\leq s<4$. Inserting \eqref{eq:g-0-0-final} and \eqref{eq:g-1-1} into \eqref{eq:lower-bound-Es}, we have
\begin{multline}\label{eq:Exp-delta-asymp}
   	\Energy_s(\omega_N)\geq N^2\biggl(\Wiener_s(\Grass{2}{4})-\frac{A_{s,n}c_{s,n+1}}{4n!(2-s/2)}\delta^{2-s/2}+o(\delta^{2-s/2})\biggr)\\-N\biggl(f_{s}(\delta)+\delta^{-s/2}\sum_{k=1}^{n}\frac{c_{s,k}}{k!}\biggr).
\end{multline}
To finish the proof, we take $\delta=N^{-1/2}$ in \cref{eq:Exp-delta-asymp}. In the case $0<s<4$, we obtain
\begin{equation*}
   	\Energy_s(\omega_N) \geq \Wiener_s(\Grass{2}{4})N^2+C_{s,n}N^{1+s/4}+o(N^{1+s/4}),
\end{equation*}
where
\begin{equation}\label{eq:C_sn}
    C_{s,n}=-\biggl(\frac{A_{s,n}c_{s,n+1}}{4n!(2-s/2)}+1+\sum_{k=1}^{n}\frac{c_{s,k}}{k!}\biggr).
\end{equation}
In the logarithmic case $s=0$, we have
\begin{equation*}
    \Energy_{\log}(\omega_N)\geq \Wiener_{\log}(\Grass{2}{4})N^2-\frac{1}{4}N\log{N}+C_{0,n}N+o(N),
\end{equation*}
where
\begin{equation}\label{eq:C_0n}
    C_{0,n}=-\biggl(\frac{A_{0,n}c_{0,n+1}}{8n!}+\sum_{k=1}^{n}\frac{c_{0,k}}{k!}\biggr).
\end{equation}
The constants $C_{s,n}$ and $C_{0,n}$ in \eqref{eq:C_sn} and \eqref{eq:C_0n} are negative and decreasing as a function of $n$. Hence, we choose $n$ as the smallest integer such that $n>1-s/2$. This implies choosing $n=1$ if $s>0$ and $n=2$ if $s=0$. The proof is now complete.\pushQED{\qed}\qedhere

\subsubsection{Proof of the lower bound for $s=4$}

In the hypersingular case $s=4$ we can use a simpler approach. Let 
$\psi(\xi_{+},\xi_{-})\defined (1-\xi_{+}\xi_{-}+\delta)^{-2}$. From \cref{lemma:positive-coefficients-lower-bound}, the generalized Fourier--Jacobi coefficients $\hat{\psi}(\tau)$ are nonnegative for all $\tau\in\Partitions^*$. Since $\psi(1,1)=\delta^{-2}$, from \cref{thm:LP-lower-bound} we have
\begin{equation}\label{eq:aux-lower-bound-hypersing}
   	\Energy_{4}(\omega_{N})=E^{\tilde{f}_4}(\omega_N)\geq \Energy^{\psi}(\omega_{N})\geq \hat{\psi}(0,0)N^2-\delta^{-2}N,
\end{equation}
where $\tilde{f}_s$ is as in \eqref{eq:tilde-f-s} and
\begin{align*}
   	\hat{\psi}(0,0)&=\frac{1}{4}\int_{-1}^{1}\int_{-1}^{1}\psi(x,y)\dd x\dd y=\frac{1}{4}\int_{-1}^{1}\int_{-1}^{1}(1-xy+\delta)^{-2}\dd x\dd y.
\end{align*}
Note that
\begin{equation*}
   	\int_{-1}^{1}\int_{-1}^{1}\frac{1}{(1-xy+\delta)^2}\dd x\dd y=\frac{2}{1+\delta}\log\biggl(1+\frac{2}{\delta}\biggr)
   	=-2\log\delta+2\log 2+o(1),
\end{equation*}
as $\delta\tendsto 0^{+}$. Hence,
\begin{equation}\label{eq:psi-0-0}
   	\hat{\psi}(0,0)=-\frac{1}{2}\log\delta+\frac{1}{2}\log 2 +o(1).
\end{equation}
Substituting \eqref{eq:psi-0-0} into \eqref{eq:aux-lower-bound-hypersing} and letting $\delta=N^{-1/2}$, we obtain
\begin{align*}
   	\Energy_{4}(\omega_{N})&\geq \frac{1}{4}N^2\log N +N^2\biggl(\frac{\log 2}{2}-1\biggr)+o(N^2).\pushQED{\qed}\qedhere
\end{align*}

\section{Upper bounds using jittered sampling}\label{sec:jittered}

In this section we use jittered sampling to prove the order of the upper bounds in \cref{thm:bounds-Riesz,thm:bounds-log}. This technique has already been used in other works; see, for example, \cite[Theorem 6.4.2]{BorodachovHardinSaff2019}. Although we do it for $\Grass{2}{4}$, the proof could be trivially extended to the general case of $\Grass{m}{d}$.

The following result is proved in \cite[Theorem 2]{GiganteLeopardi2017} for the more general setting of connected Ahlfors regular metric measure spaces. Here we state it for the particular case of the Grassmannian $\Grass{2}{4}$. 

\begin{lemma}\label{prop:partition}
    For the Grassmannian $\Grass{2}{4}$, there exist positive constants $c_1$ and $c_2$ such that for every sufficiently large $N$, there is a partition of $\Grass{2}{4}$ into $N$ regions of measure $1/N$, each contained in a geodesic ball of radius $c_1N^{-1/4}$ and containing a geodesic ball of radius $c_2N^{-1/4}$.
\end{lemma}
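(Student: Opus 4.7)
The plan is to deduce this partition result directly from the general theorem of Gigante--Leopardi \cite{GiganteLeopardi2017}, by verifying that $\Grass{2}{4}$ equipped with the geodesic distance $d_g$ and the uniform measure $\sigma$ fits into that framework. Recall that the cited theorem produces such an equal-measure partition with matching inner/outer radii for any connected Ahlfors $d$-regular metric measure space, with the radii scaling like $N^{-1/d}$; since $\dim(\Grass{2}{4})=4$, this will deliver exactly the $N^{-1/4}$ scaling in the statement.

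First, I would observe that $\Grass{2}{4}$ is connected: the connected subgroup $\SO(4) \subset \O(4)$ already acts transitively, which follows from the homogeneous space description recalled in \cref{subsec:grassmannian}, and the continuous image of a connected set is connected. Next, I would argue that $(\Grass{2}{4}, d_g, \sigma)$ is Ahlfors $4$-regular, i.e.\ that there exist constants $A_1, A_2 > 0$ such that
\begin{equation*}
A_1 r^4 \leq \sigma(B_{d_g}(P, r)) \leq A_2 r^4
\qquad \text{for all } P \in \Grass{2}{4},\ 0 < r \leq \diam(\Grass{2}{4}).
\end{equation*}
This is a routine consequence of the fact that $\Grass{2}{4}$ is a smooth compact Riemannian manifold of dimension $4$: the volume of a small geodesic ball admits the standard expansion $\vol(B_{d_g}(P,r)) = \omega_4 r^4 (1 + O(r^2))$ (with $\omega_4$ the Euclidean unit ball volume), uniformly in $P$ by homogeneity under the $\O(4)$-action, while for $r$ bounded away from $0$ both sides are bounded above and below by positive constants thanks to compactness. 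Since $\sigma$ is the normalized Riemannian volume, the same bounds transfer to $\sigma$ up to the constant factor $\vol(\Grass{2}{4})^{-1}$.

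With connectedness and Ahlfors $4$-regularity in hand, I would directly invoke \cite[Theorem 2]{GiganteLeopardi2017} to obtain, for every sufficiently large $N$, a partition $\{R_1, \ldots, R_N\}$ of $\Grass{2}{4}$ such that $\sigma(R_i) = 1/N$ and each $R_i$ sits between two concentric geodesic balls of radii comparable to $N^{-1/4}$, with comparability constants $c_1, c_2$ depending only on the Ahlfors regularity constants $A_1, A_2$. This is exactly the statement of \cref{prop:partition}.

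I do not expect any real obstacle here: the only point that requires care is making the Ahlfors constants uniform in $P$, but this is immediate from the transitive isometric action of $\O(4)$ on $(\Grass{2}{4}, d_g)$ (which preserves $\sigma$ as well). Everything else is a direct citation.
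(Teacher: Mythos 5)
Your proposal is correct and follows essentially the same route as the paper, which states the lemma as the specialization of \cite[Theorem 2]{GiganteLeopardi2017} to $\Grass{2}{4}$ and gives no further argument. Your extra verification that $(\Grass{2}{4},d_g,\sigma)$ is connected and Ahlfors $4$-regular (uniformly in the base point, via compactness and the transitive isometric $\O(4)$-action) is exactly the routine check the paper leaves implicit, so there is nothing to add.
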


\begin{proposition}\label{prop:jitt}
    For the Grassmannian $\Grass{2}{4}$ and for $0<s<4$ there exists a positive constant $c_s$ such that, for $N$ sufficiently large,
    \begin{equation*}
        \minEnergy_s(N)\leq \Wiener_s(\Grass{2}{4}) N^2-c_sN^{1+s/4}.
    \end{equation*}
    For $s=\log$, there exists a constant $c_{\log}$ such that, for $N$ sufficiently large, 
    \begin{equation*}
        \minEnergy_{\log}(N)\leq \Wiener_{\log}(\Grass{2}{4}) N^2-\frac{1}{4}N\log{N}+c_{\log}N.
    \end{equation*}
\end{proposition}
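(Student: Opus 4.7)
The plan is to use \emph{jittered sampling} on the partition guaranteed by \cref{prop:partition} and compute the expected energy. Let $c_1,c_2>0$ and, for $N$ large enough, let $\{R_1,\dotsc,R_N\}$ be a partition of $\Grass{2}{4}$ with $\sigma(R_i)=1/N$, each $R_i$ contained in a geodesic ball of radius $c_1N^{-1/4}$ and containing a geodesic ball $B_i$ of radius $c_2N^{-1/4}$. Sample $X_i$ uniformly in $R_i$ according to the restricted normalized measure $N\sigma|_{R_i}$, independently across $i$, and set $\omega_N=\{X_1,\dotsc,X_N\}$. By independence and Fubini, for any symmetric lower semicontinuous kernel $K$ on $\Grass{2}{4}$ with $\contEnergy_K[\sigma]<\infty$,
\begin{equation*}
\expectation{}{\Energy_K(\omega_N)}=N^2\sum_{i\neq j}\iint\limits_{R_i\times R_j}K(P,Q)\dd\sigma(P)\dd\sigma(Q)=N^2\Wiener_K(\Grass{2}{4})-N^2\sum_{i=1}^N D_i(K),
\end{equation*}
where $D_i(K)\defined\iint_{R_i\times R_i}K(P,Q)\dd\sigma(P)\dd\sigma(Q)$, and we used that $\Wiener_K(\Grass{2}{4})=\contEnergy_K[\sigma]$ (by \cref{thm:minimizers-Riesz-G24}, since $K$ is $\O(4)$-invariant). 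Since there must exist a realization with energy no larger than the expectation, it suffices to bound $\sum_i D_i(K)$ from below.

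To estimate $D_i(K_s)$ for $0<s<4$, we restrict the domain of integration to $B_i\times B_i$. On $B_i$, the chordal distance satisfies $\chordalGrass(P,Q)\leq c_1N^{-1/4}$ (since $\chordalGrass\leq d_g$ elementwise), so the integrand is at least $c_1^{-s}N^{s/4}$. As $\sigma(B_i)\asymp N^{-1}$ (the volume of a small geodesic ball of radius $r$ on a $4$-dimensional compact manifold is of order $r^4$), we obtain
\begin{equation*}
D_i(K_s)\geq c_1^{-s}N^{s/4}\sigma(B_i)^2\gtrsim N^{s/4-2}.
\end{equation*}
Summing over $i$ yields $\sum_i D_i(K_s)\gtrsim N^{s/4-1}$, and therefore $\expectation{}{\Energy_s(\omega_N)}\leq \Wiener_s(\Grass{2}{4})N^2-c_sN^{1+s/4}$ for a positive constant $c_s$.

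For the logarithmic case, the same argument using $K_{\log}=-\log\chordalGrass$ gives $-\log\chordalGrass(P,Q)\geq -\log(c_1N^{-1/4})=\tfrac{1}{4}\log N-\log c_1$ for $P,Q\in B_i$, hence
\begin{equation*}
D_i(K_{\log})\geq \sigma(B_i)^2\Bigl(\tfrac{1}{4}\log N-\log c_1\Bigr)\gtrsim N^{-2}\bigl(\tfrac{1}{4}\log N+O(1)\bigr).
\end{equation*}
Summing and plugging in gives $\expectation{}{\Energy_{\log}(\omega_N)}\leq \Wiener_{\log}(\Grass{2}{4})N^2-\tfrac{1}{4}N\log N+c_{\log}N$ for some constant $c_{\log}$, completing the proof.

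The only potentially delicate step is the passage $\sigma(B_i)\asymp N^{-1}$: this follows because $\Grass{2}{4}$ is a smooth compact $4$-dimensional Riemannian manifold, so for sufficiently small $r$ the volume of a geodesic ball of radius $r$ is comparable to $r^4$ with constants independent of the center by compactness. Equivalently, $\sigma(B_i)\gtrsim (c_2N^{-1/4})^4=c_2^4N^{-1}$, which is all that is needed above; no exact constant on the volume is required since we only seek bounds of the correct order.
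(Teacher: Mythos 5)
Your overall strategy (jittered sampling over the Gigante--Leopardi partition, writing the expected energy as $\Wiener_K(\Grass{2}{4})N^2-N^2\sum_iD_i(K)$ and bounding the diagonal terms from below) is exactly the paper's, and your Riesz case $0<s<4$ is correct: there only \emph{some} positive constant $c_s$ is claimed, so the harmless factor-of-two slip (two points of a geodesic ball of radius $c_1N^{-1/4}$ are within geodesic, hence chordal, distance $2c_1N^{-1/4}$, not $c_1N^{-1/4}$) and the constants lost in $\sigma(B_i)\gtrsim N^{-1}$ are irrelevant.

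The logarithmic case, however, has a genuine gap: the proposition asserts the precise coefficient $\tfrac14$ in front of $N\log N$, and your restriction of the diagonal integrals to the inner balls $B_i$ destroys it. Your volume-comparison argument only gives $\sigma(B_i)\ge\kappa N^{-1}$ for some constant $\kappa\le1$, and since $B_i\subsetneq R_i$ with $\sigma(R_i)=1/N$ you cannot take $\kappa=1$; consequently your bound reads $N^2\sum_iD_i(K_{\log})\ge\kappa^2\bigl(\tfrac14N\log N+O(N)\bigr)$, which only yields $\minEnergy_{\log}(\Grass{2}{4},N)\le\Wiener_{\log}(\Grass{2}{4})N^2-\tfrac{\kappa^2}{4}N\log N+O(N)$, strictly weaker than the statement. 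The fix is immediate and is what the paper does: do not pass to $B_i$ at all. Integrate over the full cells $R_i\times R_i$, whose $\sigma$-measure is exactly $1/N$, and use that each $R_i$ lies in a geodesic (hence chordal) ball of radius $c_1N^{-1/4}$, so that $-\log\chordalGrass(P,Q)\ge\tfrac14\log N-\log(2c_1)$ on $R_i\times R_i$; then $N^2\sum_iD_i(K_{\log})\ge\tfrac14N\log N-N\log(2c_1)$, giving the stated bound. The same simplification also removes the need for the inner-ball volume estimate in the Riesz case, where only the chordal diameter of the cells is used.
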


\begin{proof}
    Since $\sin\theta<\theta<2\sin\theta$ for sufficiently small $\theta$, we have
    \begin{equation*}
        \sin^2\theta_1+\sin^2\theta_2<\theta_1^2+\theta_2^2<4(\sin^2\theta_1+\sin^2\theta_2).
    \end{equation*}
    Hence, recalling the expression \eqref{eq:chordal-distance-angles} for the chordal distance on $\Grass{2}{4}$, from \cref{prop:partition} there exists a positive constant $c_3$ such that for $N$ sufficiently large there is a partition of $\Grass{2}{4}$ into $N$ regions $D_1,\dotsc,D_N$, each of measure $1/N$ and contained in a chordal ball of radius $c_3N^{-1/4}$. Let $\dd\sigma_j(P)\defined N\chi_{D_j}(P)\dd\sigma(P)$, where $\chi_{D_j}$ is the indicator function of $D_j$. Then, we have, for $0<s<4$,
    \begin{align*}
        \minEnergy_s(N)&\leq \int_{\Grass{2}{4}}\dotsi\int_{\Grass{2}{4}}\sum_{i\neq j} K_s(P_i,P_j)\dd\sigma_1(P_1)\dotsb\dd\sigma_N(P_N)\\
        &=N^2\sum_{i\neq j}\iint_{D_i\times D_j}K_s(P_i,P_j)\dd\sigma(P_i)\dd\sigma(P_j)\\
        &=N^2\biggl(\iint\limits_{\mathclap{\Grass{2}{4}\times\Grass{2}{4}}} K_s(P,Q)\dd\sigma(P)\dd\sigma(Q)-\sum_{j=1}^{N}\iint_{D_j\times D_j}K_s(P,Q)\dd\sigma(P)\dd\sigma(Q)\biggr)\\
        &\leq \Wiener_s(\Grass{2}{4})N^2-\sum_{j=1}^{N}\frac{1}{\diam(D_j)^s}\leq \Wiener_s(\Grass{2}{4})N^2-(2c_3)^{-s}N^{1+s/4}.
    \end{align*}
    The logarithmic case is analogous.
\end{proof}

\section{Computation of the kernel of the harmonic ensemble}\label{sec:reproducing-kernel-harmonic}

We devote this section to the proof of \cref{thm:reproducing-kernel-grassmannian-dpp}. Recall from \eqref{eq:K-tau-2} that the reproducing kernels $\calK_{\tau}$ of the subspaces $H_{\tau}$ of $\LL{2}(\Grass{2}{4})$ are in one-to-one correspondence with the generalized Jacobi polynomials $\JacobiGrass{\tau}$. To compute the sum of these reproducing kernels we use a modified version of the multivariate Christoffel--Darboux formula.

\subsection{A Christoffel--Darboux type formula}

In this subsection, we present a result that is essential for the proof of \cref{thm:reproducing-kernel-grassmannian-dpp}. Let $\kappa=(\kappa_1,\kappa_2)$ be a partition of degree $\DegreePart{\kappa}=\kappa_1+\kappa_2=k$. We define $\kappa^{(i)}$ as the partition obtained from $\kappa$ by increasing the $i$th part $\kappa_i$ by one. Note that this is not always possible for all $i$, since $\kappa^{(i)}$ should also be a partition, that is, the new parts should be in decreasing order. 
The following result is an immediate consequence of \cite[Theorem 2.6]{Bachoc2006}. In \cite{Bachoc2006}, the corresponding result is given in terms of the variables $y_i=\cos^2\theta_i$. We present it instead in terms of the variables $\xi_{+}$ and $\xi_{-}$ introduced in \eqref{eq:xi_+-}.

\begin{theorem}\label{thm:Christoffel-Darboux-Grassmannian}
	Given a partition $\mu$ of degree $k+1$, let $A_k[\kappa,\mu]$ be zero whenever $\mu\neq \kappa^{(i)}$ for $i=1,2$. Otherwise, let
	\begin{align*}
		A_k[\kappa,\kappa^{(1)}]&=\begin{cases}
			\frac{(k+1)^2}{(2k+1)^2}& \kappa_2=0,\\
			\frac{k+1}{2k+1}\frac{\kappa_1-\kappa_2+1}{2\kappa_1-2\kappa_2+1}& \kappa_2>0,
		\end{cases}\\
		A_k[\kappa,\kappa^{(2)}]&=
		\begin{cases}
			\frac{2k(k+1)}{(2k+1)^2}& \kappa_2=0,\\
			\frac{k+1}{2k+1}\frac{\kappa_1-\kappa_2}{2\kappa_1-2\kappa_2+1}& \kappa_2>0.
		\end{cases}
	\end{align*}
	Let $Q_{\kappa}\defined \sum_{\DegreePart{\mu}=k+1}A_k[\kappa,\mu]\JacobiGrass{\mu}$. Then, for all $k\geq 0$ we have	
	\begin{equation*}
		\sum_{\DegreePart{\tau}\leq k} d_{\tau}\JacobiGrass{\tau}(\xi_{+},\xi_{-})=\frac{\sum_{\DegreePart{\kappa}=k} d_{\kappa}(Q_{\kappa}(1,1)P_{\kappa}(\xi_{+},\xi_{-})-Q_{\kappa}(\xi_{+},\xi_{-}))}{1-\xi_{+}\xi_{-}},
	\end{equation*}
    where $d_{\tau}$ is given by \eqref{eq:d-tau}.
\end{theorem}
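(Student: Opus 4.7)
The plan is to deduce this theorem directly from Bachoc's Theorem 2.6, which provides a multivariate Christoffel–Darboux-type identity for the generalized Jacobi polynomials on $\Grass{m}{d}$, stated in the natural coordinates $y_j = \cos^2\theta_j$. The statement above is simply the specialization of that identity to the case $m=2$, $d=4$, combined with the change of variables $(y_1,y_2) \leftrightarrow (\xi_+,\xi_-)$ introduced in equation \eqref{eq:xi_+-}. Two things need to be checked: (i) the denominator takes the claimed form $1-\xi_+\xi_-$, and (ii) the coefficients $A_k[\kappa,\mu]$ displayed above match those produced by Bachoc's recursion.

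For the denominator, equation \eqref{eq:relation-xi-costheta} gives $\xi_+\xi_- = y_1 + y_2 - 1$, so that
\[
    2 - y_1 - y_2 \;=\; 1 - \xi_+\xi_-.
\]
Bachoc's identity has denominator $2-y_1-y_2$, which is precisely $\sin^2\theta_1+\sin^2\theta_2 = \chordalGrass(P,Q)^2$ in the $y$-coordinates, and the above identity rewrites it as $1-\xi_+\xi_-$. The numerator is carried over without modification: this is legitimate because \eqref{eq:JacobiGrass-xi} shows that each $\JacobiGrass{\tau}$, viewed as a symmetric polynomial in $(y_1,y_2)$, is also a symmetric polynomial in $(\xi_+,\xi_-)$, and Bachoc's coefficients $A_k[\kappa,\mu]$ are combinatorial quantities depending only on the partitions $\kappa$ and $\mu$, not on the coordinate system.

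The main obstacle is reconciling the explicit formulas for $A_k[\kappa,\kappa^{(1)}]$ and $A_k[\kappa,\kappa^{(2)}]$ with those appearing in Bachoc's paper. These coefficients arise from expanding the degree-one elementary symmetric polynomial $y_1+y_2$ in the basis of zonal spherical functions, via a three-term-type recursion. For $m=2$ there are at most two legal raising moves $\kappa \to \kappa^{(i)}$: the move $\kappa^{(2)}=(\kappa_1,\kappa_2+1)$ is a valid partition only when $\kappa_1 > \kappa_2$, which is consistent with the factor $\kappa_1 - \kappa_2$ in the numerator of the formula for $A_k[\kappa,\kappa^{(2)}]$ (so the vanishing is automatic). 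The split between $\kappa_2 = 0$ and $\kappa_2 > 0$ reflects the factor $2-\delta_{0,\tau_2}$ appearing in the dimension formula \eqref{eq:d-tau}, which doubles the multiplicity of strict partitions under the homogeneous-space normalization. Verifying the numerical factors $(k+1)/(2k+1)$ and the ratios of the form $(\kappa_1-\kappa_2+a)/(2\kappa_1-2\kappa_2+1)$ is then a direct specialization of Bachoc's general formulas, carried out by tracing the recursion once for the boundary case $\kappa_2=0$ and once for the generic case $\kappa_2>0$. Once this bookkeeping is complete, the theorem follows at once from Bachoc's identity together with the denominator substitution; I would present the verification as a short computation and delegate the representation-theoretic derivation of the recursion to the reference.
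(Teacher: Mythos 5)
Your proposal is correct and takes essentially the same route as the paper: the authors also obtain the theorem as an immediate consequence of Bachoc's Theorem 2.6, simply restated from the coordinates $y_i=\cos^2\theta_i$ to $(\xi_+,\xi_-)$, using exactly the identity $2-y_1-y_2=1-\xi_+\xi_-$ from \eqref{eq:relation-xi-costheta} for the denominator and carrying the coefficients $A_k[\kappa,\kappa^{(i)}]$ over unchanged. The coefficient bookkeeping you outline is all the verification the paper requires, as it offers no further proof beyond the citation.
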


Note that the sum in the left-hand side of \cref{thm:Christoffel-Darboux-Grassmannian} is precisely the reproducing kernel $\calK_k$. Therefore, to prove \cref{thm:reproducing-kernel-grassmannian-dpp} it suffices to compute the sum in the right-hand side.

\subsection{Some preliminary lemmas}

In this subsection, we gather some auxiliary lemmas that will be used to prove \cref{thm:reproducing-kernel-grassmannian-dpp}. Recall from \cref{eq:JacobiGrass-xi} that, given a partition $\kappa=(\kappa_1,\kappa_2)$ of degree $\DegreePart{\kappa}=\kappa_1+\kappa_2=k$, the generalized Jacobi polynomial $\JacobiGrass{\kappa}$ can be expressed in terms of Legendre polynomials as
\begin{equation*}
    \JacobiGrass{\kappa}(\xi_{+},\xi_{-})=\frac{1}{2}\Bigl(\Legendre{k}(\xi_{+})\Legendre{\kappa_1-\kappa_2}(\xi_{-})+\Legendre{k}(\xi_{-})\Legendre{\kappa_1-\kappa_2}(\xi_{+})\Bigr).
\end{equation*}

\begin{lemma}\label{lemma:Q_k(1)}
	Let $k>0$ and let $\kappa=(\kappa_1,\kappa_2)$ be a partition with $\DegreePart{\kappa}=k$. 
    Then, with the notation of \cref{thm:Christoffel-Darboux-Grassmannian},
	\begin{equation*}
		Q_{\kappa}(1,1)=
		\begin{cases}
			\frac{(k+1)(3k+1)}{(2k+1)^2} & \kappa_2=0,\\
			\frac{k+1}{2k+1} & \kappa_2>0.
		\end{cases}
	\end{equation*}
\end{lemma}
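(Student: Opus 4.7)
The plan is a direct computation. By the definition $Q_\kappa = \sum_{|\mu|=k+1} A_k[\kappa,\mu]\,\JacobiGrass{\mu}$ and the normalization $\JacobiGrass{\mu}(1,1)=1$ recalled in \cref{sec:L2-generalized-Jacobi}, evaluating at $(1,1)$ collapses the sum to
\begin{equation*}
    Q_\kappa(1,1)=\sum_{|\mu|=k+1}A_k[\kappa,\mu].
\end{equation*}
Since $A_k[\kappa,\mu]$ vanishes unless $\mu=\kappa^{(i)}$ for some $i\in\{1,2\}$, the only surviving contributions are $A_k[\kappa,\kappa^{(1)}]$ and $A_k[\kappa,\kappa^{(2)}]$.

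Now split according to the two cases of the piecewise definition of $A_k$. If $\kappa_2=0$, the explicit values from \cref{thm:Christoffel-Darboux-Grassmannian} give
\begin{equation*}
    A_k[\kappa,\kappa^{(1)}]+A_k[\kappa,\kappa^{(2)}]
    =\frac{(k+1)^2}{(2k+1)^2}+\frac{2k(k+1)}{(2k+1)^2}
    =\frac{(k+1)(3k+1)}{(2k+1)^2},
\end{equation*}
which matches the first branch of the claim. If instead $\kappa_2>0$, then
\begin{equation*}
    A_k[\kappa,\kappa^{(1)}]+A_k[\kappa,\kappa^{(2)}]
    =\frac{k+1}{2k+1}\cdot\frac{(\kappa_1-\kappa_2+1)+(\kappa_1-\kappa_2)}{2\kappa_1-2\kappa_2+1}
    =\frac{k+1}{2k+1},
\end{equation*}
which is the second branch.

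The only subtlety to flag is that, when $\kappa_1=\kappa_2$, the tuple $\kappa^{(2)}=(\kappa_1,\kappa_2+1)$ is not a valid partition; however, the formula for $A_k[\kappa,\kappa^{(2)}]$ contains the factor $\kappa_1-\kappa_2$, which vanishes precisely in that case, so no separate treatment is needed and the telescoping in the numerator still yields $2\kappa_1-2\kappa_2+1$. There is no genuine obstacle here — this is a short algebraic check once \cref{thm:Christoffel-Darboux-Grassmannian} and the normalization $\JacobiGrass{\mu}(1,1)=1$ are in hand.
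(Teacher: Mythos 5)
Your proof is correct and follows essentially the same route as the paper: evaluate at $(1,1)$ using $\JacobiGrass{\mu}(1,1)=1$ so that $Q_\kappa(1,1)=A_k[\kappa,\kappa^{(1)}]+A_k[\kappa,\kappa^{(2)}]$, then sum the coefficients in each case. Your extra remark about $\kappa_1=\kappa_2$ (where $\kappa^{(2)}$ is not a partition but the coefficient vanishes anyway) is a harmless clarification the paper leaves implicit.
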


\begin{proof}
	Since, by definition, $\JacobiGrass{\mu}(1,1)=1$ for every partition $\mu$, we have $Q_{\kappa}(1,1)=A_k[\kappa,\kappa^{(1)}]+A_k[\kappa,\kappa^{(2)}]$. If $\kappa_2>0$, then
	\begin{align*}
		Q_{\kappa}(1,1)&=A_k[\kappa,\kappa^{(1)}]+A_k[\kappa,\kappa^{(2)}]\\
        &=\frac{k+1}{2k+1}\frac{\kappa_1-\kappa_2+1}{2\kappa_1-2\kappa_2+1}+\frac{\kappa_1-\kappa_2}{2\kappa_1-2\kappa_2+1}\frac{k+1}{2k+1}=\frac{k+1}{2k+1}.
	\end{align*}
	If $\kappa_2=0$, then we obtain
	\begin{align*}
		Q_{\kappa}(1,1)&=A_k[\kappa,\kappa^{(1)}]+A_k[\kappa,\kappa^{(2)}]=
		\frac{(k+1)^2}{(2k+1)^2}+\frac{2k(k+1)}{(2k+1)^2}=\frac{(k+1)(3k+1)}{(2k+1)^2}.\qedhere
	\end{align*}
\end{proof}

\begin{lemma}\label{lemma:Q_k(y)}
	Let $k>0$ and let $\kappa=(\kappa_1,\kappa_2)$ be a partition with $\DegreePart{\kappa}=k$. Then, with the notation of \cref{thm:Christoffel-Darboux-Grassmannian},
	\begin{align*}
     Q_{\kappa}(\xi_{+},\xi_{-})=
		\begin{cases}
			\begin{aligned}
			    \frac{k+1}{2k+1}\Bigl(&\xi_{-}\Legendre{k}(\xi_{-})\Legendre{k+1}(\xi_{+})+\xi_{+}\Legendre{k}(\xi_{+})\Legendre{k+1}(\xi_{-})\Bigr)\\
                &-\frac{(k+1)^2}{(2k+1)^2}\Legendre{k+1}(\xi_{+})\Legendre{k+1}(\xi_{-}),
			\end{aligned}
			& \kappa_2=0,\\[24pt]
			\begin{aligned}
			    \frac{k+1}{2(2k+1)}\Bigl(&\xi_{-}\Legendre{\kappa_1-\kappa_2}(\xi_{-})\Legendre{k+1}(\xi_{+})\\
                &+\xi_{+}\Legendre{\kappa_1-\kappa_2}(\xi_{+})\Legendre{k+1}(\xi_{-})\Bigr),	
			\end{aligned}& \kappa_2>0.
		\end{cases}
	\end{align*}
\end{lemma}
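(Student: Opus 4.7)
The plan is to directly expand $Q_{\kappa}$ from its definition and reduce the claimed formula to the classical Bonnet recursion for Legendre polynomials. Since $A_k[\kappa,\mu]=0$ unless $\mu=\kappa^{(1)}$ or $\mu=\kappa^{(2)}$, we have
\begin{equation*}
Q_{\kappa}(\xi_{+},\xi_{-})=A_k[\kappa,\kappa^{(1)}]\JacobiGrass{\kappa^{(1)}}(\xi_{+},\xi_{-})+A_k[\kappa,\kappa^{(2)}]\JacobiGrass{\kappa^{(2)}}(\xi_{+},\xi_{-}).
\end{equation*}
The first step is to substitute the expression \eqref{eq:JacobiGrass-xi} for $\JacobiGrass{\mu}$ in terms of Legendre polynomials, using that $|\kappa^{(i)}|=k+1$ and that $\kappa^{(1)}_1-\kappa^{(1)}_2=(\kappa_1-\kappa_2)+1$ while $\kappa^{(2)}_1-\kappa^{(2)}_2=(\kappa_1-\kappa_2)-1$ (the latter only making sense when $\kappa_2>0$, which is the reason the two cases must be treated separately).

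For the generic case $\kappa_2>0$, set $n\defined \kappa_1-\kappa_2\geq 0$ and use the stated values $A_k[\kappa,\kappa^{(1)}]=\frac{k+1}{2k+1}\cdot\frac{n+1}{2n+1}$ and $A_k[\kappa,\kappa^{(2)}]=\frac{k+1}{2k+1}\cdot\frac{n}{2n+1}$. Collecting terms by factoring out $\frac{k+1}{2(2k+1)}$ and grouping those with $\Legendre{k+1}(\xi_{+})$ on one side and with $\Legendre{k+1}(\xi_{-})$ on the other, the bracket becomes
\begin{equation*}
\Legendre{k+1}(\xi_{+})\Bigl(\tfrac{n+1}{2n+1}\Legendre{n+1}(\xi_{-})+\tfrac{n}{2n+1}\Legendre{n-1}(\xi_{-})\Bigr)+\bigl(\xi_{+}\leftrightarrow\xi_{-}\bigr).
\end{equation*}
By Bonnet's three-term recursion $xP_n(x)=\frac{n+1}{2n+1}P_{n+1}(x)+\frac{n}{2n+1}P_{n-1}(x)$ (see \cref{appendix:orthogonal-polynomials}), each inner sum collapses to $\xi_{\pm}\Legendre{n}(\xi_{\pm})$, yielding exactly the claimed formula.

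For the boundary case $\kappa_2=0$ the partition $\kappa^{(1)}=(k+1,0)$ has $\kappa^{(1)}_1-\kappa^{(1)}_2=k+1$, so $\JacobiGrass{\kappa^{(1)}}(\xi_{+},\xi_{-})=\Legendre{k+1}(\xi_{+})\Legendre{k+1}(\xi_{-})$, and one uses the modified coefficients $A_k[\kappa,\kappa^{(1)}]=\frac{(k+1)^2}{(2k+1)^2}$ and $A_k[\kappa,\kappa^{(2)}]=\frac{2k(k+1)}{(2k+1)^2}$. The strategy here is the same: apply Bonnet's recursion with $n=k$ to the target expression in the statement, expand, and verify that after cancelling the explicit $-\frac{(k+1)^2}{(2k+1)^2}\Legendre{k+1}(\xi_{+})\Legendre{k+1}(\xi_{-})$ correction term one is left with precisely the two-term combination $A_k[\kappa,\kappa^{(1)}]\Legendre{k+1}\Legendre{k+1}+A_k[\kappa,\kappa^{(2)}]\cdot\tfrac{1}{2}(\Legendre{k+1}(\xi_{+})\Legendre{k-1}(\xi_{-})+\Legendre{k+1}(\xi_{-})\Legendre{k-1}(\xi_{+}))$. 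The appearance of the correction term is accounted for by the factor of $2$ discrepancy between $\xi_{+}\leftrightarrow\xi_{-}$ symmetrization in the $k+1$ index in this degenerate case.

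No step should present a serious obstacle; the only real risk is bookkeeping error in matching the two versions of the coefficients and tracking which products of Legendre polynomials appear, particularly in the degenerate case $\kappa_2=0$ where the normalization of $\JacobiGrass{\kappa^{(1)}}$ differs from the generic formula by a factor of $2$. The conceptual heart of the proof is that the specific values of $A_k[\kappa,\kappa^{(i)}]$ are precisely tuned to be the Bonnet recursion coefficients $(n+1)/(2n+1)$ and $n/(2n+1)$ with $n=\kappa_1-\kappa_2$, which is not coincidental in view of the representation-theoretic origin of these numbers.
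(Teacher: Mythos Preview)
Your proposal is correct and follows essentially the same approach as the paper: both expand $Q_\kappa$ as $A_k[\kappa,\kappa^{(1)}]\JacobiGrass{\kappa^{(1)}}+A_k[\kappa,\kappa^{(2)}]\JacobiGrass{\kappa^{(2)}}$, insert the Legendre-polynomial expression \eqref{eq:JacobiGrass-xi}, and then collapse the result using Bonnet's recursion \eqref{eq:recurrence-Legendre}. Your observation that the $A_k$ coefficients in the $\kappa_2>0$ case are exactly the Bonnet weights $(n+1)/(2n+1)$ and $n/(2n+1)$ with $n=\kappa_1-\kappa_2$ makes the mechanism slightly more transparent than the paper's direct substitution, but the content is the same.
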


\begin{proof}
	Note that
	\begin{equation*}
		Q_{\kappa}(\xi_{+},\xi_{-})=A_k[\kappa,\kappa^{(1)}]\JacobiGrass{\kappa^{(1)}}(\xi_{+},\xi_{-})+A_k[\kappa,\kappa^{(2)}]\JacobiGrass{\kappa^{(2)}}(\xi_{+},\xi_{-}),
	\end{equation*}
	where $\kappa^{(1)}=(\kappa_1+1,\kappa_2)$, $\kappa^{(2)}=(\kappa_1,\kappa_2+1)$, and
	\begin{align*}
		\JacobiGrass{\kappa^{(1)}}(\xi_{+},\xi_{-})&=\frac{1}{2}\Bigl(\Legendre{k+1}(\xi_{+})\Legendre{\kappa_1-\kappa_2+1}(\xi_{-})+\Legendre{k+1}(\xi_{-})\Legendre{\kappa_1-\kappa_2+1}(\xi_{+})\Bigr),\\
		\JacobiGrass{\kappa^{(2)}}(\xi_{+},\xi_{-})&=\frac{1}{2}\Bigl(\Legendre{k+1}(\xi_{+})\Legendre{\kappa_1-\kappa_2-1}(\xi_{-})+\Legendre{k+1}(\xi_{-})\Legendre{\kappa_1-\kappa_2-1}(\xi_{+})\Bigr).
	\end{align*}
	When $\kappa_2>0$, we have
	\begin{align}
		&Q_{\kappa}(\xi_{+},\xi_{-})=\frac{(k+1)}{2(2k+1)}\label{eq:aux-Qkappa}\\
		&\qquad\times\biggl(\frac{\kappa_1-\kappa_2+1}{2\kappa_1-2\kappa_2+1}\bigl(\Legendre{k+1}(\xi_{+})\Legendre{\kappa_1-\kappa_2+1}(\xi_{-})+\Legendre{k+1}(\xi_{-})\Legendre{\kappa_1-\kappa_2+1}(\xi_{+})\bigr)\notag\\
		&\qquad+ \frac{\kappa_2-\kappa_1}{2\kappa_2-2\kappa_1-1}\bigl(\Legendre{k+1}(\xi_{+})\Legendre{\kappa_1-\kappa_2-1}(\xi_{-})+\Legendre{k+1}(\xi_{-})\Legendre{\kappa_1-\kappa_2-1}(\xi_{+})\bigr)\biggr).\notag
	\end{align}
	From the recurrence relation \eqref{eq:recurrence-Legendre} for the Legendre polynomials, we have
	\begin{align}
		\Legendre{\kappa_1-\kappa_2+1}(\xi_{+})&=\frac{2\kappa_1-2\kappa_2+1}{\kappa_1-\kappa_2+1}\xi_{+}	\Legendre{\kappa_1-\kappa_2}(\xi_{+})-\frac{\kappa_1-\kappa_2}{\kappa_1-\kappa_2+1}\Legendre{\kappa_1-\kappa_2-1}(\xi_{+}),\label{eq:aux-Legendre-1}\\
		\Legendre{\kappa_1-\kappa_2+1}(\xi_{-})&=\frac{2\kappa_1-2\kappa_2+1}{\kappa_1-\kappa_2+1}\xi_{-}	\Legendre{\kappa_1-\kappa_2}(\xi_{-})-\frac{\kappa_1-\kappa_2}{\kappa_1-\kappa_2+1}\Legendre{\kappa_1-\kappa_2-1}(\xi_{-}).\label{eq:aux-Legendre-2}
	\end{align}
	Substituting \eqref{eq:aux-Legendre-1} and \eqref{eq:aux-Legendre-2} into \eqref{eq:aux-Qkappa}, we readily obtain the desired expression for $Q_{\kappa}(\xi_{+},\xi_{-})$ when $\kappa_2>0$. The case $\kappa_2=0$ follows similarly.
\end{proof}

\begin{lemma}\label{lemma:S1}
	With the notation of \cref{thm:Christoffel-Darboux-Grassmannian}, let $k>0$ and
	\begin{equation*}
		S_1=d_{(k,0)}(Q_{(k,0)}(1,1)P_{(k,0)}(\xi_{+},\xi_{-})-Q_{(k,0)}(\xi_{+},\xi_{-})).
	\end{equation*}
	Then,
	\begin{align*}
		S_1&=(k+1)(3k+1)\Legendre{k}(\xi_{+})\Legendre{k}(\xi_{-})\\
		&\qquad-(2k+1)(k+1)\Bigl(\xi_{-}\Legendre{k}(\xi_{-})\Legendre{k+1}(\xi_{+})+\xi_{+}\Legendre{k}(\xi_{+})\Legendre{k+1}(\xi_{-})\Bigr)\\
		&\qquad+(k+1)^2\Legendre{k+1}(\xi_{+})\Legendre{k+1}(\xi_{-}).
	\end{align*}
\end{lemma}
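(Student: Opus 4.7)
The plan is to derive $S_1$ by direct substitution, since everything that appears in its definition has already been computed in the preceding preliminary lemmas and definitions. There is no real obstacle here — just careful bookkeeping. I would proceed as follows.

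First, I would evaluate the dimension $d_{(k,0)}$ from formula \eqref{eq:d-tau}: since $\tau_2=0$ forces $\delta_{0,\tau_2}=1$, one gets $d_{(k,0)}=(2k+1)^2$. Next, I would simplify $P_{(k,0)}(\xi_+,\xi_-)$ using \eqref{eq:JacobiGrass-xi}: because $\kappa_1-\kappa_2=k$, the two terms in the symmetrization coincide, giving $P_{(k,0)}(\xi_+,\xi_-)=\Legendre{k}(\xi_+)\Legendre{k}(\xi_-)$.

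Second, I would specialize Lemmas~\ref{lemma:Q_k(1)} and \ref{lemma:Q_k(y)} to the case $\kappa_2=0$. This yields
\begin{equation*}
Q_{(k,0)}(1,1)=\frac{(k+1)(3k+1)}{(2k+1)^2}
\end{equation*}
and
\begin{equation*}
Q_{(k,0)}(\xi_+,\xi_-)=\frac{k+1}{2k+1}\Bigl(\xi_-\Legendre{k}(\xi_-)\Legendre{k+1}(\xi_+)+\xi_+\Legendre{k}(\xi_+)\Legendre{k+1}(\xi_-)\Bigr)-\frac{(k+1)^2}{(2k+1)^2}\Legendre{k+1}(\xi_+)\Legendre{k+1}(\xi_-).
\end{equation*}

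Third, I would insert these four pieces into the definition
\begin{equation*}
S_1=d_{(k,0)}\bigl(Q_{(k,0)}(1,1)P_{(k,0)}(\xi_+,\xi_-)-Q_{(k,0)}(\xi_+,\xi_-)\bigr).
\end{equation*}
The overall factor $d_{(k,0)}=(2k+1)^2$ exactly cancels the denominators in $Q_{(k,0)}(1,1)$ and in the second term of $Q_{(k,0)}(\xi_+,\xi_-)$, while the first term of $Q_{(k,0)}(\xi_+,\xi_-)$ picks up the factor $(2k+1)(k+1)$. After distributing the minus sign, the three resulting summands are precisely the three lines in the claimed expression for $S_1$, completing the proof.
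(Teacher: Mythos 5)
Your proposal is correct and follows essentially the same route as the paper, which proves the lemma by declaring it immediate from \cref{lemma:Q_k(1),lemma:Q_k(y)} together with $d_{(k,0)}=(2k+1)^2$; your substitution and cancellation of the $(2k+1)^2$ factors is exactly the omitted bookkeeping. The evaluation $P_{(k,0)}(\xi_+,\xi_-)=\Legendre{k}(\xi_+)\Legendre{k}(\xi_-)$ from \eqref{eq:JacobiGrass-xi} is also right.
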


\begin{proof}
	Immediate from \cref{lemma:Q_k(1),lemma:Q_k(y)}, and the fact that $d_{(k,0)}=(2k+1)^2$.
\end{proof}

\begin{lemma}\label{lemma:S2}
	With the notation of \cref{thm:Christoffel-Darboux-Grassmannian}, let $k>0$ and
	\begin{equation*}
		S_2=\sum_{\DegreePart{\kappa}=k, \kappa_2>0} d_{\kappa}(Q_{\kappa}(1,1)P_{\kappa}(\xi_{+},\xi_{-})-Q_{\kappa}(\xi_{+},\xi_{-})).
	\end{equation*}
	Then,
	\begin{align*}
		S_2&=(\Legendre{k}(\xi_{+})-\xi_{-}\Legendre{k+1}(\xi_{+}))(k+1)\Bigl(-(2k+1)\Legendre{k}(\xi_{-})+\Gegenbauer{k}{3/2}(\xi_{-})\Bigr)\\
		&\qquad +(\Legendre{k}(\xi_{-})-\xi_{+}\Legendre{k+1}(\xi_{-}))(k+1)\Bigl(-(2k+1)\Legendre{k}(\xi_{+})+\Gegenbauer{k}{3/2}(\xi_{+})\Bigr).
	\end{align*}
\end{lemma}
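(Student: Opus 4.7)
The plan is to substitute the explicit expressions from \cref{lemma:Q_k(1),lemma:Q_k(y)}, together with the formula \eqref{eq:d-tau} for the dimensions $d_\kappa$ and the Legendre expansion \eqref{eq:JacobiGrass-xi} of $\JacobiGrass{\kappa}$, into the definition of $S_2$, and then rearrange the resulting double sum so that a classical identity between Gegenbauer and Legendre polynomials can be applied.

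First, I would fix a partition $\kappa=(\kappa_1,\kappa_2)$ with $\DegreePart{\kappa}=k$ and $\kappa_2>0$. Since $\kappa_2>0$, we have $d_{\kappa}=2(2k+1)(2(\kappa_1-\kappa_2)+1)$, so that $d_{\kappa}Q_{\kappa}(1,1)=2(k+1)(2(\kappa_1-\kappa_2)+1)$ and $d_{\kappa}Q_{\kappa}(\xi_{+},\xi_{-})=(k+1)(2(\kappa_1-\kappa_2)+1)\bigl(\xi_{-}\Legendre{\kappa_1-\kappa_2}(\xi_{-})\Legendre{k+1}(\xi_{+})+\xi_{+}\Legendre{\kappa_1-\kappa_2}(\xi_{+})\Legendre{k+1}(\xi_{-})\bigr)$. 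Combining these with \eqref{eq:JacobiGrass-xi}, a direct calculation yields
\begin{align*}
    d_{\kappa}(Q_{\kappa}(1,1)P_{\kappa}-Q_{\kappa})
    &=(k+1)(2(\kappa_1-\kappa_2)+1)\Bigl[\bigl(\Legendre{k}(\xi_{+})-\xi_{-}\Legendre{k+1}(\xi_{+})\bigr)\Legendre{\kappa_1-\kappa_2}(\xi_{-})\\
    &\qquad +\bigl(\Legendre{k}(\xi_{-})-\xi_{+}\Legendre{k+1}(\xi_{-})\bigr)\Legendre{\kappa_1-\kappa_2}(\xi_{+})\Bigr].
\end{align*}

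Next, I would reindex the sum using $j=\kappa_1-\kappa_2$. The constraints $\kappa_1+\kappa_2=k$ and $\kappa_1\geq\kappa_2\geq 1$ translate into $j\equiv k\pmod{2}$ and $0\leq j\leq k-2$. Pulling the $\xi$-factors that do not depend on $\kappa$ out of the sum, $S_2$ becomes
\begin{align*}
    S_2&=(k+1)\bigl(\Legendre{k}(\xi_{+})-\xi_{-}\Legendre{k+1}(\xi_{+})\bigr)\!\!\sum_{\substack{0\leq j\leq k-2\\ j\equiv k\,(\mathrm{mod}\,2)}}\!\!(2j+1)\Legendre{j}(\xi_{-})\\
    &\quad+(k+1)\bigl(\Legendre{k}(\xi_{-})-\xi_{+}\Legendre{k+1}(\xi_{-})\bigr)\!\!\sum_{\substack{0\leq j\leq k-2\\ j\equiv k\,(\mathrm{mod}\,2)}}\!\!(2j+1)\Legendre{j}(\xi_{+}).
\end{align*}

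The final step is to recognize the truncated Legendre sums. Using the Gegenbauer--Legendre relation $\Gegenbauer{k}{3/2}(x)=\Legendre{k+1}'(x)$ together with the standard expansion of $\Legendre{k+1}'$ in Legendre polynomials, one obtains
\begin{equation*}
    \Gegenbauer{k}{3/2}(x)=\sum_{\substack{0\leq j\leq k\\ j\equiv k\,(\mathrm{mod}\,2)}}(2j+1)\Legendre{j}(x),
\end{equation*}
so peeling off the $j=k$ term gives $\sum_{j\equiv k,\,0\leq j\leq k-2}(2j+1)\Legendre{j}(x)=-(2k+1)\Legendre{k}(x)+\Gegenbauer{k}{3/2}(x)$. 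Substituting this into the displayed expression for $S_2$ yields exactly the formula in the statement. I do not anticipate any real obstacle here: the proof is a substitution followed by a standard identity. The only point that requires care is the parity/range bookkeeping for $j$ when going from the sum over $\kappa$ with $\kappa_2>0$ to the one over $j\in\{0,\ldots,k-2\}$ with $j\equiv k\pmod{2}$, since this is what produces the missing $j=k$ term responsible for $-(2k+1)\Legendre{k}$.
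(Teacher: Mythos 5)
Your proposal is correct and follows essentially the same route as the paper: substitute the expressions from \cref{lemma:Q_k(1),lemma:Q_k(y)} together with $d_{\kappa}=2(2k+1)(2(\kappa_1-\kappa_2)+1)$, factor out $(\Legendre{k}-\xi\Legendre{k+1})$, and evaluate the truncated Legendre sum via the identity $\Gegenbauer{k}{3/2}=\Legendre{k+1}'=\sum_{j\equiv k\,(2)}(2j+1)\Legendre{j}$ (the paper's \eqref{eq:sum-legendre-even-odd}). The only cosmetic difference is that you handle the parity bookkeeping uniformly with $j\equiv k\pmod 2$, whereas the paper splits into even and odd $k$ and obtains the same expression in both cases.
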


\begin{proof}
	From \cref{lemma:Q_k(1),lemma:Q_k(y)} we have
	\begin{align*}
		 \MoveEqLeft Q_{\kappa}(1,1)\JacobiGrass{\kappa}(\xi_{+},\xi_{-})-Q_{\kappa}(\xi_{+},\xi_{-})\\
		&=\frac{k+1}{2(2k+1)}\Bigl(\Legendre{k}(\xi_+)\Legendre{\kappa_1-\kappa_2}(\xi_{-})+\Legendre{k}(\xi_-)\Legendre{\kappa_1-\kappa_2}(\xi_{+})\\
		&\qquad\qquad\qquad\qquad-\xi_{-}\Legendre{\kappa_1-\kappa_2}(\xi_{-})\Legendre{k+1}(\xi_{+})-\xi_{+}\Legendre{\kappa_1-\kappa_2}(\xi_{+})\Legendre{k+1}(\xi_{-})\Bigr).
	\end{align*}
	In this case, $d_{\kappa}=2(2k+1)(2(\kappa_1-\kappa_2)+1)$, so
	\begin{align*}
		\MoveEqLeft d_{\kappa}(Q_{\kappa}(1,1)P_{\kappa}(\xi_{+},\xi_{-})-Q_{\kappa}(\xi_{+},\xi_{-}))\\
		&=(2(\kappa_1-\kappa_2)+1)(k+1)\Bigl(\Legendre{k}(\xi_+)\Legendre{\kappa_1-\kappa_2}(\xi_{-})+\Legendre{k}(\xi_-)\Legendre{\kappa_1-\kappa_2}(\xi_{+})\\
		&\qquad-\xi_{-}\Legendre{\kappa_1-\kappa_2}(\xi_{-})\Legendre{k+1}(\xi_{+})-\xi_{+}\Legendre{\kappa_1-\kappa_2}(\xi_{+})\Legendre{k+1}(\xi_{-})\Bigr)\\
		&=\bigl(\Legendre{k}(\xi_{+})-\xi_{-}\Legendre{k+1}(\xi_{+})\bigr)(k+1)\bigl(2(\kappa_1-\kappa_2)+1\bigr)\Legendre{\kappa_1-\kappa_2}(\xi_{-})\\
		&\qquad+\bigl(\Legendre{k}(\xi_{-})-\xi_{+}\Legendre{k+1}(\xi_{-})\bigr)(k+1)\bigl(2(\kappa_1-\kappa_2)+1\bigr)\Legendre{\kappa_1-\kappa_2}(\xi_{+}).
	\end{align*}
	Hence,
	\begin{align*}
		S_2&=(\Legendre{k}(\xi_{+})-\xi_{-}\Legendre{k+1}(\xi_{+}))(k+1)\sum_{\substack{\kappa_1+\kappa_2=k\\ \kappa_1\geq\kappa_2>0}} (2(\kappa_1-\kappa_2)+1)\Legendre{\kappa_1-\kappa_2}(\xi_{-})\\
		&\qquad +(\Legendre{k}(\xi_{-})-\xi_{+}\Legendre{k+1}(\xi_{-}))(k+1)\sum_{\substack{\kappa_1+\kappa_2=k\\ \kappa_1\geq\kappa_2>0}}(2(\kappa_1-\kappa_2)+1)\Legendre{\kappa_1-\kappa_2}(\xi_{+}).
	\end{align*}
	To compute these sums, it is convenient to distinguish between $k$ being even and $k$ being odd, although the final result will not depend on the parity of $k$. Assume, then, that $k$ is even, and so $\kappa_1-\kappa_2=0,2,\dotsc,k-2$. Calling $\kappa_1-\kappa_2=2m$, we have
	\begin{align}\label{eq:aux-S2}
		S_2&=(\Legendre{k}(\xi_{+})-\xi_{-}\Legendre{k+1}(\xi_{+}))(k+1)\sum_{m=0}^{k/2-1} (4m+1)\Legendre{2m}(\xi_{-})\\
		&\qquad +(\Legendre{k}(\xi_{-})-\xi_{+}\Legendre{k+1}(\xi_{-}))(k+1)\sum_{m=1}^{k/2-1}(4m+1)\Legendre{2m}(\xi_{+}).\notag
	\end{align}
	Note that
	\begin{equation*}
		\sum_{m=0}^{k/2-1} (4m+1)\Legendre{2m}(\xi_{-})=-(2k+1)\Legendre{k}(\xi_{-})+\sum_{m=0}^{k/2} (4m+1)\Legendre{2m}(\xi_{-}).
	\end{equation*}
	From \cref{eq:sum-legendre-even-odd}, we have
	\begin{equation*}
		\sum_{m=0}^{k/2} (4m+1)\Legendre{2m}(\xi_{-})=\Gegenbauer{k}{3/2}(\xi_{-}).
	\end{equation*}
	Therefore,
	\begin{equation}\label{eq:aux-sum-Legendre-S2-minus}
		\sum_{m=0}^{k/2-1} (4m+1)\Legendre{2m}(\xi_{-})=-(2k+1)\Legendre{k}(\xi_{-})+\Gegenbauer{k}{3/2}(\xi_{-}).
	\end{equation}
	Similarly,
	\begin{equation}\label{eq:aux-sum-Legendre-S2-plus}
		\sum_{m=0}^{k/2-1} (4m+1)\Legendre{2m}(\xi_{+})=-(2k+1)\Legendre{k}(\xi_{+})+\Gegenbauer{k}{3/2}(\xi_{+}).
	\end{equation}
	Then, substituting \eqref{eq:aux-sum-Legendre-S2-minus} and \eqref{eq:aux-sum-Legendre-S2-plus} into \eqref{eq:aux-S2}, we obtain
	\begin{align*}
	 S_2&=(\Legendre{k}(\xi_{+})-\xi_{-}\Legendre{k+1}(\xi_{+}))(k+1)\Bigl(-(2k+1)\Legendre{k}(\xi_{-})+\Gegenbauer{k}{3/2}(\xi_{-})\Bigr)\\
	 &\qquad +(\Legendre{k}(\xi_{-})-\xi_{+}\Legendre{k+1}(\xi_{-}))(k+1)\Bigl(-(2k+1)\Legendre{k}(\xi_{+})+\Gegenbauer{k}{3/2}(\xi_{+})\Bigr).
	\end{align*}
    The case where $k$ is odd is handled analogously, yielding the same expression for $S_2$ as in the even case. Hence, the lemma follows.
\end{proof}

We are finally ready to prove \cref{thm:reproducing-kernel-grassmannian-dpp}.

\subsection{Proof of \cref{thm:reproducing-kernel-grassmannian-dpp}}

From \cref{thm:Christoffel-Darboux-Grassmannian} we know that
	\begin{equation*}
		\calK_k(\xi_{+},\xi_{-})=\sum_{\DegreePart{\tau}\leq k}d_{\tau}\JacobiGrass{\tau}(\xi_{+},\xi_{-})=\frac{\sum_{\DegreePart{\kappa}=k} d_{\kappa}(Q_{\kappa}(1,1)\JacobiGrass{\kappa}(\xi_{+},\xi_{-})-Q_{\kappa}(\xi_{+},\xi_{-}))}{1-\xi_{+}\xi_{-}}.
	\end{equation*}
	We call
	\begin{equation*}
		N_k(\xi_{+},\xi_{-})=\sum_{\DegreePart{\kappa}=k} d_{\kappa}(Q_{\kappa}(1,1)\JacobiGrass{\kappa}(\xi_{+},\xi_{-})-Q_{\kappa}(\xi_{+},\xi_{-})).
	\end{equation*}
	This sum can be decomposed as
	\begin{equation*}
		N_k(\xi_{+},\xi_{-})=S_1+S_2,
	\end{equation*}
	where
	\begin{align*}
		S_1&=d_{(k,0)}\bigl((Q_{(k,0)}(1,1)P_{(k,0)}(\xi_{+},\xi_{-})-Q_{(k,0)}(\xi_{+},\xi_{-})\bigr),\\
		S_2&=\sum_{\DegreePart{\kappa}=k, \kappa_2>0} d_{\kappa}\bigl(Q_{\kappa}(1,1)P_{\kappa}(\xi_{+},\xi_{-})-Q_{\kappa}(\xi_{+},\xi_{-})\bigr).
	\end{align*}
	Using \cref{lemma:S1,lemma:S2}, we have
	\begin{align*}
		N_k(\xi_{+},\xi_{-})&=(k+1)(3k+1)\Legendre{k}(\xi_{+})\Legendre{k}(\xi_{-})\\
		&\quad-(2k+1)(k+1)\Bigl(\xi_{-}\Legendre{k}(\xi_{-})\Legendre{k+1}(\xi_{+})+\xi_{+}\Legendre{k}(\xi_{+})\Legendre{k+1}(\xi_{-})\Bigr)\\
		&\quad+(k+1)^2\Legendre{k+1}(\xi_{+})\Legendre{k+1}(\xi_{-})\\
		&\quad+(\Legendre{k}(\xi_{+})-\xi_{-}\Legendre{k+1}(\xi_{+}))(k+1)\Bigl(-(2k+1)\Legendre{k}(\xi_{-})+\Gegenbauer{k}{3/2}(\xi_{-})\Bigr)\\
		&\quad +(\Legendre{k}(\xi_{-})-\xi_{+}\Legendre{k+1}(\xi_{-}))(k+1)\Bigl(-(2k+1)\Legendre{k}(\xi_{+})+\Gegenbauer{k}{3/2}(\xi_{+})\Bigr).
	\end{align*}
	This expression can be simplified to
	\begin{align}
		\frac{N_k(\xi_{+},\xi_{-})}{k+1}
		&=(k+1)\Bigl(\Legendre{k+1}(\xi_{+})\Legendre{k+1}(\xi_{-})-\Legendre{k}(\xi_{+})\Legendre{k}(\xi_{-})\Bigr)\label{eq:Nk-1}\\
		&\qquad+(\Legendre{k}(\xi_{+})-\xi_{-}\Legendre{k+1}(\xi_{+}))\Gegenbauer{k}{3/2}(\xi_{-})\notag\\
		&\qquad+(\Legendre{k}(\xi_{-})-\xi_{+}\Legendre{k+1}(\xi_{-}))\Gegenbauer{k}{3/2}(\xi_{+}).\notag
	\end{align}
	From \cref{eq:legendre-gegenbauer-n,eq:legendre-gegenbauer-n+1} we know that
	\begin{align}
		\Legendre{k}(x)&=\frac{1}{k+1}(\Gegenbauer{k}{3/2}(x)-x\Gegenbauer{k-1}{3/2}(x)),\label{eq:aux-recurrence-Legendre-1}\\
		\Legendre{k+1}(x)&=\frac{1}{k+1}(x\Gegenbauer{k}{3/2}(x)-\Gegenbauer{k-1}{3/2}(x)).\label{eq:aux-recurrence-Legendre-2}
	\end{align}
	Substituting \eqref{eq:aux-recurrence-Legendre-1} and \eqref{eq:aux-recurrence-Legendre-2} into \eqref{eq:Nk-1} and simplifying we obtain
	\begin{equation*}
		 \frac{N_k(\xi_{+},\xi_{-})}{k+1}=\frac{1-\xi_{+}\xi_{-}}{k+1}\Bigl(\Gegenbauer{k}{3/2}(\xi_{+})\Gegenbauer{k}{3/2}(\xi_{-})+\Gegenbauer{k-1}{3/2}(\xi_{+})\Gegenbauer{k-1}{3/2}(\xi_{-})\Bigr).
	\end{equation*}
	Therefore,
	\begin{equation*}
		N_k(\xi_{+},\xi_{-})=(1-\xi_{+}\xi_{-})\Bigl(\Gegenbauer{k}{3/2}(\xi_{+})\Gegenbauer{k}{3/2}(\xi_{-})+\Gegenbauer{k-1}{3/2}(\xi_{+})\Gegenbauer{k-1}{3/2}(\xi_{-})\Bigr),
	\end{equation*}
	and hence
	\begin{equation*}
		\calK_k(\xi_{+},\xi_{-})=\frac{N_k(\xi_{+},\xi_{-})}{1-\xi_{+}\xi_{-}}=\Gegenbauer{k}{3/2}(\xi_{+})\Gegenbauer{k}{3/2}(\xi_{-})+\Gegenbauer{k-1}{3/2}(\xi_{+})\Gegenbauer{k-1}{3/2}(\xi_{-}). \pushQED{\qed}\qedhere
	\end{equation*}

\section{Expected energy of the harmonic ensemble}\label{sec:energy-harmonic-ensemble}

To determine the expected energy of points coming from the harmonic ensemble with the desired precision we need to perform a thorough asymptotic analysis of certain double integrals involving orthogonal polynomials. For the reader's convenience, we have encapsulated the necessary technical results as lemmas and deferred the proofs to \cref{sec:auxiliary-results}.

\begin{proposition}\label{prop:energy-dpp-finite}
	The expected Riesz $s$-energy of the harmonic ensemble is finite if and only if $s<6$.
\end{proposition}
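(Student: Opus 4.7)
The strategy is to reduce the double integral over $\Grass{2}{4}\times\Grass{2}{4}$ to a planar integral in the principal angles $(\theta_1,\theta_2)$ via \cref{lemma_integral}, and then to identify the exact threshold for integrability near the corner $\theta_1=\theta_2=0$, which is the only potential source of divergence. Since $\calK_k(P,P)=\calK_k(1,1)$ is constant by \cref{remark:KH-xx}, the expected energy becomes
\begin{equation*}
    \int_{0}^{\pi/2}\int_{0}^{\theta_2}\frac{\calK_k(1,1)^2-\calK_k(\xi_{+},\xi_{-})^2}{(\sin^2\theta_1+\sin^2\theta_2)^{s/2}}\, 2(\cos^2\theta_1-\cos^2\theta_2)\,\dd\theta_1\dd\theta_2.
\end{equation*}
Away from the corner the integrand is bounded: the numerator is bounded because $\calK_k$ is a polynomial, and the denominator is bounded away from $0$ on any set with $(\theta_1,\theta_2)$ bounded away from the origin. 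So the question reduces to local integrability at the origin.

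Next, I would perform a Taylor expansion near the corner. From $\xi_{\pm}=1-\tfrac{1}{2}(\theta_1\pm\theta_2)^2+O(|\theta|^4)$ and the symmetry $\calK_k(\xi_{+},\xi_{-})=\calK_k(\xi_{-},\xi_{+})$ granted by \cref{thm:reproducing-kernel-grassmannian-dpp}, a first-order expansion gives
\begin{equation*}
    \calK_k(\xi_{+},\xi_{-})=\calK_k(1,1)-\alpha_k\bigl[(1-\xi_{+})+(1-\xi_{-})\bigr]+O\bigl((1-\xi_{+})^2+(1-\xi_{-})^2\bigr),
\end{equation*}
where $\alpha_k=(\Gegenbauer{k}{3/2})'(1)\Gegenbauer{k}{3/2}(1)+(\Gegenbauer{k-1}{3/2})'(1)\Gegenbauer{k-1}{3/2}(1)$ is strictly positive for $k\geq 1$ because Gegenbauer polynomials with positive parameter, as well as their derivatives, take positive values at $1$. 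Combining this with $(1-\xi_{+})+(1-\xi_{-})=\theta_1^2+\theta_2^2+O(|\theta|^4)$ yields
\begin{equation*}
    \calK_k(1,1)^2-\calK_k(\xi_{+},\xi_{-})^2=2\alpha_k\calK_k(1,1)(\theta_1^2+\theta_2^2)+O\bigl((\theta_1^2+\theta_2^2)^2\bigr),
\end{equation*}
while $\sin^2\theta_1+\sin^2\theta_2\sim\theta_1^2+\theta_2^2$ and $2(\cos^2\theta_1-\cos^2\theta_2)=2(\sin^2\theta_2-\sin^2\theta_1)\sim 2(\theta_2^2-\theta_1^2)$.

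The final step is to switch to polar coordinates $\theta_1=r\cos\phi$, $\theta_2=r\sin\phi$ with $\phi\in[\pi/4,\pi/2]$ (to enforce $\theta_1\leq\theta_2$). The integrand together with the Jacobian $r$ becomes asymptotically proportional to $r^{5-s}(\sin^2\phi-\cos^2\phi)\,\dd r\,\dd\phi$ as $r\to 0$. Since the angular factor integrates to a finite positive constant, convergence is governed by $\int_{0}^{\epsilon}r^{5-s}\,\dd r$, which is finite if and only if $s<6$. The positivity of $\alpha_k$ ensures that the leading term does not cancel, which is what delivers the divergence for $s\geq 6$ and hence the ``only if'' direction.

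The only delicate point I anticipate is justifying that the error terms in the Taylor expansion are genuinely of lower order in both the upper and lower bounds, uniformly in a neighborhood of the origin; but this is routine once the leading constant $\alpha_k$ is shown to be strictly positive, so no real obstacle arises.
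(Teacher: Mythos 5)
Your proposal is correct and takes essentially the same route as the paper's proof: reduce to local integrability at the coincidence point, show that $\calK_k(1,1)^2-\calK_k(\xi_{+},\xi_{-})^2$ vanishes exactly to second order in the angles with a strictly positive constant, and compare with $\int_0^{\varepsilon}r^{5-s}\dd r$ in polar coordinates. The only difference is cosmetic: you stay in the principal-angle chart with weight $2(\cos^2\theta_1-\cos^2\theta_2)$ and get the quadratic vanishing from a direct first-order Taylor expansion of the polynomial kernel at $(1,1)$ (with $\alpha_k>0$), whereas the paper substitutes $(\xi_{+},\xi_{-})=(\cos\theta,\cos\sigma)$ and obtains the same quadratic behaviour via \cref{lemma:Taylor-Gegenbauer,lemma:K_k^2}, which is the same argument up to a linear change of angle variables.
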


\begin{proof}
	From \cref{prop:expectation-dpp} and \cref{lemma:integral-Grass24-2}, the expected Riesz $s$-energy of this \gls{dpp} is
	\begin{align*}    
		\MoveEqLeft\expectation{x\distributed\HarmonicDPP{N}}{\Energy_s(x)}=\iint\limits_{\mathclap{\Grass{2}{4}\times \Grass{2}{4}}} \frac{\calK_{k}(P,P)\calK_{k}(Q,Q)-\calK_{k}(P,Q)^2}{\chordalGrass(P,Q)^s}\dd \sigma(P)\dd \sigma(Q)\\
		&=\frac{1}{2}\int_{0}^{1}\int_{-1}^{1}\frac{\calK_k(1,1)^2-\calK_k(\xi_{+},\xi_{-})^2}{(1-\xi_{+}\xi_{-})^{s/2}}\dd\xi_{+}\dd\xi_{-}.
	\end{align*}
	Making the substitution $(\xi_{+},\xi_{-})\mapsto (\cos\theta,\cos\sigma)$, we can rewrite the previous integral as 
	\begin{equation*}    
		\frac{1}{2}\int_{0}^{\pi/2}\int_{0}^{\pi}\frac{(\calK_k(1,1)^2-\calK_k(\cos\theta,\cos\sigma)^2)\sin\theta\sin\sigma}{(1-\cos\theta\cos\sigma)^{s/2}}\dd\theta\dd\sigma.
	\end{equation*}
	Since the integrand can only be unbounded when $(\theta,\sigma)$ approaches $(0,0)$, it suffices to prove that the integral
	\begin{equation*}    
		\int_{0}^{\varepsilon}\int_{0}^{\varepsilon}\frac{(\calK_k(1,1)^2-\calK_k(\cos\theta,\cos\sigma)^2)\sin\theta\sin\sigma}{(1-\cos\theta\cos\sigma)^{s/2}}\dd\theta\dd\sigma
	\end{equation*}
	is finite if and only if $s<6$ for some sufficiently small $\varepsilon>0$. From \cref{lemma:K_k^2} we know that there exists a constant $c(k)$ depending only on $k$ such that
	\begin{equation*}
		\calK_k(1,1)^2-\calK_k(\cos\theta,\cos\sigma)^2=c(k)(\theta^2+\sigma^2)+O(\theta^4+\sigma^4).
	\end{equation*}
	This, together with the estimates $\sin \theta\sin\sigma \sim \theta\sigma$ and $(1-\cos \theta\cos\sigma)\sim (\theta^2+\sigma^2)/2$, gives
	\begin{align*}    
	\MoveEqLeft\int_{0}^{\varepsilon}\int_{0}^{\varepsilon}\frac{(\calK_k(1,1)^2-\calK_k(\cos\theta,\cos\sigma)^2)\sin\theta\sin\sigma}{(1-\cos\theta\cos\sigma)^{s/2}}\dd\theta\dd\sigma\asymp\int_{0}^{\varepsilon}\int_{0}^{\varepsilon}\frac{\theta\sigma(\theta^2+\sigma^2)}{(\theta^2+\sigma^2)^{s/2}}\dd\theta\dd\sigma.
	\end{align*}
	Since the integrand is positive, we have
	\begin{align*}
		\iint\limits_{\theta^2+\sigma^2<\varepsilon^2} \!\!\frac{\theta\sigma(\theta^2+\sigma^2)}{(\theta^2+\sigma^2)^{s/2}}\dd\theta\dd\sigma\leq \int_{0}^{\varepsilon}\int_{0}^{\varepsilon}\frac{\theta\sigma(\theta^2+\sigma^2)}{(\theta^2+\sigma^2)^{s/2}}\dd\theta\dd\sigma\leq \iint\limits_{\mathclap{\theta^2+\sigma^2<2\varepsilon^2}}\ \,\frac{\theta\sigma(\theta^2+\sigma^2)}{(\theta^2+\sigma^2)^{s/2}}\dd\theta\dd\sigma,
	\end{align*}
	where the integrals in the bounds are finite if and only if $s<6$.
\end{proof}

To prove \cref{thm:Energy-DPP-Riesz} we need the following lemma.

\begin{lemma}\label{lemma:integral-dpp}
	Let $k\geq 1$, $0<s<4$, and $\beta\in\set{0,1}$. Then,
	\begin{align*}
		\MoveEqLeft\int_{0}^{1}\int_{-1}^{1}\frac{\Gegenbauer{k}{3/2}(\xi_{+})\Gegenbauer{k-\beta}{3/2}(\xi_{+})\Gegenbauer{k}{3/2}(\xi_{-})\Gegenbauer{k-\beta}{3/2}(\xi_{-})}{(1-\xi_{+}\xi_{-})^{s/2}}\dd \xi_{+} \dd \xi_{-}\\
		&=k^{4+s}2^{s/2}\int_0^{\infty}\int_0^{\infty} \frac{\BesselJ{1}(x)^2\BesselJ{1}(y)^2}{xy(x^2+y^2)^{s/2}}\dd x\dd y +o(k^{4+s}),
	\end{align*}
	where $J_1$ is the Bessel function of the first kind of order $1$.
\end{lemma}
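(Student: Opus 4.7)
The plan is to rescale the integration variables near the singularity of the kernel at $(\xi_{+},\xi_{-})=(1,1)$ and apply the Mehler--Heine asymptotic formula for Gegenbauer polynomials. Using the identity $\Gegenbauer{n}{3/2}(x)=\tfrac{n+2}{2}\,\Jacobi{n}{1}{1}(x)$ together with the classical Mehler--Heine formula for Jacobi polynomials, one has
\[
k^{-2}\,\Gegenbauer{k}{3/2}(\cos(x/k))\longtendsto \frac{\BesselJ{1}(x)}{x},\qquad k\tendsto\infty,
\]
pointwise in $x\in[0,\infty)$, and the analogous limit holds for $\Gegenbauer{k-1}{3/2}$ since $(k-1)/k\tendsto 1$. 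I would then substitute $\xi_{+}=\cos(x/k)$ and $\xi_{-}=\cos(y/k)$, so that $(x,y)$ ranges over $[0,k\pi]\times[0,k\pi/2]$ and the Jacobian contributes $k^{-2}\sin(x/k)\sin(y/k)\,dx\,dy$. Using $\sin(t/k)\sim t/k$ and $1-\cos(x/k)\cos(y/k)\sim (x^2+y^2)/(2k^2)$, the pointwise limit of the integrand times the Jacobian for fixed $(x,y)$ is exactly
\[
k^{4+s}\cdot 2^{s/2}\cdot \frac{\BesselJ{1}(x)^2\,\BesselJ{1}(y)^2}{xy\,(x^2+y^2)^{s/2}},
\]
whose integral over $[0,\infty)^2$ converges precisely for $0<s<4$, thanks to $\BesselJ{1}(x)\sim x/2$ near zero and $\BesselJ{1}(x)=O(x^{-1/2})$ at infinity.

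To rigorously pass to the limit inside the integral, I would split the rescaled region $[0,k\pi]\times[0,k\pi/2]$ into a \emph{diagonal zone} $[0,M]^2$ with $M$ large but fixed and its complement. On the diagonal zone I would apply dominated convergence using the uniform estimate
\[
k^{-2}\,\bigl|\Gegenbauer{k}{3/2}(\cos(x/k))\bigr|\leq C\,\min(1,x^{-3/2}),
\]
which follows from Szeg\H{o}'s uniform asymptotic bounds for Jacobi polynomials. This produces the main term, converging as $k\tendsto\infty$ and then $M\tendsto\infty$ to $k^{4+s}\cdot 2^{s/2}$ times the claimed Bessel integral. On the complement I would show that the contribution is $O(k^4)=o(k^{4+s})$. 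The key ingredient is the $L^2$ bound $\int_{-1}^{1}|\Gegenbauer{k}{3/2}(\xi)|^2\,d\xi=O(k^2)$, which combined with Cauchy--Schwarz yields
\[
\int_{0}^{1}\int_{-1}^{1}\bigl|\Gegenbauer{k}{3/2}(\xi_{+})\Gegenbauer{k-\beta}{3/2}(\xi_{+})\Gegenbauer{k}{3/2}(\xi_{-})\Gegenbauer{k-\beta}{3/2}(\xi_{-})\bigr|\,d\xi_{+}\,d\xi_{-}=O(k^4).
\]
Pulled back to $\xi$-coordinates, the complement of $[0,M]^2$ lies outside an $O(M/k)$ neighbourhood of $(\xi_{+},\xi_{-})=(1,1)$, where $(1-\xi_{+}\xi_{-})^{-s/2}$ is uniformly bounded, so the integral on this complement is indeed $O(k^4)$.

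The main technical obstacle is producing the uniform envelope $k^{-2}|\Gegenbauer{k}{3/2}(\cos(x/k))|\leq C\min(1,x^{-3/2})$ needed for dominated convergence on the diagonal zone. This requires combining the Mehler--Heine regime near $x=0$ (where the bound $k^{-2}|\Gegenbauer{k}{3/2}|\leq C$ is immediate since the polynomial achieves its maximum at the endpoint with value $\binom{k+2}{2}\asymp k^2$) with the bulk estimate $|\Gegenbauer{k}{3/2}(\cos\theta)|\lesssim k^{1/2}\sin^{-3/2}\theta$ for $\theta=x/k$ bounded away from $0$ and $\pi$. Once this envelope is in place, the remaining verifications are routine: the product of four such envelopes times the Jacobian $xy/k^4$ and the rescaled kernel is dominated by a multiple of
\[
\frac{\min(1,x^{-3/2})^2\,\min(1,y^{-3/2})^2\,xy}{(x^2+y^2)^{s/2}},
\]
which is integrable on $[0,\infty)^2$ exactly in the range $0<s<4$, matching the hypothesis of the lemma.
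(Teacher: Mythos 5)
Your overall strategy (rescale by $1/k$ around the singular corner, invoke the Mehler--Heine formula, and justify the limit with a dominating envelope coming from Szeg\H{o}'s bounds) is essentially the same as the paper's, but the far-field part of your argument has a genuine gap. You claim that, pulled back to $\xi$-coordinates, the complement of the diagonal zone $[0,M]^2$ (with $M$ \emph{fixed}) lies where $(1-\xi_{+}\xi_{-})^{-s/2}$ is uniformly bounded, so that the Cauchy--Schwarz bound $O(k^4)$ for the product of Gegenbauer polynomials gives a complement contribution $O(k^4)=o(k^{4+s})$. This is false: the complement of $[0,M]^2$ only excludes a \emph{shrinking} $O(M/k)$-neighbourhood of $(1,1)$. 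For example, on the strip $M\leq x\leq 2M$, $0\leq y\leq 1$ one has $1-\xi_{+}\xi_{-}\asymp M^2/k^2$, so the kernel is of size $\asymp (k/M)^{s}$ there, not $O(1)$; combined with your $L^2$/Cauchy--Schwarz estimate this only yields a complement contribution $O(k^{4+s}M^{-s})$, which for fixed $M$ is of the \emph{same order} as the main term, merely with a small constant. A direct computation on that strip (Gegenbauer factors $\asymp k^4 \BesselJ{1}(x)^2/x^2$ and $\asymp k^4\BesselJ{1}(y)^2/y^2$, Jacobian $\asymp xy/k^4$) confirms a contribution $\asymp k^{4+s}M^{-1-s}$, so the asserted $O(k^4)$ bound cannot hold.

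The argument can be repaired, but not as written: either (i) keep your two-scale structure and run a $\limsup$ argument — show that $k^{-4-s}$ times the integral differs from $2^{s/2}\iint_{[0,M]^2}\BesselJ{1}(x)^2\BesselJ{1}(y)^2/(xy(x^2+y^2)^{s/2})\dd x\dd y$ by at most $C M^{-s}$ plus the tail of the Bessel integral, and then let $M\tendsto\infty$ after $k\tendsto\infty$; or (ii) follow the paper's route, where the main term is extracted on the box $[0,k^{-1/2}]^2$ in the angular variables $(\theta,\sigma)=(\arccos\xi_{+},\arccos\xi_{-})$ (a \emph{growing} box $[0,k^{1/2}]^2$ after rescaling, handled in one stroke by dominated convergence with exactly the envelope you describe), while on the remaining regions one uses that either $1-\cos\theta\cos\sigma\geq 1$ or the kernel is $O(k^{s/2})$ at scale $k^{-1/2}$, together with the pointwise Gegenbauer bounds, to get contributions $O(k^{4})$, $O(k^{3+s/2})$ and $O(k^{15/4+s/2})$, all $o(k^{4+s})$ for $0<s<4$. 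Your envelope $k^{-2}\abs{\Gegenbauer{k}{3/2}(\cos(x/k))}\leq C\min(1,x^{-3/2})$ and the $L^2$ bound $\int_{-1}^{1}\Gegenbauer{k}{3/2}(\xi)^2\dd\xi=O(k^2)$ are both correct; it is only the uniform boundedness claim for the kernel off the shrinking neighbourhood, and hence the $O(k^4)$ complement estimate, that fails.
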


\begin{proof}
	See \cref{sec:proof-lemma-integral-dpp}.
\end{proof}

\subsection*{Proof of \cref{thm:Energy-DPP-Riesz}}

From \cref{prop:expectation-dpp}, \cref{remark:KH-xx}, \cref{lemma:integral-Grass24-2}, \cref{eq:cont-Riesz-energy-1,eq:chordal-distance-xi}, and the fact that $\calK_k(P,P)=N$, the expected Riesz energy of this \gls{dpp} is
\begin{align*}    
	\expectation{x\distributed\HarmonicDPP{N}}{\Energy_s(x)}&=\iint\limits_{\mathclap{\Grass{2}{4}\times \Grass{2}{4}}} \frac{\calK_{k}(P,P)\calK_{k}(Q,Q)-\calK_{k}(P,Q)^2}{\chordalGrass(P,Q)^s}\dd \sigma(P)\dd \sigma(Q)\\
	&=\Wiener_s(\Grass{2}{4})N^2-\frac{1}{2}\int_{0}^{1}\int_{-1}^{1}\frac{\calK_k(\xi_{+},\xi_{-})^2}{(1-\xi_{+}\xi_{-})^{s/2}}\dd \xi_{+}\dd \xi_{-}.
\end{align*}
From \cref{thm:reproducing-kernel-grassmannian-dpp} and \cref{lemma:integral-dpp}, we have
\begin{align*}
	\expectation{x\distributed\HarmonicDPP{N}}{\Energy_s(x)}
    &=\Wiener_s(\Grass{2}{4})N^2\\
    &\qquad-2^{1+s/2}k^{4+s}\int_0^{\infty}\int_0^{\infty} \frac{\BesselJ{1}(x)^2\BesselJ{1}(y)^2}{xy(x^2+y^2)^{s/2}}\dd x\dd y+o(k^{4+s}).
\end{align*}
Finally, using that $N=\frac{1}{2}k^4+O(k^3)$, the theorem follows. \qed

To prove \cref{thm:Energy-DPP-log} we need the following lemma.

\begin{lemma}\label{lemma:integral-dpp-log}
	Let $k\geq 1$ and let $\beta\in\set{0,1}$. The following equality holds:
	\begin{align*}
		\MoveEqLeft\int_{0}^{1}\int_{-1}^{1}\Gegenbauer{k}{3/2}(\xi_{+})\Gegenbauer{k-\beta}{3/2}(\xi_{+})\Gegenbauer{k}{3/2}(\xi_{-})\Gegenbauer{k-\beta}{3/2}(\xi_{-})\log(1-\xi_{+}\xi_{-})\dd \xi_{+}\dd \xi_{-}\\
		&=-\frac{k^4\log{k}}{2}+Bk^4+(-1)^{\beta}\frac{\log(2)}{4}\Bigl(1-\BesselJ{0}(1)^2-\BesselJ{1}(1)^2\Bigr)^2k^4+o(k^4),
	\end{align*}
	with
    \begin{equation*}
        B=\frac{1+2G}{2\pi}+\frac{1}{8}-\frac{\gamma}{2}+\frac{\log{2}}{4},
    \end{equation*}
	where $G$ is Catalan's constant and $\gamma$ is the Euler--Mascheroni constant.
\end{lemma}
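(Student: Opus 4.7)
The approach parallels the proof of Lemma~\ref{lemma:integral-dpp} in the Riesz case, but is more delicate because the logarithmic kernel produces only a logarithmic singularity at the concentration corner $(\xi_+,\xi_-)=(1,1)$, so contributions that were negligible for the Riesz kernel now enter at the target order $O(k^4)$. First, I would split the $\xi_-$-integral into $[-1,0]\cup[0,1]$ and use the parity $\Gegenbauer{m}{3/2}(-\xi)=(-1)^m\Gegenbauer{m}{3/2}(\xi)$ to write $I_\beta = I^+ + (-1)^\beta I^-$, where $I^\pm$ are integrals over $[0,1]^2$ of the Gegenbauer product against $\log(1\mp\xi_+\xi_-)$. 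The $(-1)^\beta$ factor in the statement emerges at once from this parity decomposition.

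For $I^+$ the logarithm diverges exactly where the Gegenbauer polynomials concentrate. Passing to scaled variables $\xi_\pm=\cos(t_\pm/k)$ and applying the Mehler--Heine asymptotic $\Gegenbauer{k}{3/2}(\cos(t/k))\sim k^2\BesselJ{1}(t)/t$, one writes $\log(1-\xi_+\xi_-)=-2\log k+\log((t_+^2+t_-^2)/2)+o(1)$ in the Bessel regime. Using $\int_0^\infty \BesselJ{1}(t)^2/t\,\dd t = 1/2$ twice isolates the dominant $-k^4\log k/2$ term, while the remaining two-dimensional Bessel integral is then to be evaluated in closed form to yield the $Bk^4$ contribution; Catalan's constant $G$ and the Euler--Mascheroni constant $\gamma$ enter through standard identities for integrals of $\BesselJ{1}$ against logarithms.

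For $I^-$ the logarithm is bounded. Splitting $\log(1+\xi_+\xi_-)=\log 2+\log((1+\xi_+\xi_-)/2)$, I expect the second summand to contribute only $o(k^4)$ since it vanishes at the concentration corner in the Bessel limit. The $\log 2$ piece factorizes as $\log 2\cdot A_k^2$ with $A_k = \int_0^1 \Gegenbauer{k}{3/2}(\xi)\Gegenbauer{k-\beta}{3/2}(\xi)\,\dd\xi$, so the announced coefficient forces the refined asymptotic $A_k\sim(k^2/2)\bigl(1-\BesselJ{0}(1)^2-\BesselJ{1}(1)^2\bigr)$. Since $\int_0^c \BesselJ{1}(t)^2/t\,\dd t = (1-\BesselJ{0}(c)^2-\BesselJ{1}(c)^2)/2$, this amounts to showing that $A_k$ is well-approximated by the Mehler--Heine integral truncated at the matching scale $c=1$ between the Bessel regime and the oscillatory bulk of the Gegenbauer polynomials.

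The main obstacle is precisely this matched asymptotic analysis for $A_k$, which must identify the cutoff $c=1$ and control the transition between Bessel and bulk contributions uniformly at the $o(k^4)$ level. The closed-form evaluation of the two-dimensional Bessel integral producing $B$ is also technically substantial, but follows a blueprint analogous to the Riesz case. A useful consistency check comes from Theorem~\ref{thm:Energy-DPP-log}: the $(-1)^\beta$ contributions must cancel in the sum $I_{k,0}+I_{k-1,0}+2I_{k,1}$ arising from the expansion of $\calK_k^2$, while the $Bk^4$ pieces must combine with the $(k^4/8)\log 2$ shift from $-(N\log N)/4$ (with $N=k^4/2+O(k^3)$) to reproduce the stated constant $C_{\log}$.
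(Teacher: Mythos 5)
Your parity reduction is sound (modulo the slip that it is the $\xi_+$-integral, over $[-1,1]$, that gets split, not the $\xi_-$-integral), and your observation that $\log\bigl((1+\xi_+\xi_-)/2\bigr)$ contributes only $o(k^4)$ to the parity-odd piece is correct. The fatal step is the claimed asymptotic $A_k\sim\tfrac{k^2}{2}\bigl(1-\BesselJ{0}(1)^2-\BesselJ{1}(1)^2\bigr)$: it is false, and no matched-asymptotics argument can produce it. Since $\Gegenbauer{k}{3/2}=\Legendre{k+1}'$ and $\int_{-1}^{1}\Legendre{n}'(x)^2\dd x=n(n+1)$ with an even integrand, for $\beta=0$ one has exactly $A_k=(k+1)(k+2)/2\sim k^2/2$; more generally the paper's own \cref{lemma:integral-with-Hilb} gives $A_k=\tfrac{k^2}{2}(1+O(k^{-1/2}))$ for both values of $\beta$, because the portion of $\int_0^{\pi/2}\Gegenbauer{k}{3/2}(\cos\theta)\Gegenbauer{k-\beta}{3/2}(\cos\theta)\sin\theta\dd\theta$ with $\theta\in[k^{-1/2},\pi/2]$ is only $O(k^{3/2})$ by \eqref{eq:bound-gegenbauer}. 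The constant $1-\BesselJ{0}(1)^2-\BesselJ{1}(1)^2\approx 0.22$ is merely the fraction of $A_k$ carried by the window $[\cos(k^{-1}),1]$, i.e.\ it is an artifact of truncating to the box of side $k^{-1}$ (the paper's region $R_1$), not a feature of the cutoff-free integral $A_k$. Consequently your route, carried out correctly, yields $(-1)^{\beta}\tfrac{\log 2}{4}k^{4}+o(k^4)$ for the parity-odd piece and cannot reach the stated coefficient; your consistency check against \cref{thm:Energy-DPP-log} cannot detect this because there the $(-1)^\beta$ contributions cancel identically, whatever their common value. (Your reduction in fact exposes a tension with the paper's own argument: the localization of the parity-odd term to $R_1$ rests on \cref{lemma:integral-R2-R3,lemma:integral-R4}, whose proofs apply \eqref{eq:bound-gegenbauer} with the factor $\theta^{-3/2}$ for $\theta$ near $\pi$, where the correct factor is $(\pi-\theta)^{-3/2}$; for $\beta=0$ the integral over $R_2$ factorizes and is of order $k^4$, not $O(k^3)$. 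This concerns only the $(-1)^\beta$-term and is harmless for \cref{thm:Energy-DPP-log} by the cancellation just mentioned.)

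A second under-specification concerns the parity-even piece, where the genuinely delicate matching sits. To pin the coefficient of $k^4\log k$ one needs $\int_0^{\delta_k}\Gegenbauer{k}{3/2}(\cos\theta)\Gegenbauer{k-\beta}{3/2}(\cos\theta)\,\theta\dd\theta=\tfrac{k^2}{2}(1+o(1))$ on a window $\delta_k$ that becomes unbounded after the rescaling $\theta\mapsto k\theta$ (the paper takes $\delta_k=k^{-1/2}$); Mehler--Heine on a fixed rescaled window $[0,c]$ only yields $\tfrac{k^2}{2}\bigl(1-\BesselJ{0}(c)^2-\BesselJ{1}(c)^2\bigr)$, hence the wrong constant for every fixed $c$. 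The paper resolves this with Hilb's formula with controlled error (\cref{lemma:integral-with-Hilb}) and then evaluates the $\log(\hat\theta^2+\hat\sigma^2)$ integral by dominated convergence (\cref{lemma:integral-log-R8}). So the matching difficulty you flag is real, but it lives in the $\log k$ coefficient of the parity-even part, not in $A_k$.
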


\begin{proof}
	See \cref{sec:proof-lemma-integral-dpp-log}.
\end{proof}

\subsection*{Proof of \cref{thm:Energy-DPP-log}}

From \cref{prop:expectation-dpp}, \cref{remark:KH-xx}, \cref{lemma:integral-Grass24-2}, and \cref{eq:cont-log-energy,eq:chordal-distance-xi}, the expected logarithmic energy of this \gls{dpp} is
\begin{align*}    
	\expectation{x\distributed\HarmonicDPP{N}}{\Energy_{\log}(x)}&=-\iint\limits_{\mathclap{\Grass{2}{4}\times \Grass{2}{4}}} (\calK_{k}(P,P)\calK_{k}(Q,Q)-\calK_{k}(P,Q)^2)\log\chordalGrass(P,Q)\dd \sigma(P)\dd \sigma(Q)\\
	&=\Wiener_{\log}(\Grass{2}{4})N^2+\frac{1}{4}\int_{0}^{1}\int_{-1}^{1}\calK_k(\xi_{+},\xi_{-})^2\log(1-\xi_{+}\xi_{-})\dd \xi_{+}\dd \xi_{-}.
\end{align*}
From \cref{thm:reproducing-kernel-grassmannian-dpp} and \cref{lemma:integral-dpp-log}, we have
\begin{align*}    
	\MoveEqLeft\expectation{x\distributed\HarmonicDPP{N}}{\Energy_{\log}(x)}\\
    &=\Wiener_{\log}(\Grass{2}{4})N^2+\frac{1}{4}\biggl(-\frac{k^4\log{k}}{2}+Bk^4+\frac{\log(2)}{4}\Bigl(1-\BesselJ{0}(1)^2-\BesselJ{1}(1)^2\Bigr)^2k^4\biggr)\\
	&\qquad+\frac{1}{4}\biggl(-\frac{k^4\log{k}}{2}+Bk^4+\frac{\log(2)}{4}\Bigl(1-\BesselJ{0}(1)^2-\BesselJ{1}(1)^2\Bigr)^2k^4\biggr)\\
	&\qquad+\frac{1}{2}\biggl(-\frac{k^4\log{k}}{2}+Bk^4-\frac{\log(2)}{4}\Bigl(1-\BesselJ{0}(1)^2-\BesselJ{1}(1)^2\Bigr)^2k^4\biggr)+o(k^4),
\end{align*}
with
\begin{equation*}
	B=\frac{1+2G}{2\pi}+\frac{1}{8}-\frac{\gamma}{2}+\frac{\log{2}}{4},
\end{equation*}
Simplifying, we obtain
\begin{equation*}
	\expectation{x\distributed\HarmonicDPP{N}}{\Energy_{\log}(x)}=\Wiener_{\log}(\Grass{2}{4})N^2-\frac{1}{2}k^4\log k+Bk^4+o(k^4).
\end{equation*}
Using that $N=\frac{1}{2}k^4+O(k^3)$, the theorem follows. \qed

Finally, to prove \cref{thm:Energy-DPP-hypersingular} we need the following lemmas. In these three lemmas, we consider the region $R=[-1,1]\times[0,1]$ and divide it according to \cref{fig:regions-dpp-hypersingular-2}.

\begin{figure}[htbp]
    \centering
    \begin{tikzpicture}[scale=0.8]
        \def\r{0.3\textwidth}
        \def\t{0.2\textwidth}
        \coordinate (A0) at (0,0);
        \coordinate (A1) at (\r,0);
        \coordinate (A2) at (0,\r);
        \coordinate (A3) at (-\r,0);
        \coordinate (A6) at (\t,0);
        \coordinate (A7) at (0,\t);
        \draw (A0) node[below left]{$0$};
        \draw (A1) node[below]{$1$};
        \draw (A2) node[above left]{$1$};
        \draw (A3) node[below]{$-1$};
        \draw (A6) node[below]{$\cos(k^{-1})$};
        \draw (A7) node[left]{$\cos(k^{-1})$};
        \draw[->] (-0.35\textwidth,0)--(0.4\textwidth,0) node[below]{$\xi_{+}$};
        \draw[->] (0,-0.5)--(0,0.4\textwidth) node[left]{$\xi_{-}$};
        \draw (-\r,0) rectangle (\r,\r);
        \draw (\t,\t) rectangle (\r,\r) node[pos=.5]{$\hat{R}$};
        \draw[loosely dashed, color=gray!80] (A7)--(\t,\t);
        \draw[loosely dashed, color=gray!80] (A6)--(\t,\t);
    \end{tikzpicture}
    \caption{Regions of integration inside the rectangle $R=[-1,1]\times[0,1]$.}
    \label{fig:regions-dpp-hypersingular-2}
\end{figure}
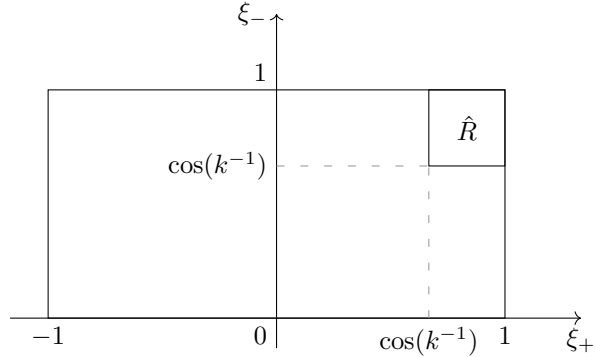

\begin{lemma}\label{lemma:integral-dpp-hyper-1}
	The following equality holds:
	\begin{equation*}
		\iint_{R\setminus \hat{R}}\frac{1}{(1-\xi_{+}\xi_{-})^2}\dd \xi_{+}\dd \xi_{-}=2\log{k}+3\log{2}+O(k^{-2}).
	\end{equation*}
\end{lemma}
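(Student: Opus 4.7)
The plan is to reduce to an explicit iterated integral. Writing $\alpha=\cos(k^{-1})$, the region $R\setminus\hat R$ decomposes (up to a set of measure zero) as the disjoint union of the two rectangles
\begin{equation*}
    A=[-1,\alpha]\times[0,1],\qquad B=[\alpha,1]\times[0,\alpha],
\end{equation*}
so the integral splits as $I_A+I_B$. For each piece I will carry out the inner integral in $\xi_{+}$ using the antiderivative $\int (1-\xi_+\xi_-)^{-2}\dd\xi_+=\xi_-^{-1}(1-\xi_+\xi_-)^{-1}$, which produces integrands of the form $(1-\xi_-)^{-1}(1-\alpha\xi_-)^{-1}$ and $(1+\xi_-)^{-1}(1-\alpha\xi_-)^{-1}$ in the remaining variable. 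These are handled by elementary partial fractions, yielding closed-form expressions in $\alpha$.

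Concretely, on $A$ the inner integral equals $(1+\alpha)/((1-\alpha\xi_-)(1+\xi_-))$, and partial fractions give
\begin{equation*}
    I_A=\int_0^1\!\!\left(\frac{\alpha}{1-\alpha\xi_-}+\frac{1}{1+\xi_-}\right)\dd\xi_-=\log\frac{2}{1-\alpha}.
\end{equation*}
On $B$, the inner integral equals $(1-\alpha)/((1-\xi_-)(1-\alpha\xi_-))$, and an analogous partial fraction computation gives
\begin{equation*}
    I_B=\int_0^{\alpha}\!\!\left(\frac{1}{1-\xi_-}-\frac{\alpha}{1-\alpha\xi_-}\right)\dd\xi_-=\log(1+\alpha).
\end{equation*}

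Finally, I will insert the Taylor expansion $1-\alpha=1-\cos(k^{-1})=\tfrac{1}{2}k^{-2}+O(k^{-4})$, which yields
\begin{equation*}
    \log\frac{2}{1-\alpha}=2\log k+2\log 2+O(k^{-2}),\qquad \log(1+\alpha)=\log 2+O(k^{-2}),
\end{equation*}
and adding these gives the claimed $2\log k+3\log 2+O(k^{-2})$. There is no real obstacle here: the computation is completely explicit, and the only care needed is in the asymptotic expansion of $\log(1-\alpha)$, where one must keep track of the $O(k^{-2})$ remainder in $1-\cos(k^{-1})$ to justify the stated error term.
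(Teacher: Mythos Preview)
Your proof is correct and follows essentially the same approach as the paper: compute the integral exactly as an elementary function of $\alpha=\cos(k^{-1})$, then expand. The paper first passes to trigonometric coordinates $(\xi_+,\xi_-)=(\cos\theta,\cos\sigma)$ and splits $\tilde R\setminus\tilde R_{10}$ into four rectangular pieces (obtaining $\log 2$, $-2\log\sin(k^{-1})$, and two copies of $\log(1+\cos(k^{-1}))$), whereas you stay in $(\xi_+,\xi_-)$ and use only two pieces; both routes arrive at the same closed form $\log 2-\log(1-\alpha)+\log(1+\alpha)$ before the asymptotic step. Your decomposition is a little more economical, while the paper's matches the region labeling used in the neighbouring lemmas, but there is no substantive difference.
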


\begin{proof}
	See \cref{sec:proof-lemma-integral-dpp-hyper-1}.
\end{proof}

\begin{lemma}\label{lemma:integral-dpp-hyper-2}
	Let $k\geq 1$ and let $\beta\in\set{0,1}$. The following equality holds:
	\begin{align*}
		\MoveEqLeft\iint_{R\setminus \hat{R}}\frac{\Gegenbauer{k}{3/2}(\xi_{+})\Gegenbauer{k-\beta}{3/2}(\xi_{+})\Gegenbauer{k}{3/2}(\xi_{-})\Gegenbauer{k-\beta}{3/2}(\xi_{-})}{(1-\xi_{+}\xi_{-})^2}\dd \xi_{+}\dd \xi_{-}\\
		&=4k^8\biggl(\int_{1}^{\infty}\int_{1}^{\infty}\frac{\BesselJ{1}(x)^2\BesselJ{1}(y)^2}{xy(x^2+y^2)^2}\dd x\dd y+2\int_{1}^{\infty}\int_{0}^{1}\frac{\BesselJ{1}(x)^2\BesselJ{1}(y)^2}{xy(x^2+y^2)^2}\dd x\dd y\biggr)+o(k^8).
	\end{align*}
\end{lemma}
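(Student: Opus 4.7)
The strategy is to mirror, term for term, the asymptotic analysis used in the proof of \cref{lemma:integral-dpp}, but at the borderline value $s = 4$; the exclusion of $\hat{R}$ is precisely what is needed to keep the limiting integral finite. The key input is the Mehler--Heine asymptotic for Gegenbauer polynomials,
\begin{equation*}
    \frac{1}{k^{2}}\Gegenbauer{k}{3/2}\bigl(\cos(z/k)\bigr) \longrightarrow \frac{\BesselJ{1}(z)}{z}\qquad (k\to\infty),
\end{equation*}
uniform on compact subsets of $[0,\infty)$; the same asymptotic holds for $\Gegenbauer{k-1}{3/2}$, so the leading-order answer will not depend on $\beta \in \set{0,1}$.

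First I would perform the substitution $\xi_{+} = \cos(x/k)$, $\xi_{-} = \cos(y/k)$, under which $\hat{R}$ is mapped to $[0,1]^{2}$ and $R \setminus \hat{R}$ to the region $\Omega_{k} \defined \bigl([0,k\pi]\times[0,k\pi/2]\bigr) \setminus [0,1]^{2}$. The Jacobian is $k^{-2}\sin(x/k)\sin(y/k)$, and the elementary expansions $1 - \cos(x/k)\cos(y/k) = (x^{2}+y^{2})/(2k^{2}) + O(k^{-4}(x^{2}+y^{2})^{2})$ and $\sin(x/k)\sin(y/k) = xy/k^{2} + O(k^{-4}x^{2}y^{2})$, combined with the Mehler--Heine formula, show that, pointwise on $(0,\infty)^{2}$, $k^{-8}$ times the integrand times the Jacobian tends to $4\BesselJ{1}(x)^{2}\BesselJ{1}(y)^{2}/(xy(x^{2}+y^{2})^{2})$. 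Integrating this pointwise limit over $(0,\infty)^{2} \setminus [0,1]^{2}$ and using the symmetry $x \leftrightarrow y$ of the integrand produces exactly the advertised sum $\int_{1}^{\infty}\int_{1}^{\infty} + 2\int_{1}^{\infty}\int_{0}^{1}$.

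To upgrade this pointwise convergence into the claimed integral asymptotic, I would split $\Omega_{k}$ into three zones: a \emph{bulk} $[0,M]^{2} \setminus [0,1]^{2}$ for a large fixed parameter $M$; an \emph{intermediate} zone $[0,\delta k]^{2} \setminus [0,M]^{2}$; and a \emph{tail} $\Omega_{k} \setminus [0,\delta k]^{2}$ for a small fixed $\delta > 0$. On the bulk, the Mehler--Heine formula is uniform and dominated convergence delivers the leading term with an $o(k^{8})$ error. On the intermediate zone, the limiting integrand is of order $1/(xy(x^{2}+y^{2})^{2})$, whose integral over $\max(x,y) > M$ tends to $0$ as $M \to \infty$ by the decay $\BesselJ{1}(z)^{2} = O(1/z)$; this controls the intermediate contribution provided one lets $k\to\infty$ first and $M\to\infty$ afterwards. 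On the tail, $\xi_{\pm}$ are bounded away from $\pm 1$, so the classical envelope bound $\abs{\Gegenbauer{k}{3/2}(\cos\theta)} \lesssim k/(\sin\theta)^{3/2}$ gives that the integrand is $O(k^{4})$ pointwise while $(1 - \xi_{+}\xi_{-})^{-2} = O(1)$, yielding a total contribution of size $O(k^{4}) = o(k^{8})$.

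The main obstacle will be the clean matching across the intermediate zone, where one has to verify that the Mehler--Heine approximation remains effective as $y/k$ grows and simultaneously patch in the oscillatory envelope bound once $y/k$ becomes macroscopic, producing a $k$-independent integrable majorant in the scaled variables. This is precisely the technicality that appears in \cref{lemma:integral-dpp} and is handled identically there; the only new feature of the present lemma is that the limiting integrand is not integrable near the origin, which is exactly what the exclusion of $\hat{R} = [0,1]^{2}$ (in the scaled variables) is designed to compensate.
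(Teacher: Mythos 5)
Your overall route is the same as the paper's: pass to principal angles, rescale by $k$, apply the Mehler--Heine formula \eqref{eq:Mehler-Heine-Gegenbauer} with dominated convergence near the origin (where the excluded square $\hat R$ becomes $[0,1]^2$ in the scaled variables), and kill the remaining region by envelope bounds; the paper carries this out via the subdivision of \cref{fig:regions-dpp-hypersingular}, reusing \cref{lemma:integral-R1,lemma:integral-R2-R3,lemma:integral-R4,lemma:integral-R5,lemma:integral-R6-R7} with $s=4$ together with \cref{lemma:integral-hyper-tilde-R7,lemma:integral-hyper-tilde-R8-R9}. However, two of your quantitative claims fail as stated, and the first hits exactly the point you identify as the crux. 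The envelope bound you quote, $\abs{\Gegenbauer{k}{3/2}(\cos\theta)}\lesssim k(\sin\theta)^{-3/2}$, is off by a factor $k^{1/2}$: the correct Szeg\H{o} bound is $\Gegenbauer{k}{3/2}(\cos\theta)=\theta^{-3/2}O(k^{1/2})$ for $\theta\geq ck^{-1}$, see \eqref{eq:bound-gegenbauer}. With your version the fourfold Gegenbauer product in the scaled variables is only $O(k^{10}(xy)^{-3})$ rather than $O(k^{8}(xy)^{-3})$, so the dominating function you need on the intermediate zone carries an extra factor $k^{2}$ and is not $k$-independent; the matching across $[0,\delta k]^2\setminus[0,M]^2$ then does not close. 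With the sharp bound one obtains the $k$-independent integrable majorants (for instance $(xy)^{-2}(x^2+y^2)^{-2}$ on $[1,\infty)^2$), which is precisely what \cref{lemma:integral-hyper-tilde-R7,lemma:integral-hyper-tilde-R8-R9} use, and your "$k\to\infty$ first, $M\to\infty$ afterwards" bookkeeping then works.

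The second problem is the tail. On the region where $\max(\theta,\sigma)>\delta$ the integrand is not $O(k^4)$ pointwise: near the corner $\xi_+\approx-1$, $\xi_-\approx 1$ (i.e.\ $\theta$ within $O(k^{-1})$ of $\pi$ and $\sigma=O(k^{-1})$) all four Gegenbauer factors are of size $\asymp k^2$, so their product is $\asymp k^8$ and, even after multiplying by $\sin\theta\sin\sigma=O(k^{-2})$, the integrand reaches size $\asymp k^6$; likewise, on strips where only one angle is within $O(k^{-1})$ of an endpoint the product is of size $k^{5}$ to $k^{6}$. The conclusion that the tail contributes $o(k^8)$ is correct, but it cannot be obtained from a single uniform pointwise bound; it requires subdividing the far region as in \cref{fig:regions-dpp} and estimating each piece separately (this is what \cref{lemma:integral-R1,lemma:integral-R2-R3,lemma:integral-R4,lemma:integral-R5,lemma:integral-R6-R7} do, the worst pieces giving $O(k^{23/4})$, not $O(k^4)$). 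With these two repairs — the sharp envelope bound and a finer splitting of the tail — your argument coincides in substance with the paper's proof.
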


\begin{proof}
	See \cref{sec:proof-lemma-integral-dpp-hyper-2}.
\end{proof}

\begin{lemma}\label{lemma:integral-dpp-hyper-3}
	The following equality holds:
	\begin{align*}
		\MoveEqLeft\iint_{\hat{R}}\frac{\calK_{k}(1,1)^2-\calK_{k}(\xi_{+},\xi{-})^2}{(1-\xi_{+}\xi_{-})^2}\dd \xi_{+}\dd \xi_{-}\\
		&=k^8\int_{0}^{1}\int_{0}^{1}
		\frac{x^2y^2-16\BesselJ{1}(x)^2\BesselJ{1}(y)^2}{xy(x^2+y^2)^2}\dd x\dd y+o(k^8).
	\end{align*}
\end{lemma}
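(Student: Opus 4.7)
The plan is to rescale the corner region $\hat{R}=[\cos(k^{-1}),1]^{2}$ onto the fixed square $[0,1]^{2}$ through the substitution $\xi_{+}=\cos(x/k)$, $\xi_{-}=\cos(y/k)$ (Jacobian $k^{-2}\sin(x/k)\sin(y/k)$) and then to read off the leading $k^{8}$ behaviour by combining Mehler--Heine asymptotics for the Gegenbauer polynomials in \cref{thm:reproducing-kernel-grassmannian-dpp} with the dominated convergence theorem. After the substitution, the integral equals
\[
\iint_{[0,1]^{2}}\frac{\calK_k(1,1)^{2}-\calK_k(\cos(x/k),\cos(y/k))^{2}}{\bigl(1-\cos(x/k)\cos(y/k)\bigr)^{2}}\cdot\frac{\sin(x/k)\sin(y/k)}{k^{2}}\,\dd x\,\dd y.
\]

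First I would record the uniform expansions $1-\cos(x/k)\cos(y/k)=(x^{2}+y^{2})/(2k^{2})+O(k^{-4})$ and $\sin(x/k)\sin(y/k)/k^{2}=xy/k^{4}+O(k^{-6})$ valid on $[0,1]^{2}$, together with $\calK_k(1,1)=d_k=k^{4}/2+O(k^{3})$. The Mehler--Heine step uses the identity $\Gegenbauer{n}{3/2}(x)=\tfrac{n+2}{2}P_{n}^{(1,1)}(x)$ and the classical limit $n^{-1}P_n^{(1,1)}(\cos(z/n))\to 2\BesselJ{1}(z)/z$ to obtain
\[
\Gegenbauer{k}{3/2}\bigl(\cos(x/k)\bigr)=\frac{k^{2}\BesselJ{1}(x)}{x}+o(k^{2}),\qquad\Gegenbauer{k-1}{3/2}\bigl(\cos(x/k)\bigr)=\frac{k^{2}\BesselJ{1}(x)}{x}+o(k^{2}),
\]
uniformly for $x\in[0,1]$, whence, by \cref{thm:reproducing-kernel-grassmannian-dpp},
\[
\calK_k(\cos(x/k),\cos(y/k))=\frac{2k^{4}\BesselJ{1}(x)\BesselJ{1}(y)}{xy}+o(k^{4}).
\]
Substituting these into the rescaled integrand gives the pointwise limit
\[
\frac{1}{k^{8}}\bigl(\text{integrand}\bigr)\longrightarrow\frac{x^{2}y^{2}-16\BesselJ{1}(x)^{2}\BesselJ{1}(y)^{2}}{xy(x^{2}+y^{2})^{2}},
\]
and the expansion $\BesselJ{1}(u)=u/2+O(u^{3})$ shows that this limit vanishes at the origin to order $xy/(x^{2}+y^{2})$, hence is bounded and integrable on $[0,1]^{2}$.

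The final task is to justify the interchange of limit and integral. Combining the lower bound $1-\cos(x/k)\cos(y/k)\geq c(x^{2}+y^{2})/k^{2}$ on $[0,1]^{2}$ with a first/second-order Taylor estimate $|\calK_k(1,1)-\calK_k(\xi_{+},\xi_{-})|\leq Ck^{4}(x^{2}+y^{2})$ in the rescaled variables (which follows from the derivative identity $(\Gegenbauer{k}{3/2})'=3\Gegenbauer{k-1}{5/2}$, the value $\Gegenbauer{k-1}{5/2}(1)=\binom{k+3}{4}$, and a uniform remainder bound coming from $|\Gegenbauer{n}{\lambda}|\leq \Gegenbauer{n}{\lambda}(1)$ for $\lambda\geq 1/2$) together with the trivial bound $|\calK_k(1,1)+\calK_k(\xi_{+},\xi_{-})|\leq 2d_k\sim k^{4}$, one obtains a $k$-independent majorant of size $Cxy/(x^{2}+y^{2})$ after dividing the integrand by $k^{8}$; this majorant is integrable on $[0,1]^{2}$ (bounded by $C/2$ via AM--GM), and the dominated convergence theorem concludes the proof. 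The main obstacle is precisely this control of $\calK_k(1,1)^{2}-\calK_k^{2}$, an indeterminate difference of two quantities of order $k^{8}$: one must extract the Bessel cancellation at the origin with enough precision to balance the $(1-\xi_{+}\xi_{-})^{-2}$ singularity at $(\xi_{+},\xi_{-})=(1,1)$, which is exactly what the Taylor/derivative identity above provides.
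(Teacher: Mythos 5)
Your proposal is correct and follows essentially the same route as the paper: rescale the corner onto $[0,1]^2$, apply the Mehler--Heine asymptotics together with the dominated convergence theorem, and control the indeterminate difference $\calK_k(1,1)^2-\calK_k(\xi_+,\xi_-)^2$ by a Taylor/derivative bound of size $k^8(x^2+y^2)$ yielding the integrable majorant $xy/(x^2+y^2)$. The only (inessential) difference is that the paper obtains this bound via the multivariate mean value theorem applied to $\calK_k^2$, whereas you factor the difference of squares and use a univariate derivative estimate with $\abs{\Gegenbauer{n}{\lambda}}\leq\Gegenbauer{n}{\lambda}(1)$.
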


\begin{proof}
	See \cref{sec:proof-lemma-integral-dpp-hyper-3}.
\end{proof}

\subsection*{Proof of \cref{thm:Energy-DPP-hypersingular}}

From \cref{prop:expectation-dpp}, \cref{lemma:integral-Grass24-2}, and \cref{eq:chordal-distance-xi}, the expected Riesz $4$-energy of this \gls{dpp} is
\begin{align*}    
    \expectation{x\distributed\HarmonicDPP{N}}{\Energy_4(x)}&=\iint\limits_{\mathclap{\Grass{2}{4}\times \Grass{2}{4}}} \frac{\calK_{k}(P,P)\calK_{k}(Q,Q)-\calK_{k}(P,Q)^2}{\chordalGrass(P,Q)^2}\dd \sigma(P)\dd \sigma(Q)\\
    &=\frac{1}{2}\int_{0}^{1}\int_{-1}^{1}\frac{\calK_{k}(1,1)^2-\calK_{k}(\xi_{+},\xi{-})^2}{(1-\xi_{+}\xi_{-})^2}\dd \xi_{+}\dd \xi_{-}.
\end{align*}
Recall from \cref{remark:KH-xx} that
\begin{equation*}
    \calK_k(1,1)=\calK_k(\xi_+(P,P),\xi_{-}(P,P))=N.
\end{equation*}
Note that the Riesz kernel $(1-\xi_{+}\xi_{-})^{-2}$ is not integrable near $(1,1)$. Hence, we divide the region $R=[-1,1]\times[0,1]$ as in \cref{fig:regions-dpp-hypersingular-2} and write
\begin{align*}    
    \MoveEqLeft\expectation{x\distributed\HarmonicDPP{N}}{\Energy_4(x)}=\frac{N^2}{2}\iint_{R\setminus \hat{R}}\frac{1}{(1-\xi_{+}\xi_{-})^2}\dd \xi_{+}\dd \xi_{-}\\
    &\qquad-\frac{1}{2}\iint_{R\setminus \hat{R}}\frac{\calK_{k}(\xi_{+},\xi{-})^2}{(1-\xi_{+}\xi_{-})^2}\dd \xi_{+}\dd \xi_{-}+\frac{1}{2}\iint_{\hat{R}}\frac{\calK_{k}(1,1)^2-\calK_{k}(\xi_{+},\xi{-})^2}{(1-\xi_{+}\xi_{-})^2}\dd \xi_{+}\dd \xi_{-}.
\end{align*}
Then, using \cref{thm:reproducing-kernel-grassmannian-dpp} and \cref{lemma:integral-dpp-hyper-1,lemma:integral-dpp-hyper-2,lemma:integral-dpp-hyper-3}, we have
\begin{align*}
    \MoveEqLeft\expectation{x\distributed\HarmonicDPP{N}}{\Energy_4(x)}=\frac{N^2}{2}(2\log{k}+3\log{2}+O(k^{-2}))\\
    &\qquad+k^8\biggl(-8\int_{1}^{\infty}\int_{1}^{\infty}\frac{\BesselJ{1}(\hat\theta)^2\BesselJ{1}(\hat\sigma)^2}{\hat\theta\hat\sigma(\hat\theta^2+\hat\sigma^2)^2}\dd \hat\theta\dd\hat\sigma-16\int_{1}^{\infty}\int_{0}^{1}\frac{\BesselJ{1}(\hat\theta)^2\BesselJ{1}(\hat\sigma)^2}{\hat\theta\hat\sigma(\hat\theta^2+\hat\sigma^2)^2}\dd \hat\theta\dd\hat\sigma\\
    &\qquad +\frac{1}{2}\int_{0}^{1}\int_{0}^{1}
    \frac{x^2y^2-16\BesselJ{1}(x)^2\BesselJ{1}(y)^2}{xy(x^2+y^2)^2}\dd x\dd y\biggr)+o(k^8).
\end{align*}
Finally, using that $N=\frac{1}{2}k^4+O(k^3)$, the theorem follows. \qed

\section{Auxiliary results for \cref{sec:energy-harmonic-ensemble}}\label{sec:auxiliary-results}

\subsection{Auxiliary lemmas for the proof of \cref{prop:energy-dpp-finite}}

\begin{lemma}\label{lemma:Taylor-Gegenbauer}
	Let $k\geq 1$ and let $B(k)=k(k+1)(k+2)(k+3)/8$. Then, for $0<t<1/k^{2}$, we have
	\begin{equation*}
	\Gegenbauer{k}{3/2}(1)-B(k)t	
    \leq \Gegenbauer{k}{3/2}(1-t)
	\leq \Gegenbauer{k}{3/2}(1)-B(k)t+K_0t^2,
	\end{equation*}
	for some constant $K_0\in (0,\infty)$.
\end{lemma}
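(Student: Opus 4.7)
The plan is a standard Taylor expansion of $f(x) \defined \Gegenbauer{k}{3/2}(x)$ at $x=1$, using the derivative rule $\frac{d}{dx}\Gegenbauer{n}{\alpha}(x) = 2\alpha\,\Gegenbauer{n-1}{\alpha+1}(x)$ and the value $\Gegenbauer{n}{\alpha}(1) = \binom{n+2\alpha-1}{n}$. I first observe that $f'(x) = 3\Gegenbauer{k-1}{5/2}(x)$ and $f''(x) = 15\Gegenbauer{k-2}{7/2}(x)$, whence
\begin{equation*}
f'(1) = 3\binom{k+3}{k-1} = \frac{k(k+1)(k+2)(k+3)}{8} = B(k),
\end{equation*}
so $B(k)$ is precisely the first-order Taylor coefficient at $x=1$. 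Taylor's theorem with integral remainder then gives
\begin{equation*}
f(1-t) - f(1) + B(k)\,t = 15 \int_{0}^{t}(t-s)\,\Gegenbauer{k-2}{7/2}(1-s)\,ds,
\end{equation*}
so both inequalities reduce to controlling the sign and magnitude of $\Gegenbauer{k-2}{7/2}$ on $[1-t,1]\contained [1-k^{-2},1]$.

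For the upper bound I would invoke the classical estimate $\abs{\Gegenbauer{n}{\alpha}(x)}\leq\Gegenbauer{n}{\alpha}(1)$ on $[-1,1]$ valid for every $\alpha>0$ (Szegő, \emph{Orthogonal Polynomials}, Theorem 7.33.1). This yields $\abs{f''(1-s)}\leq f''(1) = 15\binom{k+4}{6}$ for $s\in[0,t]$, so the remainder integral is bounded above by $\frac{15}{2}\binom{k+4}{6}\,t^2$. Setting $K_0 = \frac{15}{2}\binom{k+4}{6}$, a finite (though $k$-dependent) positive number, delivers the required upper inequality.

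For the lower bound I need $\Gegenbauer{k-2}{7/2}\geq 0$ throughout $[1-t,1]$, equivalently that the largest zero $x^*$ of $\Gegenbauer{k-2}{7/2}$ satisfies $x^*\leq 1-t$. Since $\Gegenbauer{k-2}{7/2}$ is a positive multiple of the Jacobi polynomial $\Jacobi{k-2}{3}{3}$, Mehler--Heine asymptotics yield $1-x^*\sim j_3^2/(2k^2)$, where $j_3\approx 6.38$ is the first positive zero of $\BesselJ{3}$, so $1-x^*>1/k^2$ for $k$ large. The small-$k$ range is handled directly: $f''\equiv 0$ for $k=1$, $f''\equiv 15$ for $k=2$, while for $k=3$ and $k=4$ one finds $x^*=0$ and $x^*=1/3$ respectively, both comfortably below $1-k^{-2}$. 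The main obstacle is making the zero-separation $1-x^*>1/k^2$ uniform in $k$ rather than only asymptotic; this can be accomplished either by a quantitative Markov--Stieltjes-type inequality for Jacobi zeros giving $1-x^*_{n,1}\geq C/n^2$ with an explicit admissible $C$ for $\alpha=\beta=3$, or by combining the finite-$k$ verification above with the Mehler--Heine asymptotic for the remaining range, so that in both cases $[1-t,1]$ lies strictly to the right of $x^*$ and the integrand retains its positive sign.
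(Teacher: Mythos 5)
Your architecture is sound and genuinely different from the paper's: you Taylor-expand $\Gegenbauer{k}{3/2}$ at $x=1$ to first order with integral remainder, identify $B(k)=3\,\Gegenbauer{k-1}{5/2}(1)$, and obtain the upper bound from $\abs{\Gegenbauer{k-2}{7/2}(x)}\leq\Gegenbauer{k-2}{7/2}(1)$; that part is complete (your $K_0=\tfrac{15}{2}\binom{k+4}{6}$ depends on $k$, but so does the constant implicit in the paper's own argument, and the application only uses fixed $k$). The gap is in the lower bound. You need $\Gegenbauer{k-2}{7/2}\geq 0$ on $[1-k^{-2},1]$ for \emph{every} $k$, i.e.\ a uniform quantitative estimate $1-x^*_{k-2,1}\geq k^{-2}$ for the largest zero, and as written you do not establish it: you verify $k\leq 4$ by hand and otherwise appeal to Mehler--Heine, which is a purely asymptotic statement with no explicit threshold, so "finite verification plus the asymptotic for the remaining range" does not close the argument --- the range where the asymptotic takes over is unspecified. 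Your first remedy (a quantitative zero bound for $\Jacobi{n}{3}{3}$, e.g.\ Sturm-comparison/Bessel-type inequalities of Gatteschi or Elbert--Laforgia giving $\theta_{n,1}\geq j_{3,1}/(n+\mathrm{const})$, hence $1-x^*_{n,1}\gtrsim 20/k^2$) would work, but it must actually be cited or proved; at present the key inequality is asserted, not established.

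For comparison, the paper needs no zero-location input at all: it expands $q(t)=\Gegenbauer{k}{3/2}(1-t)$ exactly in powers of $t$ using the derivative formula, reads off the constant and linear coefficients as $\Gegenbauer{k}{3/2}(1)$ and $-B(k)$, and shows that for $t\leq k^{-2}$ the remaining sum has alternating signs with terms of decreasing modulus, whence the tail $R$ satisfies $0\leq R\leq(\text{its }j=2\text{ term})=O_k(t^2)$; this is precisely where the hypothesis $t<1/k^2$ enters, and it yields both inequalities at once. If you want to keep your route, replace the Mehler--Heine appeal by a concrete citable estimate on the largest zero of $\Gegenbauer{k-2}{7/2}$ (or prove the needed positivity on $[1-k^{-2},1]$ by an elementary argument); otherwise the lower bound remains unproved.
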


\begin{proof}
	We proceed as in the proof of \cite[Lemma 4]{BeltranMarzoOrtegaCerda2016}. Let $q(t)=\Gegenbauer{k}{3/2}(1-t)$. In order to expand $q$ in the standard monomial basis, note that, for $0\leq j\leq k$, we have (see \cref{eq:derivatives-Gegenbauer})
	\begin{align*}
		\frac{\dd^j}{\dd t^j}q(0)&=(-1)^j\frac{\dd^j}{\dd t^j}\Gegenbauer{k}{3/2}(1)=\frac{(-1)^j2^j\Gamma(3/2+j)}{\Gamma(3/2)}\Gegenbauer{k-j}{3/2+j}(1)\\
		&=\frac{(-1)^j2^{j+1}\Gamma(3/2+j)}{\sqrt{\pi}}\frac{\Gamma(k+j+3)}{\Gamma(k-j+1)\Gamma(2j+3)},
	\end{align*}
    where in the last equality we have used \cref{eq:Gegenbauer(1)}. Then,
	\begin{equation*}
		q(t)=\sum_{j=0}^{k}\frac{(-1)^j2^{j+1}\Gamma(3/2+j)}{\sqrt{\pi}}\frac{\Gamma(k+j+3)}{\Gamma(k-j+1)\Gamma(2j+3)\Gamma(j+1)}t^j.
	\end{equation*}
	By the duplication formula for the gamma function, we have
	\begin{equation*}
		\frac{2^{j+1}\Gamma(3/2+j)}{\sqrt{\pi}\Gamma(2j+3)\Gamma(j+1)}=\frac{2^{-j-1}}{\Gamma(j+1)\Gamma(j+2)}.
	\end{equation*}
	Hence,
	\begin{equation*}
		q(t)=\frac{(k+1)(k+2)}{2}-\frac{k(k+1)(k+2)(k+3)}{8}t+R,
	\end{equation*}
	where
	\begin{equation}\label{eq:sum-R}
		R=\sum_{j=2}^{k}(-1)^j\frac{2^{-j-1}}{\Gamma(j+1)\Gamma(j+2)}\frac{\Gamma(k+j+3)}{\Gamma(k-j+1)}t^j.
	\end{equation}
    Note that
    \begin{equation*}
        \frac{(k+1)(k+2)}{2}=\Gegenbauer{k}{3/2}(1).
    \end{equation*}
	We now show that $R\geq 0$, which finishes the proof. The terms in $R$ have alternating signs, so it suffices to show that for $j=2,4,\dotso$ and $j<k$, the $j$th term in the summation \eqref{eq:sum-R} is greater than the absolute value of the $(j+1)$th term. That is, we have to show that, for those values of $j$,
	\begin{equation*}
		\frac{2^{-j-1}}{\Gamma(j+1)\Gamma(j+2)}\frac{\Gamma(k+j+3)}{\Gamma(k-j+1)}t^j\
		\geq \frac{2^{-j-2}}{\Gamma(j+2)\Gamma(j+3)}\frac{\Gamma(k+j+4)}{\Gamma(k-j)}t^{j+1},
	\end{equation*}
	which is true whenever
	\begin{equation*}
		t\leq \frac{2(j+1)(j+2)}{(k-j)(k+j+3)}.
	\end{equation*}
	A trivial computation shows that this inequality is satisfied if $t\leq 1/k^2$.
\end{proof}

\begin{lemma}\label{lemma:K_k^2}
	Let $k\geq 1$ and let
    \begin{equation*}
        \calK_k(\cos\theta,\cos\sigma)=\Gegenbauer{k}{3/2}(\cos\theta)\Gegenbauer{k}{3/2}(\cos\sigma)+\Gegenbauer{k-1}{3/2}(\cos\theta)\Gegenbauer{k-1}{3/2}(\cos\sigma).
    \end{equation*}
    For sufficiently small $\theta$ and $\sigma$, there exists a constant $c(k)$ depending only on $k$ such that
	\begin{equation*}
		\calK_k(\cos\theta,\cos\sigma)^2=\calK_k(1,1)^2-c(k)(\theta^2+\sigma^2)+O(\theta^4+\sigma^4).
	\end{equation*}
\end{lemma}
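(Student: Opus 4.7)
My plan is to Taylor expand each Gegenbauer factor around $1$ using \cref{lemma:Taylor-Gegenbauer}, assemble the linear approximation of $\calK_k(\cos\theta,\cos\sigma)$, and then square.

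First I would set $t = 1-\cos\theta = \theta^2/2 + O(\theta^4)$ and $u = 1-\cos\sigma = \sigma^2/2 + O(\sigma^4)$. For $\theta, \sigma$ small enough that $t, u < 1/k^2$, \cref{lemma:Taylor-Gegenbauer} applied to both $\Gegenbauer{k}{3/2}$ and $\Gegenbauer{k-1}{3/2}$ yields
\begin{align*}
\Gegenbauer{k}{3/2}(\cos\theta) &= \Gegenbauer{k}{3/2}(1) - B(k)\, t + O(t^2) = \Gegenbauer{k}{3/2}(1) - \tfrac{B(k)}{2}\theta^2 + O(\theta^4),\\
\Gegenbauer{k-1}{3/2}(\cos\theta) &= \Gegenbauer{k-1}{3/2}(1) - \tfrac{B(k-1)}{2}\theta^2 + O(\theta^4),
\end{align*}
with analogous expansions in $\sigma$, where $B(k) = k(k+1)(k+2)(k+3)/8$.

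Next, I would multiply the two products defining $\calK_k(\cos\theta,\cos\sigma)$ and collect terms. Setting $a = \Gegenbauer{k}{3/2}(1)$, $b = \Gegenbauer{k-1}{3/2}(1)$, the cross-terms between $\theta^2$ and $\sigma^2$ are of order $\theta^2\sigma^2 = O(\theta^4+\sigma^4)$ and can be absorbed into the remainder. This gives
\begin{equation*}
\calK_k(\cos\theta,\cos\sigma) = \calK_k(1,1) - \tilde c(k)\,(\theta^2+\sigma^2) + O(\theta^4+\sigma^4),
\end{equation*}
with $\calK_k(1,1) = a^2 + b^2$ and $\tilde c(k) = \tfrac{1}{2}\bigl(a\,B(k) + b\,B(k-1)\bigr)$.

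Finally, squaring this expansion and again absorbing $\bigl(\theta^2+\sigma^2\bigr)^2$ into $O(\theta^4+\sigma^4)$ gives
\begin{equation*}
\calK_k(\cos\theta,\cos\sigma)^2 = \calK_k(1,1)^2 - 2\,\calK_k(1,1)\,\tilde c(k)\,(\theta^2+\sigma^2) + O(\theta^4+\sigma^4),
\end{equation*}
so the claim holds with $c(k) = 2\calK_k(1,1)\,\tilde c(k) = (a^2+b^2)\bigl(a\,B(k)+b\,B(k-1)\bigr)$. There is no real obstacle here beyond bookkeeping; the only point to watch is that the $O(t^2)$ error in \cref{lemma:Taylor-Gegenbauer} feeds into an $O(\theta^4)$ error once $t = \theta^2/2 + O(\theta^4)$, which is consistent with the desired error bound $O(\theta^4+\sigma^4)$.
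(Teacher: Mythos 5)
Your proposal is correct and follows essentially the same route as the paper: expand each Gegenbauer factor at $1$ via \cref{lemma:Taylor-Gegenbauer} with $1-\cos\theta=\theta^2/2+O(\theta^4)$, multiply out, absorb the $\theta^2\sigma^2$ cross terms into $O(\theta^4+\sigma^4)$, and square (the paper merely organizes the squaring as $A_1+A_2+2A_3$ before expanding). Your bookkeeping is in fact slightly more careful, since you retain the factor $\tfrac12$ from $1-\cos\theta$ that the paper silently absorbs into its unnamed constant.
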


\begin{proof}
    We have
    \begin{equation}\label{eq:K_k^2}
        \calK_k(\cos\theta,\cos\sigma)^2=A_1+A_2+2A_3,
    \end{equation}
    where
    \begin{align*}
        A_1&=\Gegenbauer{k}{3/2}(\cos\theta)^2\Gegenbauer{k}{3/2}(\cos\sigma)^2,\\
        A_2&=\Gegenbauer{k-1}{3/2}(\cos\theta)^2\Gegenbauer{k-1}{3/2}(\cos\sigma)^2,\\
        A_3&=\Gegenbauer{k}{3/2}(\cos\theta)\Gegenbauer{k}{3/2}(\cos\sigma)\Gegenbauer{k-1}{3/2}(\cos\theta)\Gegenbauer{k-1}{3/2}(\cos\sigma).
    \end{align*}
    For sufficiently small $\theta$ and $\sigma$, we have
	\begin{align*}
		\cos\theta=1-\frac{\theta^2}{2}+O(\theta^4),\quad
		\cos\sigma=1-\frac{\sigma^2}{2}+O(\sigma^4).
	\end{align*}
	Let $B(k)=k(k+1)(k+2)(k+3)/8$. Using \cref{lemma:Taylor-Gegenbauer}, we have
	\begin{equation}
		\Gegenbauer{k}{3/2}(\cos\theta)=\Gegenbauer{k}{3/2}\biggl(1-\frac{\theta^2}{2}+O(\theta^4)\biggr)=\Gegenbauer{k}{3/2}(1)-B(k)\theta^2+O(\theta^4).\label{eq:aux-K^2-1}
	\end{equation}
	Hence,
	\begin{equation}
		\Gegenbauer{k}{3/2}(\cos\theta)^2=\Gegenbauer{k}{3/2}(1)^2-2B(k)\Gegenbauer{k}{3/2}(1)\theta^2+O(\theta^4).\label{eq:aux-K^2-5}
	\end{equation}
	Note that the same expressions hold for the polynomials $\Gegenbauer{k}{3/2}(\cos\sigma)$. Then, from \cref{eq:aux-K^2-5} it follows that
	\begin{align}
		A_1&=\Gegenbauer{k}{3/2}(1)^4-2B(k)\Gegenbauer{k}{3/2}(1)^3(\theta^2+\sigma^2)+O(\theta^4+\sigma^4),\label{eq:A1}\\
	    A_2&=\Gegenbauer{k-1}{3/2}(1)^4-2B(k-1)\Gegenbauer{k-1}{3/2}(1)^3(\theta^2+\sigma^2)+O(\theta^4+\sigma^4).\label{eq:A2}
	\end{align}
	From \cref{eq:aux-K^2-1}, we have
	\begin{multline*}
		\Gegenbauer{k}{3/2}(\cos\theta)\Gegenbauer{k-1}{3/2}(\cos\theta)=\Gegenbauer{k}{3/2}(1)\Gegenbauer{k-1}{3/2}(1)\\
        -(B(k)\Gegenbauer{k-1}{3/2}(1)+B(k-1)\Gegenbauer{k}{3/2}(1))\theta^2+O(\theta^4).
	\end{multline*}
	Again, the same expression holds for $\Gegenbauer{k}{3/2}(\cos\sigma)$. Therefore,
    \begin{equation}\label{eq:A3}
        A_3=\Gegenbauer{k}{3/2}(1)^2\Gegenbauer{k-1}{3/2}(1)^2-\tilde{B}(k)(\theta^2+\sigma^2)+O(\theta^4+\sigma^4),
    \end{equation}
	where
	\begin{equation*}
		\tilde{B}(k)=(B(k)\Gegenbauer{k-1}{3/2}(1)+B(k-1)\Gegenbauer{k}{3/2}(1))\Gegenbauer{k}{3/2}(1)\Gegenbauer{k-1}{3/2}(1).
	\end{equation*}
	Calling
	\begin{equation*}
		\hat{B}(k)=2B(k)\Gegenbauer{k}{3/2}(1)^3+2B(k-1)\Gegenbauer{k-1}{3/2}(1)^3+2\tilde{B}(k),
	\end{equation*}
	and substituting \eqref{eq:A1}, \eqref{eq:A2}, and \eqref{eq:A3} into \eqref{eq:K_k^2}, we conclude that
	\begin{align*}
		\calK_k(\cos\theta,\cos\sigma)^2&=\Gegenbauer{k}{3/2}(1)^4+\Gegenbauer{k-1}{3/2}(1)^4+2\Gegenbauer{k}{3/2}(1)^2\Gegenbauer{k-1}{3/2}(1)^2\\
        &\qquad-\hat{B}(k)(\theta^2+\sigma^2)+O(\theta^4+\sigma^4)\\
		&=\calK_k(1,1)^2-\hat{B}(k)(\theta^2+\sigma^2)+O(\theta^4+\sigma^4).\qedhere
	\end{align*}
\end{proof}

\subsection{Auxiliary lemmas for the proofs of \cref{thm:Energy-DPP-Riesz,thm:Energy-DPP-log,thm:Energy-DPP-hypersingular}}

Throughout this section we denote
\begin{equation}\label{eq:auxiliar-function-lemmas}
	f(\theta,\sigma)=\Gegenbauer{k}{3/2}(\cos\theta)\Gegenbauer{k-\beta}{3/2}(\cos\theta)\Gegenbauer{k}{3/2}(\cos\sigma)\Gegenbauer{k-\beta}{3/2}(\cos\sigma),\quad \beta\in\set{0,1},
\end{equation}
and we will be working on the region sketched in \cref{fig:regions-dpp}.

\begin{figure}[htbp]
    \centering
    \begin{tikzpicture}[scale=0.9]
        \def\R{0.8\textwidth}
        \def\r{0.4\textwidth}
        \def\s{0.1\textwidth}
        \def\t{0.15\textwidth}
        \def\u{0.7\textwidth}
        \coordinate (A0) at (0,0);
        \coordinate (A1) at (\R,0);
        \coordinate (A2) at (0,\r);
        \coordinate (A3) at (\r,0);
        \coordinate (A4) at (\s,0);
        \coordinate (A5) at (0,\s);
        \coordinate (A6) at (\t,0);
        \coordinate (A7) at (0,\t);
        \coordinate (A8) at (\u,0);
        \coordinate (A9) at (\R,\s);
        \draw (A0) node[below left]{$0$};
        \draw[->] (-0.5,0)--(0.87\textwidth,0) node[below]{$\theta$};
        \draw[->] (0,-0.5)--(0,0.47\textwidth) node[left]{$\sigma$};
        \draw[loosely dashed, color=gray!80] (A5)--(\r,\s);
        \draw (0,0) rectangle (\R,\r);
        \draw (\r,0) rectangle (\R,\r);
        \draw (A8) rectangle (A9) node[pos=.5]{$R_{1}$};
        \draw (\r,0) rectangle (\u,\s) node[pos=.5]{$R_{2}$};
        \draw (\u,\s) rectangle (\R,\r) node[pos=.5]{$R_{3}$};
        \draw (\r,\s) rectangle (\u,\r) node[pos=.5]{$R_{4}$};
        \draw (\t,\t) rectangle (\r,\r) node[pos=.5]{$R_{5}$};
        \draw (0,\t) rectangle (\t,\r) node[pos=.5]{$R_{6}$};
        \draw (\t,0) rectangle (\r,\t) node[pos=.5]{$R_{7}$};
        \draw (0,0) rectangle (\t,\t) node[pos=.5]{$R_{8}$};
        \draw (A5) node[left]{$k^{-1}$};
        \draw (A1) node[below]{$\pi$};
        \draw (A2) node[left]{$\pi/2$};
        \draw (A3) node[below]{$\pi/2$};
        \draw (A6) node[below]{$k^{-1/2}$};
        \draw (A7) node[left]{$k^{-1/2}$};
        \draw (A8) node[below]{$\pi-k^{-1}$};
    \end{tikzpicture}
    \caption{Regions of integration inside the rectangle $[0,\pi]\times[0,\pi/2]$.}
    \label{fig:regions-dpp}
\end{figure}
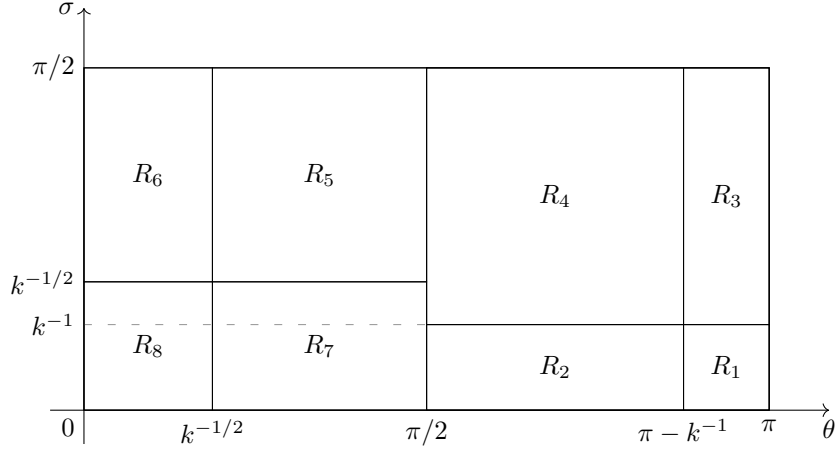

\begin{lemma}\label{lemma:integral-R1}
	The following equality holds:
	\begin{equation*}
		\iint_{R_1}f(\theta,\sigma)\sin\theta\sin\sigma\dd\theta\dd\sigma=O(k^4).
	\end{equation*}
\end{lemma}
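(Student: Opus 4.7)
The region $R_1$ is the corner rectangle $[\pi-k^{-1},\pi]\times[0,k^{-1}]$, so its area is $k^{-2}$ and both $\sin\theta$ and $\sin\sigma$ are bounded by $k^{-1}$ there (using $\sin\theta=\sin(\pi-\theta)\leq\pi-\theta\leq k^{-1}$ and $\sin\sigma\leq\sigma\leq k^{-1}$). Thus the factor $\sin\theta\sin\sigma$ contributes $O(k^{-2})$ uniformly on $R_1$, and the area of $R_1$ contributes another $O(k^{-2})$. It follows that the integral is bounded by $O(k^{-4})$ times the supremum of $|f|$ on $R_1$.

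The plan is therefore to use only a crude uniform bound on $f$. The Gegenbauer polynomial $\Gegenbauer{k}{3/2}$ attains its maximum absolute value on $[-1,1]$ at the endpoints, where
\begin{equation*}
\Gegenbauer{k}{3/2}(\pm 1)=(\pm 1)^k\,\frac{(k+1)(k+2)}{2}=O(k^2),
\end{equation*}
so $\lVert \Gegenbauer{k}{3/2}\rVert_{\infty,[-1,1]}=O(k^2)$ (and likewise for $k-\beta$). Consequently, recalling the definition \eqref{eq:auxiliar-function-lemmas} of $f$,
\begin{equation*}
\sup_{(\theta,\sigma)\in R_1}|f(\theta,\sigma)|\leq \bigl(\Gegenbauer{k}{3/2}(1)\bigr)^2\bigl(\Gegenbauer{k-\beta}{3/2}(1)\bigr)^2=O(k^8).
\end{equation*}

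Combining the three estimates gives
\begin{equation*}
\Bigl|\iint_{R_1}f(\theta,\sigma)\sin\theta\sin\sigma\dd\theta\dd\sigma\Bigr|\leq O(k^8)\cdot O(k^{-2})\cdot\vol(R_1)=O(k^8)\cdot O(k^{-2})\cdot O(k^{-2})=O(k^4),
\end{equation*}
which is the desired conclusion. There is no real obstacle here: the argument is just the trivial $L^\infty$ bound, and no cancellation or refined asymptotics are needed because $R_1$ is so small and the weight $\sin\theta\sin\sigma$ is small throughout. This is in contrast to the neighboring regions (near $(\pi,\pi/2)$ or $(0,0)$) where the polynomial oscillations or the singularity of the kernel require finer analysis, handled in subsequent lemmas.
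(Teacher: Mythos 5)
Your proof is correct and follows essentially the same route as the paper: a crude uniform bound $|f|=O(k^8)$ on $R_1$ (the paper uses the endpoint estimate \eqref{eq:bound-gegenbauer}, you use the equivalent sup-norm bound $\Gegenbauer{k}{3/2}(\pm1)=O(k^2)$), combined with $\sin\theta\sin\sigma=O(k^{-2})$ and $\vol(R_1)=k^{-2}$, giving $O(k^4)$.
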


\begin{proof}
	In this region we have $\sin\theta,\sin\sigma=O(k^{-1})$. Using the bounds \eqref{eq:bound-gegenbauer} for the Gegenbauer polynomials, we obtain
	\begin{align*}
		\abs[\bigg]{\iint_{R_{1}}f(\theta,\sigma)\sin\theta\sin\sigma\dd\theta\dd\sigma}&\leq \iint_{R_{1}}\abs{f(\theta,\sigma)\sin\theta\sin\sigma}\dd\theta\dd\sigma\\
		&=O\biggl(k^8k^{-2}\iint_{R_{1}}\dd\theta\dd\sigma\biggr)=O(k^4).\qedhere
	\end{align*}
\end{proof}

\begin{lemma}\label{lemma:integral-R2-R3}
	The following equality holds:
	\begin{equation*}
		\iint_{R_2}f(\theta,\sigma)\sin\theta\sin\sigma\dd\theta\dd\sigma=\iint_{R_3}f(\theta,\sigma)\sin\theta\sin\sigma\dd\theta\dd\sigma=O(k^3).
	\end{equation*}
\end{lemma}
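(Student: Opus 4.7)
The plan is to exploit the fact that the integrand of each double integral factors as
\begin{equation*}
f(\theta,\sigma)\sin\theta\sin\sigma = g(\theta)\,g(\sigma), \qquad g(t)\defined\Gegenbauer{k}{3/2}(\cos t)\Gegenbauer{k-\beta}{3/2}(\cos t)\sin t,
\end{equation*}
so that each integral is a product of two one-dimensional integrals of $g$. The key tool throughout is the identity $\Gegenbauer{k}{3/2}(x) = \Legendre{k+1}'(x)$, which follows from $(\dd/\dd x)\Gegenbauer{n}{\lambda} = 2\lambda\Gegenbauer{n-1}{\lambda+1}$ applied with $\lambda = 1/2$. After the substitution $x=\cos t$, each one-dimensional factor takes the form $\int \Legendre{k+1}'(x)\Legendre{k+1-\beta}'(x)\dd x$ over an appropriate subinterval of $[-1,1]$, which I will attack differently depending on whether that subinterval is short or long.

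For the \emph{short}-interval factors, namely $\sigma\in[0,k^{-1}]$ in $R_2$ and $\theta\in[\pi-k^{-1},\pi]$ in $R_3$, the substitution produces an integral over a window of length $\sim 1/(2k^2)$ adjacent to $x=\pm1$. The crude bound $|\Gegenbauer{k}{3/2}(x)|\leq\Gegenbauer{k}{3/2}(1) = (k+1)(k+2)/2$ on such a window then gives $O(k^4\cdot k^{-2})=O(k^2)$; the reflection $\Gegenbauer{k}{3/2}(-x) = (-1)^k\Gegenbauer{k}{3/2}(x)$ identifies the $x=-1$ and $x=1$ cases. For the \emph{bulk} factors, $\theta\in[\pi/2,\pi-k^{-1}]$ in $R_2$ and $\sigma\in[k^{-1},\pi/2]$ in $R_3$, the substitution yields an integral over $[-1+O(k^{-2}),0]$ or $[0,1-O(k^{-2})]$. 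I would handle this via the orthogonality-type identities
\begin{equation*}
\int_{-1}^{1}\Legendre{n}'(x)^2\dd x = n(n+1), \qquad \int_{-1}^{1}\Legendre{k+1}'(x)\Legendre{k}'(x)\dd x = 0,
\end{equation*}
the first from integrating Legendre's ODE by parts, the second because $\Legendre{n}'(-x) = (-1)^{n+1}\Legendre{n}'(x)$ makes the integrand odd. Parity then identifies each symmetric half-interval integral with either $(k+1)(k+2)/2$ (for $\beta=0$) or, after one integration by parts producing the boundary term $\Legendre{k+1}(1)\Legendre{k}'(1) = k(k+1)/2$, an $O(k^2)$ quantity (for $\beta=1$). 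The endpoint correction of width $O(k^{-2})$ contributes at most $O(k^4\cdot k^{-2}) = O(k^2)$ by the same crude pointwise bound, so each bulk factor is $O(k^2)$.

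Multiplying a bulk factor with a short factor directly yields $O(k^4)$, and the main obstacle is shaving the extra power of $k$ to reach $O(k^3)$. The plan here is to exploit the oscillation of $\Gegenbauer{k}{3/2}(\cos\theta)$ on the bulk range through the Szegő/Darboux asymptotic $\Gegenbauer{k}{3/2}(\cos\theta)\sim(k/\pi)^{1/2}(\sin\theta)^{-3/2}\cos((k+\tfrac{3}{2})\theta-\tfrac{3\pi}{4})$: the product-to-sum expansion turns the bulk integral into one of the form $\int(\sin\theta)^{-2}\cos((2k+3-\beta)\theta+\varphi)\dd\theta$ (plus a non-oscillatory lower-order piece), and a single integration by parts — where the antiderivative of the oscillating factor contributes a $1/(2k+3-\beta)$ saving, the boundary term at $\theta=\pi-k^{-1}$ is controlled by $(\sin(\pi-k^{-1}))^{-2}=O(k^2)$, and the remainder integral $\int(\sin\theta)^{-3}\dd\theta$ is also $O(k^2)$ — trims the bulk factor from $O(k^2)$ to $O(k)$, producing the desired $O(k^3)$. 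For $R_3$, the analogous factorization applies with the short/long roles of $\theta$ and $\sigma$ reversed, and the reflection symmetry $\Gegenbauer{k}{3/2}(-x)=(-1)^k\Gegenbauer{k}{3/2}(x)$ reduces the required estimates to those done for $R_2$.
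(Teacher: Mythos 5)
Your setup is fine as far as it goes: the integral over $R_2$ does factor into a ``short'' $\sigma$-factor and a ``bulk'' $\theta$-factor, the short factor is $O(k^2)$, and via $\Gegenbauer{k}{3/2}=\Legendre{k+1}'$ the $\beta=0$ bulk factor equals $\int_{-\cos(1/k)}^{0}\Legendre{k+1}'(x)^2\,\dd x=\tfrac12(k+1)(k+2)-\int_{-1}^{-\cos(1/k)}\Legendre{k+1}'(x)^2\,\dd x$, where the subtracted endpoint piece is $\approx\tfrac12\bigl(1-\BesselJ{0}(1)^2-\BesselJ{1}(1)^2\bigr)k^2\approx 0.11\,k^2$ by Mehler--Heine. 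But this computation, which you carry out yourself in the ``bulk'' step, already refutes your final step: the bulk factor is the integral of a nonnegative function and equals $\approx 0.39\,k^2$, so it is $\Theta(k^2)$ and cannot be trimmed to $O(k)$. The oscillation argument fails precisely because the product-to-sum expansion of $\cos((k+\tfrac32)\theta-\tfrac{3\pi}{4})\cos((k-\beta+\tfrac32)\theta-\tfrac{3\pi}{4})$ leaves the non-oscillatory term $\tfrac12\cos(\beta\theta)$ (for $\beta=0$, the constant $\tfrac12$ coming from $\cos^2$), which is not lower order: combined with the Darboux amplitudes it contributes $\asymp k\int_{1/k}^{\pi/2}\sin^{-2}u\,\dd u\asymp k^2$. (Your integration by parts also drops the amplitude factor $k$ coming from the two $k^{1/2}$ prefactors, so even the genuinely oscillating term only comes out as $O(k^2)$ by that estimate, not $O(k)$.)

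Moreover, the obstruction is not curable: for $\beta=0$ the integrand on $R_2$ is nonnegative and both one-dimensional factors are bounded below by $ck^2$ (the $\sigma$-factor tends to $\tfrac12\bigl(1-\BesselJ{0}(1)^2-\BesselJ{1}(1)^2\bigr)k^2>0$), so $\iint_{R_2}f(\theta,\sigma)\sin\theta\sin\sigma\,\dd\theta\,\dd\sigma\geq ck^4$; the sharp order of the $R_2$ and $R_3$ integrals is $k^4$, and the stated $O(k^3)$ cannot be reached by any refinement of your route. The same issue appears in the paper's own proof, which applies the estimate $\Gegenbauer{k}{3/2}(\cos\theta)=\theta^{-3/2}O(k^{1/2})$ on $\theta\in[\pi/2,\pi-k^{-1}]$, whereas by the reflection $\Gegenbauer{k}{3/2}(-x)=(-1)^k\Gegenbauer{k}{3/2}(x)$ the correct decay variable there is $\pi-\theta$, and the polynomial grows to size $\asymp k^2$ as $\theta$ approaches $\pi-k^{-1}$. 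The bound that is actually available is $O(k^4)$ (your crude estimates already give it); this is still $o(k^{4+s})$ and therefore harmless where the lemma is invoked for the Riesz energies with $s>0$, but the logarithmic case, which uses $o(k^4)$ over $R_2$, $R_3$ (and $R_4$), would require an actual evaluation of, or exhibited cancellation in, these contributions rather than a size bound.
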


\begin{proof}
	Since the analysis over the regions $R_2$ and $R_3$ is equivalent, we will focus on the region $R_{2}$. In this region we have
	\begin{align*}
		\frac{\pi}{2}\leq \theta\leq \pi-k^{-1},\quad 0\leq \sigma\leq k^{-1}.
	\end{align*}
	Using the bounds in \eqref{eq:bound-gegenbauer} for the Gegenbauer polynomials and the estimates $\sin\theta=O(\theta)$ and $\sin\sigma=O(k^{-1})$, we obtain
	\begin{equation*}
		\abs[\bigg]{\iint_{R_{2}}f(\theta,\sigma)\sin\theta\sin\sigma\dd\theta\dd\sigma}
        =O\biggl(k^4\int_{0}^{k^{-1}}\int_{\pi/2}^{\pi-k^{-1}}\theta^{-2}\dd\theta\dd\sigma\biggr)=O(k^3).\qedhere
	\end{equation*}
\end{proof}
\begin{lemma}\label{lemma:integral-R4}
	The following equality holds:
	\begin{equation*}
		\iint_{R_4}f(\theta,\sigma)\sin\theta\sin\sigma\dd\theta\dd\sigma=O(k^3).
	\end{equation*}
\end{lemma}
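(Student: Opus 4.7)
The plan is to mirror the template used for the previous regions $R_{1}$--$R_{3}$: apply the Szegő-type pointwise bound for Gegenbauer polynomials, bound the integrand, and integrate. On $R_{4}$, both variables $\theta$ and $\sigma$ stay at distance at least $k^{-1}$ from the zeros of the Jacobian factor $\sin\theta\sin\sigma$ (namely $\theta=\pi$ for the $\theta$-variable and $\sigma=0$ for the $\sigma$-variable), so the Szegő-type bound $|\Gegenbauer{k}{3/2}(\cos\phi)|\lesssim k^{1/2}(\sin\phi)^{-3/2}$ from \eqref{eq:bound-gegenbauer} applies uniformly to all four Gegenbauer factors in $f$. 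Combined with the Jacobian this yields $|f(\theta,\sigma)\sin\theta\sin\sigma|\lesssim k^{2}(\sin\theta\sin\sigma)^{-2}$.

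Since the integrand factors as $g(\theta)g(\sigma)\sin\theta\sin\sigma$, where $g(\phi)=\Gegenbauer{k}{3/2}(\cos\phi)\Gegenbauer{k-\beta}{3/2}(\cos\phi)$, the double integral splits into a product of one-dimensional integrals. Moreover, by the parity relation $\Gegenbauer{n}{3/2}(-x)=(-1)^{n}\Gegenbauer{n}{3/2}(x)$, the substitution $\theta\mapsto\pi-\theta$ converts the $\theta$-integral on $[\pi/2,\pi-k^{-1}]$ into $(-1)^{\beta}$ times the corresponding integral on $[k^{-1},\pi/2]$, so, after also substituting $x=\cos\phi$,
\[
\iint_{R_{4}}f(\theta,\sigma)\sin\theta\sin\sigma\dd\theta\dd\sigma=(-1)^{\beta}I^{2},\qquad I=\int_{0}^{\cos(k^{-1})}\Gegenbauer{k}{3/2}(x)\Gegenbauer{k-\beta}{3/2}(x)\dd x.
\]

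It then suffices to prove $|I|=O(k^{3/2})$. The idea is to relate $I$ to the weighted $L^{2}$ inner product of the two Gegenbauer polynomials through the orthogonality relation $\int_{-1}^{1}\Gegenbauer{k}{3/2}(x)\Gegenbauer{k-\beta}{3/2}(x)(1-x^{2})\dd x=h_{k}\delta_{\beta,0}$ with $h_{k}\asymp k$, by decomposing $1=(1-x^{2})+x^{2}$ inside the integrand and reducing the $x^{2}$ contribution via the three-term recurrence for the Gegenbauer polynomials. The boundary corrections coming from the restriction of the integration to $[0,\cos(k^{-1})]$ versus $[-1,1]$ are small because $1-\cos(k^{-1})=O(k^{-2})$ while the pointwise bound $|g(x)|\lesssim k^{4}$ holds on this tiny interval.

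The main obstacle is precisely this one-dimensional estimate $|I|=O(k^{3/2})$: a purely pointwise application of the Szegő bound only yields $|I|=O(k^{2})$, so the extra factor of $k^{1/2}$ must come from the cancellation encoded in the weighted orthogonality of the Gegenbauer polynomials. Once this bound is available, squaring and accounting for the sign $(-1)^{\beta}$ gives the claimed $O(k^{3})$.
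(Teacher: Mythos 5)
Your reduction is correct as far as it goes: on $R_4$ the integrand factorizes, and the substitution $\theta\mapsto\pi-\theta$ together with $\Gegenbauer{n}{3/2}(-x)=(-1)^n\Gegenbauer{n}{3/2}(x)$ and $x=\cos\phi$ gives $\iint_{R_4}f\sin\theta\sin\sigma\,\dd\theta\dd\sigma=(-1)^{\beta}I^2$ with $I=\int_0^{\cos(k^{-1})}\Gegenbauer{k}{3/2}(x)\Gegenbauer{k-\beta}{3/2}(x)\dd x$, and your pointwise bound $|f\sin\theta\sin\sigma|\lesssim k^2(\sin\theta\sin\sigma)^{-2}$ uses the correct envelope near $\theta=\pi$. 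This is already a different (and more careful) route than the paper's, which is a two-line pointwise estimate: it invokes \eqref{eq:bound-gegenbauer} in the form $\theta^{-3/2}O(k^{1/2})$ on all of $[\pi/2,\pi-k^{-1}]$, so that $\int_{\pi/2}^{\pi-k^{-1}}\theta^{-2}\dd\theta=O(1)$ and only the $\sigma$-integral contributes a factor $k$, yielding $O(k^3)$ with no cancellation argument at all.

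The genuine gap is your pivotal claim $|I|=O(k^{3/2})$: it is not proved, and it cannot be, because it is false. Take $\beta=0$, so the integrand of $I$ is nonnegative. By \cref{lemma:integral-with-Hilb}, $\int_0^{k^{-1/2}}\Gegenbauer{k}{3/2}(\cos\phi)^2\,\phi\,\dd\phi=\tfrac{k^2}{2}(1+O(k^{-1/2}))$, while the portion removed by truncating at $\cos(k^{-1})$ corresponds to $\phi\in[0,k^{-1}]$ and, by the Mehler--Heine formula \eqref{eq:Mehler-Heine-Gegenbauer} and dominated convergence (as in the proof of \cref{lemma:integral-log-R1}), contributes only $\tfrac12\bigl(1-\BesselJ{0}(1)^2-\BesselJ{1}(1)^2\bigr)k^2+o(k^2)\approx 0.11\,k^2$; hence $I\geq c\,k^2$ for some $c>0$ and all large $k$ (the same holds for $\beta=1$). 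The mass of $I$ sits near $x=1$, where both polynomials are of size $\asymp k^2$ and in phase, so there is no cancellation for weighted orthogonality to encode; your decomposition $1=(1-x^2)+x^2$ only controls the $(1-x^2)$ piece (which is $O(k)$), while the $x^2$ piece, after the three-term recurrence, is again an unweighted integral of products of Gegenbauer polynomials of neighboring degrees—an object of the same kind and genuinely of size $\asymp k^2$—so the argument is circular. Consequently your route can only yield $(-1)^\beta I^2=O(k^4)$; worse, your own (correct) identity shows the $R_4$ integral equals $(-1)^\beta I^2\asymp k^4$, which is in direct tension with the stated $O(k^3)$ and with the paper's proof, whose use of the $\theta^{-3/2}$ bound up to $\theta=\pi-k^{-1}$ ignores that the correct envelope there is $(\pi-\theta)^{-3/2}O(k^{1/2})$. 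Rather than trying to force the missing $k^{1/2}$, you should flag this discrepancy.
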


\begin{proof}
	In this region we have $\sin\theta=O(\theta)$ and $\sin\sigma=O(\sigma)$. Hence, using the bounds \eqref{eq:bound-gegenbauer}, we obtain
	\begin{equation*}
		\abs[\bigg]{\iint_{R_{4}}f(\theta,\sigma)\sin\theta\sin\sigma\dd\theta}
        =O\biggl(k^2\int_{k^{-1}}^{\pi/2}\int_{\pi/2}^{\pi-k^{-1}}\theta^{-2}\sigma^{-2}\dd\theta\dd\sigma\biggr)=O(k^3).\qedhere
	\end{equation*}
\end{proof}

\begin{lemma}\label{lemma:integral-R5}
	Let $s\geq 0$. The following equality holds:
	\begin{equation*}
		\iint_{R_5}\frac{f(\theta,\sigma)\sin\theta\sin\sigma}{(1-\cos\theta\cos\sigma)^{s/2}}\dd\theta\dd\sigma=O(k^{3+s/2}).
	\end{equation*}
\end{lemma}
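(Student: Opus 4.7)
The plan is to bound the integrand pointwise on $R_5$ and then separate the variables. On $R_5=[k^{-1/2},\pi/2]^{2}$ both arguments lie in the oscillatory regime for the Gegenbauer polynomials, so I will use the standard bound $|\Gegenbauer{n}{3/2}(\cos\theta)|\lesssim n^{1/2}/(\sin\theta)^{3/2}$ that was already invoked in the previous lemmas through the reference \eqref{eq:bound-gegenbauer}. Squaring and multiplying the four factors in \eqref{eq:auxiliar-function-lemmas} yields
\begin{equation*}
	|f(\theta,\sigma)| \lesssim \frac{k^{2}}{(\sin\theta)^{3}(\sin\sigma)^{3}},
\end{equation*}
and since $\theta,\sigma\in[k^{-1/2},\pi/2]$ we also have $\sin\theta\asymp\theta$ and $\sin\sigma\asymp\sigma$.

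For the denominator I will use the elementary inequality $1-\cos\theta\cos\sigma\gtrsim \theta^{2}+\sigma^{2}$ on the square $[0,\pi/2]^{2}$, which follows from writing $1-\cos\theta\cos\sigma=(1-\cos\theta)+\cos\theta(1-\cos\sigma)$ and splitting into the cases $\cos\theta\geq 1/2$ (where both summands contribute quadratically) and $\cos\theta<1/2$ (where $1-\cos\theta$ is already bounded below by a constant, and $\theta^{2}+\sigma^{2}$ is at most $\pi^{2}/2$). Combining this with the Gegenbauer bound gives
\begin{equation*}
	\frac{|f(\theta,\sigma)|\sin\theta\sin\sigma}{(1-\cos\theta\cos\sigma)^{s/2}}
	\lesssim \frac{k^{2}}{\theta^{2}\sigma^{2}(\theta^{2}+\sigma^{2})^{s/2}}.
\end{equation*}

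The final step is a decoupling: by AM--GM, $(\theta^{2}+\sigma^{2})^{s/2}\geq 2^{s/2}(\theta\sigma)^{s/2}$, so the integrand is further bounded by a product $k^{2}\theta^{-2-s/2}\sigma^{-2-s/2}$, and the integral over $R_5$ factorises as
\begin{equation*}
	k^{2}\left(\int_{k^{-1/2}}^{\pi/2}\theta^{-2-s/2}\dd\theta\right)^{2} \lesssim k^{2}\cdot k^{1+s/2} = k^{3+s/2},
\end{equation*}
where the univariate integral is dominated by its lower endpoint and is of order $k^{1/2+s/4}$ uniformly in $s\geq 0$.

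No substantial obstacle is anticipated; the proof mirrors the strategy of \cref{lemma:integral-R2-R3,lemma:integral-R4}. The two ingredients that deserve explicit verification are the lower bound $1-\cos\theta\cos\sigma\gtrsim\theta^{2}+\sigma^{2}$ on $[0,\pi/2]^{2}$ and the AM--GM decoupling that turns the bivariate estimate into a product of two one-variable integrals; both are elementary.
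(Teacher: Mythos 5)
Your proposal is correct and follows essentially the same strategy as the paper: bound the four Gegenbauer factors via \eqref{eq:bound-gegenbauer}, bound the denominator from below on $R_5$, and integrate the resulting power function over $[k^{-1/2},\pi/2]^2$. The only difference is cosmetic: the paper replaces $(1-\cos\theta\cos\sigma)^{-s/2}$ by the uniform bound $O(k^{s/2})$ coming from $\cos\theta,\cos\sigma\leq\cos k^{-1/2}$, whereas you keep the pointwise bound $1-\cos\theta\cos\sigma\gtrsim\theta^2+\sigma^2$ and decouple by AM--GM; both routes give $O(k^{3+s/2})$.
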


\begin{proof}
	In this region we have $\sin\theta=O(\theta)$, $\sin\sigma=O(\sigma)$, and $\cos(\theta),\cos(\sigma)\leq \cos k^{-1/2}$. Using that $\cos{x}\leq 1-x^2/4$ for $x\in[0,1]$, we have
	\begin{equation*}
		1-\cos\theta\cos\sigma\geq 1-\biggl(1-\frac{k^{-1}}{4}\biggr)^2.
	\end{equation*}
	Hence,
	\begin{equation*}
		\frac{1}{(1-\cos\theta\cos\sigma)^{s/2}}=O(k^{s/2}).
	\end{equation*}
	Using the estimates above together with the bounds \eqref{eq:bound-gegenbauer} for the Gegenbauer polynomials, we obtain
	\begin{equation*}
		\abs[\bigg]{\iint_{R_5} \frac{f(\theta,\sigma)\sin\theta\sin\sigma}{(1-\cos\theta\cos\sigma)^{s/2}}\dd\theta\dd\sigma}
        =O\biggl(k^{2+s/2}\iint_{R_5}\theta^{-2}\sigma^{-2}\dd\theta\dd\sigma\biggr)=O(k^{3+s/2}).\qedhere
	\end{equation*}
\end{proof}

\begin{lemma}\label{lemma:integral-R6-R7}
	Let $s\geq 0$. The following equality holds:
	\begin{equation*}
		\iint_{R_6}\frac{f(\theta,\sigma)\sin\theta\sin\sigma}{(1-\cos\theta\cos\sigma)^{s/2}}\dd\theta\dd\sigma=\iint_{R_7}\frac{f(\theta,\sigma)\sin\theta\sin\sigma}{(1-\cos\theta\cos\sigma)^{s/2}}\dd\theta\dd\sigma=O(k^{15/4+s/2}).
	\end{equation*}
\end{lemma}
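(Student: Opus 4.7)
The plan is first to reduce to the single region $R_7$. The integrand is symmetric under $(\theta,\sigma) \leftrightarrow (\sigma,\theta)$ because $f$ in \eqref{eq:auxiliar-function-lemmas} is a product of a function of $\theta$ with the same function of $\sigma$, and the weight $\sin\theta\sin\sigma/(1-\cos\theta\cos\sigma)^{s/2}$ is visibly symmetric. Since $R_6$ is the reflection of $R_7$ across the diagonal $\theta=\sigma$, the two integrals coincide, so it suffices to estimate the one over $R_7 = [k^{-1/2},\pi/2]\times[0,k^{-1/2}]$.

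Next, I would apply the Gegenbauer bounds from \eqref{eq:bound-gegenbauer}. In $R_7$ we have $\theta \geq k^{-1/2} \geq c/k$, so the pointwise bound yields $|\Gegenbauer{k}{3/2}(\cos\theta)|, |\Gegenbauer{k-\beta}{3/2}(\cos\theta)| \lesssim k^{1/2}\theta^{-3/2}$. The $\sigma$-factor is more delicate because as $\sigma \to 0^+$, $\Gegenbauer{k}{3/2}(\cos\sigma)$ tends to its maximum $\Gegenbauer{k}{3/2}(1) = (k+1)(k+2)/2 = O(k^2)$, while the pointwise bound $k^{1/2}\sigma^{-3/2}$ blows up. The key device is to split the $\sigma$-integration at the crossover scale $\sigma = 1/k$: on $[0,1/k]$ use the uniform bound $|\Gegenbauer{k}{3/2}(\cos\sigma)| \lesssim k^2$, and on $[1/k, k^{-1/2}]$ use the pointwise bound $|\Gegenbauer{k}{3/2}(\cos\sigma)| \lesssim k^{1/2}\sigma^{-3/2}$. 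For the Riesz factor, use $1-\cos\theta\cos\sigma \geq 1-\cos\theta \gtrsim \theta^2$ on $[0,\pi/2]$, giving $(1-\cos\theta\cos\sigma)^{-s/2} \lesssim \theta^{-s}$, together with $\sin\theta \leq \theta$ and $\sin\sigma \leq \sigma$.

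Putting these bounds together, on the first sub-region the integrand is dominated by $k^5 \sigma \theta^{-2-s}$, and on the second by $k^2 \theta^{-2-s}\sigma^{-2}$. The subsequent double integrals are routine: the $\theta$-integral $\int_{k^{-1/2}}^{\pi/2} \theta^{-2-s}\,d\theta$ is of order $k^{(1+s)/2}$; the $\sigma$-integrals $\int_0^{1/k}\sigma\,d\sigma$ and $\int_{1/k}^{k^{-1/2}}\sigma^{-2}\,d\sigma$ are of orders $k^{-2}$ and $k$, respectively. This yields contributions $k^5 \cdot k^{-2} \cdot k^{(1+s)/2} = k^{(7+s)/2}$ and $k^2 \cdot k \cdot k^{(1+s)/2} = k^{(7+s)/2}$, both absorbed in the claimed $O(k^{15/4+s/2})$. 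I do not anticipate serious difficulty; the only point requiring attention is the splitting of the $\sigma$-range, since a crude uniform estimate $|\Gegenbauer{k}{3/2}(\cos\sigma)| \lesssim k^2$ used throughout $[0,k^{-1/2}]$ would produce only the weaker bound $O(k^{(9+s)/2})$.
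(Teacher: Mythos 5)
Your argument is correct, and it follows the same skeleton as the paper (reduce to $R_7$, invoke the bounds \eqref{eq:bound-gegenbauer}, integrate), but the estimation is organized differently. The paper does not split the $\sigma$-range: it bounds the kernel uniformly by $(1-\cos\theta\cos\sigma)^{-s/2}=O(k^{s/2})$ on all of $R_7$ and uses an asymmetric bound for the two $\sigma$-factors, namely $\Gegenbauer{k}{3/2}(\cos\sigma)=\sigma^{-3/2}O(k^{1/2})$ for one and the uniform $O(k^2)$ for the other, which produces an integrand of order $k^{7/2+s/2}\theta^{-2}\sigma^{-1/2}$ and lands exactly on the exponent $15/4+s/2$. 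You instead split $\sigma$ at the crossover scale $1/k$, use matching bounds (both uniform below, both pointwise above), and bound the kernel by $\theta^{-s}$ via $1-\cos\theta\cos\sigma\geq 1-\cos\theta\gtrsim\theta^2$; this yields the sharper estimate $O(k^{(7+s)/2})$, which of course implies the stated $O(k^{15/4+s/2})$ (the lemma only asserts an upper bound, and in the applications both are $o(k^{4+s})$, so nothing downstream changes). One cosmetic point: the first line of \eqref{eq:bound-gegenbauer} is stated for $cn^{-1}\leq\theta\leq\pi/2$ with an unspecified constant $c$, so strictly you should either split at $c/k$ rather than $1/k$ or invoke the paper's remark that this bound is valid on the whole interval $0<\theta\leq\pi/2$; either way the orders are unaffected. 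Your closing observation is also accurate: using only the uniform $O(k^2)$ bound throughout $[0,k^{-1/2}]$ would give $O(k^{(9+s)/2})$, which is too weak for the $s$ close to $4$ cases, so some exploitation of the decay in $\sigma$ (yours or the paper's) is genuinely needed.
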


\begin{proof}
	Since the analysis over the regions $R_6$ and $R_7$ is equivalent, we will focus on the region $R_{7}$. In this region we have $\sin\theta=O(\theta)$, $\sin\sigma=O(\sigma)$, $\cos\sigma\leq 1$, and $\cos\theta\leq \cos k^{-1/2}$. Hence, using that $\cos{x}\leq 1-x^2/4$ for $x\in[0,1]$, we have
	\begin{equation*}
		1-\cos\theta\cos\sigma\geq 1-\biggl(1-\frac{k^{-1}}{4}\biggr),
	\end{equation*}
	and so
	\begin{equation*}
		\frac{1}{(1-\cos\theta\cos\sigma)^{s/2}}=O(k^{s/2}).
	\end{equation*}
    To estimate $f(\theta,\sigma)$ in this region, we use the bounds \eqref{eq:bound-gegenbauer} for the Gegenbauer polynomials as follows:
     \begin{align*}
        \Gegenbauer{k}{3/2}(\cos\theta)&=\theta^{-3/2}O(k^{1/2}),\qquad\Gegenbauer{k-\beta}{3/2}(\cos\theta)=\theta^{-3/2}O(k^{1/2}),\\
        \Gegenbauer{k}{3/2}(\cos\sigma)&=\sigma^{-3/2}O(k^{1/2}), 
        \qquad\Gegenbauer{k-\beta}{3/2}(\cos\sigma)=O(k^2).
    \end{align*}
	Using the estimates above, we obtain
    \begin{align*}
		\abs[\bigg]{\iint_{R_7} \frac{f(\theta,\sigma)\sin\theta\sin\sigma}{(1-\cos\theta\cos\sigma)^{s/2}}\dd\theta\dd\sigma}
        &=O\biggl(k^{7/2+s/2}\iint_{R_7}\theta^{-2}\sigma^{-1/2}\dd\theta\dd\sigma\biggr)\\
        &=O(k^{15/4+s/2}).\qedhere
	\end{align*}    
\end{proof}

\begin{lemma}\label{lemma:integral-Riesz-R8}
	Let $0\leq s<4$. The following equality holds:
	\begin{equation*}
		\iint_{R_8} \frac{f(\theta,\sigma)\sin\theta\sin\sigma}{(1-\cos\theta\cos\sigma)^{s/2}}\dd\theta\dd\sigma=2^{s/2}k^{4+s}\int_0^{\infty}\int_0^{\infty}\frac{\BesselJ{1}(x)^2\BesselJ{1}(y)^2}{xy(x^2+y^2)^{s/2}}\dd x\dd y+o(k^{4+s}).
	\end{equation*}
\end{lemma}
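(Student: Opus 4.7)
The plan is to perform the rescaling $\theta = x/k$, $\sigma = y/k$, which maps $R_8 = [0,k^{-1/2}]^2$ bijectively onto the growing square $\tilde{R}_8 = [0,k^{1/2}]^2$ and introduces a Jacobian factor $k^{-2}$. With this substitution, the goal is to show that the rescaled integrand converges pointwise to a Bessel-type kernel and to conclude by dominated convergence, extending the domain to $[0,\infty)^2$ in the limit. This mirrors the standard Mehler--Heine strategy used, for example, in \cite{BeltranMarzoOrtegaCerda2016}.

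\textbf{Pointwise limit.} The key input is the Mehler--Heine asymptotic for the Gegenbauer polynomials $\Gegenbauer{k}{3/2}$. Using the identity $\Gegenbauer{n}{3/2}(x) = \tfrac{n+2}{2}\Jacobi{n}{1}{1}(x)$ together with the classical Mehler--Heine formula $n^{-1}\Jacobi{n}{1}{1}(\cos(z/n)) \to 2\BesselJ{1}(z)/z$, valid uniformly on compact subsets of $[0,\infty)$, I obtain
\begin{equation*}
    k^{-2}\Gegenbauer{k}{3/2}(\cos(x/k)) \longtendsto \frac{\BesselJ{1}(x)}{x},\qquad k\tendsto\infty,
\end{equation*}
and the same limit for $\Gegenbauer{k-1}{3/2}(\cos(x/k))$ after absorbing the shift $n=k-1$ into the variable $z = x(k-1)/k \to x$. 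Combined with $k^2\sin(x/k)\sin(y/k)\to xy$ and $2k^2(1-\cos(x/k)\cos(y/k))\to x^2+y^2$, this shows that after factoring out $2^{s/2}k^{4+s}$ and the Jacobian, the integrand converges pointwise on $(0,\infty)^2$ to $\BesselJ{1}(x)^2\BesselJ{1}(y)^2/(xy(x^2+y^2)^{s/2})$.

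\textbf{Dominated convergence.} To upgrade the pointwise limit to convergence of the integrals — and, simultaneously, to replace the shrinking error $[0,k^{1/2}]^2 \setminus [0,\infty)^2$ by nothing in the limit — I would use the bounds \eqref{eq:bound-gegenbauer} already employed in Lemmas \ref{lemma:integral-R1}--\ref{lemma:integral-R6-R7}, namely $\abs{\Gegenbauer{n}{3/2}(\cos\theta)}\lesssim\min(n^2,n^{1/2}\theta^{-3/2})$ on $(0,\pi/2]$. This gives $k^{-2}\abs{\Gegenbauer{k-\beta}{3/2}(\cos(x/k))}\lesssim\min(1,x^{-3/2})$, and hence the rescaled integrand is dominated uniformly in $k$ by
\begin{equation*}
    \Phi(x,y) \defined \min(1,x^{-3})\min(1,y^{-3})\,\frac{xy}{(x^2+y^2)^{s/2}}.
\end{equation*}
A straightforward split of $[0,\infty)^2$ into the four boxes according to whether $x$ and $y$ exceed $1$ shows that $\Phi$ is integrable for every $0\leq s<4$: the near-origin block is integrable because $\int_0^1\int_0^1 xy(x^2+y^2)^{-s/2}\dd x\dd y<\infty$ for $s<4$ (use polar coordinates), while the other three blocks give $L^1$ decay from the factors $x^{-3}$ and $y^{-3}$.

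\textbf{Conclusion and main obstacle.} Dominated convergence then yields
\begin{equation*}
    \iint_{R_8}\frac{f(\theta,\sigma)\sin\theta\sin\sigma}{(1-\cos\theta\cos\sigma)^{s/2}}\dd\theta\dd\sigma = 2^{s/2}k^{4+s}\biggl(\iint_{[0,\infty)^2}\frac{\BesselJ{1}(x)^2\BesselJ{1}(y)^2}{xy(x^2+y^2)^{s/2}}\dd x\dd y + o(1)\biggr),
\end{equation*}
which is exactly the claimed estimate. The main technical hurdle is the domination argument: one must ensure that the Gegenbauer bound \eqref{eq:bound-gegenbauer}, valid for all $\theta\in(0,\pi/2]$, yields an $L^1$ majorant uniformly in $k$ near the boundary $\max(\theta,\sigma)=k^{-1/2}$ — equivalently, when $x$ or $y$ is of order $k^{1/2}$ — since that is the regime where the Mehler--Heine approximation is weakest. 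This is handled by splitting into a large compact box $[0,M]^2$ (where Mehler--Heine converges uniformly, together with the Taylor remainders of $\sin$ and $\cos$) and its complement (whose integral is bounded by the tail $\iint_{[0,\infty)^2\setminus[0,M]^2}\Phi<\varepsilon$), and then letting first $k\tendsto\infty$ and then $M\tendsto\infty$.
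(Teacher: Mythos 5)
Your proposal is correct and follows essentially the same route as the paper's proof: rescale $(\theta,\sigma)\mapsto(x/k,y/k)$, use the Mehler--Heine asymptotics (which the paper quotes directly for $\Gegenbauer{k}{3/2}$ as \eqref{eq:Mehler-Heine-Gegenbauer}, while you derive it via $\Jacobi{k}{1}{1}$) for the pointwise limit, and conclude by dominated convergence with an integrable majorant built from the bounds \eqref{eq:bound-gegenbauer}; your majorant $\Phi$ is equivalent to the paper's piecewise function $g$. The only cosmetic difference is your final $[0,M]^2$-plus-tail splitting, which is just another packaging of the same dominated-convergence step.
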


\begin{proof}
	It suffices to compute the limit
	\begin{equation}\label{eq:Riesz-R8-aux-1}
		\lim_{k\tendsto\infty}\frac{1}{k^{4+s}}\int_{0}^{k^{-1/2}}\int_{0}^{k^{-1/2}}\frac{f(\theta,\sigma)\sin\theta\sin\sigma}{(1-\cos\theta\cos\sigma)^{s/2}}\dd\theta\dd\sigma.
	\end{equation}
	Over this region we have $\sin\theta\sim\theta$, $\sin\sigma\sim \sigma$, and
	\begin{equation*}
		\frac{1}{(1-\cos\theta\cos\sigma)^{s/2}}\sim\frac{2^{s/2}}{(\theta^2+\sigma^2)^{s/2}}.
	\end{equation*}
	Hence, 	the limit in \eqref{eq:Riesz-R8-aux-1} equals
	\begin{equation*}
		2^{s/2}\lim_{k\tendsto\infty}\frac{1}{k^{4+s}}\int_0^{k^{-1/2}}\int_0^{k^{-1/2}}\frac{f(\theta,\sigma)\theta\sigma}{(\theta^2+\sigma^2)^{s/2}}(1+o(1))\dd\theta\dd\sigma.
	\end{equation*}
	Making the substitution $(\theta,\sigma)\mapsto(\hat\theta/k,\hat\sigma/k)$, the expression above can be rewritten as follows:
	\begin{align}
		\MoveEqLeft 2^{s/2}\lim_{k\tendsto\infty}\int_0^{k^{1/2}}\int_0^{k^{1/2}}\frac{1}{k^{8}} \frac{f(\hat\theta/k,\hat\sigma/k)\hat\theta\hat\sigma}{(\hat\theta^2+\hat\sigma^2)^{s/2}}(1+o(1))\dd\hat\theta\dd\hat\sigma\notag\\
		&=2^{s/2}\lim_{k\tendsto\infty}\int_0^{\infty}\int_0^{\infty}\chi_{[0,k^{1/2}]}(\hat\theta)\chi_{[0,k^{1/2}]}(\hat\sigma)\frac{1}{k^{8}}
		\frac{f(\hat\theta/k,\hat\sigma/k)\hat\theta\hat\sigma}{(\hat\theta^2+\hat\sigma^2)^{s/2}}(1+o(1))\dd\hat\theta\dd\hat\sigma,\label{eq:Riesz-R8-aux-2}
	\end{align}
	where $\chi$ is the indicator function. It can be checked, using the bounds in \eqref{eq:bound-gegenbauer} for the Gegenbauer polynomials, that the function $g\from[0,\infty)\times[0,\infty)\to\R$ given by
	\begin{equation*}
		g(\hat\theta,\hat\sigma)=\begin{cases}
			\hat\theta\hat\sigma(\hat\theta^2+\hat\sigma^2)^{-s/2}	& (\hat\theta,\hat\sigma)\in[0,1]\times[0,1],\\
			\hat\sigma^{-2-s}	& (\hat\theta,\hat\sigma)\in[0,1]\times(1,\infty),\\
			\hat\theta^{-2-s}	& (\hat\theta,\hat\sigma)\in(1,\infty)\times[0,1],\\
			\hat\theta^{-2}\hat\sigma^{-2}(\hat\theta^2+\hat\sigma^2)^{-s/2}	& (\hat\theta,\hat\sigma)\in(1,\infty)\times(1,\infty),
		\end{cases}
	\end{equation*}
	is integrable in $[0,\infty)\times[0,\infty)$ and satisfies
	\begin{equation*}
		\abs[\bigg]{\frac{\chi_{[0,k^{1/2}]}(\hat\theta)\chi_{[0,k^{1/2}]}(\hat\sigma)}{k^8}\frac{f(\hat\theta/k,\hat\sigma/k)\hat\theta\hat\sigma}{(\hat\theta^2+\hat\sigma^2)^{s/2}}}\lesssim g(\hat\theta,\hat\sigma).
	\end{equation*}
	Hence, using the Dominated Convergence Theorem and the Mehler--Heine formula \eqref{eq:Mehler-Heine-Gegenbauer}, the expression in \eqref{eq:Riesz-R8-aux-2} equals
	\begin{equation*}
		2^{s/2}\int_0^{\infty}\int_0^{\infty}\frac{\BesselJ{1}(\hat\theta)^2\BesselJ{1}(\hat\sigma)^2}{\hat\theta\hat\sigma(\hat\theta^2+\hat\sigma^2)^{s/2}}\dd\hat\theta\dd\hat\sigma,
	\end{equation*}
	as wanted.
\end{proof}

\begin{lemma}\label{lemma:integral-log-R1}
	Let $f$ be as in \cref{eq:auxiliar-function-lemmas}. The following equality holds:
	\begin{align*}
		\MoveEqLeft\iint_{R_1}f(\theta,\sigma)\sin\theta\sin\sigma\log(1-\cos\theta\cos\sigma)\dd\theta\dd\sigma\\
		&=(-1)^{\beta}\frac{\log(2)}{4}\Bigl(1-\BesselJ{0}(1)^2-\BesselJ{1}(1)^2\Bigr)^2k^4+o(k^4).
	\end{align*}
\end{lemma}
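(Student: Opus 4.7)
\emph{Proof plan.} The idea is to transport the analysis from the corner near $(\theta,\sigma) = (\pi,0)$ to the corner $(0,0)$, where the Mehler--Heine asymptotics for Gegenbauer polynomials apply directly. Substituting $\theta = \pi - \theta'$ (so $\theta'\in[0,k^{-1}]$) and using the parity $\Gegenbauer{k}{3/2}(-x) = (-1)^k\Gegenbauer{k}{3/2}(x)$ produces a sign $(-1)^{2k-\beta} = (-1)^{\beta}$ in the Gegenbauer product; simultaneously $\sin\theta = \sin\theta'$ and $1-\cos\theta\cos\sigma = 1 + \cos\theta'\cos\sigma$. The integral therefore becomes
\begin{equation*}
(-1)^{\beta}\int_0^{k^{-1}}\int_0^{k^{-1}} f(\theta',\sigma)\sin\theta'\sin\sigma\log(1+\cos\theta'\cos\sigma)\dd\theta'\dd\sigma,
\end{equation*}
which has the same structure as the integral handled in \cref{lemma:integral-Riesz-R8}, but with a bounded factor $\log 2$ in place of the Riesz singularity.

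On $[0,k^{-1}]^2$ we have $\sin\theta'\sin\sigma = \theta'\sigma(1+O(k^{-2}))$ and $\log(1+\cos\theta'\cos\sigma) = \log 2 + O(k^{-2})$, uniformly, so rescaling $\theta' = \hat\theta/k$, $\sigma = \hat\sigma/k$ (which contributes $k^{-4}$ from the Jacobian and the $\theta'\sigma$ factors) reduces the integral, up to an $o(k^4)$ error, to
\begin{equation*}
\frac{(-1)^{\beta}\log 2}{k^4}\int_0^1\int_0^1 f\bigl(\tfrac{\hat\theta}{k},\tfrac{\hat\sigma}{k}\bigr)\hat\theta\hat\sigma\dd\hat\theta\dd\hat\sigma.
\end{equation*}
By the Mehler--Heine formula for Gegenbauer polynomials, $k^{-2}\Gegenbauer{k}{3/2}(\cos(x/k))\to\BesselJ{1}(x)/x$ pointwise on $[0,\infty)$, and the same limit holds for the $\Gegenbauer{k-\beta}{3/2}$ factor. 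The uniform bound $\abs{\Gegenbauer{k}{3/2}(x)}\leq \Gegenbauer{k}{3/2}(1) = (k+1)(k+2)/2$ valid for all $x\in[-1,1]$ supplies a $k$-independent $L^1$ dominator on $[0,1]^2$, so dominated convergence yields
\begin{equation*}
(-1)^{\beta}\log(2)\,k^4\biggl(\int_0^1\frac{\BesselJ{1}(x)^2}{x}\dd x\biggr)^2 + o(k^4).
\end{equation*}

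Finally, I would evaluate the remaining Bessel integral via the identity
\begin{equation*}
\frac{\BesselJ{1}(x)^2}{x} = -\frac{1}{2}\frac{\dd}{\dd x}\bigl(\BesselJ{0}(x)^2+\BesselJ{1}(x)^2\bigr),
\end{equation*}
which follows from $\BesselJ{0}' = -\BesselJ{1}$ together with the recurrence $\BesselJ{1}'(x) = \BesselJ{0}(x) - \BesselJ{1}(x)/x$. Integrating from $0$ to $1$ and using $\BesselJ{0}(0)=1$, $\BesselJ{1}(0)=0$ gives
\begin{equation*}
\int_0^1\frac{\BesselJ{1}(x)^2}{x}\dd x = \frac{1}{2}\bigl(1-\BesselJ{0}(1)^2-\BesselJ{1}(1)^2\bigr),
\end{equation*}
and squaring this produces the claimed coefficient $(-1)^{\beta}\log(2)/4$. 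The only nontrivial point is securing the dominated convergence step; but since after rescaling the argument $\cos(\hat\theta/k)$ remains in the tiny window $[\cos(k^{-1}),1]$ near the maximum of $\Gegenbauer{k}{3/2}$, the uniform sup-norm bound is enough and no delicate endpoint analysis is required.
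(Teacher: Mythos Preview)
Your proof is correct and follows essentially the same approach as the paper: reflect $\theta\mapsto\pi-\theta'$ to pick up the $(-1)^\beta$ sign, approximate $\sin$ and $\log(1+\cos\theta'\cos\sigma)$ near the origin, rescale to $[0,1]^2$, and apply dominated convergence with Mehler--Heine. The only cosmetic differences are that the paper cites the $O(k^2)$ bound from \eqref{eq:bound-gegenbauer} (which in the regime $\theta\leq k^{-1}$ is exactly your sup-norm bound) and evaluates $\int_0^1 \BesselJ{1}(x)^2/x\,\dd x$ by reference to a table, whereas you derive it from the antiderivative $-\tfrac12(\BesselJ{0}^2+\BesselJ{1}^2)$; your derivation is a nice self-contained touch.
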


\begin{proof}
	It suffices to compute the following limit
	\begin{align*}
		\MoveEqLeft\lim_{k\tendsto\infty}\frac{1}{k^4}\int_{0}^{k^{-1}}\int_{\pi-k^{-1}}^{\pi}f(\theta,\sigma)\sin\theta\sin\sigma\log(1-\cos\theta\cos\sigma)\dd \theta\dd\sigma\\
		&=(-1)^{\beta}\lim_{k\tendsto\infty}\frac{1}{k^4}\int_{0}^{k^{-1}}\int_{0}^{k^{-1}}f(\theta,\sigma)\sin\theta\sin\sigma\log(1+\cos\theta\cos\sigma)\dd \theta\dd\sigma,
	\end{align*}
	where we have used that $\Gegenbauer{k}{3/2}(-x)=(-1)^k\Gegenbauer{k}{3/2}(x)$. Over this region we have $\sin\theta\sim\theta$, $\sin\sigma\sim\sigma$, and $\log(1+\cos\theta\cos\sigma)\sim\log(2)$. Hence, the limit above is
	\begin{equation*}
		(-1)^{\beta}\log(2)\lim_{k\tendsto\infty}\frac{1}{k^4}\int_{0}^{k^{-1}}\int_{0}^{k^{-1}}f(\theta,\sigma)\theta\sigma(1+o(1))\dd\theta\dd\sigma.
	\end{equation*}
	Making the substitution $(\theta,\sigma)\mapsto(\hat\theta/k,\hat\sigma/k)$, this expression reads as
	\begin{equation}\label{eq:R2-4-aux}
		(-1)^{\beta}\log(2)\lim_{k\tendsto\infty}\int_{0}^{1}\int_{0}^{1}\frac{1}{k^8}f\biggl(\frac{\hat\theta}{k},\frac{\hat\sigma}{k}\biggr)\hat\theta\hat\sigma(1+o(1))\dd \hat\theta\dd\hat\sigma.
	\end{equation}
	From \eqref{eq:bound-gegenbauer}, we have
	\begin{align*}
		\abs[\bigg]{\frac{1}{k^8}f\biggl(\frac{\hat\theta}{k},\frac{\hat\sigma}{k}\biggr)\hat\theta\hat\sigma}\lesssim \hat\theta\hat\sigma.
	\end{align*}
	Since $\hat\theta\hat\sigma$ is integrable in $[0,1]^2$, we can simplify \eqref{eq:R2-4-aux} using the Dominated Convergence Theorem and the Mehler--Heine formula \eqref{eq:Mehler-Heine-Gegenbauer} as
	\begin{align*}
		(-1)^{\beta}\log(2)\int_{0}^{1}\int_{0}^{1}\frac{\BesselJ{1}(\hat\theta)^2\BesselJ{1}(\hat\sigma)^2}{\hat\theta\hat\sigma}\dd \hat\theta\dd\hat\sigma&=(-1)^{\beta}\log(2)\biggl(\int_{0}^{1}\frac{\BesselJ{1}(\hat\theta)^2}{\hat\theta}\dd \hat\theta\biggr)^2\\
		&=(-1)^{\beta}\frac{\log(2)}{4}\Bigl(1-\BesselJ{0}(1)^2-\BesselJ{1}(1)^2\Bigr)^2,
	\end{align*} 
	where in the last equality we have used \cite[Formula 1.8.3-2]{PrudnikovBrychkovMarichev1986}.
\end{proof}

\begin{lemma}\label{lemma:integral-with-Hilb}
	Let $\beta\in\set{0,1}$. The following equality holds:
	\begin{align*}
		\int_{0}^{k^{-1/2}}\Gegenbauer{k}{3/2}(\cos\theta)\Gegenbauer{k-\beta}{3/2}(\cos\theta)\theta\dd\theta=\frac{k^2}{2}(1+O(k^{-1/2})).
	\end{align*}
\end{lemma}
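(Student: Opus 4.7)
\bigskip

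\noindent\textbf{Proof plan.} The plan is to rescale $\theta = \hat\theta/k$ so that
\[
\int_{0}^{k^{-1/2}}\Gegenbauer{k}{3/2}(\cos\theta)\Gegenbauer{k-\beta}{3/2}(\cos\theta)\,\theta\,\dd\theta
=\frac{1}{k^{2}}\int_{0}^{k^{1/2}}\Gegenbauer{k}{3/2}\!\left(\cos\tfrac{\hat\theta}{k}\right)\Gegenbauer{k-\beta}{3/2}\!\left(\cos\tfrac{\hat\theta}{k}\right)\hat\theta\,\dd\hat\theta.
\]
By the Mehler--Heine formula \eqref{eq:Mehler-Heine-Gegenbauer}, the integrand after dividing by the correct power of $k$ satisfies
\[
\frac{1}{k^{2}}\Gegenbauer{k-\beta}{3/2}\!\left(\cos\tfrac{\hat\theta}{k}\right)\longrightarrow \frac{\BesselJ{1}(\hat\theta)}{\hat\theta}\qquad\text{as }k\to\infty,
\]
so formally the integral should behave like $k^{2}\int_{0}^{\infty}\BesselJ{1}(\hat\theta)^{2}/\hat\theta\,\dd\hat\theta$. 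The classical identity $\int_{0}^{\infty}\BesselJ{1}(\hat\theta)^{2}/\hat\theta\,\dd\hat\theta = 1/2$ then yields the leading constant $k^{2}/2$.

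To turn this into a rigorous asymptotic with error $O(k^{3/2})$, I would invoke a Hilb-type asymptotic for the Gegenbauer polynomials (this is what the name \texttt{integral-with-Hilb} refers to), providing a uniform approximation of the form
\[
\frac{1}{k^{2}}\Gegenbauer{k-\beta}{3/2}\!\left(\cos\tfrac{\hat\theta}{k}\right)=\frac{\BesselJ{1}(\hat\theta)}{\hat\theta}+R_{k}(\hat\theta),\qquad \hat\theta\in[0,k^{1/2}],
\]
where $R_{k}$ is controlled so that both $R_{k}(\hat\theta)$ and $\BesselJ{1}(\hat\theta)/\hat\theta$ satisfy an $L^{\infty}$ bound of the shape $\min\{1,\hat\theta^{-3/2}\}$ with the error $R_k$ additionally decaying in $k$. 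Multiplying the two Mehler--Heine approximations, the main contribution to $k^{-2}$ times the integral becomes $\int_{0}^{k^{1/2}}\BesselJ{1}(\hat\theta)^{2}/\hat\theta\,\dd\hat\theta$, and the cross and remainder terms contribute $O(k^{-1/2})$ after integration, using that $\BesselJ{1}(\hat\theta)^{2}/\hat\theta\lesssim \min\{\hat\theta,\hat\theta^{-2}\}$.

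The tail error is handled by the asymptotic $\BesselJ{1}(\hat\theta)^{2}\sim\frac{2}{\pi\hat\theta}\cos^{2}(\hat\theta-3\pi/4)$ for large $\hat\theta$, which gives
\[
\int_{k^{1/2}}^{\infty}\frac{\BesselJ{1}(\hat\theta)^{2}}{\hat\theta}\,\dd\hat\theta=O\!\left(\int_{k^{1/2}}^{\infty}\hat\theta^{-2}\,\dd\hat\theta\right)=O(k^{-1/2}).
\]
Combining these pieces gives $k^{-2}\cdot(\text{integral})=\tfrac{1}{2}+O(k^{-1/2})$, which is exactly the claimed estimate.

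The principal obstacle is the need for a genuinely uniform Mehler--Heine approximation on the full range $\hat\theta\in[0,k^{1/2}]$, not merely the pointwise statement used earlier in \cref{sec:energy-harmonic-ensemble}; the bounds \eqref{eq:bound-gegenbauer} on $\Gegenbauer{k}{3/2}$ away from the origin and the Hilb-type remainder estimate must be combined carefully so that the error, which is spread over a range growing like $k^{1/2}$, does not overwhelm the gain of order $k^{-1/2}$.
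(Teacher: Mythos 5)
Your plan is correct and follows essentially the same route as the paper's proof: rescale $\theta=\hat\theta/k$, use Hilb's formula for $\Gegenbauer{k}{3/2}$ (Szeg\H{o}, Theorem 8.21.12) to replace the polynomials by Bessel factors with controlled remainders, evaluate the resulting main term as a Bessel product integral, and obtain the dominant $O(k^{-1/2})$ error from the truncation tail via $\BesselJ{1}(x)=O(x^{-1/2})$. The only real difference is at the last step: the paper keeps the exact arguments $(k+3/2)\theta$ and $(k+3/2-\beta)\theta$ and invokes $\int_0^\infty \BesselJ{1}(ax)\BesselJ{1}(bx)x^{-1}\dd x=\min(a,b)/(2\max(a,b))$ (Gradshteyn--Ryzhik 6.574), whereas you replace both arguments by $\hat\theta$ and use $\int_0^\infty \BesselJ{1}(\hat\theta)^2\hat\theta^{-1}\dd\hat\theta=1/2$, which also works but requires the additional (routine) perturbation bound $\abs{\BesselJ{1}((1+O(k^{-1}))\hat\theta)-\BesselJ{1}(\hat\theta)}\lesssim \hat\theta/k$, whose contribution is $O(k^{-3/4})$ and hence harmless.
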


\begin{proof}
	To control the error in the asymptotic expansion we use the following version of Hilb's formula for the case of the Gegenbauer polynomials $\Gegenbauer{n}{3/2}$ (see \cite[Theorem 8.21.12]{Szego1975}):
	\begin{equation}\label{eq:Hilb-Gegenbauer-3}
		\Gegenbauer{k}{3/2}(\cos\theta)=\frac{(k+2)(k+1)}{k+3/2}\frac{\theta^{1/2}}{\sin^{3/2}\theta}\BesselJ{1}((k+3/2)\theta)+\text{Error},
	\end{equation}
	where $\BesselJ{1}$ is the Bessel function of the first kind of order $1$ and
	\begin{equation*}
		\text{Error}=\begin{cases}
			(\theta^{1/2}/\sin\theta)O(k^{-3/2}) & k^{-1}\leq \theta\leq \pi/2,\\
			(\theta^{3}/\sin\theta)O(k) & 0< \theta\leq k^{-1}.\\
		\end{cases}
	\end{equation*}
	Set $\tilde{k}=(k+3/2)$. In this case $\theta\in[0,k^{-1/2}]$. Using that $\sin\theta\sim \theta$, we can rewrite \eqref{eq:Hilb-Gegenbauer-3} as
	\begin{equation*}
		\Gegenbauer{k}{3/2}(\cos\theta)=\frac{k}{\theta}\BesselJ{1}(\tilde{k}\theta)(1+O(k^{-1}))+\text{Error},
	\end{equation*}
	where
	\begin{equation*}
		\text{Error}=\begin{cases}
			\theta^{-1/2}O(k^{-3/2}) & k^{-1}\leq \theta\leq k^{-1/2},\\
			\theta^2 O(k) & 0< \theta\leq k^{-1}.
		\end{cases}
	\end{equation*}
	For the polynomials $\Gegenbauer{k-\beta}{3/2}$, Hilb's formula takes the form
    \begin{equation*}
		\Gegenbauer{k-\beta}{3/2}(\cos\theta)=\frac{k}{\theta}\BesselJ{1}((\tilde{k}-\beta)\theta)(1+O(k^{-1}))+\text{Error},
	\end{equation*}
    where the error term remains the same. Hence,
	\begin{multline*}
		\Gegenbauer{k}{3/2}(\cos\theta)\Gegenbauer{k-\beta}{3/2}(\cos\theta)=\frac{k^2}{\theta^2}\BesselJ{1}(\tilde{k}\theta)\BesselJ{1}((\tilde{k}-\beta)\theta)(1+O(k^{-1}))+\text{Error}^2\\
		+\frac{k}{\theta}\BesselJ{1}(\tilde{k}\theta)\text{Error}(1+O(k^{-1}))+\frac{k}{\theta}\BesselJ{1}((\tilde{k}-\beta)\theta)\text{Error}(1+O(k^{-1})).
	\end{multline*}
	Using these estimates, the integral of the lemma equals
	\begin{align*}
		\MoveEqLeft\int_{0}^{k^{-1/2}}\frac{k^2}{\theta}\BesselJ{1}(\tilde{k}\theta)\BesselJ{1}((\tilde{k}-\beta)\theta)(1+O(k^{-1}))\dd\theta+\int_{0}^{k^{-1/2}}\theta\,\text{Error}^2\dd\theta\\
		&\qquad+\int_{0}^{k^{-1/2}}k\Bigl(\BesselJ{1}(\tilde{k}\theta)+\BesselJ{1}((\tilde{k}-\beta)\theta)\Bigr)\text{Error}(1+O(k^{-1}))\dd\theta.
	\end{align*}
	The integrals
	\begin{equation*}
		\int_{0}^{k^{-1/2}}\theta\,\text{Error}^2\dd\theta+\int_{0}^{k^{-1/2}}k\Bigl(\BesselJ{1}(\tilde{k}\theta)+\BesselJ{1}((\tilde{k}-\beta)\theta)\Bigr)\text{Error}(1+O(k^{-1}))\dd\theta
	\end{equation*}
	can be estimated as $O(k^{-3/4})$ using that $\abs{\BesselJ{1}(x)}\leq 1$ for all $x\in\R$. For the remaining integral, making the substitution $\theta\mapsto \hat{\theta}/k$ we have
	\begin{align*}
		\MoveEqLeft\int_{0}^{k^{-1/2}}\frac{k^2}{\theta}\BesselJ{1}(\tilde{k}\theta)\BesselJ{1}((\tilde{k}-\beta)\theta)(1+O(k^{-1}))\dd\theta\\
        &=\int_{0}^{k^{1/2}}\frac{k^2}{\hat\theta}\BesselJ{1}(\tilde{k}\hat\theta/k)\BesselJ{1}((\tilde{k}-\beta)\hat\theta/k)(1+O(k^{-1}))\dd\hat\theta\\
		&=k^2(1+O(k^{-1}))\biggl(\int_{0}^{\infty}\frac{\BesselJ{1}(\tilde{k}\hat\theta/k)\BesselJ{1}((\tilde{k}-\beta)\hat\theta/k)}{\hat\theta}\dd\hat\theta\\
        &\hspace{4cm}-\int_{k^{1/2}}^{\infty}\frac{\BesselJ{1}(\tilde{k}\hat\theta/k)\BesselJ{1}((\tilde{k}-\beta)\hat\theta/k)}{\hat\theta}\dd\hat\theta\biggr).
	\end{align*}
	Using that $\BesselJ{1}(x)=O(x^{-1/2})$ as $x\tendsto\infty$, we derive the estimate
	\begin{align*}
		\int_{k^{1/2}}^{\infty}\frac{\BesselJ{1}(\tilde{k}\hat\theta/k)\BesselJ{1}((\tilde{k}-\beta)\hat\theta/k)}{\hat\theta}\dd\hat\theta=O(k^{-1/2}).
	\end{align*}
	Finally, using \cite[Formulas 6.574-1 and 6.574-2]{GradshteynRyzhik2007} we obtain
	\begin{equation*}
		\int_{0}^{\infty}\frac{\BesselJ{1}(\tilde{k}\hat\theta/k)\BesselJ{1}((\tilde{k}-\beta)\hat\theta/k)}{\hat\theta}\dd\hat\theta=\frac{1}{2}\frac{\tilde{k}-\beta}{\tilde{k}}=\frac{1}{2}(1+O(k^{-1})).
	\end{equation*}
	The lemma follows.
\end{proof}

\begin{lemma}\label{lemma:integral-log-R8}
	The following equality holds:
	\begin{align*}
		\MoveEqLeft\iint_{R_8}f(\theta,\sigma)\sin\theta\sin\sigma\log(1-\cos\theta\cos\sigma)\dd\theta\dd\sigma\\
		&=-\frac{k^4\log{k}}{2}+k^4\biggl(\frac{4+8G+\pi-4\gamma\pi+4\pi\log{2}}{8\pi}-\frac{\log{2}}{4}\biggr)+o(k^4),
	\end{align*}
    where $G$ is Catalan's constant and $\gamma$ is the Euler--Mascheroni constant.
\end{lemma}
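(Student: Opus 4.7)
The plan follows the template of \cref{lemma:integral-Riesz-R8}, adapted for the logarithmic kernel; the new feature is that the logarithm produces an explicit $\log k$ term after rescaling. I would perform the substitution $(\theta,\sigma)\mapsto(\hat\theta/k,\hat\sigma/k)$, which maps $R_8=[0,k^{-1/2}]^2$ onto $[0,k^{1/2}]^2$ and carries Jacobian $k^{-2}$. On this domain one has, uniformly,
\begin{equation*}
\sin(\hat\theta/k)\sin(\hat\sigma/k)=\frac{\hat\theta\hat\sigma}{k^2}(1+o(1)),\qquad 1-\cos(\hat\theta/k)\cos(\hat\sigma/k)=\frac{\hat\theta^2+\hat\sigma^2}{2k^2}(1+o(1)),
\end{equation*}
which yields the crucial decomposition
\begin{equation*}
\log\bigl(1-\cos(\hat\theta/k)\cos(\hat\sigma/k)\bigr) = -2\log k -\log 2 +\log(\hat\theta^2+\hat\sigma^2) + o(1).
\end{equation*}

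\textbf{Reduction via dominated convergence.} Combining the Mehler--Heine asymptotic \eqref{eq:Mehler-Heine-Gegenbauer} with the pointwise bounds \eqref{eq:bound-gegenbauer}, one builds a dominating function exactly as in the proof of \cref{lemma:integral-Riesz-R8}; since $\lvert\log(\hat\theta^2+\hat\sigma^2)\rvert$ is integrable near the origin and grows only logarithmically at infinity, it is absorbed into that majorant. Dominated convergence, applied term by term to the three pieces of the decomposition above, then gives
\begin{equation*}
\iint_{R_8}f(\theta,\sigma)\sin\theta\sin\sigma\log(1-\cos\theta\cos\sigma)\dd\theta\dd\sigma = k^4\bigl(-2I_1\log k -I_1\log 2+I_2\bigr)+o(k^4),
\end{equation*}
where
\begin{align*}
I_1 &= \int_{0}^\infty\!\!\int_{0}^\infty\!\frac{\BesselJ{1}(\hat\theta)^2\BesselJ{1}(\hat\sigma)^2}{\hat\theta\hat\sigma}\dd\hat\theta\dd\hat\sigma, \\
I_2 &= \int_{0}^\infty\!\!\int_{0}^\infty\!\frac{\BesselJ{1}(\hat\theta)^2\BesselJ{1}(\hat\sigma)^2}{\hat\theta\hat\sigma}\log(\hat\theta^2+\hat\sigma^2)\dd\hat\theta\dd\hat\sigma.
\end{align*}
The tail estimates $\int_{k^{1/2}}^\infty \BesselJ{1}(x)^2/x\,\dd x=O(k^{-1/2})$ ensure that replacing $[0,k^{1/2}]^2$ by $[0,\infty)^2$ only contributes $o(k^4)$, even against the $\log k$ prefactor. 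The identity $\int_0^\infty \BesselJ{1}(x)^2/x\,\dd x=1/2$ (already invoked in \cref{lemma:integral-log-R1}) then gives $I_1=1/4$, which immediately produces the announced $-k^4\log k/2$ leading term.

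\textbf{Main obstacle: evaluating $I_2$.} It remains to identify
\begin{equation*}
I_2 = \frac{4+8G+\pi-4\gamma\pi+4\pi\log 2}{8\pi},
\end{equation*}
so that $I_2-I_1\log 2$ matches the constant appearing in the statement. I would exploit the factorization $\log(\hat\theta^2+\hat\sigma^2)=2\log\hat\theta+\log(1+\hat\sigma^2/\hat\theta^2)$ and symmetrize. The first piece factorizes as $2\bigl(\int_0^\infty \BesselJ{1}(\hat\sigma)^2/\hat\sigma\,\dd\hat\sigma\bigr)\bigl(\int_0^\infty\BesselJ{1}(\hat\theta)^2\log(\hat\theta)/\hat\theta\,\dd\hat\theta\bigr)$; the one-dimensional integral containing $\log\hat\theta$ is obtained by differentiating the Weber--Schafheitlin Mellin integral $\int_0^\infty\BesselJ{1}(x)^2 x^{s-1}\dd x$ at $s=0$, producing explicit contributions in $\gamma$ and $\log 2$ through the digamma function. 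For the residual cross term I would pass to polar coordinates $(\hat\theta,\hat\sigma)=(r\cos\phi,r\sin\phi)$, perform the radial integral by a tabulated Bessel product identity, and reduce the angular part to $\int_0^{\pi/4}\log\tan\phi\,\dd\phi=-G$; this is precisely the mechanism by which Catalan's constant enters the final answer. Summing the two contributions delivers the desired closed form of $I_2$ and completes the proof. The decisive technical difficulty is this explicit closed-form evaluation; the preceding rescaling, Mehler--Heine, and dominated-convergence steps are parallel to \cref{lemma:integral-Riesz-R8} and require only minor bookkeeping for the logarithmic weight.
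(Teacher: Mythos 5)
Your overall strategy (rescale by $k$, split $\log(1-\cos(\hat\theta/k)\cos(\hat\sigma/k))=\log(\hat\theta^2+\hat\sigma^2)-2\log k-\log 2+o(1)$, then use Mehler--Heine plus dominated convergence) is the same as the paper's, and it works for the $\log 2$ piece and for the piece carrying $\log(\hat\theta^2+\hat\sigma^2)$, where an $o(1)$ relative error is harmless. The genuine gap is in the piece multiplied by $-2\log k$. Dominated convergence gives only
\begin{equation*}
\frac{1}{k^4}\iint_{R_8}f(\theta,\sigma)\sin\theta\sin\sigma\,\dd\theta\dd\sigma=\frac14+o(1)
\end{equation*}
with no rate, so after multiplying by $k^4\log k$ the error is $o(k^4\log k)$, which is \emph{not} $o(k^4)$ and would contaminate exactly the constant you are trying to extract. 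Your tail estimate $\int_{k^{1/2}}^{\infty}\BesselJ{1}(x)^2x^{-1}\dd x=O(k^{-1/2})$ only controls the truncation of the domain; it does not control the error made in replacing the rescaled Gegenbauer products by their Bessel limits on the bulk $[0,k^{1/2}]^2$, and the Mehler--Heine limit \eqref{eq:Mehler-Heine-Gegenbauer} as cited carries no rate. This is precisely why the paper does not use dominated convergence for this term: it observes that the $\log k$ term factorizes as a square of a one-dimensional integral and evaluates that integral with a quantitative error $O(k^{-1/2})$ via Hilb's formula for $\Gegenbauer{k}{3/2}$ (\cref{lemma:integral-with-Hilb}), which beats the $\log k$ prefactor. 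Your argument needs an analogous quantitative step (e.g.\ Hilb's formula with explicit error, as in \cite[Theorem 8.21.12]{Szego1975}) to be complete.

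A secondary point: for the constant, the paper simply evaluates
\begin{equation*}
\int_{0}^{\infty}\int_{0}^{\infty}\frac{\BesselJ{1}(\hat\theta)^2\BesselJ{1}(\hat\sigma)^2}{\hat\theta\hat\sigma}\log(\hat\theta^2+\hat\sigma^2)\dd\hat\theta\dd\hat\sigma
\end{equation*}
with a computer algebra system, whereas you sketch a hand computation. The Weber--Schafheitlin/digamma treatment of the $2\log\hat\theta$ part is plausible, but the cross term is not as routine as you suggest: after passing to polar coordinates the radial integral $\int_0^\infty \BesselJ{1}(r\cos\phi)^2\BesselJ{1}(r\sin\phi)^2 r^{-1}\dd r$ is a four-Bessel Weber--Schafheitlin integral depending nontrivially on $\phi$, so the angular part does not reduce directly to $\int_0^{\pi/4}\log\tan\phi\,\dd\phi=-G$; this step would need to be carried out in detail (or replaced by a CAS evaluation, as in the paper) before the claimed closed form is justified. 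Also, minor bookkeeping: the identity $\int_0^\infty\BesselJ{1}(x)^2x^{-1}\dd x=1/2$ comes from \cite[Formula 6.574-2]{GradshteynRyzhik2007}, not from the truncated integral used in \cref{lemma:integral-log-R1}.
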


\begin{proof}
	Using the substitution
	\begin{equation}\label{eq:substitution}
		(\theta,\sigma)\mapsto(\hat\theta/k,\hat\sigma/k),
	\end{equation}
	the integral of the lemma equals
	\begin{equation}\label{eq:aux-log-R8-1}
		\frac{1}{k^2}\int_{0}^{k^{1/2}}\int_{0}^{k^{1/2}}f(\hat\theta/k,\hat\sigma/k)\sin(\hat\theta/k)\sin(\hat\sigma/k)\log(1-\cos(\hat\theta/k)\cos(\hat\sigma/k))\dd\hat\theta\dd\hat\sigma.
	\end{equation}
	In this region we have
	\begin{align*}
		\log(1-\cos(\hat\theta/k)\cos(\hat\sigma/k))
		&=\log\biggl(\frac{\hat\theta^2+\hat\sigma^2}{2k^2}(1+O(k^{-1}))\biggr)\\
		&=\log(\hat\theta^2+\hat\sigma^2)-2\log{k}-\log(2)+\log(1+O(k^{-1})).
	\end{align*}
	Hence, the integral in \eqref{eq:aux-log-R8-1} can be written as
	\begin{equation*}
		I_1+I_2+I_3+I_4,
	\end{equation*}
	where
	\begin{align*}
		I_1&=\frac{1}{k^2}\int_{0}^{k^{1/2}}\int_{0}^{k^{1/2}}f(\hat\theta/k,\hat\sigma/k)\sin(\hat\theta/k)\sin(\hat\sigma/k)\log(\hat\theta^2+\hat\sigma^2)\dd\hat\theta\dd\hat\sigma,\\
		I_2&=-\frac{2\log(k)}{k^2}\int_{0}^{k^{1/2}}\int_{0}^{k^{1/2}}f(\hat\theta/k,\hat\sigma/k)\sin(\hat\theta/k)\sin(\hat\sigma/k)\dd\hat\theta\dd\hat\sigma,\\
		I_3&=-\frac{\log(2)}{k^2}\int_{0}^{k^{1/2}}\int_{0}^{k^{1/2}}f(\hat\theta/k,\hat\sigma/k)\sin(\hat\theta/k)\sin(\hat\sigma/k)\dd\hat\theta\dd\hat\sigma,\\
		I_4&=\frac{\log(1+O(k^{-1}))}{k^2}\int_{0}^{k^{1/2}}\int_{0}^{k^{1/2}}f(\hat\theta/k,\hat\sigma/k)\sin(\hat\theta/k)\sin(\hat\sigma/k)\dd\hat\theta\dd\hat\sigma.
	\end{align*}
	Reversing the change of variables \eqref{eq:substitution}, we can use \cref{lemma:integral-Riesz-R8} with $s=0$ and \cite[Formula 6.574-2]{GradshteynRyzhik2007} to derive
	\begin{equation*}
		I_3=-k^4\frac{\log(2)}{4}+o(k^4).
	\end{equation*}
	Similarly, we have $I_4=o(k^4)$. To compute $I_2$ we use the estimates
	\begin{equation*}
		\sin(\hat\theta/k)=\frac{\hat\theta}{k}(1+O(k^{-1})),\quad \sin(\hat\sigma/k)=\frac{\hat\theta}{k}(1+O(k^{-1})).
	\end{equation*}
	Then, we can simplify $I_2$ as
	\begin{equation*}
		I_2=\frac{-2\log(k)}{k^2}(1+O(k^{-1}))\biggl(\int_{0}^{k^{1/2}}\Gegenbauer{k}{3/2}(\cos(\hat\theta/k))\Gegenbauer{k-\beta}{3/2}(\cos(\hat\theta/k))\frac{\hat\theta}{k}\dd\hat\theta\biggr)^2.
	\end{equation*}
	Reversing the substitution \eqref{eq:substitution} and using \cref{lemma:integral-with-Hilb}, we obtain
	\begin{equation*}
		I_2=-\frac{k^4}{2}\log{k}+o(k^4).
	\end{equation*}
	Finally, we claim that 
	\begin{equation*}
		I_1=k^4\biggl(\frac{4+8G+\pi-4\gamma\pi+4\pi\log{2}}{8\pi}\biggr)+o(k^4).
	\end{equation*}
	To prove this claim, it suffices to compute the limit $\lim_{k\tendsto\infty}I_1/k^4$. Using the asymptotic equivalences $\sin(\hat\theta/k)\sim\hat\theta/k$ and $\sin(\hat\sigma/k)\sim\hat\sigma/k$, we have
	\begin{align}
		&\lim_{k\tendsto\infty}\frac{1}{k^4}I_1=\lim_{k\tendsto\infty}\frac{1}{k^8}\int_{0}^{k^{1/2}}\int_{0}^{k^{1/2}}f(\hat\theta/k,\hat\sigma/k)\hat\theta\hat\sigma\log(\hat\theta^2+\hat\sigma^2)(1+o(1))\dd\hat\theta\dd\hat\sigma\label{eq:aux-log-R8-2}\\
		&=\lim_{k\tendsto\infty}\int_{0}^{\infty}\int_{0}^{\infty}\chi_{[0,k^{1/2}]}(\hat\theta)\chi_{[0,k^{1/2}]}(\hat\sigma)\frac{f(\hat\theta/k,\hat\sigma/k)\hat\theta\hat\sigma\log(\hat\theta^2+\hat\sigma^2)}{k^8}(1+o(1))\dd\hat\theta\dd\hat\sigma.\notag
	\end{align}
	It can be checked, using the bounds in \eqref{eq:bound-gegenbauer} for the Gegenbauer polynomials, that the function $g\from [0,\infty)\times [0,\infty)$ given by
	\begin{equation*}
		g(\hat\theta,\hat\sigma)=\begin{cases}
			\hat\theta\hat\sigma\log(\hat\theta^2+\hat\sigma^2)	& (\hat\theta,\hat\sigma)\in[0,1]\times[0,1],\\
			\hat\theta\hat\sigma^{-2}\log(\hat\theta^2+\hat\sigma^2)	& (\hat\theta,\hat\sigma)\in[0,1]\times(1,\infty),\\
			\hat\theta^{-2}\hat\sigma\log(\hat\theta^2+\hat\sigma^2)	& (\hat\theta,\hat\sigma)\in(1,\infty)\times[0,1],\\
			\hat\theta^{-2}\hat\sigma^{-2}\log(\hat\theta^2+\hat\sigma^2)	& (\hat\theta,\hat\sigma)\in(1,\infty)\times(1,\infty),
		\end{cases}
	\end{equation*}
	is integrable over $[0,\infty)\times [0,\infty)$ and satisfies
	\begin{equation*}
		\abs[\bigg]{\frac{\chi_{[0,k^{1/2}]}(\hat\theta)\chi_{[0,k^{1/2}]}(\hat\sigma)}{k^8}f(\hat\theta/k,\hat\sigma/k)\hat\theta\hat\sigma\log(\hat\theta^2+\hat\sigma^2)}\lesssim g(\hat\theta,\hat\sigma).
	\end{equation*}
	Hence, using the Dominated Convergence Theorem and the Mehler--Heine formula \eqref{eq:Mehler-Heine-Gegenbauer}, we have from \eqref{eq:aux-log-R8-2} that
	\begin{align*}
		I_1=k^4\int_{0}^{\infty}\int_{0}^{\infty}\frac{\BesselJ{1}(\hat\theta)^2\BesselJ{1}(\hat\sigma)^2}{\hat\theta\hat\sigma}\log(\hat\theta^2+\hat\sigma^2)\dd \hat\theta\dd\hat\sigma+o(k^4).
	\end{align*}
	Finally, the integral in this expression can be computed using the computer algebra system \texttt{Mathematica} \cite{Mathematica}:
	\begin{equation*}
		\int_{0}^{\infty}\int_{0}^{\infty}\frac{\BesselJ{1}(\hat\theta)^2\BesselJ{1}(\hat\sigma)^2}{\hat\theta\hat\sigma}\log(\hat\theta^2+\hat\sigma^2)\dd \hat\theta\dd\hat\sigma=\frac{4+8G+\pi-4\gamma\pi+4\pi\log{2}}{8\pi}.
	\end{equation*}
	Adding the contributions from $I_1$ to $I_4$, the lemma follows.
\end{proof}

\subsubsection{Proof of \cref{lemma:integral-dpp}}\label{sec:proof-lemma-integral-dpp}

Making the substitution $(\xi_{+},\xi_{-})\mapsto(\cos\theta,\cos\sigma)$ and calling
\begin{equation*}
	f(\theta,\sigma)=\Gegenbauer{k}{3/2}(\cos\theta)\Gegenbauer{k-\beta}{3/2}(\cos\theta)\Gegenbauer{k}{3/2}(\cos\sigma)\Gegenbauer{k-\beta}{3/2}(\cos\sigma),
\end{equation*}
the integral of the lemma equals
\begin{equation*}
	\int_{0}^{\pi/2}\int_{0}^{\pi}\frac{f(\theta,\sigma)\sin\theta\sin\sigma}{(1-\cos\theta\cos\sigma)^{s/2}}\dd \theta \dd \sigma.
\end{equation*}
We divide the region $[0,\pi]\times [0,\pi/2]$ as in \cref{fig:regions-dpp}. Over the regions $R_1$ to $R_4$ we have $1-\cos\theta\cos\sigma\geq 1$. Hence, using \cref{lemma:integral-R1,lemma:integral-R2-R3,lemma:integral-R4,lemma:integral-R5,lemma:integral-R6-R7}, we have
\begin{equation*}
	\iint_{\bigunion_{i=1}^{7}R_j}\frac{f(\theta,\sigma)\sin\theta\sin\sigma}{(1-\cos\theta\cos\sigma)^{s/2}}\dd \theta \dd \sigma=o(k^{4+s}).
\end{equation*}
Finally, from \cref{lemma:integral-Riesz-R8} we have
\begin{align*}
	\iint_{R_8}\frac{f(\theta,\sigma)\sin\theta\sin\sigma}{(1-\cos\theta\cos\sigma)^{s/2}}\dd \theta \dd \sigma=2^{s/2}k^{4+s}\int_0^{\infty}\int_0^{\infty} \frac{\BesselJ{1}(x)^2\BesselJ{1}(y)^2}{xy(x^2+y^2)^{s/2}}\dd x\dd y +o(k^{4+s}).
\end{align*}
The proof is now complete.\qed

\subsubsection{Proof of \cref{lemma:integral-dpp-log}}\label{sec:proof-lemma-integral-dpp-log}

Making the substitution $(\xi_{+},\xi_{-})\mapsto(\cos\theta,\cos\sigma)$ and calling
\begin{equation*}
	f(\theta,\sigma)=\Gegenbauer{k}{3/2}(\cos\theta)\Gegenbauer{k-\beta}{3/2}(\cos\theta)\Gegenbauer{k}{3/2}(\cos\sigma)\Gegenbauer{k-\beta}{3/2}(\cos\sigma),
\end{equation*}
the integral of the lemma equals
\begin{equation*}
	\int_{0}^{\pi/2}\int_{0}^{\pi}f(\theta,\sigma)\sin\theta\sin\sigma\log(1-\cos\theta\cos\sigma)\dd \theta \dd \sigma.
\end{equation*}
We divide the region $[0,\pi]\times [0,\pi/2]$ as in \cref{fig:regions-dpp}. From \cref{lemma:integral-log-R1}, we have
\begin{align*}
	\MoveEqLeft\iint_{R_1}f(\theta,\sigma)\sin\theta\sin\sigma\log(1-\cos\theta\cos\sigma)\dd\theta\dd\sigma\\
	&=(-1)^{\beta}\frac{\log(2)}{4}\Bigl(1-\BesselJ{0}(1)^2-\BesselJ{1}(1)^2\Bigr)^2k^4+o(k^4).
\end{align*}
Over the regions $R_{2}$, $R_{3}$, and $R_{4}$ we have $\abs{\log(1-\cos\theta\cos\sigma)}\leq 1$. Therefore, using \cref{lemma:integral-R2-R3,lemma:integral-R4},
\begin{align*}
	\MoveEqLeft\abs[\bigg]{\iint_{R_j}f(\theta,\sigma)\sin\theta\sin\sigma\log(1-\cos\theta\cos\sigma)\dd \theta\dd\sigma}\\
	&\leq\iint_{R_j}\abs[\big]{f(\theta,\sigma)\sin\theta\sin\sigma}\dd\theta\dd\sigma=o(k^4), \qquad 2\leq j\leq 4.
\end{align*}
Now, over the regions $R_5$ to $R_7$ we have $\log(1-\cos\theta\cos\sigma)=O(\log{k})$. Hence, using \cref{lemma:integral-R5,lemma:integral-R6-R7} with $s=0$, we have
\begin{equation*}
	\iint\limits_{R_5\union R_6\union R_7}f(\theta,\sigma)\sin\theta\sin\sigma\log(1-\cos\theta\cos\sigma)\dd \theta\dd\sigma=O(k^{15/4}\log{k})=o(k^{4}).
\end{equation*}
Finally, from \cref{lemma:integral-log-R8} we obtain
\begin{align*}
	\MoveEqLeft\iint_{R_8}f(\theta,\sigma)\sin\theta\sin\sigma\log(1-\cos\theta\cos\sigma)\dd \theta\dd\sigma\\
	&=-\frac{k^4\log{k}}{2}+k^4\biggl(\frac{4+8G+\pi-4\gamma\pi+4\pi\log{2}}{8\pi}-\frac{\log{2}}{4}\biggr)+o(k^4).\pushQED{\qed}\qedhere
\end{align*}
To prove \cref{lemma:integral-dpp-hyper-1,lemma:integral-dpp-hyper-2,lemma:integral-dpp-hyper-3} we need to consider a slightly more exhaustive division of the region of integration. In this case, we consider the subdivision sketched in \cref{fig:regions-dpp-hypersingular}. 

\begin{figure}[htbp]
	\centering
	\begin{tikzpicture}[scale=0.9]
		\def\R{0.8\textwidth}
		\def\r{0.4\textwidth}
		\def\s{0.18\textwidth}
		\def\t{0.09\textwidth}
		\coordinate (A0) at (0,0);
		\coordinate (A1) at (\R,0);
		\coordinate (A2) at (0,\r);
		\coordinate (A3) at (\r,0);
		\coordinate (A6) at (\t,0);
		\coordinate (A7) at (0,\t);
		\coordinate (A8) at (\s,0);
		\coordinate (A9) at (0,\s);
		\draw (A0) node[below left]{$0$};
		\draw[->] (-1,0)--(0.9\textwidth,0) node[below]{$\theta$};
		\draw[->] (0,-1)--(0,0.5\textwidth) node[left]{$\sigma$};
		\draw (0,0) rectangle (\R,\r);
		\draw (\r,0) rectangle (\R,\r);
		\draw (\r,0) rectangle (\R,\r) node[pos=.5]{$\tilde{R}_{1}$};
		\draw (\s,\s) rectangle (\r,\r) node[pos=.5]{$\tilde{R}_{2}$};
		\draw (\t,\s) rectangle (\s,\r) node[pos=.5]{$\tilde{R}_{3}$};
		\draw (\s,\t) rectangle (\r,\s) node[pos=.5]{$\tilde{R}_{4}$};
		\draw (0,\s) rectangle (\t,\r) node[pos=.5]{$\tilde{R}_{5}$};
		\draw (\s,0) rectangle (\r,\t) node[pos=.5]{$\tilde{R}_{6}$};
		\draw (\t,\t) rectangle (\s,\s) node[pos=.5]{$\tilde{R}_{7}$};
		\draw (0,\t) rectangle (\t,\s) node[pos=.5]{$\tilde{R}_{8}$};
		\draw (\t,0) rectangle (\s,\t) node[pos=.5]{$\tilde{R}_{9}$};
		\draw (0,0) rectangle (\t,\t) node[pos=.5]{$\tilde{R}_{10}$};
		\draw (A1) node[below]{$\pi$};
		\draw (A2) node[left]{$\pi/2$};
		\draw (A3) node[below]{$\pi/2$};
		\draw (A6) node[below]{$k^{-1}$};
		\draw (A7) node[left]{$k^{-1}$};
		\draw (A8) node[below]{$k^{-1/2}$};
		\draw (A9) node[left]{$k^{-1/2}$};
	\end{tikzpicture}
	\caption{Subdivision of the rectangle $\tilde{R}=[0,\pi]\times[0,\pi/2]$ used in the proofs of \cref{lemma:integral-dpp-hyper-1,lemma:integral-dpp-hyper-2,lemma:integral-dpp-hyper-3}.}
	\label{fig:regions-dpp-hypersingular}
\end{figure}
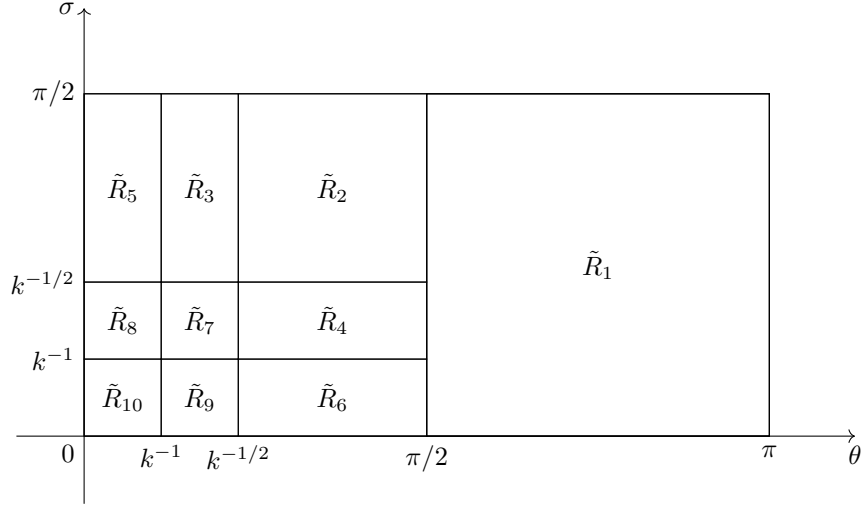

\begin{lemma}\label{lemma:integral-hyper-tilde-R7}
	The following equality holds:
	\begin{equation*}
		\iint_{\tilde{R}_7}\frac{f(\theta,\sigma)\sin\theta\sin\sigma}{(1-\cos\theta\cos\sigma)^2}\dd\theta\dd\sigma=4k^8\int_{1}^{\infty}\int_{1}^{\infty}\frac{\BesselJ{1}(x)^2\BesselJ{1}(y)^2}{xy(x^2+y^2)^2}\dd x\dd y+o(k^8).
	\end{equation*}
\end{lemma}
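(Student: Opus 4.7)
The plan is to follow the same rescaling strategy used in \cref{lemma:integral-Riesz-R8}, but now on the annular region $\tilde{R}_{7}=[k^{-1},k^{-1/2}]\times[k^{-1},k^{-1/2}]$, and to identify the appropriate dominating function on $(1,\infty)^2$ so that the Dominated Convergence Theorem applies and yields the integral over $[1,\infty)\times[1,\infty)$ appearing in the statement. First, I would perform the substitution $(\theta,\sigma)=(\hat\theta/k,\hat\sigma/k)$, which turns $\tilde{R}_{7}$ into the square $[1,k^{1/2}]^2$ and contributes a Jacobian factor $k^{-2}$. On this region one has the uniform estimates
\begin{align*}
    \sin(\hat\theta/k)\sin(\hat\sigma/k) &= \frac{\hat\theta\hat\sigma}{k^{2}}(1+O(k^{-1})),\\
    1-\cos(\hat\theta/k)\cos(\hat\sigma/k) &= \frac{\hat\theta^{2}+\hat\sigma^{2}}{2k^{2}}(1+O(k^{-1})),
\end{align*}
so the non-polynomial factor of the integrand becomes $4k^{2}\hat\theta\hat\sigma(\hat\theta^{2}+\hat\sigma^{2})^{-2}(1+O(k^{-1}))$, and the overall prefactor simplifies to $4$.

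Next, I would rewrite the integral as
\begin{equation*}
    4\int_{0}^{\infty}\!\!\int_{0}^{\infty}\chi_{[1,k^{1/2}]}(\hat\theta)\chi_{[1,k^{1/2}]}(\hat\sigma)\,\frac{f(\hat\theta/k,\hat\sigma/k)}{k^{8}}\cdot\frac{k^{8}\hat\theta\hat\sigma}{(\hat\theta^{2}+\hat\sigma^{2})^{2}}(1+o(1))\dd\hat\theta\dd\hat\sigma
\end{equation*}
and apply the Mehler--Heine formula \eqref{eq:Mehler-Heine-Gegenbauer} pointwise, which gives
\begin{equation*}
    \frac{f(\hat\theta/k,\hat\sigma/k)}{k^{8}}\longtendsto\frac{\BesselJ{1}(\hat\theta)^{2}\BesselJ{1}(\hat\sigma)^{2}}{\hat\theta^{2}\hat\sigma^{2}}
\end{equation*}
for each fixed $(\hat\theta,\hat\sigma)\in(0,\infty)^2$. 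Combined with the prefactor, the pointwise integrand converges to $4\BesselJ{1}(\hat\theta)^{2}\BesselJ{1}(\hat\sigma)^{2}\bigl(\hat\theta\hat\sigma(\hat\theta^{2}+\hat\sigma^{2})^{2}\bigr)^{-1}$ on $(1,\infty)^2$.

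To justify passing the limit inside the integral, I would use the standard upper bounds \eqref{eq:bound-gegenbauer} for the Gegenbauer polynomials (already used in \cref{lemma:integral-R6-R7}): for $\hat\theta/k,\hat\sigma/k\in[k^{-1},k^{-1/2}]$ each factor $\Gegenbauer{k}{3/2}(\cos(\hat\theta/k))$ is $O(k^{1/2}(\hat\theta/k)^{-3/2})$, so
\begin{equation*}
    \frac{f(\hat\theta/k,\hat\sigma/k)}{k^{8}}\lesssim\frac{1}{\hat\theta^{3}\hat\sigma^{3}},
\end{equation*}
and consequently the whole integrand is bounded in absolute value by the integrable majorant $g(\hat\theta,\hat\sigma)=\hat\theta^{-2}\hat\sigma^{-2}(\hat\theta^{2}+\hat\sigma^{2})^{-2}$ on $(1,\infty)^{2}$. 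The Dominated Convergence Theorem then yields the claimed limit with the $(1,\infty)^{2}$ range of integration, and the $o(k^{8})$ error absorbs both the $(1+o(1))$ factors and the truncation at $\hat\theta=k^{1/2}$.

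The step I expect to demand the most care is verifying the integrability of the dominator $g$ on $(1,\infty)^{2}$ and ensuring that the Gegenbauer bounds apply uniformly across the rescaled region (in particular near $\hat\theta=1$ and $\hat\sigma=1$, where $\theta=k^{-1}$ lies at the boundary of the regime in which Hilb-type bounds are usable); once these technical points are addressed, the remainder is a direct adaptation of the argument in \cref{lemma:integral-Riesz-R8}.
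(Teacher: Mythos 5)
Your proposal is correct and follows essentially the same route as the paper's proof: the same rescaling $(\theta,\sigma)\mapsto(\hat\theta/k,\hat\sigma/k)$, the same elementary estimates for $\sin\theta\sin\sigma$ and $1-\cos\theta\cos\sigma$, the Mehler--Heine pointwise limit, and the Dominated Convergence Theorem with exactly the majorant $g(\hat\theta,\hat\sigma)=\hat\theta^{-2}\hat\sigma^{-2}(\hat\theta^{2}+\hat\sigma^{2})^{-2}$ on $(1,\infty)^{2}$ that the paper uses. The boundary issue you flag at $\hat\theta=1$ (i.e.\ $\theta=k^{-1}$) is harmless, since the first bound in \eqref{eq:bound-gegenbauer}, of the form $\theta^{-3/2}O(k^{1/2})$, is valid on the whole interval $0<\theta\leq\pi/2$.
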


\begin{proof}
	It suffices to compute the limit
	\begin{equation}\label{eq:aux-hyper-R5-1}
		\lim_{k\tendsto\infty}\frac{1}{k^8}\int_{k^{-1}}^{k^{-1/2}}\int_{k^{-1}}^{k^{-1/2}}\frac{f(\theta,\sigma)\sin\theta\sin\sigma}{(1-\cos\theta\cos\sigma)^2}\dd\theta\dd\sigma.
	\end{equation}
	Over this region, we have $\sin\theta\sim\theta$, $\sin\sigma\sim\sigma$, and
	\begin{equation*}
		\frac{1}{(1-\cos\theta\cos\sigma)^2}\sim\frac{4}{(\theta^2+\sigma^2)^2}.
	\end{equation*}
	Hence, the limit in \eqref{eq:aux-hyper-R5-1} equals
	\begin{equation*}
		4\lim_{k\tendsto\infty}\frac{1}{k^8}\int_{k^{-1}}^{k^{-1/2}}\int_{k^{-1}}^{k^{-1/2}}\frac{f(\theta,\sigma)\theta\sigma}{(\theta^2+\sigma^2)^2}(1+o(1))\dd\theta\dd\sigma.
	\end{equation*}
	Making the substitution $(\theta,\sigma)\mapsto(\hat\theta/k,\hat\sigma/k)$, this expression reads as
	\begin{align}\label{eq:aux-hyper-R5-2}
		\MoveEqLeft 4\lim_{k\tendsto\infty}\int_{1}^{k^{1/2}}\int_{1}^{k^{1/2}}\frac{1}{k^8}\frac{f(\hat\theta/k,\hat\sigma/k)\hat\theta\hat\sigma}{(\hat\theta^2+\hat\sigma^2)^2}(1+o(1))\dd\hat\theta\dd\hat\sigma\\
		&=4\lim_{k\tendsto\infty}\int_{1}^{\infty}\int_{1}^{\infty}\frac{\chi_{[1,k^{1/2}]}(\hat\theta)\chi_{[1,k^{1/2}]}(\hat\sigma)}{k^8}\frac{f(\hat\theta/k,\hat\sigma/k)\hat\theta\hat\sigma}{(\hat\theta^2+\hat\sigma^2)^2}(1+o(1))\dd\hat\theta\dd\hat\sigma.\notag
	\end{align}
	It can be checked, using the bounds in \eqref{eq:bound-gegenbauer}, that the function $g\from[1,\infty]\times[1,\infty]$ given by
	\begin{equation*}
		g(\hat\theta,\hat\sigma)=\frac{1}{\hat\theta^2\hat\sigma^2(\hat\theta^2+\hat\sigma^2)^2}
	\end{equation*}
	satisfies
	\begin{equation*}
		\abs[\bigg]{\frac{\chi_{[1,k^{1/2}]}(\hat\theta)\chi_{[1,k^{1/2}]}(\hat\sigma)}{k^8}\frac{f(\hat\theta/k,\hat\sigma/k)\hat\theta\hat\sigma}{(\hat\theta^2+\hat\sigma^2)^2}}\lesssim g(\hat\theta,\hat\sigma)
	\end{equation*}
	and is integrable in $[1,\infty]\times[1,\infty]$. Hence, using the Dominated Convergence Theorem and the Mehler--Heine formula \eqref{eq:Mehler-Heine-Gegenbauer}, the expression \eqref{eq:aux-hyper-R5-2} equals
	\begin{equation*}
		4\int_{1}^{\infty}\int_{1}^{\infty}\frac{\BesselJ{1}(\hat\theta)^2\BesselJ{1}(\hat\sigma)^2}{\hat\theta\hat\sigma(\hat\theta^2+\hat\sigma^2)^2}\dd \hat\theta\dd \hat\sigma.\qedhere
	\end{equation*}
\end{proof}

\begin{lemma}\label{lemma:integral-hyper-tilde-R8-R9}
	The following equality holds:
	\begin{align*}
		\iint_{\tilde{R}_8}\frac{f(\theta,\sigma)\sin\theta\sin\sigma}{(1-\cos\theta\cos\sigma)^2}\dd\theta\dd\sigma&=\iint_{\tilde{R}_9}\frac{f(\theta,\sigma)\sin\theta\sin\sigma}{(1-\cos\theta\cos\sigma)^2}\dd\theta\dd\sigma\\
		&=4k^8\int_{1}^{\infty}\int_{0}^{1}\frac{\BesselJ{1}(x)^2\BesselJ{1}(y)^2}{xy(x^2+y^2)^2}\dd x\dd y+o(k^8).
	\end{align*}
\end{lemma}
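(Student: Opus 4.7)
The plan is to reduce the two integrals to a single one by symmetry and then to imitate the proof of \cref{lemma:integral-hyper-tilde-R7}, taking into account the asymmetric shape of the region.

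First, observe that the integrand $f(\theta,\sigma)\sin\theta\sin\sigma/(1-\cos\theta\cos\sigma)^2$ is symmetric in $(\theta,\sigma)$, since $f$ is a symmetric polynomial in $(\cos\theta,\cos\sigma)$. Since $\tilde{R}_8$ and $\tilde{R}_9$ are reflections of each other across the diagonal $\theta=\sigma$, the two integrals in the statement coincide. It therefore suffices to analyze the one over $\tilde{R}_9=[k^{-1},k^{-1/2}]\times[0,k^{-1}]$.

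Next, I would compute
\begin{equation*}
\lim_{k\tendsto\infty}\frac{1}{k^{8}}\int_{k^{-1}}^{k^{-1/2}}\int_{0}^{k^{-1}}\frac{f(\theta,\sigma)\sin\theta\sin\sigma}{(1-\cos\theta\cos\sigma)^{2}}\dd\sigma\dd\theta.
\end{equation*}
On this region both $\theta$ and $\sigma$ tend to zero, so $\sin\theta\sim\theta$, $\sin\sigma\sim\sigma$, and $1-\cos\theta\cos\sigma\sim(\theta^{2}+\sigma^{2})/2$. Performing the rescaling $(\theta,\sigma)\mapsto(\hat\theta/k,\hat\sigma/k)$ transforms the domain into $[1,k^{1/2}]\times[0,1]$ and produces, modulo a factor $1+o(1)$, the integrand $4\, f(\hat\theta/k,\hat\sigma/k)\,\hat\theta\hat\sigma/(k^{8}(\hat\theta^{2}+\hat\sigma^{2})^{2})$.

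The main technical point is to justify passage to the limit under the integral sign. Using the bounds in \eqref{eq:bound-gegenbauer}, namely $|\Gegenbauer{k}{3/2}(\cos(\hat\theta/k))|\lesssim k^{2}/\hat\theta^{3/2}$ for $\hat\theta\geq 1$ and $|\Gegenbauer{k}{3/2}(\cos(\hat\sigma/k))|\lesssim k^{2}$ for $\hat\sigma\leq 1$, we obtain
\begin{equation*}
\abs[\bigg]{\frac{\chi_{[1,k^{1/2}]}(\hat\theta)\chi_{[0,1]}(\hat\sigma)}{k^{8}}\,\frac{f(\hat\theta/k,\hat\sigma/k)\,\hat\theta\hat\sigma}{(\hat\theta^{2}+\hat\sigma^{2})^{2}}}\lesssim g(\hat\theta,\hat\sigma)\defined\frac{\hat\sigma}{\hat\theta^{2}(\hat\theta^{2}+\hat\sigma^{2})^{2}},
\end{equation*}
and $g$ is integrable on $[1,\infty)\times[0,1]$ since $g(\hat\theta,\hat\sigma)\leq\hat\sigma/\hat\theta^{6}$ for $\hat\sigma\leq\hat\theta$. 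The Mehler--Heine formula \eqref{eq:Mehler-Heine-Gegenbauer} then gives the pointwise limit
\begin{equation*}
\frac{1}{k^{8}}f(\hat\theta/k,\hat\sigma/k)\longtendsto\frac{\BesselJ{1}(\hat\theta)^{2}\BesselJ{1}(\hat\sigma)^{2}}{\hat\theta^{2}\hat\sigma^{2}},
\end{equation*}
so the Dominated Convergence Theorem yields the desired expression
\begin{equation*}
4\int_{1}^{\infty}\int_{0}^{1}\frac{\BesselJ{1}(\hat\theta)^{2}\BesselJ{1}(\hat\sigma)^{2}}{\hat\theta\hat\sigma(\hat\theta^{2}+\hat\sigma^{2})^{2}}\dd\hat\sigma\dd\hat\theta.
\end{equation*}
The hardest step is verifying the integrability of the dominating function near $\hat\sigma=0$ and as $\hat\theta\tendsto\infty$, but since $\BesselJ{1}(\hat\sigma)^{2}/\hat\sigma\sim\hat\sigma/4$ near zero and the prefactor $1/\hat\theta^{2}$ already ensures integrability at infinity, this is routine.
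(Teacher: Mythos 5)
Your argument is correct and follows essentially the same route as the paper's proof: reduce to one of the two regions (the paper works over $\tilde{R}_8$, you over its mirror image $\tilde{R}_9$, which agree by symmetry of the integrand), use $\sin\theta\sim\theta$, $\sin\sigma\sim\sigma$, $1-\cos\theta\cos\sigma\sim(\theta^2+\sigma^2)/2$, rescale by $k$, dominate via the Gegenbauer bounds \eqref{eq:bound-gegenbauer}, and conclude by the Dominated Convergence Theorem together with the Mehler--Heine formula \eqref{eq:Mehler-Heine-Gegenbauer}. Your dominating function $\hat\sigma/(\hat\theta^{2}(\hat\theta^{2}+\hat\sigma^{2})^{2})$ is a slightly sharper variant of the paper's $\hat\theta/\hat\sigma^{4}$ (with the variables' roles exchanged), but the argument is the same.
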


\begin{proof}
	Since the analysis over the regions $\tilde{R}_8$ and $\tilde{R}_9$ is equivalent, we will focus on $\tilde{R}_8$. To prove the lemma it suffices to compute the following limit:
	\begin{equation}\label{eq:aux-hyper-tilde-R8-R9-1}
		\lim_{k\tendsto\infty}\frac{1}{k^8}\int_{k^{-1}}^{k^{-1/2}}\int_{0}^{k^{-1}}\frac{f(\theta,\sigma)\sin\theta\sin\sigma}{(1-\cos\theta\cos\sigma)^2}\dd\theta\dd\sigma.
	\end{equation}
	Over this region, we have $\sin\theta\sim\theta$, $\sin\sigma\sim\sigma$, and
	\begin{equation*}
		\frac{1}{(1-\cos\theta\cos\sigma)^2}\sim\frac{4}{(\theta^2+\sigma^2)^2}.
	\end{equation*}
	Hence, the limit in \eqref{eq:aux-hyper-tilde-R8-R9-1} equals
	\begin{equation*}
		4\lim_{k\tendsto\infty}\frac{1}{k^8}\int_{k^{-1}}^{k^{-1/2}}\int_{0}^{k^{-1}}\frac{f(\theta,\sigma)\theta\sigma}{(\theta^2+\sigma^2)^2}(1+o(1))\dd\theta\dd\sigma.
	\end{equation*}
	Making the substitution $(\theta,\sigma)\mapsto(\hat\theta/k,\hat\sigma/k)$, this expression reads as
	\begin{align}\label{eq:aux-hyper-tilde-R8-R9-2}
		\MoveEqLeft 4\lim_{k\tendsto\infty}\int_{1}^{k^{1/2}}\int_{0}^{1}\frac{1}{k^8}\frac{f(\hat\theta/k,\hat\sigma/k)\hat\theta\hat\sigma}{(\hat\theta^2+\hat\sigma^2)^2}(1+o(1))\dd\hat\theta\dd\hat\sigma\\
		&=4\lim_{k\tendsto\infty}\int_{1}^{\infty}\int_{0}^{1}\frac{1}{k^8}\chi_{[1,k^{1/2}]}(\hat\sigma)\frac{f(\hat\theta/k,\hat\sigma/k)\hat\theta\hat\sigma}{(\hat\theta^2+\hat\sigma^2)^2}(1+o(1))\dd\hat\theta\dd\hat\sigma.\notag
	\end{align}
	It can be checked, using the bounds in \eqref{eq:bound-gegenbauer}, that the function $g\from[0,1]\times[1,\infty]$ given by
	\begin{equation*}
		g(\hat\theta,\hat\sigma)=\frac{\hat\theta}{\hat\sigma^4}
	\end{equation*}
	satisfies
	\begin{equation*}
		\abs[\bigg]{\frac{\chi_{[1,k^{1/2}]}(\hat\sigma)}{k^8}\frac{f(\hat\theta/k,\hat\sigma/k)\hat\theta\hat\sigma}{(\hat\theta^2+\hat\sigma^2)^2}}\lesssim g(\hat\theta,\hat\sigma)
	\end{equation*}
	and is integrable in $[0,1]\times[1,\infty]$. Hence, using the Dominated Convergence Theorem and the Mehler--Heine formula \eqref{eq:Mehler-Heine-Gegenbauer}, the expression \eqref{eq:aux-hyper-R5-2} equals
	\begin{equation*}
		4\int_{1}^{\infty}\int_{0}^{1}\frac{\BesselJ{1}(\hat\theta)^2\BesselJ{1}(\hat\sigma)^2}{\hat\theta\hat\sigma(\hat\theta^2+\hat\sigma^2)^2}\dd \hat\theta\dd \hat\sigma.\qedhere
	\end{equation*}
\end{proof}

\begin{lemma}\label{lemma:integral-hyper-R-minus-R10}
	The following equality holds:
	\begin{align*}
		\MoveEqLeft\iint_{\tilde{R}\setminus \tilde{R}_{10}}\frac{f(\theta,\sigma)\sin\theta\sin\sigma}{(1-\cos\theta\cos\sigma)^2}\dd\theta\dd\sigma\\
		&=4k^8\biggl(\int_{1}^{\infty}\int_{1}^{\infty}\frac{\BesselJ{1}(x)^2\BesselJ{1}(y)^2}{xy(x^2+y^2)^2}\dd x\dd y+2\int_{1}^{\infty}\int_{0}^{1}\frac{\BesselJ{1}(x)^2\BesselJ{1}(y)^2}{xy(x^2+y^2)^2}\dd x\dd y\biggr)+o(k^8).
	\end{align*}
\end{lemma}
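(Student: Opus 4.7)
The plan is to partition $\tilde R\setminus\tilde R_{10}$ into the nine subregions $\tilde R_1,\dotsc,\tilde R_9$ displayed in \cref{fig:regions-dpp-hypersingular}. The main contributions come from $\tilde R_7$, $\tilde R_8$, and $\tilde R_9$, which have already been computed in \cref{lemma:integral-hyper-tilde-R7,lemma:integral-hyper-tilde-R8-R9}; summing them produces exactly
\[
4k^8\biggl(\int_1^\infty\!\!\int_1^\infty \frac{\BesselJ{1}(x)^2\BesselJ{1}(y)^2}{xy(x^2+y^2)^2}\dd x\dd y+2\int_1^\infty\!\!\int_0^1 \frac{\BesselJ{1}(x)^2\BesselJ{1}(y)^2}{xy(x^2+y^2)^2}\dd x\dd y\biggr)+o(k^8).
\]
It therefore remains to show that the contributions from the six peripheral regions $\tilde R_1,\dotsc,\tilde R_6$ are all $o(k^8)$. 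These estimates will be obtained by the same techniques as in \cref{lemma:integral-R1,lemma:integral-R2-R3,lemma:integral-R4,lemma:integral-R5,lemma:integral-R6-R7}, applying the Gegenbauer bounds \eqref{eq:bound-gegenbauer} adapted to the more singular kernel $(1-\cos\theta\cos\sigma)^{-2}$.

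For $\tilde R_1=[\pi/2,\pi]\times[0,\pi/2]$, the inequality $\cos\theta\leq 0$ gives $1-\cos\theta\cos\sigma\geq 1$, so the kernel is harmless and the contribution is controlled by $\iint_{\tilde R_1}\abs{f(\theta,\sigma)}\sin\theta\sin\sigma\,\dd\theta\,\dd\sigma$. Splitting along $\theta=\pi-k^{-1}$ and $\sigma=k^{-1}$ and using the arguments of \cref{lemma:integral-R2-R3,lemma:integral-R4} gives at most $O(k^4)$. For $\tilde R_5$ and $\tilde R_6$, one angle lies in $[0,k^{-1}]$ (where the trivial bound $\abs{\Gegenbauer{k}{3/2}(\cos\theta)}\lesssim k^2$ is used) while the other lies in $[k^{-1/2},\pi/2]$ (where the sharp bound $\abs{\Gegenbauer{k}{3/2}(\cos\sigma)}\lesssim k^{1/2}(\sin\sigma)^{-3/2}$ applies). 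Because the two scales are very different, $1-\cos\theta\cos\sigma\sim\sigma^2/2$, and a direct integration gives an $O(k^{11/2})$ bound.

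The most delicate subregions are $\tilde R_2,\tilde R_3,\tilde R_4$, where both angles are bounded below and the sharp Gegenbauer bound applies in both variables. After the substitution $\theta=x/k$, $\sigma=y/k$, the integral becomes $k^8$ times an integral of the form $\iint x^{-2}y^{-2}(x^2+y^2)^{-2}\,\dd x\,\dd y$ over a region that escapes to infinity in at least one variable: $[k^{1/2},\pi k/2]^2$ for $\tilde R_2$, $[1,k^{1/2}]\times[k^{1/2},\pi k/2]$ for $\tilde R_3$, and symmetrically for $\tilde R_4$. Using $(x^2+y^2)^{-2}\leq \min(x^{-4},y^{-4})$ shows that each of these rescaled integrals is $o(1)$, so the contribution from each region is $o(k^8)$.

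The main obstacle will be the precise bookkeeping in $\tilde R_2$, where the two Gegenbauer factors and the singularity of the kernel simultaneously approach their critical strengths; one must verify that the polynomial decay of the rescaled integrand is strong enough to beat the $k^8$ prefactor even though the rescaled region starts at the relatively small radius $k^{1/2}$. Once these six peripheral estimates are established, adding them to the main contributions from $\tilde R_7,\tilde R_8,\tilde R_9$ yields the claimed asymptotic.
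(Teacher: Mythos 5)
Your proposal is correct and follows essentially the same route as the paper: the main contributions come from $\tilde R_7$, $\tilde R_8$, $\tilde R_9$ via \cref{lemma:integral-hyper-tilde-R7,lemma:integral-hyper-tilde-R8-R9}, and the peripheral regions are shown to be $o(k^8)$ using the Gegenbauer bounds \eqref{eq:bound-gegenbauer}. The only (cosmetic) difference is that the paper dispatches $\tilde R_1,\dotsc,\tilde R_6$ by noting $\tilde R_1=\bigcup_{j=1}^4 R_j$, $\tilde R_2=R_5$, $\tilde R_3\cup\tilde R_5=R_6$, $\tilde R_4\cup\tilde R_6=R_7$ and invoking \cref{lemma:integral-R1,lemma:integral-R2-R3,lemma:integral-R4,lemma:integral-R5,lemma:integral-R6-R7} with $s=4$, whereas you re-derive these bounds directly; your estimates ($O(k^4)$, $O(k^{11/2})$, and the rescaled $o(k^8)$ bounds on $\tilde R_2,\tilde R_3,\tilde R_4$) check out and are consistent with the paper's.
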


\begin{proof}
	First, note that, by comparing the divisions of the region $[0,\pi]\times[0,\pi/2]$ given by \cref{fig:regions-dpp,fig:regions-dpp-hypersingular}, we have 
	\begin{equation*}
		\tilde{R}_1=\bigunion_{j=1}^{4}R_j,\quad \tilde{R}_2=R_5,\quad \tilde{R}_3\union\tilde{R}_5=R_6,\quad \tilde{R}_4\union\tilde{R}_6=R_7.
	\end{equation*}
	Hence, using \cref{lemma:integral-R1,lemma:integral-R2-R3,lemma:integral-R4,lemma:integral-R5,lemma:integral-R6-R7} with $s=4$ and taking into account that over $\tilde{R}_1$ we have $1-\cos\theta\cos\sigma\geq 1$, we obtain
	\begin{equation*}
		\iint_{\bigunion_{j=1}^{6}\tilde{R}_j}\frac{f(\theta,\sigma)\sin\theta\sin\sigma}{(1-\cos\theta\cos\sigma)^2}\dd\theta\dd\sigma=o(k^8).
	\end{equation*}
	Finally, using \cref{lemma:integral-hyper-tilde-R7,lemma:integral-hyper-tilde-R8-R9}, the lemma follows.
\end{proof}

\subsubsection{Proof of \cref{lemma:integral-dpp-hyper-1}}\label{sec:proof-lemma-integral-dpp-hyper-1}

Making the substitution $(\xi_{+},\xi_{-})\mapsto(\cos\theta,\cos\sigma)$, the integral of the lemma reads as
\begin{equation*}
	\iint_{\tilde{R}\setminus\tilde{R}_{10}}\frac{\sin\theta\sin\sigma}{(1-\cos\theta\cos\sigma)^2}\dd\theta\dd\sigma,
\end{equation*}
where $\tilde{R}\setminus\tilde{R}_{10}$ is as in \cref{fig:regions-dpp-hypersingular}. First, we have
\begin{equation*}
	\iint_{\tilde{R}_{1}}\frac{\sin\theta\sin\sigma}{(1-\cos\theta\cos\sigma)^2}\dd\theta\dd\sigma=\log{2}.
\end{equation*}
Now, the integral over the region $\tilde{R}_2\union\tilde{R}_3\union\tilde{R}_4\union\tilde{R}_7=[k^{-1},\pi/2]^2$ is
\begin{equation*}
	\iint\limits_{\tilde{R}_2\union\tilde{R}_3\union\tilde{R}_4\union\tilde{R}_7} \frac{\sin\theta\sin\sigma}{(1-\cos\theta\cos\sigma)^2}\dd \theta\dd\sigma=-2\log(\sin(k^{-1}))=2\log{k}+O(k^{-2}),
\end{equation*}
where the last equality follows from $\sin(k^{-1})=k^{-1}(1+O(k^{-2}))$. The analysis over the regions $\tilde{R}_6\union\tilde{R}_9$ and $\tilde{R}_5\union\tilde{R}_8$ is equivalent. We have
\begin{align*}
	\iint\limits_{\tilde{R}_6\union\tilde{R}_9} \frac{\sin\theta\sin\sigma}{(1-\cos\theta\cos\sigma)^2}\dd \theta\dd\sigma=\iint\limits_{\tilde{R}_5\union\tilde{R}_8} \frac{\sin\theta\sin\sigma}{(1-\cos\theta\cos\sigma)^2}\dd \theta\dd\sigma&=\log(1+\cos(k^{-1}))\\
    &=\log{2}+O(k^{-2}),
\end{align*}
where the last equality follows from $1+\cos(k^{-1})=2+O(k^{-2})$. Putting all the previous computations together, the lemma follows.\qed

\subsubsection{Proof of \cref{lemma:integral-dpp-hyper-2}}\label{sec:proof-lemma-integral-dpp-hyper-2}

Making the substitution $(\xi_{+},\xi_{-})\mapsto(\cos\theta,\cos\sigma)$, the new region of integration is the region $\tilde{R}\setminus \tilde{R}_{10}$ from \cref{fig:regions-dpp-hypersingular}. The result follows then from \eqref{eq:auxiliar-function-lemmas} and \cref{lemma:integral-hyper-R-minus-R10}.\qed

\subsubsection{Proof of \cref{lemma:integral-dpp-hyper-3}}\label{sec:proof-lemma-integral-dpp-hyper-3}

Making the substitution $(\xi_{+},\xi_{-})\mapsto(\cos\theta,\cos\sigma)$, the new region of integration is the region $\tilde{R}_{10}$ from \cref{fig:regions-dpp-hypersingular}. Then, to prove the lemma it suffices to compute the limit
\begin{equation}\label{eq:aux-hyper-R10-1}
	\lim_{k\tendsto\infty}\frac{1}{k^8}\int_{0}^{k^{-1}}\int_{0}^{k^{-1}}\frac{(\calK_k(1,1)^2-\calK_k(\cos\theta,\cos\sigma)^2)\sin\theta\sin\sigma}{(1-\cos\theta\cos\sigma)^2}\dd\theta\dd\sigma.
\end{equation}
Using the estimates $\sin\theta\sim\theta$, $\sin\sigma\sim\sigma$, and
\begin{equation*}
	\frac{1}{(1-\cos\theta\cos\sigma)^2}\sim\frac{4}{(\theta^2+\sigma^2)^2},
\end{equation*}
we can write \eqref{eq:aux-hyper-R10-1} as
\begin{equation*}
	4\lim_{k\tendsto\infty}\frac{1}{k^8}\int_{0}^{k^{-1}}\int_{0}^{k^{-1}}\frac{(\calK_k(1,1)^2-\calK_k(\cos\theta,\cos\sigma)^2)\theta\sigma}{(\theta^2+\sigma^2)^2}(1+o(1))\dd\theta\dd\sigma.
\end{equation*}
Making the substitution $(\theta,\sigma)\mapsto(\hat\theta/k,\hat\sigma/k)$, the previous expression reads as
\begin{equation}\label{eq:aux-hyper-R10-2}
	4\lim_{k\tendsto\infty}\frac{1}{k^8}\int_{0}^{1}\int_{0}^{1}\frac{(\calK_k(1,1)^2-\calK_k(\cos(\hat\theta/k),\cos(\hat\sigma/k))^2)\hat\theta\hat\sigma}{(\hat\theta^2+\hat\sigma^2)^2}(1+o(1))\dd\hat\theta\dd\hat\sigma.
\end{equation}
Applying the multivariate Mean Value Theorem to the function $h\from[0,1]^2\to\R$ defined as 
\begin{equation*}
	h(\hat\theta,\hat\sigma)=\calK_k(\cos(\hat\theta/k),\cos(\hat\sigma/k))^2,
\end{equation*}
we have
\begin{align*}
	\abs{\calK_k(1,1)^2-\calK_k(\cos(\hat\theta/k),\cos(\hat\sigma/k))^2}&=\abs{h(0,0)-h(\hat\theta,\hat\sigma)}\\
    &\leq \sup_{c\in[0,1]}\norm{\gradient h(c\hat\theta,c\hat\sigma)}\norm{(\hat\theta,\hat\sigma)}.
\end{align*}
Then, using \cref{eq:derivatives-Gegenbauer} for the derivatives of the Gegenbauer polynomials and the bounds in \eqref{eq:bound-gegenbauer-general}, it can be proved that
\begin{equation*}
	\abs{\calK_k(1,1)^2-\calK_k(\cos(\hat\theta/k),\cos(\hat\sigma/k))^2}=(\hat\theta^2+\hat\sigma^2)O(k^8).
\end{equation*}
Therefore, we have
\begin{equation*}
	\abs[\bigg]{\frac{1}{k^8}\frac{(\calK_k(1,1)^2-\calK_k(\cos(\hat\theta/k),\cos(\hat\sigma/k))^2)\hat\theta\hat\sigma}{(\hat\theta^2+\hat\sigma^2)^2}}\lesssim \frac{\hat\theta\hat\sigma}{\hat\theta^2+\hat\sigma^2},
\end{equation*}
where the expression in the right hand side is integrable in $[0,1]^2$. Hence, using the Dominated Convergence Theorem and the Mehler--Heine formula \eqref{eq:Mehler-Heine-Gegenbauer}, we conclude that \eqref{eq:aux-hyper-R10-2} equals
\begin{align*}
	\int_{0}^{1}\int_{0}^{1}
	\frac{\hat\theta^2\hat\sigma^2-16\BesselJ{1}(\hat\theta)^2\BesselJ{1}(\hat\sigma)^2}{\hat\theta\hat\sigma(\hat\theta^2+\hat\sigma^2)^2}\dd \hat\theta\dd\hat\sigma,
\end{align*}
where we have used that $\calK_k(1,1)=\frac{1}{2}k^4+o(k^4)$. The lemma follows.\qed

\section*{Acknowledgments}

We would like to thank Javier Segura and Dmitrii Karp for their assistance with the estimations of certain special functions, and Carlos Beltrán for useful discussions on the topic.


\appendix

\section{Orthogonal polynomials and special functions}\label{appendix:orthogonal-polynomials}

\subsection{Legendre and Gegenbauer polynomials}

Classical orthogonal polynomials, and in particular Legendre and Gegenbauer polynomials, play a crucial role in this paper. In this section, we compile some properties of these polynomials that are used throughout this work. 

We denote the Legendre and Gegenbauer polynomials by $\Legendre{n}(x)$ and $\Gegenbauer{n}{\lambda}(x)$, respectively, where $n$ denotes the degree of the polynomial and $\lambda\in\R$ is a parameter. Both Legendre and Gegenbauer polynomials belong to the family of Jacobi polynomials $\Jacobi{n}{\alpha}{\beta}$, which form a complete orthogonal system in $[-1,1]$ with respect to the weight $w(x)=(1-x)^{\alpha}(1+x)^{\beta}$. More specifically, we have $\Legendre{n}=\Jacobi{n}{0}{0}$ and
\begin{equation*}
	 \Gegenbauer{n}{\lambda}=\frac{\Gamma(n+2\lambda)\Gamma(\lambda+1/2)}{\Gamma(n+\lambda+1/2)\Gamma(2\lambda)}\Jacobi{n}{\lambda-1/2}{\lambda-1/2}.
\end{equation*}
Legendre polynomials satisfy $\Legendre{n}(1)=1$. 
For the Gegenbauer polynomials $\Gegenbauer{n}{\lambda}$, we have
\begin{equation}\label{eq:Gegenbauer(1)}
    \Gegenbauer{n}{\lambda}(1)=\frac{\Gamma(2\lambda+n)}{\Gamma(2\lambda)\Gamma(n+1)}.
\end{equation}
Legendre polynomials satisfy the following recurrence formula, known as \emph{Bonnet's recurrence formula}: 
\begin{equation}\label{eq:recurrence-Legendre}
	(n+1)\Legendre{n+1}(x)=(2n+1)x\Legendre{n}(x)-n\Legendre{n-1}(x).
\end{equation}
In the case of the Gegenbauer polynomials, this recurrence relation takes the form
\begin{equation}\label{eq:recurrence-Gegenbauer-3/2}
	(n+1)\Gegenbauer{n+1}{3/2}(x)=(2n+3)x\Gegenbauer{n}{3/2}(x)-(n+2)\Gegenbauer{n-1}{3/2}(x).
\end{equation}
The derivatives of the Gegenbauer polynomials are given by \cite[8.935-1]{GradshteynRyzhik2007}:
\begin{equation}\label{eq:derivatives-Gegenbauer}
	\frac{\dd^j}{\dd x^j}\Gegenbauer{n}{\lambda}(x)=2^j\frac{\Gamma(\lambda+j)}{\Gamma(\lambda)}\Gegenbauer{n-j}{\lambda+j}(x).
\end{equation}
From \cite[Eq. (7.33.6)]{Szego1975} we have, for $n\geq 1$, the following asymptotic estimates for the Gegenbauer polynomials:
\begin{equation}\label{eq:bound-gegenbauer-general}
    \Gegenbauer{n}{\lambda}(\cos\theta)= \begin{cases}
        \theta^{-\lambda}O(n^{\lambda-1}), & cn^{-1}\leq \theta\leq \pi/2,\\
        O(n^{2\lambda-1}), & 0\leq \theta\leq cn^{-1},\\
    \end{cases}
\end{equation}
where $c$ is a fixed positive constant. The first bound can be used for the whole interval $0<\theta\leq \pi/2$. In the case $\lambda=3/2$, this bounds take the form
\begin{equation}\label{eq:bound-gegenbauer}
	\Gegenbauer{n}{3/2}(\cos\theta)= \begin{cases}
		\theta^{-3/2}O(n^{1/2}), & cn^{-1}\leq \theta\leq \pi/2,\\
		O(n^{2}), & 0\leq \theta\leq cn^{-1}.\\
	\end{cases}
\end{equation}
Near the endpoints, a more precise asymptotic is provided by the Mehler--Heine formula (see \cite[Theorem 8.1.1]{Szego1975}):
\begin{equation}\label{eq:Mehler-Heine-Gegenbauer}
	\lim_{n\tendsto\infty}\frac{1}{n^2}\Gegenbauer{n}{3/2}\Bigl(\cos\frac{z}{n}\Bigr)=\frac{1}{z}\BesselJ{1}(z),
\end{equation}
where $\BesselJ{1}$ is the Bessel function of the first kind of order $1$ and the limit is uniform in compact subsets of $\C$.

We conclude this section with some useful relations between Legendre and Gegenbauer polynomials. From \cite[4.3.1-2]{PrudnikovBrychkovMarichev1986}, we have the following identity:
\begin{equation}\label{eq:sum-legendre-even-odd}
    \sum_{j=0}^{n}(4j+2\beta+1)\Legendre{2j+\beta}(x)=\frac{\dd}{\dd x}\Legendre{2n+\beta+1}(x)=\Gegenbauer{2n+\beta}{3/2}(x),\qquad \beta\in\set{0,1}.
\end{equation}
From \cite[4.7.29]{Szego1975}, and using the recurrence relation \eqref{eq:recurrence-Gegenbauer-3/2} for the Gegenbauer polynomials, we obtain the following two identities:
\begin{align}
	\Legendre{n}(x)&=\frac{1}{n+1}\Bigl(\Gegenbauer{n}{3/2}(x)-x\Gegenbauer{n-1}{3/2}(x)\Bigr).\label{eq:legendre-gegenbauer-n}\\
	\Legendre{n+1}(x)&=\frac{1}{n+1}\Bigl(x\Gegenbauer{n}{3/2}(x)-\Gegenbauer{n-1}{3/2}(x)\Bigr).\label{eq:legendre-gegenbauer-n+1}
\end{align}

\subsection{Generalized hypergeometric functions}

We compile here the basic definitions and results about generalized hypergeometric functions that we use in this work. 

Let $p,q$ be nonnegative integers with $p\leq q+1$. Let $a_1,\dotsc,a_p,b_1,\dotsc,b_q\in\C$. The generalized hypergeometric series $\hypergeom{p}{q}(a_1,\dotsc,a_p;b_1,\dotsc,b_q;z)$ is defined as the following series: 
\begin{equation}\label{eq:hypergeometric-series}
    \hypergeom{p}{q}(a_1,\dotsc,a_p;b_1,\dotsc,b_q;z)=\sum_{k=0}^{\infty}\frac{\Pochhammer{a_1}{k}\dotsb\Pochhammer{a_p}{k}}{\Pochhammer{b_1}{k}\dotsb\Pochhammer{b_q}{k}}\frac{z^k}{k!}.
\end{equation}
The case $p=q+1$ is of special importance. In that case, if one or more of the parameters $a_j$ is a nonpositive integer, then the series \eqref{eq:hypergeometric-series} terminates and the generalized hypergeometric function is a polynomial in $z$. If none of the $a_j$ is a nonpositive integer, then the radius of convergence of the series \eqref{eq:hypergeometric-series} is 1, and outside the open disk $\abs{z}<1$ the generalized hypergeometric function is defined by analytic continuation with respect to $z$. On the circle $\abs{z}=1$, setting $\gamma=\sum_{j=1}^{q}b_j-\sum_{j=1}^{q+1}a_j$, the series \eqref{eq:hypergeometric-series} is absolutely convergent if $\Re(\gamma)>0$, convergent except at $z=1$ if $-1<\Re(\gamma)\leq 0$, and divergent if $\Re(\gamma)\leq -1$. The next classical result (see \cite[Theorems 2 and 4]{Buhring1992}) gives us the behavior of the generalized hypergeometric series $\hypergeom{q+1}{q}(a_1,\dotsc,a_{q+1};b_1,\dotsc,b_q;z)$ near $z=1$.

\begin{proposition}\label{prop:hypergeom32-limit-divergent}
	Let $\gamma=\sum_{j=1}^{q}b_j-\sum_{j=1}^{q+1}a_j<0$ . Then,
	\begin{equation*}
		\lim_{z\tendsto 1}\frac{\hypergeom{q+1}{q}(a_1,\dotsc,a_{q+1};b_1,\dotsc,b_q;z)}{(1-z)^{\gamma}}=\frac{\Gamma(-\gamma)\Gamma(b_1)\dotsb\Gamma(b_{q})}{\Gamma(a_1)\dotsb\Gamma(a_{q+1})}.
	\end{equation*}
\end{proposition}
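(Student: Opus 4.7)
I would attack this by passing from asymptotics of the power-series coefficients to the behavior of the sum near $z=1$ via a (one-sided) Abelian/Karamata-type theorem. Write
\begin{equation*}
\hypergeom{q+1}{q}(a_1,\dotsc,a_{q+1};b_1,\dotsc,b_q;z)=\sum_{k=0}^{\infty}c_k z^k,
\qquad c_k=\frac{\Pochhammer{a_1}{k}\dotsb \Pochhammer{a_{q+1}}{k}}{\Pochhammer{b_1}{k}\dotsb \Pochhammer{b_q}{k}\,k!}.
\end{equation*}
Using $\Pochhammer{a}{k}=\Gamma(a+k)/\Gamma(a)$ together with the classical ratio asymptotic
$\Gamma(a+k)/\Gamma(k+1)\sim k^{a-1}$ as $k\tendsto\infty$ (Stirling), the first step would be to show
\begin{equation*}
c_k \sim \frac{\Gamma(b_1)\dotsb\Gamma(b_q)}{\Gamma(a_1)\dotsb\Gamma(a_{q+1})}\,k^{-\gamma-1}\qquad (k\tendsto\infty),
\end{equation*}
so $c_k\sim A k^{\beta-1}$ with $\beta\defined -\gamma>0$ and $A$ equal to the constant in the right-hand side of the proposition (with the factor $\Gamma(\beta)$ still to appear).

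The second step would be to feed this asymptotic into Karamata's Tauberian/Abelian theorem for power series with regularly varying coefficients: if $c_k\sim A k^{\beta-1}$ with $\beta>0$, then
\begin{equation*}
\sum_{k=0}^{\infty}c_k z^k \sim A\,\Gamma(\beta)\,(1-z)^{-\beta}\qquad (z\tendsto 1^-).
\end{equation*}
Since $\beta=-\gamma$ and $\Gamma(\beta)=\Gamma(-\gamma)$, the limit identified in the proposition drops out. Conceptually this matches the model computation $\sum_{k\geq 0}\binom{k+\beta-1}{k}z^k=(1-z)^{-\beta}$ where the binomial coefficient is itself $\sim k^{\beta-1}/\Gamma(\beta)$.

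The main obstacle I would expect is twofold. First, the Abelian direction of Karamata's theorem requires some regularity on the coefficients; in our setting the stronger information $c_k = A k^{-\gamma-1}(1+O(1/k))$ (available from a full Stirling expansion, not just the leading order) should be sufficient and give the limit with no Tauberian hypothesis needed, but one must be careful when $-1<\gamma<0$ versus $\gamma<-1$ because the series $\sum c_k$ converges in one case and diverges in the other; treating both cases uniformly is the delicate point. Second, one should extend the statement from real $z\tendsto 1^-$ to the complex limit $z\tendsto 1$; the simplest route is to remark that the radial asymptotic plus standard bounds for $\hypergeom{q+1}{q}$ on compact subsets of $\set{\abs{z}\leq 1}\setminus\set{1}$ combined with Lindelöf-type arguments give the general limit, but an alternative (and probably cleaner) path is the inductive one via the Euler integral reduction
\begin{equation*}
\hypergeom{q+1}{q}(a_1,\dotsc,a_{q+1};b_1,\dotsc,b_q;z)=\frac{\Gamma(b_q)}{\Gamma(a_{q+1})\Gamma(b_q-a_{q+1})}\int_{0}^{1}t^{a_{q+1}-1}(1-t)^{b_q-a_{q+1}-1}\hypergeom{q}{q-1}(a_1,\dotsc,a_q;b_1,\dotsc,b_{q-1};zt)\dd t,
\end{equation*}
reducing the claim by induction on $q$ to the classical Gauss case $\hypergeom{2}{1}$ (where the limit is well-known). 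Either route yields the stated formula; I would proceed with the power-series/Abelian approach first because it makes the constant $A$ transparent, and fall back on the induction if uniformity in $z\tendsto 1$ becomes cumbersome.
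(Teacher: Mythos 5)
Your argument is correct in substance, but it is a genuinely different route from the paper's: the paper does not prove this proposition at all — it quotes it as a classical result from Bühring's work on $\hypergeom{q+1}{q}$ at unit argument (Theorems 2 and 4 of the cited reference), which rests on explicit analytic continuation/expansion formulas near $z=1$. Your proposal replaces that citation by a self-contained elementary proof: the Gamma-ratio asymptotics give $c_k\sim A\,k^{-\gamma-1}$ with $A=\prod\Gamma(b_j)/\prod\Gamma(a_j)$, and then the Abelian direction of Karamata (or simply comparison with $\sum_k k^{\beta-1}z^k\sim\Gamma(\beta)(1-z)^{-\beta}$, writing $c_k=Ak^{\beta-1}(1+\epsilon_k)$ with $\epsilon_k\tendsto 0$ and noting that the comparison series diverges at $z=1$, so finitely many terms are negligible) yields the stated limit with the constant $A\Gamma(-\gamma)$; no Tauberian hypothesis and no positivity of the $c_k$ is needed for this direction. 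What the citation buys is more: Bühring's formulas give the full expansion (hence error terms and complex parameters with $\Re\gamma<0$) and the limit as $z\tendsto 1$ in the cut plane, whereas your power-series argument directly gives only the radial limit $z\tendsto 1^-$ — which is in fact all the paper uses (in the lower-bound proof the argument is $1/(1+t)^2<1$), so your "Lindelöf-type" extension can simply be dropped rather than patched. Two small corrections to your commentary: the case split you worry about is vacuous, since for every $\gamma<0$ one has $c_k\sim Ak^{-\gamma-1}$ with exponent $>-1$, so $\sum_k c_k$ diverges in both of your cases (only the growth versus decay of individual $c_k$ changes), and the uniform treatment is automatic; and the Euler-integral induction, while workable, needs the parameter restriction $\Re b_q>\Re a_{q+1}>0$ at each step, so it is less general than the coefficient argument you put first.
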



\bibliographystyle{amsplain}
\begin{bibdiv}
	\begin{biblist}
		
		\bib{AbsilMahonySepulchre2004}{article}{
			author={Absil, P.-A.},
			author={Mahony, R.},
			author={Sepulchre, R.},
			title={{Riemannian} geometry of {Grassmann} manifolds with a view on
				algorithmic computation},
			date={2004},
			ISSN={0167-8019},
			journal={Acta Applicandae Mathematicae},
			volume={80},
			number={2},
			pages={199\ndash 220},
			url={http://link.springer.com/10.1023/B:ACAP.0000013855.14971.91},
		}
		
		\bib{AlishahiZamani2015}{article}{
			author={Alishahi, Kasra},
			author={Zamani, Mohammadsadegh},
			title={The spherical ensemble and uniform distribution of points on the
				sphere},
			date={2015},
			ISSN={1083-6489},
			journal={Electronic Journal of Probability},
			volume={20},
			pages={1\ndash 27},
			url={https://projecteuclid.org/journals/electronic-journal-of-probability/volume-20/issue-none/The-spherical-ensemble-and-uniform-distribution-of-points-on-the/10.1214/EJP.v20-3733.full},
		}
		
		\bib{AlvarezVizosoBeltranCuevasSantamariaTucekPeters2023}{article}{
			author={{\'{A}lvarez-Vizoso}, Javier},
			author={Beltrán, Carlos},
			author={Cuevas, Diego},
			author={Santamaría, Ignacio},
			author={Tu\v{c}ek, Vít},
			author={Peters, Gunnar},
			title={Statistical characterization of the chordal product determinant
				of {Grassmannian} codes},
			date={2023},
			ISSN={2049-8772},
			journal={Information and Inference: A Journal of the IMA},
			volume={12},
			number={3},
			pages={2406\ndash 2422},
		}
		
		\bib{AndersonDostertGrabnerMatzkeStepaniuk2023}{article}{
			author={Anderson, Austin},
			author={Dostert, Maria},
			author={Grabner, Peter~J.},
			author={Matzke, Ryan~W.},
			author={Stepaniuk, Tetiana~A.},
			title={{Riesz} and {Green} energy on projective spaces},
			date={2023},
			ISSN={2330-0000},
			journal={Transactions of the American Mathematical Society, Series B},
			volume={10},
			number={29},
			pages={1039\ndash 1076},
		}
		
		\bib{Bachoc2006}{article}{
			author={Bachoc, C.},
			title={Linear programming bounds for codes in {Grassmannian} spaces},
			date={2006},
			ISSN={0018-9448},
			journal={{IEEE} Transactions on Information Theory},
			volume={52},
			number={5},
			pages={2111\ndash 2125},
			url={http://ieeexplore.ieee.org/document/1624644/},
		}
		
		\bib{BachocEhler2013}{article}{
			author={Bachoc, C.},
			author={Ehler, M.},
			title={Tight $p$-fusion frames},
			date={2013},
			ISSN={1063-5203},
			journal={Applied and Computational Harmonic Analysis},
			volume={35},
			number={1},
			pages={1\ndash 15},
		}
		
		\bib{BachocBannaiCoulangeon2004}{article}{
			author={Bachoc, Christine},
			author={Bannai, Eiichi},
			author={Coulangeon, Renaud},
			title={Codes and designs in {Grassmannian} spaces},
			date={2004},
			ISSN={0012-365X},
			journal={Discrete Mathematics},
			volume={277},
			number={1-3},
			pages={15\ndash 28},
			url={https://linkinghub.elsevier.com/retrieve/pii/S0012365X03001511},
		}
		
		\bib{BachocBenHaimLitsyn2008}{article}{
			author={Bachoc, Christine},
			author={Ben-Haim, Yael},
			author={Litsyn, Simon},
			title={Bounds for codes in products of spaces, {Grassmann}, and
				{Stiefel} manifolds},
			date={2008-03},
			ISSN={0018-9448},
			journal={IEEE Transactions on Information Theory},
			volume={54},
			number={3},
			pages={1024\ndash 1035},
		}
		
		\bib{BachocCoulangeonNebe2002}{article}{
			author={Bachoc, Christine},
			author={Coulangeon, Renaud},
			author={Nebe, Gabriele},
			title={Designs in {Grassmannian} spaces and lattices},
			date={2002},
			journal={Journal of Algebraic Combinatorics},
			volume={16},
			number={1},
			pages={5\ndash 19},
			url={https://www.researchgate.net/publication/227141098},
		}
		
		\bib{BeltranEtayo2019}{article}{
			author={Beltrán, Carlos},
			author={Etayo, Ujué},
			title={A generalization of the spherical ensemble to even-dimensional
				spheres},
			date={2019},
			ISSN={0022-247X},
			journal={Journal of Mathematical Analysis and Applications},
			volume={475},
			number={2},
			pages={1073\ndash 1092},
			url={https://linkinghub.elsevier.com/retrieve/pii/S0022247X19302136},
		}
		
		\bib{BeltranFerizovic2020}{article}{
			author={Beltrán, Carlos},
			author={Ferizović, Damir},
			title={Approximation to uniform distribution in $\operatorname{SO}(3)$},
			date={2020},
			ISSN={0176-4276},
			journal={Constructive Approximation},
			volume={52},
			number={2},
			pages={283\ndash 311},
			url={https://link.springer.com/10.1007/s00365-020-09506-1},
		}
		
		\bib{BeltranMarzoOrtegaCerda2016}{article}{
			author={Beltrán, Carlos},
			author={Marzo, Jordi},
			author={Ortega-Cerdà, Joaquim},
			title={Energy and discrepancy of rotationally invariant determinantal
				point processes in high dimensional spheres},
			date={2016},
			ISSN={0885-064X},
			journal={Journal of Complexity},
			volume={37},
			pages={76\ndash 109},
			url={https://linkinghub.elsevier.com/retrieve/pii/S0885064X16300619},
		}
		
		\bib{BendokatZimmermannAbsil2024}{article}{
			author={Bendokat, Thomas},
			author={Zimmermann, Ralf},
			author={Absil, P.-A.},
			title={A {Grassmann} manifold handbook: basic geometry and computational
				aspects},
			date={2024},
			ISSN={1572-9044},
			journal={Advances in Computational Mathematics},
			volume={50},
			number={1},
		}
		
		\bib{Bochner1941}{article}{
			author={Bochner, S.},
			title={Hilbert distances and positive definite functions},
			date={1941},
			journal={Annals of Mathematics},
			volume={37},
			number={3},
			pages={647\ndash 656},
		}
		
		\bib{BodmannEhlerGraef2018}{article}{
			author={Bodmann, Bernhard~G.},
			author={Ehler, Martin},
			author={Gräf, Manuel},
			title={From low- to high-dimensional moments without magic},
			date={2018},
			ISSN={1572-9230},
			journal={Journal of Theoretical Probability},
			volume={31},
			number={4},
			pages={2167\ndash 2193},
		}
		
		\bib{BorodachovHardinSaff2019}{book}{
			author={Borodachov, Sergiy~V.},
			author={Hardin, Douglas~P.},
			author={Saff, Edward~B.},
			title={Discrete energy on rectifiable sets},
			series={Springer Monographs in Mathematics},
			publisher={Springer New York},
			date={2019},
		}
		
		\bib{Brauchart2006}{article}{
			author={Brauchart, Johann~S.},
			title={About the second term of the asymptotics for optimal {Riesz}
				energy on the sphere in the potential-theoretical case},
			date={2006},
			ISSN={1476-8291},
			journal={Integral Transforms and Special Functions},
			volume={17},
			number={5},
			pages={321\ndash 328},
		}
		
		\bib{BrauchartGrabner2015}{article}{
			author={Brauchart, Johann~S.},
			author={Grabner, Peter~J.},
			title={Distributing many points on spheres: Minimal energy and designs},
			date={2015},
			ISSN={0885-064X},
			journal={Journal of Complexity},
			volume={31},
			number={3},
			pages={293\ndash 326},
			url={https://linkinghub.elsevier.com/retrieve/pii/S0885064X15000205},
		}
		
		\bib{BregerEhler2017}{inproceedings}{
			author={Breger, Anna},
			author={Ehler, Martin},
			title={Sampling in {Grassmannians}},
			date={2017},
			booktitle={2017 international conference on sampling theory and applications
				({SampTA})},
			publisher={IEEE},
			pages={617\ndash 620},
		}
		
		\bib{BregerEhlerGraf2017}{incollection}{
			author={Breger, Anna},
			author={Ehler, Martin},
			author={Gräf, Manuel},
			title={Quasi {Monte Carlo} integration and kernel-based function
				approximation on {Grassmannians}},
			date={2017},
			booktitle={Frames and other bases in abstract and function spaces: Novel
				methods in harmonic analysis},
			editor={Thong, Quoc},
			editor={Azita, Mayeli},
			editor={Hrushikesh, Mhaskar},
			editor={Isaac, Zhou Ding-Xuan~Pesenson},
			editor={Gia, Le},
			volume={1},
			publisher={Springer International Publishing},
			pages={333\ndash 353},
		}
		
		\bib{BregerEhlerGraef2018}{article}{
			author={Breger, Anna},
			author={Ehler, Martin},
			author={Gräf, Manuel},
			title={Points on manifolds with asymptotically optimal covering radius},
			date={2018},
			journal={Journal of Complexity},
			volume={48},
			pages={1\ndash 14},
		}
		
		\bib{BregerEhlerGraefPeter2017}{incollection}{
			author={Breger, Anna},
			author={Ehler, Martin},
			author={Gräf, Manuel},
			author={Peter, Thomas},
			title={Cubatures on {Grassmannians}: Moments, dimension reduction, and
				related topics},
			date={2017},
			booktitle={Compressed sensing and its applications},
			publisher={Springer International Publishing},
			pages={235\ndash 259},
		}
		
		\bib{Buhring1992}{article}{
			author={Bühring, Wolfgang},
			title={Generalized hypergeometric functions at unit argument},
			date={1992},
			ISSN={0002-9939},
			journal={Proceedings of the American Mathematical Society},
			volume={114},
			number={1},
			pages={145\ndash 153},
		}
		
		\bib{CalderbankHardinRainsShorSloane1999}{article}{
			author={Calderbank, A.R.},
			author={Hardin, R.H.},
			author={Rains, E.M.},
			author={Shor, P.W.},
			author={Sloane, N.J.A.},
			title={A group-theoretic framework for the construction of packings in
				{Grassmannian} spaces},
			date={1999},
			ISSN={0925-9899},
			journal={Journal of Algebraic Combinatorics},
			volume={9},
			number={2},
			pages={129\ndash 140},
		}
		
		\bib{Chikuse2003}{book}{
			author={Chikuse, Yasuko},
			title={Statistics on special manifolds},
			series={Lecture Notes on Statistics},
			publisher={Springer New York},
			date={2003},
			number={174},
			ISBN={9780387001609},
		}
		
		\bib{ConwayHardinSloane1996}{article}{
			author={Conway, John~H.},
			author={Hardin, Ronald~H.},
			author={Sloane, Neil J.~A.},
			title={Packing lines, planes, etc.: {Packings} in {Grassmannian}
				spaces},
			date={1996},
			ISSN={1058-6458},
			journal={Experimental Mathematics},
			volume={5},
			number={2},
			pages={139\ndash 159},
			url={http://www.tandfonline.com/doi/abs/10.1080/10586458.1996.10504585},
		}
		
		\bib{CuevasAlvarezVizosoBeltranSantamariaTucekPeters2023}{article}{
			author={Cuevas, Diego},
			author={{{\'{A}}lvarez-Vizoso}, Javier},
			author={Beltr{\'{a}}n, Carlos},
			author={Santamar{\'{\i}}a, Ignacio},
			author={Tu{\v{c}}ek, V{\'{\i}}t},
			author={Peters, Gunnar},
			title={Constellations on the sphere with efficient encoding-decoding for
				noncoherent communications},
			date={2023},
			journal={{IEEE} Transactions on Wireless Communications},
		}
		
		\bib{CuevasAlvarezVizosoBeltranSantamariaTucekPeters2022}{inproceedings}{
			author={Cuevas, Diego},
			author={{\'{A}lvarez-Vizoso}, Javier},
			author={Beltrán, Carlos},
			author={Santamaría, Ignacio},
			author={Tu\v{c}ek, Vít},
			author={Peters, Gunnar},
			title={A measure preserving mapping for structured {Grassmannian}
				constellations in {SIMO} channels},
			date={2022},
			booktitle={Proceedings of the 2022 {IEEE Global Communications Conference
					(GLOBECOM)}},
		}
		
		\bib{Davis1999a}{article}{
			author={Davis, A.~W.},
			title={Spherical functions on the {Grassmann} manifold and generalized
				{Jacobi} polynomials {\textemdash} part 1},
			date={1999},
			ISSN={0024-3795},
			journal={Linear Algebra and its Applications},
			volume={289},
			number={1-3},
			pages={75\ndash 94},
			url={https://linkinghub.elsevier.com/retrieve/pii/S0024379598101787},
		}
		
		\bib{Davis1999b}{article}{
			author={Davis, A.~W.},
			title={Spherical functions on the {Grassmann} manifold and generalized
				{Jacobi} polynomials {\textemdash} part 2},
			date={1999},
			ISSN={0024-3795},
			journal={Linear Algebra and its Applications},
			volume={289},
			number={1-3},
			pages={95\ndash 119},
			url={https://linkinghub.elsevier.com/retrieve/pii/S0024379598101799},
		}
		
		\bib{DhillonHeathStrohmerTropp2008}{article}{
			author={Dhillon, I.~S.},
			author={Heath, Jr. R.~W.},
			author={Strohmer, T.},
			author={Tropp, J.~A.},
			title={Constructing packings in {Grassmannian} manifolds via alternating
				projection},
			date={2008},
			ISSN={1058-6458},
			journal={Experimental Mathematics},
			volume={17},
			number={1},
			pages={9\ndash 35},
			url={http://www.tandfonline.com/doi/abs/10.1080/10586458.2008.10129018},
		}
		
		\bib{DickEhlerGraefKrattenthaler2023}{article}{
			author={Dick, Josef},
			author={Ehler, Martin},
			author={Gräf, Manuel},
			author={Krattenthaler, Christian},
			title={Spectral decomposition of discrepancy kernels on the {Euclidean}
				ball, the special orthogonal group, and the {Grassmannian} manifold},
			date={2023},
			journal={Constructive Approximation},
			volume={57},
			number={3},
			pages={983\ndash 1026},
		}
		
		\bib{EhlerGraf2019}{article}{
			author={Ehler, Martin},
			author={Gräf, Manuel},
			title={Reproducing kernels for the irreducible components of polynomial
				spaces on unions of {Grassmannians}},
			date={2018},
			ISSN={0176-4276},
			journal={Constructive Approximation},
			volume={49},
			number={1},
			pages={29\ndash 58},
			url={http://link.springer.com/10.1007/s00365-018-9443-5},
		}
		
		\bib{EhlerGraefNeumayerSteidl2021}{article}{
			author={Ehler, Martin},
			author={Gräf, Manuel},
			author={Neumayer, Sebastian},
			author={Steidl, Gabriele},
			title={Curve based approximation of measures on manifolds by discrepancy
				minimization},
			date={2021},
			journal={Foundations of Computational Mathematics},
			volume={21},
			number={6},
			pages={1595\ndash 1642},
		}
		
		\bib{GarciaArias2024}{misc}{
			author={{García Arias}, Pablo},
			title={Equidistribution of points in the harmonic ensemble for the
				{Wasserstein} distance},
			publisher={arXiv},
			date={2024},
		}
		
		\bib{GiganteLeopardi2017}{article}{
			author={Gigante, Giacomo},
			author={Leopardi, Paul},
			title={Diameter bounded equal measure partitions of {Ahlfors} regular
				metric measure spaces},
			date={2017},
			journal={Discrete \& Computational Geometry},
			volume={57},
			number={2},
			pages={419\ndash 430},
		}
		
		\bib{GradshteynRyzhik2007}{book}{
			author={Gradshteyn, I.~S.},
			author={Ryzhik, I.~M.},
			editor={Jeffrey, Alan},
			editor={Zwillinger, Daniel},
			title={Table of integrals, series, and products},
			edition={7},
			publisher={Elsevier Academic Press},
			date={2007},
			ISBN={9780123736376},
			note={Translated from Russian by Scripta Technica, Inc.},
		}
		
		\bib{Hirao2021}{article}{
			author={Hirao, Masatake},
			title={Finite frames, frame potentials and determinantal point processes
				on the sphere},
			date={2021},
			ISSN={0167-7152},
			journal={Statistics \& Probability Letters},
			volume={176},
			number={109129},
		}
		
		\bib{HoughKrishnapurPeresVirag2009}{book}{
			author={Hough, J.~Ben},
			author={Krishnapur, Manjunath},
			author={Peres, Yuval},
			author={Virág, Bálint},
			title={Zeros of {Gaussian} analytic functions and determinantal point
				processes},
			series={University Lecture Series},
			publisher={American Mathematical Society},
			date={2009},
			number={51},
			ISBN={9780821843734},
		}
		
		\bib{James1954}{article}{
			author={James, A.~T.},
			title={Normal multivariate analysis and the orthogonal group},
			date={1954},
			journal={The Annals of Mathematical Statistics},
			volume={25},
			number={1},
			pages={40\ndash 75},
		}
		
		\bib{JamesConstantine1974}{article}{
			author={James, Alan~T.},
			author={Constantine, A.~G.},
			title={Generalized {Jacobi} polynomials as spherical functions of the
				{Grassmann} manifold},
			date={1974},
			ISSN={0024-6115},
			journal={Proceedings of the London Mathematical Society},
			volume={s3-29},
			number={1},
			pages={174\ndash 192},
			url={http://doi.wiley.com/10.1112/plms/s3-29.1.174},
		}
		
		\bib{KasselLevy2022}{article}{
			author={Kassel, Adrien},
			author={Lévy, Thierry},
			title={Determinantal probability measures on {Grassmannians}},
			date={2022},
			ISSN={2308-5835},
			journal={Annales de l’Institut Henri Poincaré D, Combinatorics, Physics
				and their Interactions},
			volume={9},
			number={4},
			pages={659\ndash 732},
		}
		
		\bib{Macchi1975}{article}{
			author={Macchi, Odile},
			title={The coincidence approach to stochastic point processes},
			date={1975},
			ISSN={1475-6064},
			journal={Advances in Applied Probability},
			volume={7},
			number={1},
			pages={83\ndash 122},
		}
		
		\bib{MarzoOrtegaCerda2017}{article}{
			author={Marzo, Jordi},
			author={Ortega-Cerdà, Joaquim},
			title={Expected {Riesz} energy of some determinantal processes on flat
				tori},
			date={2017},
			ISSN={1432-0940},
			journal={Constructive Approximation},
			volume={47},
			number={1},
			pages={75\ndash 88},
		}
		
		\bib{PrudnikovBrychkovMarichev1986}{book}{
			author={Prudnikov, A.~P.},
			author={Brychkov, {Yu}.~A.},
			author={Marichev, O.~I.},
			title={Integrals and series},
			subtitle={Special functions},
			publisher={Gordon and Breach Science Publishers},
			date={1986},
			volume={2},
			ISBN={2-88124-090-9},
			note={Translated from the Russian by N. M. Queen},
		}
		
		\bib{Roy2010}{article}{
			author={Roy, Aidan},
			title={Bounds for codes and designs in complex subspaces},
			date={2010},
			journal={Journal of Algebraic Combinatorics},
			volume={31},
			number={1},
			pages={1\ndash 32},
		}
		
		\bib{Soshnikov2000}{article}{
			author={Soshnikov, A.},
			title={Determinantal random point fields},
			date={2000},
			ISSN={1468-4829},
			journal={Russian Mathematical Surveys},
			volume={55},
			number={5},
			pages={923\ndash 975},
		}
		
		\bib{Szego1975}{book}{
			author={Szegö, Gabor},
			title={Orthogonal polynomials},
			edition={4},
			publisher={American Mathematical Society},
			date={1975},
			ISBN={0-8218-1023-5},
		}
		
		\bib{Wagner1990}{article}{
			author={Wagner, Gerold},
			title={On means of distances on the surface of a sphere (lower bounds)},
			date={1990},
			journal={Pacific Journal of Mathematics},
			volume={144},
			number={2},
			pages={389\ndash 398},
		}
		
		\bib{Mathematica}{misc}{
			author={{Wolfram Research, Inc.}},
			title={{Mathematica Version 14.1}},
			url={https://www.wolfram.com/mathematica},
			note={{Champaign, IL, 2024}},
		}
		
		\bib{Wong1967}{article}{
			author={Wong, Yung-Chow},
			title={Differential geometry of {Grassmann} manifolds},
			date={1967},
			ISSN={1091-6490},
			journal={Proceedings of the National Academy of Sciences},
			volume={57},
			number={3},
			pages={589\ndash 594},
		}
		
		\bib{YeLim2016}{article}{
			author={Ye, Ke},
			author={Lim, {Lek-Heng}},
			title={Schubert varieties and distances between subspaces of different
				dimensions},
			date={2016},
			journal={{SIAM} Journal on Matrix Analysis and Applications},
			volume={37},
			number={3},
			pages={1176\ndash 1197},
		}
		
	\end{biblist}
\end{bibdiv}

\end{document}